\documentclass[11pt,reqno]{amsart}
\usepackage[hmargin=2.5cm, bmargin=2.5cm, tmargin=3cm]{geometry}
\usepackage{enumerate}
\usepackage{amsmath,amsthm,amssymb,amsfonts,latexsym}
\usepackage{mathbbol}
\usepackage{pgfplots}
\usepackage[colorlinks, linkcolor=blue, filecolor=blue,
     citecolor = olive, urlcolor=purple]{hyperref}
\usepackage[gen]{eurosym}
\usepackage[german,english]{babel}
\setcounter{secnumdepth}{5}
\usepackage[normalem]{ulem}
\usepackage{latexsym}
\usepackage{orcidlink}
\usepackage{tikz}					

\makeatletter
\newcommand{\opnorm}{\@ifstar\@opnorms\@opnorm}
\newcommand{\@opnorms}[1]{%
  \left|\mkern-1.1mu\left|\mkern-1.1mu\left|#1\right|\mkern-1.1mu\right|\mkern-1.1mu\right|
}
\newcommand{\@opnorm}[2][]{%
  \mathopen{#1|\mkern-1.1mu#1|\mkern-1.1mu#1|}
  #2
  \mathclose{#1|\mkern-1.1mu#1|\mkern-1.1mu#1|}
}
\makeatother

\DeclareMathOperator{\dist}{dist}
\DeclareMathOperator{\Diam}{Diam}

\allowdisplaybreaks

\newcommand{\Graph}{{\mathcal G}}

\DeclareMathOperator{\Mean}{Mean}

\newcommand{\Vmbc}{(\mV, m, b, c)}
\newcommand{\Mp}{\Mean_p}
\newcommand{\Mq}{\Mean_q}
\newcommand{\Mt}{\Mean_2}
\newcommand{\Vmb}{(\mV,m,b,c\equiv 0)}

\newcommand{\N}{\mathbb{N}}

\newcommand{\R}{\mathbb{R}}

%


\usepackage{aliascnt}

\theoremstyle{plain}
\newtheorem{theo}{Theorem}[section]

\newaliascnt{cor}{theo}
\newaliascnt{prop}{theo}
\newaliascnt{lemma}{theo}

\newtheorem{lemma}[lemma]{Lemma}
\newtheorem{prop}[prop]{Proposition}
\newtheorem{cor}[cor]{Corollary}

\aliascntresetthe{cor}
\aliascntresetthe{prop}
\aliascntresetthe{lemma}

\theoremstyle{defi} 

\newaliascnt{defi}{theo}
\newaliascnt{assum}{theo}
\newaliascnt{assums}{theo}
\newaliascnt{prob}{theo}
\newaliascnt{conv}{theo}

\newtheorem{defi}[defi]{Definition}
\newtheorem{assum}[assum]{Assumption}

\newtheorem{conv}[conv]{Convention}

\aliascntresetthe{defi}
\aliascntresetthe{assum}
\aliascntresetthe{assums}
\aliascntresetthe{prob}
\aliascntresetthe{conv}

	\theoremstyle{rem}

\newaliascnt{rems}{theo}
\newaliascnt{rem}{theo}
\newaliascnt{exa}{theo}
\newaliascnt{exs}{theo}

\newtheorem{rem}[rem]{Remark}
\newtheorem{exa}[exa]{Example}

\aliascntresetthe{rems}
\aliascntresetthe{rem}
\aliascntresetthe{exa}
\aliascntresetthe{exs}

\numberwithin{equation}{section}
\numberwithin{lemma}{section}


\newcommand{\mG}{\mathsf G}
\newcommand{\mP}{\mathsf P}
\newcommand{\mS}{\mathsf S}
\newcommand{\mT}{\mathsf T}
\newcommand{\mV}{\mathsf V}

\newcommand{\mw}{\mathsf w}
\newcommand{\mv}{\mathsf v}

\newcommand{\me}{\mathsf e}

\newcommand{\tauneubc}{\tau_{p}^\mG}
\newcommand{\tauneubcmulambda}{\tau_{p}^{\mG_{\mu,\lambda}}}
\newcommand{\tauneubct}{\tau_{p}^{\widetilde{\mG}}}
\newcommand{\hneubc}{h_{p}^{\widetilde{\mG}}}

\newcommand{\taudirbc}{\tau_{p}^{\mG;\mV_0}}
\newcommand{\taudirbcmulambda}{\tau_{p}^{{\mG_{\mu,\lambda}};\mV_0}}

\newcommand{\taudirb}{\tau_{p}^{\mG;\mV_0}}
\newcommand{\taudirbpath}{\tau_{p}^{\mP;\mV_0}}
\newcommand{\udirbpath}{u_{p}^{\mP;\mV_0}}
\newcommand{\taudirbpathpr}{\tau_{p}^{\mP';\mV_0}}
\newcommand{\taudirbpathlin}{\tau_{2}^{\mP;\mV_0}}
\newcommand{\taudirbstar}{\tau_{p}^{\mathsf{S};\mV_0}}
\newcommand{\taudirbstarlin}{\tau_{2}^{\mathsf{S};\mV_0}}

\newcommand{\Lpformbc}{L_{p}^\mG}
\newcommand{\Qpformbc}{Q_{p}^\mG}

\newcommand{\Qneu}{\mathcal{Q}_p}
\newcommand{\Ldirbc}{\mathcal{L}^{\mG;\mV_0}_{p}}

\newcommand{\Ldirbpath}{\mathcal{L}^{\mP;\mV_0}_{p}}

\newcommand{\Ldirbstar}{\mathcal{L}^{\mathsf{S};\mV_0}_{p}}

\newcommand{\Lneubc}{\mathcal{L}_{p}^\mG}

\newcommand{\Qdirbc}{\mathcal{Q}^{\mG;\mV_0}_{p}}

\newcommand{\Qneubcpr}{\mathcal{Q}_{p}^{\mG'}}
\newcommand{\Qneubc}{\mathcal{Q}_{p}^\mG}
\newcommand{\Qneubct}{\mathcal{Q}_{p}^{\widetilde{\mG}}}

\newcommand{\Ddirbc}{\overset{\circ}{\mathcal{D}}{}^p_{b,c;\mV_0}(\mV)}
\newcommand{\Dneubc}{\mathcal{D}^p_{b,c}(\mV)}
\newcommand{\wdirbc}{\overset{\circ}{w}^{1,p,1}_{b,c;\mV_0}}
\newcommand{\wqpdirbc}{\overset{\circ}{w}^{1,p,q}_{b,c;\mV_0}}
\newcommand{\wppdirbc}{\overset{\circ}{w}^{1,p,p}_{b,c;\mV_0}}

\newcommand{\wneubc}{w^{1,p,1}_{b,c}}\newcommand{\wqpneubc}{w^{1,p,q}_{b,c}}
\newcommand{\wppneubc}{w^{1,p,p}_{b,c}}

\newcommand{\wdirbcpr}{\overset{\circ}{w}^{1,p,1}_{b',c';\mV_0'}}
\newcommand{\wdirb}{\overset{\circ}{w}^{1,p,1}_{b;\mV_0}}
\newcommand{\wneubcpr}{w^{1,p,1}_{b',c'}}

\newcommand{\wqpneu}{w^{1,q,p}}

\newcommand{\wqpdir}{\overset{\circ}{w}^{1,p,q}_{\mV_0}}
\newcommand{\Fneubc}{ \mathfrak{F}_{p}^\mG}
\newcommand{\Polneubc}{{\mathcal{P}}_{p}^\mG}
\newcommand{\Polneubchat}{{\mathcal{P}}_{p}^{\hat \mG}}
\newcommand{\Polneubct}{{\mathcal{P}}_{p}^{ \widetilde{\mG}}}
\newcommand{\Poldirbc}{{\mathcal{P}}_{p}^{\mG;\mV_0}}
\newcommand{\Polneubcpr}{{\mathcal{P}}_{p}^{\mG'}}










\textwidth=37.5cc
\oddsidemargin.2mm
\evensidemargin.2mm

\title[On the $p$-torsional rigidity of combinatorial graphs]{On the $p$-torsional rigidity of combinatorial \\graphs}
\dedicatory{Dedicated to the memory of Luca Trevisan (1971--2024)}


\subjclass[2010]{34B45, 35P15, 39A12}

\keywords{}
\author[P.~Bifulco]{Patrizio Bifulco\orcidlink{0009-0004-0628-374X}}
\author[D.~Mugnolo]{Delio Mugnolo\orcidlink{0000-0001-9405-0874}}

\address{Lehrgebiet Analysis, Fakult\"at Mathematik und Informatik, Fern\-Universit\"at in Hagen, D-58084 Hagen, Germany}
\email{patrizio.bifulco@fernuni-hagen.de}

\email{delio.mugnolo@fernuni-hagen.de}

%

\keywords{discrete $p$-Laplacian; torsional rigidity; spectral surgery; eigenvalue estimates; Kohler-Jobin inequality}

\thanks{
We are grateful to Lorenzo Liverani (Milano) for fruitful discussions concerning some steps in the proof of \autoref{lem:fp-convex}. We also wish to thank Lorenzo Brasco (Ferrara) for pointing us at~\cite{BraRuf17} and for insightful observations. We are also indebted to the referee for their very careful reading and their very valuable comments.\\
Both authors were partially supported by
  the Deutsche Forschungsgemeinschaft (Grant 397230547).}

\begin{document}

\begin{abstract}
We study the $p$-\emph{torsion function} and the corresponding $p$-\emph{torsional rigidity} associated with $p$-Laplacians and, more generally, $p$-Schrödinger operators, for $1<p<\infty$, on possibly infinite combinatorial graphs. We present sufficient criteria for the existence of a summable $p$-torsion function and we derive several upper and lower bounds for the $p$-torsional rigidity. Our methods are mostly based on novel surgery principles.
As an application, we also find some new estimates on the bottom of the spectrum of the $p$-Laplacian with Dirichlet conditions, thus complementing some results recently obtained in~\cite{MazTol23} in a more general setting. Finally, we prove a Kohler-Jobin inequality for combinatorial graphs (for $p=2$): to the best of our knowledge, graphs thus become the third ambient where a Kohler-Jobin inequality is known to hold.
\end{abstract}

\maketitle

\tableofcontents

\section{Introduction}

The $p$-Laplacian is among the most thoroughly studied nonlinear operators. As nicely described in the extensive historical survey~\cite{BenGirKot18}, this class of operators has been used in the analysis of fluid dynamics models (in the range $p\in [\frac{3}{2},2]$) since the 19th century, especially in the filtration problem in porous materials. More recently, further physical systems have been studied by means of the $p$-Laplacians (with $p$ in different ranges), including continuum mechanics captured by the Ramberg--Osgood relationship \cite{ThoSan06,Kne06} and superconductivity in the Bean model \cite{Yin01,ChoKimLaf18}.

In the linear world, the Laplace transform offers a natural connection between the parabolic and elliptic equation associated with the Laplacian: accordingly, the so-called \emph{torsion function}, i.e., the solution $u$ of
\[
-\Delta u=\mathbf{1},
\]
has a far-reaching probabilistic interpretation for evolution equations \cite{BanBerCar02}. If the Laplacian is replaced by any of its nonlinear siblings, the $p$-Laplacians ($p\ne 2$), this connection is missing; however, a large part of the torsional analysis can be still performed. 
The aim of this paper is to discuss the torsional properties of a class of homogeneous, $p$-Laplacian-like nonlinear operators on combinatorial graphs, with a special focus on the interplay with their spectral theory. 

In this article we focus on infinite combinatorial graphs. 
Nonlinear difference operators -- and, especially, $p$-Laplacians -- on graphs play a pivotal role in Minty's theory of monotone networks (electrical networks of nonlinear resistors)~\cite{Min60,NakYam76,Cal96}, as well as in modern spectral clustering techniques~\cite{BuhHei09,SzlBre10,HeiLenMug15}. In addition, it is also natural to regard graphs as a convenient mathematical environment to describe the discretization of domains: at least for $p=2$, various notions of convergence of the lattice Laplacian towards the continuum Laplacian have been proved rigorously, see~\cite{NakTad21} and references therein.

The class of nonlinear Schrödinger operators of our interest here is rather natural: it appears as the nonlinear relaxation of classical Schrödinger-type operators on graphs studied over the last decades \cite{Soa94,KelLenWoj21}. As $p$-Schrödinger operators are designed to be homogeneous (of degree $p-1 \in (0,\infty)$), it is still possible to study them by variational or potential-theoretic methods, as already done since~\cite{Yam75}, see e.g.~\cite{Amg03,BuhHei09,Mug13,KelMug16} for more modern literature. In the non-local context of interest here, $p$-Schrödinger operators on graphs have been introduced 
in~\cite{Fis23}.

In the case of the (linear) free Laplacian with Dirichlet conditions on bounded planar domains $\Omega$, the torsion function was introduced by Pólya in \cite{Pol48}, who especially studied its $L^1$ norm, the ``torsional rigidity'' of $\Omega$; 
the torsion function recently gained popularity as it has been used  since~\cite{FilMay12,Ber12} as a ``landscape function'' to deliver pointwise upper bounds on eigenfunctions, see \cite{Mug23} for extensions of this theory to the nonlinear, homogeneous setting and for historical remarks. While the theory of torsion on domains is fairly well understood, much less attention has been devoted to the case of graphs: among recent articles, let us mention \cite{FilMayTao21}, where a (generalized) torsion function has been used to derive Agmon-type estimates on the eigenfunctions of linear Schrödinger operators on finite graphs; \cite{MugPlu23,Ozc24}, where it is shown that the torsion function of metric graphs can be computed using the
torsion function on the underlying combinatorial graph; and \cite{AdrSet23}, where a probabilistic interpretation of the torsion function for the (linear) graph Laplacian with Dirichlet conditions is used as a tool in a potential theoretic analysis of infinite combinatorial graphs.

It was demonstrated in \cite{Bra14} that the  variational approach first proposed by Pólya \cite{Pol48} can be successfully and almost seamlessly adapted to general $p$-Laplacians. Brasco's ideas partially extend to a non-smooth setting, too, and indeed Mazón and Toledo have very recently discussed the torsional properties of the Laplacian in the context of so-called \textit{random walk spaces} \cite{MazTol23}:
we only recall that possibly infinite combinatorial graphs represent a subclass of random walk spaces and refer to~\cite{MazSolTol23} for an extensive introduction to this theory.
In particular, different but comparable estimates of the $p$-torsional rigidity in terms of the $p$-Cheeger constant, and of the infimum of a Rayleigh-type quotient with mixed exponents, are derived in~\cite{MazTol23} and~\cite{BraRuf17}, respectively, in the case of random walk spaces and possibly unbounded Euclidean domains, respectively.

In the general non-smooth setting, the existence of a (non-trivial) $p$-torsion function is by no means obvious. For bounded open domains of $\R^d$, a solution of
\[
-\Delta_p u=\mathbf{1},
\]
with Dirichlet boundary conditions can be found by elementary variational methods \cite{Bra14}; in the unbounded case, the analysis of (non-trivial) $p$-torsion functions has been performed ever since~\cite{BanBerCar02}:
in~\cite{BraRuf17}, in particular, the authors first introduce a weak notion of $p$-torsion function by exhaustion, and then prove the equivalence between its integrability properties, the validity of enhanced Poincaré-type inequalities, and the compactness of certain Sobolev embedding. 
(In the case of combinatorial graphs, Poincaré inequalities are known to hold under appropriate geometric conditions that typically concern the curvature of the graph, cf.~\cite{Woe00,CouKos04,FuePes13,
MazSolTol23}.)
An exhaustion method that generally only leads to introducing a pointwise defined torsion function (which is a priori not required to enjoy any summability property) has been worked out on infinite, combinatorially locally finite graphs in~\cite{AdrSet23}. 
In particular, \cite[Theorem~5.4]{AdrSet23} implies that, for $p=2$, failure of the $\ell^1$-Liouville property is a necessary condition for the existence of a torsion function of class $\ell^1$ on graphs with Dirichlet conditions; then,~\cite[Theorem~3.5]{AdrSet23} rules out the existence of an $\ell^1$-torsion function on, e.g., stochastically complete model graphs. Because we will be interested in the \textit{torsional rigidity} of a graph $\mG$, which is defined in terms of the $\ell^1$ norm of the torsion function, we here prefer to focus on a weak formulation of the relevant elliptic equation that automatically implies that the $p$-torsion function is summable, provided $\mG$ has finite measure. 

The aim of this paper is twofold. On one hand we 
investigate existence and uniqueness of a $p$-torsion function for $p$-Schrödinger operators with or without Dirichlet conditions: we are going to prove that, under the assumption that suitable Poincaré inequalities are satisfied,
a unique $p$-torsion function -- which in the simplest instances \\
n by $\tau_p:= (-\Delta_p)^{-1}\mathbf{1}$ -- exists and is summable; in particular, the corresponding torsional rigidity $\| \tau_p \|_{\ell^1}^{p-1}$ is finite. On the other hand, we present a few upper and lower bounds on the $p$-torsional rigidity: these investigations are pursued by adapting to the discrete setup suitable surgery principles developed since \cite{BerKenKur17}. Many of these bounds are sharp and rigid and can thus also be understood as torsional shape optimization results in the spirit of \cite{ButRufVel14,MugPlu23}.
As a by-product, we can deduce information about the spectral-geometric properties of $p$-Schrödinger operators, and in particular the interplay between the bottom of their spectrum and their $p$-torsional rigidity.

In \autoref{sec:general} we formally introduce the relevant class of homogeneous $p$-Schrödinger operators, as well as the corresponding spaces of functions of finite energy: in the case of  vanishing potential, these objects have been studied since~\cite{NakYam76,Yam77}. The existence and uniqueness theory for the $p$-torsion function is developed in \autoref{sec:torsionfunctionandrigidity}. In comparison with known investigations on the torsion of Euclidean domains, we relax the necessity of imposing Dirichlet conditions at some vertices: in particular, \autoref{cor:wellp} shows that for several relevant classes of graphs  (including all finite graphs and certain rapidly branching symmetric trees) adding a non-trivial potential is sufficient to enforce the existence of a $p$-torsion function. Many of our results are formulated under the (rather strong) assumption that the graph has finite measure, which is comparable to that in \cite{MazTol23}; but, crucially, our main existence results, \autoref{thm:well-p-l1}, is independent of this condition:
by \autoref{cor:compact-admiss}, compactness of a Sobolev-type embedding is decisive, like in the continuous case discussed in~\cite{BraRuf17}.
In \autoref{sec:torsionalrig} we introduce the $p$-torsional rigidity, discuss its summability properties and discuss its monotonicity properties with respect to subgraph inclusion.
Inspired by~\cite{MugPlu23,MazTol23}, we derive in \autoref{sec:bounds}  a number of upper (Section \ref{sec:upper-bounds}) and lower bounds (Section \ref{sec:lower-bounds}) for the $p$-torsional rigidity in terms of different metric and measure theoretical quantities. Our proofs are based on surgery principles that boil down to comparisons between the given graph and a simpler reference graph, typically a path graph. Similar ideas are used in Section \ref{sec:appliaction-lower-estimates} to estimate the bottom of the spectrum of the $p$-Laplacian with Dirichlet conditions: unlike in the linear case (see, e.g., \cite{Fie73}, \cite[Chapters 1 and 3]{Chu97}, \cite[Section~8]{KenKurmal16}) such bounds of geometric flavour are surprisingly rare in the literature. In particular \autoref{prop:estimate-inradius} yields a geometric condition on a certain dual graph that enforces the existence of a $p$-torsion function. Finally, in Section~\ref{sec:kohler} we turn to discussing a graph counterpart of the celebrated \textit{Kohler-Jobin inequality}, a sharp lower bound on a product of $p$-torsional rigidity and bottom of the $p$-spectrum that, for $p=2$ that was first conjectured by Pólya in the case of planar domains. Very few instances of such bounds are known so far: we are only aware of Kohler-Jobin inequalities on Euclidean domains~\cite{Koh78} and on metric graphs~\cite{MugPlu23}, both under Dirichlet conditions. We here prove a Kohler-Jobin inequality for both normalized and normalized graph Laplacians in \autoref{theo:lower-bound-classical-kohler-jobin-product} and \autoref{cor:kohlerjobin-unnorm}, respectively, also in this case under Dirichlet conditions.
  
\section{General setting: $p$-Schrödinger operators on graphs}\label{sec:general}

Let $\mV$ be a finite or (countably) infinite set and let $m:\mV \to (0,\infty)$ be a point measure on $\mV$: then $(\mV,m)$ becomes a discrete measure space via $m(A) := \sum_{\mv \in A} m(\mv)$ for every subset $A \subset \mV$. Following the terminology and notation in~\cite{KelLenWoj21}, a \emph{locally finite graph}, or simply a \textit{graph} $\mG$ over $(\mV,m)$ is determined by
\begin{itemize}
\item  a function $b: \mV  \times \mV \to [0,\infty)$ such that
\begin{enumerate}[(i)]
\item\label{item:symmetry} $b(\mv,\mw)=b(\mw,\mv)$ for all $\mv,\mw \in \mV$,
\item\label{item:no-loops} $b(\mv,\mv) = 0$ for all $\mv \in \mV$,
\item\label{item:locally-finite} $\sum\limits_{\mw\in \mV} b(\mv,\mw)<\infty$ for every $\mv\in\mV$; along with
\end{enumerate}
\item a function $c: \mV \to [0,\infty)$.
\end{itemize}
In the following, we denote such a graph by $\mG = \Vmbc$.
We call it a \textit{graph with standard edge weights}, and write $b=b_{\mathrm{st}}$, if $b(\mv,\mw)=1$ whenever $b(\mv,\mw)>0$.
A special role in our investigation will be played by graphs with $c\equiv 0$: in this case, we denote our graph by $\Vmb$ and remark that these objects agree with \textit{weighted graphs} from classical graph theory.
The above property \eqref{item:locally-finite} means that the graph is \textit{locally finite}. The graph is said to be 
\textit{combinatorially locally finite} if the neighbour set $N_\mv:=\{\mw\in \mV:b(\mv,\mw)>0\}$ is finite for each $\mv\in \mV$. (Observe that each combinatorially locally finite graph is locally finite, and the converse holds if, additionally, $\inf_{\mv\in \mV}\inf_{\mw\in N_\mv}b(\mv,\mw)>0$.)

We call each $\mv\in \mV$ a \textit{vertex}, and each couple of (distinct) vertices $\mv,\mw\in \mV$ with $b(\mv,\mw)>0$ an \textit{edge} (clearly, the set of edges is at most countable). Intuitively, this means that two vertices $\mv,\mw$ with $b(\mv,\mw)=0$ are not connected by any edge:  if,  however, $b(\mv,\mw)>0$, then we say that $\mv,\mw$ are \textit{adjacent}. The \emph{degree} of the graph $\mG$ is the function $\deg_\mG: \mV \rightarrow [0,\infty)$ defined by
\begin{align*}\label{eq:definition-deg}
\deg_\mG(\mv) :=  \sum_{\mw \in \mV} b(\mv,\mw) + c(\mv), \qquad \mv \in \mV;
\end{align*}
in particular, $\deg_\mG(\mv)$ is the number of edges in $\mG$ connected to $\mv$ if $b = b_{\mathrm{st}}$ and $c \equiv 0$. If there is no danger of confusion, we will often simply denote $\deg_\mG(\mv)$ by $\deg(\mv)$; however, it will frequently happen that we ``rewire'' a given graph and, hence, end up considering a new graph $\mG'$ on the same vertex set $\mV$, so that $\deg_\mG:\mV\to [0,\infty)$ and $\deg_{\mG'}:\mV\to [0,\infty)$ will indeed be different. 

\begin{rem}\label{rem:multiple}
Technically speaking, the above formalism does not apply to \textit{multigraphs}, i.e., it does not allow for the existence of two or more distinct edges between any two given vertices $\mv,\mw$: i.e., only weighted \emph{simple} graphs can be described as above. In practice, if we need to consider a weighted multigraph -- and this will, indeed, repeatedly happen when dealing with surgical operations in \autoref{sec:bounds} --, it will for our purposes be sufficient to ``reduce'' it to a graph in the above sense. To perform this reduction, we replace all edges $\me^{1}_{\mv,\mw},\ldots,\me^{m}_{\mv,\mw}$ between any two vertices $\mv,\mw$ by \textit{one} edge whose weight $b(\mv,\mw)$ is the sum of the weights of all parallel edges  $\me^{i}_{\mv,\mw}$.

Luckily, it can be easily checked that the adjacency matrix of a weighted multigraph agrees with the matrix $(b(\mv,\mw))_{\mv,\mw\in \mV}$, i.e., the above reduction procedure is consistent with the usual theory of multigraph Laplacians (see, e.g., \cite[Chapter 2]{Mug14}).
\end{rem}

A \textit{subgraph} of $\mG=\Vmbc$ is a new graph $\mG'=(\mV',m',b',c')$ such that $\mV'\subset \mV$, $m':=m\vert_{\mV'}$, $c'(\mv):=[0,c(\mv)]$, and $b'(\mv,\mw)\in [0,b(\mv,\mw)]$ for any $\mv,\mw\in \mV'$: roughly speaking, a subgraph of a graph $\mG$ may ``lose'' both vertices and edges.

In most cases we will however only consider the \textit{subgraph induced} by some $\mV'\subset \mV$: by definition, this is the graph whose vertex set is $\mV'$ and such that 
$b' = b\vert_{\mV' \times \mV'}$ and $c'=c\vert_{\mV'}$; in other words, 
any two vertices $\mv,\mw$ in $\mV'$ are adjacent if and only if so are they in $\mV$, and in this case they have the same weight as in the original graph.

A \textit{path connecting two pairwise distinct vertices $\mv,\mw$} in $\Vmbc$ is, by definition, a sequence $(\mv=\mv_0,\ldots,\mv_n=\mw)$ of vertices such that $\mv_0=\mv$, $\mv_n=\mw$, and $b(\mv_i,\mv_{i+1})>0$ for each $i=0,\ldots,n-1$; moreover, we call it a \emph{cycle} whenever $\mv = \mw$.
A graph is \textit{connected} if there is at least one path between any two vertices. 

A \textit{tree} is a graph that does not contain any cycle as a (not necessarily induced) subgraph. A \emph{path graph} is a tree such that the vertex set can be written as $\mV = \{ \mv_j \: : \: j \in J \}$, where 
\[
\hbox{either }J=\{0,1,\ldots,n-1\}, \qquad\hbox{ or }J=\N_0,
\]
and $b(\mv,\mw) > 0$ for any $\mv,\mw\in \mV$ if and only if $\mv=\mv_i$ and $\mw=\mv_{i+1}$ for some $i \in J$. A \emph{star graph with center $\mv_c \in \mV$} is a tree such that $b(\mv,\mw) > 0$  for any $\mv,\mw\in \mV$ if and only if $\mv=\mv_c$ and $\mw\ne \mv_c$.

The \textit{length} of a path $(\mv_0,\ldots,\mv_n)$ is $\sum_{i=0}^{n-1} b(\mv_i,\mv_{i+1})$. The \textit{distance}  $\dist_{b}$ between any two vertices $\mv,\mw \in \mV$ is defined as the infimal length among all paths connecting $\mv,\mw$. Observe that $(\mV,m,\dist_b)$ is a metric measure space; also, $(\mV,m,\nu)$ is a random walk space, cf.\ \cite[Example~1.41]{MazSolTol23}, where $\nu(A):=\sum_{\mv\in A}\left(c(\mv)+\sum_{\mw\in\mV}b(\mv,\mw)\right)$ for any $A\subseteq \mV$.

 Given $q>0$, we also define the \emph{$q$-inradius} of any graph $\mG=\Vmbc$ with respect to $\mV_0 \subset \mV$ by 
\begin{equation}\label{eq:inradius-defi}
\mathrm{Inr}_q(\mG;\mV_0) := \sup_{\mv \in \mV} \mathrm{dist}_{q,b}(\mv;\mV_0)^{q-1} = \sup_{\mv \in \mV} \Big(\inf_{\mw \in \mV_0} \dist_{q,b}(\mv,\mw)  \Big)^{q-1},
\end{equation}
for the \textit{$q$-distance} (with respect to the edge weight $b$) defined by
\begin{equation}\label{eq:qbdist-def}
\dist_{q,b}(\mv,\mw) := \inf \sum_{j=0}^{n-1} b(\mv_j,\mv_{j+1})^{\frac{1}{q-1}},
\end{equation}
where the infimum is taken over all paths $(\mv=\mv_0,\ldots,\mv_n=\mw)$ connecting $\mv$ and $\mw$: observe that the $q$-distance agrees with the usual one for $q=2$ and that, apparently, these metric quantities do not depend on either the measure $m$ or the potential $c$.

We can hence obtain more refined information on the fine geometric structure of a graph $\mG=\Vmbc$ by means of the \emph{$q$-mean distance}
\begin{equation}\label{eq:q-meanstdist-defi}
\Mq({\mG};\mV_0) := \frac{1}{m(\mV\setminus\mV_0)} \sum_{\mv \in \mV\setminus \mV_0} \mathrm{dist}_{q,b}(\mv;\mV_0)^{q-1} m(\mv).
\end{equation}
Clearly one has that
\begin{align*}
\mathrm{Inr}_q({ \mG};\mV_0) \geq \Mq({\mG};\mV_0) \qquad \text{for every $m: \mV \rightarrow (0,\infty)$:}
\end{align*}
we stress that neither of these quantities depend on $c$.

Following \cite{RigSalVig97}, we denote by $\nabla f$ the discrete gradients defined by
\[
\nabla_{\mv,\mw}f := f(\mv)-f(\mw),\qquad f\in \R^\mV,\  \mv,\mw\in \mV.
\]

We let throughout the paper 
\[
p\in (1,\infty),
\]
and denote $p' := \frac{p}{p-1}$.
This article is devoted to the analysis of a discrete Poisson-type equation for $p$-Schrödinger operators, $1<p<\infty$, given by
\begin{equation}\label{eq:p-schrödinger}
 \Lpformbc f(\mv):= \frac{1}{m(\mv)}\sum_{\mw\in\mV} b(\mv,\mw)|\nabla_{\mv,\mw}f|^{p-2}\nabla_{\mv,\mw}f+\frac{c(\mv)}{m(\mv)}|f(\mv)|^{p-2}f(\mv),\quad f\in c_{00}(\mV),\ \mv\in\mV,
\end{equation}
where $c_{00}(\mV)$ denotes the vectors space of functions from $\mV$ to $\R$ with finite support. In this paper we are going to consider equations of the form
\[
L^\mG_p f=g,
\]
for some $g:\mV\to \R$. 

With the ultimate aim of applying variational methods as done in \cite{Bra14, MugPlu23, MazTol23},  we are led to considering the smooth energy functional
\begin{equation}\label{eq:quad-form}
 \Qpformbc: u \mapsto \frac{1}{2p} \sum_{\mv,\mw \in \mV} b(\mv,\mw)\vert \nabla_{\mv,\mw}u \vert^{p} + \frac{1}{p}\sum_{\mv \in \mV} c(\mv)\vert u(\mv) \vert^{p}
\end{equation}
defined on a domain that contains $c_{00}(\mV)$ and which will be specified later.

Observe that \(u\mapsto \Qpformbc (u)^\frac{1}{p}\) already defines a semi-norm on
\[
\Dneubc := \bigg\{ u \in \R^\mV :\Qpformbc(u)< \infty \bigg\}\quad\hbox{and}\quad
\Ddirbc :=\{u\in \Dneubc  :u(\mv)=0\hbox{ for all }\mv\in \mV_0\};
\]
and in fact a norm 
\begin{itemize}
\item on $\Dneubc$ if $\sup_{\mv\in\mV}c(\mv) > 0$, as well as
\item on $\Ddirbc$ for any $\emptyset\ne \mV_0\subset \mV$.
\end{itemize}

We also introduce the discrete Sobolev-type spaces
\[
\wqpneubc (\mV, m) := \Dneubc \cap  \ell^q(\mV,m)
\quad
\hbox{and}
\quad  \wqpdirbc(\mV, m):=\Ddirbc \cap  \ell^q(\mV,m),
 \]
for $q \in [1,\infty)$, which we equip with the canonical norm
\[
u \mapsto \Vert u \Vert_{\ell^q(\mV,m)} + \Qpformbc (u)^\frac{1}{p}.
\]

In the following, we are going to consider a class of conditions usually referred to as \textit{$\ell^{q}-\ell^p$-Poincaré inequalities}, i.e., the validity of the estimate
\begin{equation}\label{eq:poinc-neu}
\|u\|_{\ell^{q}(\mV,m)}\le M \Qpformbc(u)^\frac{1}{p}\qquad \hbox{for all }u\in \wqpneubc(\mV,m)
\end{equation}
or its Dirichlet counterpart
\begin{equation}\label{eq:poinc-neu-dir}
\|u\|_{\ell^{q}(\mV\setminus \mV_0,m)}\le M \Qpformbc (u)^\frac{1}{p}\qquad \hbox{for all }u\in \wqpdirbc(\mV,m)
\end{equation}
for some $M>0$; we are mostly going focus on the case where $q=1$ (cf.\ \autoref{sec:torsionalrig} and \autoref{sec:bounds}).
For the sake of later reference let us formulate the following.

\begin{assum}\label{ass:poincare-inequality}
Let $q \in [1,\infty)$. Either $\sup_{\mv\in\mV}c(\mv)>0$ and the $\ell^q-\ell^p$-Poincaré inequality~\eqref{eq:poinc-neu}
is satisfied; or $\mV_0\ne\emptyset$ and the $\ell^q-\ell^p$-Poincaré inequality~\eqref{eq:poinc-neu-dir} is satisfied.
\end{assum}

\begin{prop}\label{prop:intro-plapl}
Let $\mG=\Vmbc$ be a graph, and let $q \in [1,\infty)$. Under the \autoref{ass:poincare-inequality}, let $\sup_{\mv\in\mV} c(\mv)>0$ (resp., $\mV_0\ne \emptyset$).
Then
\(
u \mapsto \Qpformbc (u)^\frac{1}{p}
\)
defines an equivalent norm on $\wqpneubc(\mV,m)$ (resp., on $\wqpdirbc(\mV,m)$), with respect to which it is a 
separable Banach space; it is densely and continuously embedded in $\ell^q(\mV,m)$; it is reflexive if $q \neq 1$. Also, $\wqpdirbc(\mV,m)$ is a closed subspace and a lattice ideal of $\wqpneubc(\mV,m)$ for every $\mV_0\subset \mV$.
\end{prop}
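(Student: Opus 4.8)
\emph{Strategy.} The plan is to derive every topological assertion from one isometric embedding of $\wqpneubc(\mV,m)$ into a concrete sequence space, and to read off the lattice statements directly from the pointwise definitions. I spell out the case $\sup_\mv c(\mv)>0$; the Dirichlet case is entirely analogous, replacing $\mV$ by $\mV\setminus\mV_0$ wherever an $\ell^q$-norm occurs and restricting to functions vanishing on $\mV_0$. First, the canonical norm trivially dominates $\Qpformbc(\cdot)^\frac1p$, and conversely the $\ell^q$--$\ell^p$-Poincaré inequality \eqref{eq:poinc-neu} of \autoref{ass:poincare-inequality} gives $\|u\|_{\ell^q(\mV,m)}\le M\,\Qpformbc(u)^\frac1p$, so the two are equivalent (in particular $\Qpformbc(\cdot)^\frac1p$ is a genuine norm), and both are equivalent to $\bigl(\|u\|_{\ell^q(\mV,m)}^p+\Qpformbc(u)\bigr)^\frac1p$ by equivalence of the $\ell^1$- and $\ell^p$-norms on $\R^2$. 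Now set $\mathsf E:=\{(\mv,\mw)\in\mV\times\mV:b(\mv,\mw)>0\}$ with measure $\mu(\{(\mv,\mw)\}):=b(\mv,\mw)/(2p)$, let $\mathsf C:=\{\mv\in\mV:c(\mv)>0\}$ with measure $\mv\mapsto c(\mv)/p$, form the $\ell^p$-direct sum $Z:=\ell^q(\mV,m)\oplus_p\ell^p(\mathsf E,\mu)\oplus_p\ell^p(\mathsf C,c/p)$, and define
\[
Ju:=\bigl((u(\mv))_{\mv\in\mV},\ (\nabla_{\mv,\mw}u)_{(\mv,\mw)\in\mathsf E},\ (u(\mv))_{\mv\in\mathsf C}\bigr),
\]
which satisfies $\|Ju\|_Z^p=\|u\|_{\ell^q(\mV,m)}^p+\Qpformbc(u)$ and is therefore a linear isometry onto its range for this equivalent norm.

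\emph{Banach, separable, reflexive.} The space $Z$ is a finite $\ell^p$-direct sum of separable Banach spaces (the index sets $\mV$, $\mathsf E$, $\mathsf C$ being countable), hence itself separable and complete, and it is reflexive when $q\neq1$, being then built from reflexive $\ell$-spaces ($1<p,q<\infty$). The range of $J$ is closed in $Z$: if $Ju_n\to(f,g,h)$ in $Z$, then, since $m(\mv)>0$ and $\mu(\{(\mv,\mw)\})>0$, the values $u_n(\mv)$ converge pointwise to $f(\mv)$ and the gradients $\nabla_{\mv,\mw}u_n$ to $g_{\mv,\mw}$; hence $g_{\mv,\mw}=\nabla_{\mv,\mw}f$ for all $(\mv,\mw)\in\mathsf E$ and $h(\mv)=f(\mv)$ for $\mv\in\mathsf C$, while $\|(f,g,h)\|_Z<\infty$ forces $f\in\wqpneubc(\mV,m)$, so $(f,g,h)=Jf$. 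Consequently $\wqpneubc(\mV,m)$ is isometric to a closed subspace of $Z$, hence a separable Banach space, and reflexive for $q\neq1$. (Completeness alone also follows directly from Fatou's lemma applied to the nonnegative summands defining $\Qpformbc$.)

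\emph{Dense continuous embedding and lattice ideal.} Continuity of $\wqpneubc(\mV,m)\hookrightarrow\ell^q(\mV,m)$ is exactly the Poincaré inequality, and density holds because $c_{00}(\mV)\subset\wqpneubc(\mV,m)$ (all occurring sums being finite, using local finiteness) is already dense in $\ell^q(\mV,m)$ for $q<\infty$; in the Dirichlet case one uses $c_{00}(\mV\setminus\mV_0)$ and density in $\ell^q(\mV\setminus\mV_0,m)$. For the final assertion, $\wqpdirbc(\mV,m)$ is closed in $\wqpneubc(\mV,m)$ since norm convergence entails $\ell^q$-, hence pointwise, convergence, which preserves vanishing on $\mV_0$. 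Next, $\wqpneubc(\mV,m)$ is stable under $u\mapsto|u|$: indeed $\||u|\|_{\ell^q}=\|u\|_{\ell^q}$, $c(\mv)\bigl||u|(\mv)\bigr|^p=c(\mv)|u(\mv)|^p$, and the reverse triangle inequality $\bigl||a|-|b|\bigr|\le|a-b|$ yields $\Qpformbc(|u|)\le\Qpformbc(u)$; hence $u\vee w=\tfrac12(u+w+|u-w|)$ and $u\wedge w$ stay in the space, so it is a sublattice of $\R^\mV$, and the same computation makes $\wqpdirbc(\mV,m)$ a sublattice as well. Finally, if $v\in\wqpdirbc(\mV,m)$ and $u\in\wqpneubc(\mV,m)$ with $|u|\le|v|$ pointwise, then $|u(\mv)|\le|v(\mv)|=0$ for every $\mv\in\mV_0$, so $u\in\wqpdirbc(\mV,m)$: this is the order-ideal property. (When $\mV_0=\emptyset$ the two spaces coincide and everything is trivial.)

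\emph{Main obstacle.} The only point that is more than bookkeeping with the Poincaré and reverse-triangle inequalities is the closedness of the range of $J$ — equivalently, the completeness of these Sobolev-type spaces — which rests on the elementary but essential observation that convergence in $\ell^q(\mV,m)$ and in the weighted space $\ell^p(\mathsf E,\mu)$ entails pointwise convergence of both function values and discrete gradients, together with the compatibility $\nabla_{\mv,\mw}(\lim u_n)=\lim\nabla_{\mv,\mw}u_n$.
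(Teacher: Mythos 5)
Your proof is correct and follows essentially the same (standard) route as the paper, which simply outsources the functional-analytic core — separability, reflexivity, completeness, and the dense embedding — to \cite[Lemma~3.6 and Proposition~3.8]{Mug14} and notes that the ideal property ``can be checked directly''; your isometric embedding of $\wqpneubc(\mV,m)$ into the product $\ell^q(\mV,m)\oplus_p\ell^p(\mathsf E,\mu)\oplus_p\ell^p(\mathsf C,c/p)$ together with the closed-range argument is precisely that standard technique, written out in full. All steps check out, including your (sensible) reading of the density claim in the Dirichlet case as density in $\ell^q(\mV\setminus\mV_0,m)$.
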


\begin{proof}
Equivalence of the norms is an immediate consequence of the $\ell^q-\ell^p$-Poincaré inequality.
Separability (as well as reflexivity if $q \neq 1$) of $\wqpneubc(\mV,m)$ and $\wqpdirbc(\mV,m)$ can be proved as in \cite[Lemma~3.6]{Mug14}, respectively. Also, they are continuously and densely embedded in $\ell^q(\mV,m)$, see \cite[Proposition~3.8]{Mug14}. The ideal property can be checked directly.
\end{proof}

Under the \autoref{ass:poincare-inequality} we are, therefore, mostly going to endow the discrete Sobolev spaces with the norm
\begin{equation}\label{eq:fromnowequiv}
\|u\|_{\wqpneubc}:=  \Qpformbc (u)^\frac{1}{p}
\end{equation}
throughout.
Also, (whenever $q \in [1,\infty)$ is fixed) in the following we are going to denote by $\Qneubc$ and $\Qdirbc$ the restriction of $\Qpformbc$ to $\wqpneubc(\mV,m)$ and $\wqpdirbc(\mV,m)$, respectively.

\begin{rem}
At the risk of being tautological, let us stress that the Sobolev spaces do depend on the coefficients $b,c$, as so does $Q^\mG_p$.
Observe that $\wqpneubc(\mV,m)=w^{1, q,p}_{\mathbf 1,\mathbf 1}(\mV,m)=:\wqpneu(\mV,m)$ and $ \wqpdirbc(\mV,m)=\overset{\circ}{w}^{1, q,p}_{\mathbf 1,\mathbf 1;\mV_0}(\mV,m)=: \wqpdir(\mV,m)$ whenever there exist $\beta,B>0$ and $\gamma,\Gamma>0$ such that
\[
\beta<b(\mv,\mw)\le B\quad\hbox{and}\quad \gamma<c(\mv)\le \Gamma\qquad\hbox{for all }\mv,\mw\in \mV .
\]
Also, $\wqpneubc(\mV,m)=\ell^q(\mV,m)=\R^\mV$ and $\wqpdirbc(\mV,m)=\ell^q(\mV\setminus\mV_0,m)=\R^{\mV\setminus\mV_0}$ if $\mV$ is finite.
\end{rem}

Let us present a setting where such $\ell^q-\ell^p$-inequalities can be easily proved.

To begin with, observe that if $\mV$ is finite and $c(\mv_0)>0$ for some $\mv_0$, we can consider the norm 
\[
u\mapsto \opnorm{u} :=\left(\frac{1}{2p}\sum\limits_{\mv,\mw\in \mV}b(\mv,\mw)|\nabla_{\mv,\mw}u|^p+\frac{1}{p}c(\mv_0)|u(\mv_0)|^p\right)^\frac{1}{p}
\]
 on $\Dneubc$. Then clearly $\opnorm{u} \lesssim \Qneubc(u)^\frac{1}{p}$, but on the other hand a Poincaré-type inequality involving the norms  $\|\cdot\|_{\ell^1}$ (and thus in particular $\|\cdot\|_{\ell^q}$) and $\opnorm{\: \cdot \:}$ holds by \cite[Lemma~2.1]{Yam77}. In fact, more can be said.
\begin{prop}\label{cor:compact-admiss}
Let $\mG=\Vmbc$ be a graph. Then the following assertions hold.
\begin{enumerate}[(i)]
\item\label{item:poincare-robin} Let $\sup_{\mv\in\mV}c(\mv)>0$. If $\wqpneubc(\mV,m)$
is compactly embedded in $\ell^q(\mV,m)$, then an $\ell^q-\ell^p$-Poincaré inequality \eqref{eq:poinc-neu} holds.
\item\label{item:poincare-dirichlet} Let $\emptyset \ne\mV_0\subset \mV$. If $\wqpdirbc(\mV,m)$ is compactly embedded in $\ell^q(\mV,m)$, then an $\ell^q-\ell^p$-Poincaré inequality \eqref{eq:poinc-neu-dir} holds.
\end{enumerate}
\end{prop}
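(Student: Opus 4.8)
The plan is to argue by contraposition: if the Poincaré inequality \eqref{eq:poinc-neu} (resp.\ \eqref{eq:poinc-neu-dir}) fails, then the compact embedding fails as well. So suppose that for every $n\in\N$ there is some $u_n$ in the relevant Sobolev space with $\|u_n\|_{\ell^q(\mV,m)} > n\, \Qpformbc(u_n)^{1/p}$ (resp.\ with $\|u_n\|_{\ell^q(\mV\setminus\mV_0,m)}$ on the left). After normalizing so that $\|u_n\|_{\ell^q}=1$ for all $n$, we obtain $\Qpformbc(u_n)^{1/p} < 1/n \to 0$. In particular the sequence $(u_n)$ is bounded in the Sobolev norm $u\mapsto \|u\|_{\ell^q}+\Qpformbc(u)^{1/p}$, so by the assumed compactness of the embedding into $\ell^q(\mV,m)$ there is a subsequence (not relabelled) converging in $\ell^q$ to some $u\in\ell^q(\mV,m)$, and then $\|u\|_{\ell^q}=\lim_n\|u_n\|_{\ell^q}=1$, so $u\neq 0$.

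Next I would identify the limit $u$. Since $\ell^q$-convergence implies pointwise convergence, $u_n(\mv)\to u(\mv)$ for every $\mv\in\mV$, and hence $\nabla_{\mv,\mw}u_n\to\nabla_{\mv,\mw}u$ for every edge. By Fatou's lemma applied to the (nonnegative, termwise) sums defining $\Qpformbc$,
\[
\Qpformbc(u)\le\liminf_{n\to\infty}\Qpformbc(u_n)=0,
\]
so $\Qpformbc(u)=0$. In the Dirichlet case ($\mV_0\neq\emptyset$), pointwise convergence also gives $u(\mv)=\lim_n u_n(\mv)=0$ for every $\mv\in\mV_0$, so $u\in\Ddirbc\cap\ell^q(\mV,m)=\wqpdirbc(\mV,m)$; but $u\mapsto\Qpformbc(u)^{1/p}$ is a genuine \emph{norm} on $\wqpdirbc(\mV,m)$ for any nonempty $\mV_0$ (as noted in the excerpt), whence $u=0$ — contradicting $\|u\|_{\ell^q}=1$. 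In the Robin case ($\sup_{\mv\in\mV}c(\mv)>0$), the same argument applies because $u\mapsto\Qpformbc(u)^{1/p}$ is then a norm on $\Dneubc$, so again $\Qpformbc(u)=0$ forces $u=0$, a contradiction. This proves both \eqref{item:poincare-robin} and \eqref{item:poincare-dirichlet}.

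The only mildly delicate point is the passage $\Qpformbc(u)\le\liminf_n\Qpformbc(u_n)$: one must be slightly careful because $\Qpformbc$ is a sum over the (possibly infinite) edge set, so I would phrase it as Fatou's lemma on the counting space of ordered pairs $(\mv,\mw)$ together with the counting space $\mV$ for the $c$-term, using that each summand $b(\mv,\mw)|\nabla_{\mv,\mw}u_n|^p$ and $c(\mv)|u_n(\mv)|^p$ converges pointwise in $(n)$ and is nonnegative. Everything else is routine: boundedness of $(u_n)$ in the Sobolev norm to invoke compactness, extraction of a convergent subsequence, and the fact — already recorded before \autoref{prop:intro-plapl} — that $\Qpformbc^{1/p}$ is a norm (not just a seminorm) in precisely the two cases under consideration. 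I do not expect any essential obstacle.
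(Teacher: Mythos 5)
Your proof is correct and is essentially the paper's argument run in contrapositive form: the paper uses compactness to extract a minimizer of the quotient $\Qpformbc(u)/\|u\|_{\ell^q(\mV,m)}^p$ and shows its energy is strictly positive, whereas you show that a normalized sequence with vanishing energy would converge (via the same compactness plus Fatou step) to a nonzero function with $\Qpformbc(u)=0$, contradicting the fact that $\Qpformbc(\cdot)^{1/p}$ is a norm in the two cases considered. The ingredients — boundedness in the Sobolev norm, compact extraction, Fatou's lemma for lower semicontinuity, and the norm property of $\Qpformbc^{1/p}$ — are identical.
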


\begin{proof}
By compactness, $\wqpneubc(\mV,m) \ni u\mapsto \Qneubc(u)$ attains its minimum on the unit ball of $\ell^{q}(\mV,m)$, hence there exists $u_0 \in \ell^{q}(\mV,m)$ which, without loss of generality, we assume to be normalized, i.e., 
\begin{align}\label{eq:minimizer-u0}
\Qneubc(u_0) = \min_{u \in \wqpneubc(\mV,m)} \frac{\Qneubc(u)}{\Vert u \Vert_{\ell^q(\mV,m)}^p}. 
\end{align}
We show that $\Qneubc(u_0)$ has to be strictly positive: indeed, as 
\begin{align}\label{eq:bottom-of-p-spec-ground-state}
\Qneubc(u_0) = \frac{1}{2p} \sum_{\mv, \mw \in \mV} b(\mv,\mw) \vert \nabla_{\mv,\mw} u_0 \vert^p + \frac{1}{p}\sum_{\mv \in \mV} c(\mv) \vert u_0(\mv) \vert^p,
\end{align}
it follows that $u_0$ cannot be constant if $c(\mv) \vert u_0(\mv) \vert^p = 0$ for all $\mv \in \mV$ as $\sup_{\mv \in \mV} c(\mv) > 0$ (note that $u_0$ is  non-trivial by \eqref{eq:minimizer-u0}) yielding $\sum_{\mv,\mw \in \mV} b(\mv,\mw) \vert \nabla_{\mv,\mw} u_0 \vert^p >0$ and hence $\Qneubc(u_0) > 0$ in this case. Likewise, if $c(\mv_0) \vert \varphi_0^\mG(\mv_0) \vert^p > 0$ for some $\mv_0 \in \mV$, it follows -- again by \eqref{eq:bottom-of-p-spec-ground-state} -- that $\Qneubc(u_0) \geq c(\mv_0) \vert u_0(\mv_0) \vert^p > 0$.
Therefore, an $\ell^q-\ell^p$-Poincaré inequality \eqref{eq:poinc-neu} holds with $M:=\Qneu(u_0)^{-1}$:
this implies \eqref{item:poincare-robin}. 

The proof of \eqref{item:poincare-dirichlet} can be performed similarly.
\end{proof}

The embedding of $\wqpneubc(\mV,m)$ into $\ell^q(\mV,m)$ is certainly compact whenever $\mV$ is finite, but some classes of infinite graphs (including radially symmetric trees with fast growth \cite{BonGolKel15,Mel17}) are also known to fulfill this condition, at least for $p=q=2$. Let us present another interesting class: graphs of finite measure and finite diameter (with respect to a specific edge weight).  To this end, for a given graph $\mG = \Vmbc$, we define the \emph{edge weight inverted graph} 
\begin{equation}\label{eq:def-g-1}
\mG^{-1} := (\mV,m,b^{-1},c),
\end{equation}
where \[
b^{-1}(\mv,\mw) :=\begin{cases}
 b(\mv,\mw)^{-1},\quad&\hbox{ whenever }b(\mv,\mw)>0,\\
 0, &\hbox{ else}.
 \end{cases}
 \]
 (Note that $\mG = \mG^{-1}$ if and only if $b=b_{\mathrm{st}}$.)
Here we use again the notation introduced in~\eqref{eq:qbdist-def}.

\begin{lemma}\label{lem:geohaekelp}
Let $\mG=\Vmbc$ be a graph. Assume that 
\[
m(\mV)<\infty\qquad\hbox{and}\qquad \Diam_{p}(\mG^{-1}):=\sup_{\mv,\mw\in \mV}\dist_{p,b^{-1}}(\mv,\mw)^{p-1}<\infty.
\]
Then $\wqpneubc(\mV,m)$ is compactly embedded in $\ell^r(\mV,m)$, for any $q,r\in [1,\infty)$.
\end{lemma}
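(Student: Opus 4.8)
The plan is to show first that $\wppneubc(\mV,m)$ embeds continuously into $\ell^\infty(\mV)$, and in fact that functions of bounded energy are automatically uniformly bounded by a multiple of their $Q$-seminorm once one fixes (or averages over) a base vertex; then to upgrade this to compactness by a diagonal/tightness argument exploiting $m(\mV)<\infty$. The two hypotheses play complementary roles: the finite $p$-diameter of $\mG^{-1}$ controls oscillations of $u$ (via a discrete fundamental-theorem-of-calculus estimate along paths, using Hölder with exponents $p,p'$), while $m(\mV)<\infty$ converts pointwise/uniform control into $\ell^r$-control and provides the tail smallness needed for compactness.

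\textbf{Step 1: a pointwise oscillation bound.}
Fix $u\in\Dneubc$ and vertices $\mv,\mw\in\mV$. For any path $(\mv=\mv_0,\dots,\mv_n=\mw)$ one has, telescoping and applying the discrete Hölder inequality,
\begin{align*}
|u(\mv)-u(\mw)|
&\le \sum_{j=0}^{n-1}|\nabla_{\mv_j,\mv_{j+1}}u|
= \sum_{j=0}^{n-1} b(\mv_j,\mv_{j+1})^{-\frac1p}\,\bigl(b(\mv_j,\mv_{j+1})^{\frac1p}|\nabla_{\mv_j,\mv_{j+1}}u|\bigr)\\
&\le \Bigl(\sum_{j=0}^{n-1} b(\mv_j,\mv_{j+1})^{-\frac{p'}{p}}\Bigr)^{\frac1{p'}}
\Bigl(\sum_{j=0}^{n-1} b(\mv_j,\mv_{j+1})|\nabla_{\mv_j,\mv_{j+1}}u|^{p}\Bigr)^{\frac1p}.
\end{align*}
Since $\frac{p'}{p}=\frac{1}{p-1}$, the first factor is $\bigl(\sum_j b^{-1}(\mv_j,\mv_{j+1})^{\frac1{p-1}}\bigr)^{\frac1{p'}}$, which is exactly a length in the $p$-distance on $\mG^{-1}$; taking the infimum over paths and then the supremum over $\mv,\mw$ gives
\[
|u(\mv)-u(\mw)|\le \Diam_{p}(\mG^{-1})^{\frac{1}{p}}\,\bigl(2p\,\Qpformbc(u)\bigr)^{\frac1p}
=: C\,\Qpformbc(u)^{\frac1p}\qquad\text{for all }\mv,\mw\in\mV,
\]
using $\Diam_p(\mG^{-1})=\sup \dist_{p,b^{-1}}(\mv,\mw)^{p-1}$ and $\frac{1}{p'}=\frac{p-1}{p}$. (If $\mG$ is disconnected one argues componentwise, and the finite-diameter hypothesis forces finitely many components of uniformly bounded $p$-diameter; the same bound holds on each.) In particular, picking any fixed $\mv_*$, every $u$ satisfies $\|u\|_{\ell^\infty}\le |u(\mv_*)|+C\,\Qpformbc(u)^{1/p}$, and averaging $|u(\mv)|^r m(\mv)$ over the finite-measure space shows $\wppneubc(\mV,m)\hookrightarrow\ell^r(\mV,m)$ continuously for every $r$ (together with the fact, from the Poincaré-inequality hypotheses encoded in the space, or directly, that the $\ell^1$-component of the norm controls $|u(\mv_*)|$).

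\textbf{Step 2: compactness.}
Let $(u_n)$ be bounded in $\wppneubc(\mV,m)$, say $\Qpformbc(u_n)^{1/p}+\|u_n\|_{\ell^q}\le K$. By Step 1 the sequence is uniformly bounded pointwise, so by a diagonal argument (the vertex set is countable) we extract a subsequence converging pointwise to some $u$; Fatou gives $u\in\wppneubc(\mV,m)$. To promote pointwise to $\ell^r$-convergence, fix $\varepsilon>0$: since $m(\mV)<\infty$ there is a finite $\mV_\varepsilon\subset\mV$ with $m(\mV\setminus\mV_\varepsilon)<\varepsilon$, and on $\mV\setminus\mV_\varepsilon$ we bound $\sum_{\mv\notin\mV_\varepsilon}|u_n(\mv)-u(\mv)|^r m(\mv)\le (2\sup_n\|u_n\|_{\ell^\infty})^r\, m(\mV\setminus\mV_\varepsilon)\le (2(|u_n(\mv_*)|+CK))^r\varepsilon$, uniformly in $n$; on the finite set $\mV_\varepsilon$ pointwise convergence gives convergence of the (finite) sum. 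Hence $u_n\to u$ in $\ell^r(\mV,m)$, which is the claimed compactness.

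\textbf{Main obstacle.}
The routine-but-delicate point is handling possible disconnectedness and, relatedly, pinning down $|u(\mv_*)|$: the raw energy $Q$ only controls differences of values, so one genuinely needs either the ambient Poincaré hypothesis (as in \autoref{ass:poincare-inequality}, which is in force in the situations where this lemma is applied) or the $\ell^q$-part of the Sobolev norm to anchor the constants; with $m(\mV)<\infty$ one can for instance absorb $|u(\mv_*)|$ into $\|u\|_{\ell^q}$ up to the uniform oscillation bound. Once the uniform $\ell^\infty$-bound is secured, the compactness step is a standard finite-measure tightness argument and presents no real difficulty.
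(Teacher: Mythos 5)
Your proof is correct and follows essentially the same route as the paper: the identical Hölder-along-paths oscillation estimate yields $\wqpneubc(\mV,m)\hookrightarrow\ell^\infty(\mV)$, and you then unpack (via diagonal extraction plus tightness) the compactness of $\ell^\infty(\mV)\hookrightarrow\ell^r(\mV,m)$ for finite measure, which the paper simply cites; your explicit anchoring of $|u(\mv_*)|$ by the $\ell^q$-part of the canonical norm is a welcome clarification of a point the paper leaves implicit. One tiny remark: finiteness of $\Diam_p(\mG^{-1})$ already forces connectedness (the infimum over the empty set of paths is $+\infty$), so the parenthetical about finitely many components is unnecessary.
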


\begin{proof}
To begin with, we mention that $\ell^\infty(\mV)$ is compactly embedded in $\ell^r(\mV,m)$ whenever $m(\mV)<\infty$: this has been observed in~\cite[Lemma~2.2]{HofKenMug22}. 

The missing step is achieved adapting the proof of~\cite[Theorem~4.3]{GeoHaeKel15} to the case $p\ne 2$: we present it for the sake of self-containedness. Take $f\in \wqpneubc(\mV,m)$: it can be proved like in~\cite[Lemma~3.4]{GeoHaeKel15} that for any $\mv\in \mV$, any reference point $\mv_0\in \mV$ and any path $(\mv_0,\ldots,\mv_n=\mv)$ connecting them, one has
\[
\begin{split}
|f(\mv)-f(\mv_0)|&\le \left( \sum_{i=1}^n b(\mv_i,\mv_{i-1})|f(\mv_i)-f(\mv_{i-1})|^p\right)^\frac{1}{p}
\left(\sum_{i=1}^n b(\mv_i,\mv_{i-1})^{ \frac{1}{1-p}} \right)^\frac{p-1}{p}\\
&\le \|f\|_{\wqpneubc}\Diam_{p}(\mG^{-1})^\frac{1}{p}.
\end{split}
\]
It immediately follows from the triangle inequality that $f\in \ell^\infty(\mV)$.
\end{proof}

\begin{rem}
(i) Graphs that satisfy $\Dneubc\subset \ell^\infty(\mV)$ are said to be \emph{canonically $p$-compactifiable}; this notion has been introduced in~\cite{GeoHaeKel15} for $p=2$, and has been actively studied ever since: several examples of such graphs are known (for $p=2$ only; however, observe that if $b\in \ell^1(\mV\times \mV)$ and $c\in \ell^1(\mV)$, then $\mathcal D^{p'}_{b,c}\subset \Dneubc$ for all $p'>p$). If $\mG$ is canonically $p$-compactifiable and the measure space $(\mV,m)$ is finite, then $ \wqpneubc(\mV,m) \subset \ell^\infty(\mV)\hookrightarrow \ell^p(\mV,m)$, and hence in particular the Poincaré inequality \eqref{eq:poinc-neu}, hold for any canonically $p$-compactifiable graph.

(ii) $\ell^p-\ell^p$-Poincaré inequalities for Dirichlet boundary conditions have been proved in~\cite[Section~1.6.2]{MazSolTol23} for $c\equiv 0$ and a specific choice of probability measure $m$.

(iii) In the specific case of infinite $\mV$ we may introduce further discrete Sobolev spaces by
\[
{\overset{\circ\circ}{w}}^{1,p,p}_{b,c}(\mV,m) :=\overline{c_{00}(\mV)}^{\|\cdot\|_{\wppneubc}}\quad\hbox{and}\quad 
{\overset{\circ\circ}{w}}^{1,p,p}_{b,c;\mV_0}(\mV,m):=\overline{c_{00}(\mV \setminus \mV_0)}^{\|\cdot\|_{\wppneubc}}
\]
and again, by \cite[Corollary~3.10]{Mug14}, these are separable, reflexive Banach spaces, and indeed lattice ideals of the corresponding spaces introduced above, see \cite[Lemma~3.11]{Mug14}.

In the case of $b\in \ell^1(\mV\times \mV)$, $\ell^1-\ell^2$-Poincaré-inequalities for ${\overset{\circ\circ}{w}}^{1,p,p}_{b,c}(\mV,m)$ can be derived from the results in~\cite[Chapter I, Section 4.A]{Woe00}.
\end{rem}

\section{The $p$-torsion function}\label{sec:torsionfunctionandrigidity}

In this section we are going to look for a solution of
\begin{equation}\label{eq:discr-ellipt-1-p}
\Lneubc u = \mathbf{1}\quad \text{in }\mV,
\end{equation}
or else of its version
\begin{equation}\label{eq:discr-ellipt-1-p-dir}
\Ldirbc u = \mathbf{1}\quad \text{in }\mV\setminus\mV_0,
\end{equation}
 with Dirichlet conditions on some $\mV_0\subset\mV$:
these equations are purely symbolic, and we introduce a convenient notion of solutions as follows.

\begin{defi}\label{defi:torsion}
Let $\mG=\Vmbc$ be a graph, and let $q\in [1,\infty)$. A \emph{weak $\ell^q$-solution} of~\eqref{eq:discr-ellipt-1-p} (resp., of \eqref{eq:discr-ellipt-1-p-dir}) is a function $u \in {\wqpneubc(\mV,m)}$ (resp., $u \in { \wqpdirbc(\mV,m)}$) such that
\begin{equation}\label{eq:weakneu}
\begin{split}
&\frac12 \sum_{\mv,\mw \in \mV} b(\mv,\mw)\vert \nabla_{\mv,\mw}u \vert^{p-2}\nabla_{\mv,\mw}u\nabla_{\mv,\mw}h\\
&\qquad + \sum_{\mv \in \mV} c(\mv)\vert u(\mv) \vert^{p-2}u(\mv)h(\mv)=\sum_{\mv\in \mV}h(\mv)m(\mv)\quad \hbox{for all }h\in \wqpneubc(\mV,m) \cap \ell^1(\mV,m)
\end{split}
\end{equation}
(resp., such that
\begin{equation}\label{eq:weakdir}
\begin{split}
&\frac12 \sum_{\mv,\mw \in \mV} b(\mv,\mw)\vert \nabla_{\mv,\mw}u \vert^{p-2}\nabla_{\mv,\mw}u\nabla_{\mv,\mw}h\\
&\qquad+ \sum_{\mv \in \mV \setminus \mV_0} c(\mv)\vert u(\mv) \vert^{p-2}u(\mv)h(\mv)=\sum_{\mv\in \mV\setminus  \mV_0}h(\mv)m(\mv) \quad\hbox{for all }h\in \wqpdirbc(\mV,m) \cap \ell^1(\mV,m)\ ).
\end{split}
\end{equation}

A \emph{$p$-torsion function} 
for $\mG$  (resp., for $\mG$ with Dirichlet conditions at $\mV_0 \neq \emptyset$) is any weak $\ell^q$-solution  $\tauneubc:=u$ of~\eqref{eq:discr-ellipt-1-p} (resp., any weak $\ell^q$-solution  $\taudirbc:=u$ of~\eqref{eq:discr-ellipt-1-p-dir}), for some $q\in [1,\infty)$.

Finally, $\mG$ is called \emph{(uniquely) $q-p$-torsional-admissible} if there exists a (unique) weak $\ell^q$-solution of \eqref{eq:discr-ellipt-1-p} and, likewise, \emph{(uniquely) $q-p$-torsional-admissible} with Dirichlet conditions at $\mV_0 \neq \emptyset$ if there exists a (unique) weak solution of
\eqref{eq:discr-ellipt-1-p-dir}.\end{defi}

\begin{rem}\label{rem:pointw}
Observe that weak solutions are also pointwise solutions, as one sees testing both sides of \eqref{eq:discr-ellipt-1-p} or \eqref{eq:discr-ellipt-1-p-dir} against the Dirac delta $h:=\delta_\mv$ for any point $\mv$: indeed, pointwise solutions of elliptic equations associated with the $p$-Schrödinger operators on graphs have been studied in several articles, up to the recent investigations in~\cite{Fis23}. Moreover, any weak $\ell^q$-solution $u$ of \eqref{eq:discr-ellipt-1-p} (resp., \eqref{eq:discr-ellipt-1-p-dir})  also satisfies the balance condition
\[
\sum_{\mv \in \mV} c(\mv) \vert u(\mv) \vert^{p-2}u(\mv) = \sum_{\mv \in \mV} m(\mv)
\]
whenever $m(\mV) < \infty$; this can be seen letting $h := \mathbf{1} \in \wqpneubc(\mV,m)$ (resp., $h := \mathbf{1}_{\mV\setminus \mV_0} \in \wqpdirbc(\mV,m)$).
\end{rem}

\begin{rem}
Let $\mG=\Vmbc$ be a combinatorially locally finite graph. Mimicking~\cite{BraRuf17} (Euclidean domains, general $p$) and~\cite{AdrSet23} (combinatorially locally finite graphs, $p=2$), one may introduce a very weak notion of $p$-torsion function for $\mG$ with Dirichlet conditions at $\mV_0$ as follows: 
Consider a growing family of finite subgraphs $\mG_n$ induced by vertex sets $(\mV_n)_{n\in \N}$ that exhausts  $\mG$, see~\cite[Definition~3.3]{Mug13}: because $\mG$ is combinatorially locally finite, the graphs $\mG_n$ are finite, \autoref{ass:poincare-inequality} (by \autoref{cor:compact-admiss}) and \autoref{assum:finite-meas} are satisfied and we deduce from that \autoref{cor:wellp} that there exists a sequence of corresponding $p$-torsion functions $\tau^{\mG_n}_p$ with Dirichlet conditions outside $\mV_n$, i.e., of (unique) solutions to
\[
\mathcal L^{\mG_n;\mV\setminus \mV_n}_p u=\mathbf{1}_{\mV_n}.
\]
Clearly, $\mathcal L^{\mG_n;\mV\setminus \mV_n}_p u=\mathcal L^{\mG;\mV\setminus \mV_n}_p u$ whenever $u$ vanishes outside $\mV_n$: because
$(\mathbf{1}_{\mV_n})_{n\in \N}$ is pointwise monotonically increasing, and by the comparison principle in~\cite[Theorem~1]{ParChu11}  we deduce that $(\tau^{\mG_n}_p)_{n\in \N}$ is a sequence of positive functions that is pointwise monotonically increasing. We may, thus, consider the function $\tilde{\tau}^\mG_p$ defined by $\tilde\tau_p^\mG(\mv):=\lim_{n\to\infty}\tau^{\mG_n}_p(\mv)$, $\mv \in \mV$. At this point, one can still observe that while in general $\tilde\tau_p^\mG$ may be identically $+\infty$, the local Harnack inequality in~\cite[Lemma~4.4]{Fis23} guarantees that the function is everywhere finite if it is finite at least at one vertex, provided $\mG$ is connected.	
\end{rem}

We will in the present article follow a different approach, better suited to our general setting, as described in the following. To this aim we will sometimes restrict to a classic ensemble of graphs.


\begin{assum}\label{assum:finite-meas}
The underlying discrete measure space $(\mV,m)$ is finite, i.e., $m(\mV) < \infty$.
\end{assum}

Clearly, if $q=1$, every $p$-torsion function belongs, in particular, to $\ell^1(\mV,m)$, regardless of \autoref{assum:finite-meas}. Observe that, under \autoref{assum:finite-meas}, $\wqpneubc(\mV,m)\hookrightarrow \ell^p(\mV,m)\hookrightarrow \ell^1(\mV,m)$ (and, \textit{a fortiori}, $\wqpdirbc(\mV,m)\hookrightarrow \ell^p(\mV,m)\hookrightarrow \ell^1(\mV,m)$), hence the conditions in~\eqref{eq:weakneu} and~\eqref{eq:weakdir} simplify a bit. In particular, we observe the following.

\begin{lemma}
Let $\mG=\Vmbc$ be a graph.
Under~\autoref{assum:finite-meas}, any $p$-torsion function for $\mG$,  or for $\mG$ with Dirichlet conditions at $\mV_0$, is of class $\ell^1(\mV,m)$.
\end{lemma}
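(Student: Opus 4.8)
The plan is to recall that, by \autoref{defi:torsion}, a $p$-torsion function is by definition a weak $\ell^q$-solution for \emph{some} $q \in [1,\infty)$, so the only case that requires an argument is $q \neq 1$; when $q = 1$ there is nothing to prove since membership in $\ell^1(\mV,m)$ is built into the definition (and this is already noted in the text immediately preceding the lemma). So fix $q \in (1,\infty)$ and let $u \in \wqpdirbc(\mV,m)$ (or $\wqpneubc(\mV,m)$) be a $p$-torsion function.

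First I would invoke the embedding chain stated just above the lemma: under \autoref{assum:finite-meas} one has $\wqpneubc(\mV,m) \hookrightarrow \ell^p(\mV,m) \hookrightarrow \ell^1(\mV,m)$ (and likewise for the Dirichlet space). The first of these embeddings is part of \autoref{prop:intro-plapl} — the Sobolev space is continuously embedded in $\ell^q(\mV,m)$ — combined with the fact that $u \in \Dneubc \subset \ell^p(\mV,m)$; more directly, once we know $u \in \ell^q(\mV,m)$ with $q$ finite and $m(\mV) < \infty$, Hölder's inequality with the conjugate pair $(\frac{q}{q-1}, q)$ applied to $|u| \cdot \mathbf{1}$ gives $\|u\|_{\ell^1(\mV,m)} = \sum_{\mv} |u(\mv)| m(\mv) \leq \|u\|_{\ell^q(\mV,m)} \, m(\mV)^{1 - 1/q} < \infty$. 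Either route shows $u \in \ell^1(\mV,m)$, which is the assertion.

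Since this is entirely a matter of assembling definitions and a one-line Hölder estimate, there is no real obstacle; the only point worth stating carefully is the case distinction on $q$, and the observation that the finiteness of $m(\mV)$ is exactly what makes $\ell^q \hookrightarrow \ell^1$ available for all finite $q$ at once. I would therefore write the proof in two or three sentences along these lines and not belabour it.

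\begin{proof}
If the $p$-torsion function is a weak $\ell^1$-solution, it belongs to $\ell^1(\mV,m)$ by definition. Otherwise it is a weak $\ell^q$-solution for some $q \in (1,\infty)$, hence an element of $\wqpneubc(\mV,m) \subset \ell^q(\mV,m)$ (resp.\ of $\wqpdirbc(\mV,m) \subset \ell^q(\mV,m)$). Under \autoref{assum:finite-meas} we have $m(\mV) < \infty$, so by Hölder's inequality
\[
\sum_{\mv \in \mV} |u(\mv)|\, m(\mv) \le \Big( \sum_{\mv \in \mV} |u(\mv)|^q\, m(\mv) \Big)^{\frac1q} m(\mV)^{1 - \frac1q} < \infty,
\]
i.e.\ $u \in \ell^1(\mV,m)$.
\end{proof}
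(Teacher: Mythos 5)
Your argument is correct and matches the paper's reasoning: the paper disposes of this lemma via the remark immediately preceding it (the case $q=1$ is trivial by definition, and for $q>1$ the finite measure gives the embedding $\ell^q(\mV,m)\hookrightarrow\ell^1(\mV,m)$), which is exactly your case split plus the Hölder estimate. One minor caveat: the parenthetical claim that $u\in\Dneubc\subset\ell^p(\mV,m)$ is not justified in general, but your "more direct" route via Hölder is the one used in the formal proof and is all that is needed.
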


Plugging $h=u$ into~\eqref{eq:weakneu} or in~\eqref{eq:weakdir}
leads us to considering the functional 
 \begin{equation}\label{eq:fneu-def}
\Fneubc (u):=
\Qneubc(u) -\sum_{\mv\in\mV}u(\mv)m(\mv),\quad u\in \wqpneubc(\mV,m),
 \end{equation}
or else its restriction to $\wqpdirbc(\mV,m)$. One immediately sees that the map $\Qneubc:\wqpneubc(\mV,m) \to \R$ and hence $\Qdirbc:\wqpdirbc(\mV,m) \to \R$ are differentiable and, in particular,
\begin{align}\label{eq:q'pq}
\begin{aligned}
&(\Qneubc)'(u)(h)= \frac12 \sum_{\mv,\mw \in \mV} b(\mv,\mw)\vert \nabla_{\mv,\mw}u \vert^{p-2}\nabla_{\mv,\mw}u\nabla_{\mv,\mw}h\\
&\qquad\quad\quad + \sum_{\mv \in \mV} c(\mv)\vert u(\mv) \vert^{p-2}u(\mv)h(\mv) \quad\hbox{for all } u,h \in \wqpneubc(\mV,m)
\end{aligned}
\end{align}
and
\begin{align}\label{eq:q'pq-dir}
\begin{aligned}
&(\Qdirbc)'(u)(h)= \frac12 \sum_{\mv,\mw \in \mV} b(\mv,\mw)\vert \nabla_{\mv,\mw}u \vert^{p-2}\nabla_{\mv,\mw}u\nabla_{\mv,\mw}h\\
&\qquad\quad\quad + \sum_{\mv \in \mV \setminus \mV_0} c(\mv)\vert u(\mv) \vert^{p-2}u(\mv)h(\mv) \quad\hbox{for all } u,h \in \wqpdirbc(\mV,m).
\end{aligned}
\end{align}
This shows that a (unique) minimizer of $\Fneubc$ in $\wqpneubc(\mV,m)$ or of its restriction to $\wqpdirbc(\mV,m)$ yields a (unique) $p$-torsion function in the sense of \autoref{defi:torsion}, and vice versa. We state sufficient conditions for the existence of such a (unique) minimizer, next.

\begin{lemma}\label{lem:fp-convex}
Let $\mG=\Vmbc$ be a graph and let \autoref{ass:poincare-inequality} be satisfied for $q \in [1,\infty)$.
Then both $\Fneubc$ and its restriction to $\wqpdirbc$ are differentiable on $\wqpneubc(\mV,m)$ and $\wqpdirbc(\mV,m)$, respectively (and in particular lower semicontinuous), strictly convex. If, additionally,  $q \in (1,\infty)$ and \autoref{assum:finite-meas} is satisfied; or if $q=1$, then both $\Fneubc$ and its restriction to $\wqpdirbc$ are coercive.
\end{lemma}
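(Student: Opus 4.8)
The plan is to treat the three assertions in turn, using throughout that $\Fneubc$ (and, likewise, its restriction to $\wqpdirbc(\mV,m)$) differs from the nonlinear energy $\Qneubc$ only by an affine term. \emph{Differentiability and lower semicontinuity.} Differentiability of $\Qneubc$ and $\Qdirbc$, with derivatives as in~\eqref{eq:q'pq} and~\eqref{eq:q'pq-dir}, is obtained by differentiating termwise: since $p>1$ the scalar map $t\mapsto|t|^p$ is $C^1$ with derivative $t\mapsto p|t|^{p-2}t$, so the Gâteaux derivative follows upon exchanging sum and differentiation — legitimate because the relevant series converges absolutely by Hölder's inequality (using $u,h\in\Dneubc$) — and continuity of $t\mapsto p|t|^{p-2}t$ upgrades it to a Fréchet derivative. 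It then remains to observe that the linear term $u\mapsto\sum_{\mv\in\mV}u(\mv)m(\mv)$ is a bounded linear functional wherever it is defined on the space at hand (namely for $q=1$, or, more generally, whenever~\autoref{assum:finite-meas} holds): indeed $\bigl|\sum_{\mv\in\mV}u(\mv)m(\mv)\bigr|\le\|u\|_{\ell^1(\mV,m)}$, and $\wqpneubc(\mV,m)\hookrightarrow\ell^1(\mV,m)$ continuously — directly for $q=1$, and via $\wqpneubc(\mV,m)\hookrightarrow\ell^q(\mV,m)\hookrightarrow\ell^1(\mV,m)$ (the last embedding since $m(\mV)<\infty$) for $q>1$. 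Hence $\Fneubc$ is differentiable on $\wqpneubc(\mV,m)$, and so is its restriction to the closed subspace $\wqpdirbc(\mV,m)$; being differentiable, both are continuous, in particular lower semicontinuous.

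\emph{Strict convexity.} As $p>1$, the map $t\mapsto|t|^p$ is strictly convex; precomposing it with the linear maps $u\mapsto\nabla_{\mv,\mw}u$ and $u\mapsto u(\mv)$ and summing against the nonnegative weights $b(\mv,\mw)$ and $c(\mv)$ shows $\Qneubc$ to be convex, and adding the affine term leaves $\Fneubc$ convex. For strictness, I would pick $u\neq w$ in $\wqpneubc(\mV,m)$ and $\lambda\in(0,1)$ with $\Fneubc(\lambda u+(1-\lambda)w)=\lambda\Fneubc(u)+(1-\lambda)\Fneubc(w)$; since the linear part is affine, the same identity holds with $\Qneubc$ in place of $\Fneubc$, so equality must hold in the convexity estimate for \emph{every} summand. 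By strict convexity of $t\mapsto|t|^p$ this forces $\nabla_{\mv,\mw}u=\nabla_{\mv,\mw}w$ whenever $b(\mv,\mw)>0$ and $u(\mv)=w(\mv)$ whenever $c(\mv)>0$, so $f:=u-w$ satisfies $\Qneubc(f)=0$; the Poincaré inequality~\eqref{eq:poinc-neu} then gives $\|f\|_{\ell^q(\mV,m)}\le M\,\Qneubc(f)^{1/p}=0$, i.e.\ $f=0$, contradicting $u\neq w$. (Equivalently, $\Qneubc(f)=0\Rightarrow f=0$ because $u\mapsto\Qneubc(u)^{1/p}$ is a norm, cf.~\autoref{prop:intro-plapl}.) The same argument, now with~\eqref{eq:poinc-neu-dir} and noting that $f=u-w\in\wqpdirbc(\mV,m)$ vanishes also on $\mV_0$, gives strict convexity of the restriction to $\wqpdirbc(\mV,m)$.

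\emph{Coercivity.} By~\eqref{eq:fromnowequiv}, size in $\wqpneubc(\mV,m)$ is measured by $\|u\|_{\wqpneubc}=\Qneubc(u)^{1/p}$. I would bound the linear term by the energy: for $q=1$, $\bigl|\sum_{\mv\in\mV}u(\mv)m(\mv)\bigr|\le\|u\|_{\ell^1(\mV,m)}\le M\,\Qneubc(u)^{1/p}$ by~\eqref{eq:poinc-neu}; for $q\in(1,\infty)$ with $m(\mV)<\infty$, additionally using $\|u\|_{\ell^1(\mV,m)}\le m(\mV)^{1-1/q}\|u\|_{\ell^q(\mV,m)}$ gives $\bigl|\sum_{\mv\in\mV}u(\mv)m(\mv)\bigr|\le C\,\Qneubc(u)^{1/p}$ with $C:=m(\mV)^{1-1/q}M$. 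Therefore
\[
\Fneubc(u)=\Qneubc(u)-\sum_{\mv\in\mV}u(\mv)m(\mv)\ \ge\ \|u\|_{\wqpneubc}^{p}-C\,\|u\|_{\wqpneubc},
\]
and since $p>1$ the right-hand side tends to $+\infty$ as $\|u\|_{\wqpneubc}\to\infty$; this is coercivity. For the restriction to $\wqpdirbc(\mV,m)$ one argues identically, using $\sum_{\mv\in\mV}u(\mv)m(\mv)=\sum_{\mv\in\mV\setminus\mV_0}u(\mv)m(\mv)$ for $u\in\wqpdirbc(\mV,m)$ together with~\eqref{eq:poinc-neu-dir}.

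\emph{Anticipated main obstacle.} The differentiability (termwise, from the $C^1$-regularity of $t\mapsto|t|^p$, the Gâteaux-to-Fréchet upgrade resting only on continuity of $t\mapsto p|t|^{p-2}t$) and the plain convexity are routine. The genuinely delicate point is the \emph{strict} convexity: one must rule out that two distinct functions with identical edge increments $\nabla_{\mv,\mw}u=\nabla_{\mv,\mw}w$ over all edges with $b(\mv,\mw)>0$, and identical values on $\{c>0\}$, can both be minimal along a segment. This is exactly where the Poincaré inequality of~\autoref{ass:poincare-inequality} — equivalently, the fact that $u\mapsto\Qneubc(u)^{1/p}$ is a bona fide norm and not merely a seminorm (cf.~\autoref{prop:intro-plapl}) — is indispensable; without it, constant functions (in the case $c\equiv0$ and $\mV_0=\emptyset$) already destroy strict convexity.
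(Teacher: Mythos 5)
Your proof is correct, and on the one genuinely delicate point — strict convexity — it takes a different (and arguably more robust) route than the paper. The paper splits $\Fneubc$ as $\mathfrak F^{(1)}_{p,b}+\mathfrak F^{(2)}_{p,c}$, declares the gradient part convex and the potential-plus-linear part $\mathfrak F^{(2)}_{p,c}(u)=\sum_{\mv}\bigl(\tfrac{c(\mv)}{p}|u(\mv)|^p-u(\mv)m(\mv)\bigr)$ strictly convex because $\sup_\mv c(\mv)>0$; strictly speaking that summand is only strictly convex in the coordinates where $c(\mv)>0$ and affine in the others, so the decomposition alone does not close the argument (and it says nothing in the Dirichlet case with $c\equiv 0$, which the paper dismisses as ``analogous''). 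You instead analyze the equality case in the convexity inequality summand by summand: equality forces $\nabla_{\mv,\mw}u=\nabla_{\mv,\mw}w$ wherever $b(\mv,\mw)>0$ and $u(\mv)=w(\mv)$ wherever $c(\mv)>0$, hence $\Qneubc(u-w)=0$, and then \autoref{ass:poincare-inequality} (equivalently, the norm property of $\Qneubc(\cdot)^{1/p}$ from \autoref{prop:intro-plapl}) kills $u-w$. This treats the Neumann-type and Dirichlet cases uniformly and makes explicit exactly where the Poincaré inequality enters, which the paper's decomposition leaves implicit. Your coercivity argument (a direct lower bound $\Fneubc(u)\ge \|u\|_{\wqpneubc}^p-C\|u\|_{\wqpneubc}$ via the Poincaré inequality and, for $q>1$, H\"older with $m(\mV)<\infty$) is equivalent to the paper's boundedness of sublevel sets via Young's inequality. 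The differentiability part matches the paper's.
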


\begin{proof}
We will only consider the case $\sup_{\mv\in\mV}c(\mv)>0$, the Dirichlet case being completely analogous.

As $\wqpneubc(\mV,m) \ni u \mapsto \Qneubc(u)\in \R$ is differentiable it follows immediately that $\Fneubc:\wqpneubc(\mV,m)\to \R $ is differentiable with
 \begin{equation}\label{eq:discr-ellipt-1-p-weak}
\big(\Fneubc\big)'(u)(h)=(\Qneubc)'(u)(h)- \sum_{\mv\in\mV}h(\mv)m(\mv)\quad\hbox{for all }u,h\in \wqpneubc (\mV,m).
\end{equation}
To check strict convexity, we write
 \[
 \begin{split}
 \mathfrak F^{(1)}_{p,b} (u)&:=
 \sum_{\mv,\mw \in \mV} \frac{b(\mv,\mw)}{2p}\vert \nabla_{\mv,\mw}u \vert^{p},\\
  \mathfrak F^{(2)}_{p,c} (u)&:= \sum_{\mv \in \mV} \left(\frac{c(\mv)}{p}\vert u(\mv) \vert^{p}-u(\mv)m(\mv)\right),
  \end{split}
 \]
and observe that $\Fneubc $ is strictly convex as it is the sum of a convex mapping $ \mathfrak F^{(1)}_{p,b}$ and a strict(!) convex mapping $\mathfrak F^{(2)}_{p,c}$: indeed, convexity of the former follows from the convexity of $|\cdot|^p$, 
whereas strict convexity of the latter follows from convexity of $\frac{c}{p}|\cdot|^p-m( \cdot )$ and the fact that $\sup_{\mv \in \mV} c(\mv) > 0$ (note that (not necessarily strict) convexity remains true for any $c\ge 0$ and any $m\in \R$).

To show the coercivity we have to show that, for every $\beta \in \mathbb{R}$, the sub-level set
\[
\mathcal U_{\beta} = \{ u \in \wqpneubc(\mV,m) \: : \: \Fneubc(u) \leq \beta \}
\]
is bounded in $\wqpneubc(\mV,m)$ with respect to the
norm $u \mapsto \Qneubc(u)^\frac{1}{p}$ and we first consider the case where $q=1$. To this aim, fix $\beta\in\mathbb{R}$ and let $u\in\mathcal U_{\beta}$, i.e., $\Qneubc(u) - \sum\limits_{\mv \in \mV} u(\mv)m(\mv) \leq \beta$ and therefore by~\eqref{eq:poinc-neu}
\begin{align}\label{main-b}
\begin{aligned}
\Qneubc(u) &\leq \beta +  \sum\limits_{\mv \in \mV} u(\mv)m(\mv) \leq \beta + \Vert u \Vert_{\ell^1(\mV,m)} \\&\leq \beta + M\Qneubc(u)^\frac{1}{p} \leq \beta + \frac{M^{p'}}{p'} + \frac{\Qneubc(u)}{p},
\end{aligned}
\end{align}
by Young's inequality. Thus, there exists a constant $C := C(\beta,M,p)$ depending on $\beta,M,p$ such that  
\[
\Qneubc(u) \leq C(\beta,M,p) \qquad \text{for all $u \in U_\beta$,}
\]
implying that the level set $\mathcal{U}_\beta$ is indeed bounded.

If $q \in (1,\infty)$ and in addition \autoref{assum:finite-meas} holds, then by Hölder's inequality that for any $u \in \wqpneubc(\mV,m)$
\begin{align}
\begin{aligned}
\Qneubc(u) &\leq \beta + \Vert u \Vert_{\ell^1(\mV,m)} \leq \beta + m(\mV)^\frac{1}{q'}\Vert u \Vert_{\ell^q(\mV,m)} \leq \beta + Mm(\mV)^\frac{1}{q'}\Qneubc(u)^\frac{1}{p}
\end{aligned}
\end{align}
with $q':=\frac{q}{q-1}$. Again, Young's inequality then implies the coercivity.
\end{proof}

On general infinite graphs, $p$-torsion functions need not exist, or -- if they do exist -- need not enjoy good summability properties: we are now finally in the position to deduce appropriate $q-p$-torsional-admissibility results. Let us begin with the case $q \in (1,\infty)$.
\begin{theo}\label{cor:wellp}
Let $q \in (1,\infty)$. Under \autoref{ass:poincare-inequality} and \autoref{assum:finite-meas}, any graph $\mG = \Vmbc$ is uniquely $q-p$-torsional-admissible (resp., uniquely $q-p$-torsional-admissible with Dirichlet conditions at $\mV_0 \neq \emptyset$). In particular, the corresponding $p$-torsion function is the minimizer of $\Fneubc$ on $\wqpneubc$ (resp., on $\wqpdirbc$).
\end{theo}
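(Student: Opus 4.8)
The plan is to apply the direct method of the calculus of variations to the functional $\Fneubc$ of \eqref{eq:fneu-def} (resp.\ to its restriction to $\wqpdirbc(\mV,m)$), exploiting that all the analytic ingredients are already in place. Indeed, by \autoref{prop:intro-plapl}, since $q \in (1,\infty)$ the space $\wqpneubc(\mV,m)$ (resp.\ $\wqpdirbc(\mV,m)$), normed by $u\mapsto \Qneubc(u)^{1/p}$, is a reflexive Banach space; and by \autoref{lem:fp-convex}, $\Fneubc$ is differentiable, strictly convex, lower semicontinuous, and — thanks to $q>1$ together with \autoref{assum:finite-meas} — coercive, in the sense that its sublevel sets $\mathcal U_\beta$ are bounded.

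First I would upgrade lower semicontinuity to weak sequential lower semicontinuity: a convex, (strongly) lower semicontinuous functional has convex closed sublevel sets, which are therefore weakly closed, so $\Fneubc$ is weakly sequentially lower semicontinuous. Then I take a minimizing sequence $(u_n)$; by coercivity it eventually lies in a fixed sublevel set and is thus bounded, so by reflexivity a subsequence converges weakly to some $u_\star$. Weak lower semicontinuity yields $\Fneubc(u_\star)\le\liminf_n\Fneubc(u_n)=\inf\Fneubc$, so $u_\star$ is a minimizer (and, in particular, $\Fneubc$ is bounded below). Uniqueness of the minimizer follows from strict convexity via the standard midpoint argument.

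It then remains to match the minimizer with the notion of $p$-torsion function from \autoref{defi:torsion}. Since $u_\star$ minimizes the differentiable functional $\Fneubc$, one has $(\Fneubc)'(u_\star)(h)=0$ for every $h$; by \eqref{eq:discr-ellipt-1-p-weak} and \eqref{eq:q'pq} this is exactly \eqref{eq:weakneu}, the test-function class there being all of $\wqpneubc(\mV,m)$ because $\wqpneubc(\mV,m)\hookrightarrow\ell^1(\mV,m)$ under \autoref{assum:finite-meas}. Conversely, any weak $\ell^q$-solution $u$ satisfies $(\Fneubc)'(u)=0$, hence by convexity $\Fneubc(v)\ge\Fneubc(u)+(\Fneubc)'(u)(v-u)=\Fneubc(u)$ for all $v$, so $u$ is a global minimizer and therefore equals $u_\star$; this yields existence and uniqueness of the $p$-torsion function simultaneously. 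The Dirichlet case is identical, replacing $\wqpneubc(\mV,m)$ by $\wqpdirbc(\mV,m)$ and \eqref{eq:weakneu}, \eqref{eq:q'pq} by \eqref{eq:weakdir}, \eqref{eq:q'pq-dir}.

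The main point is less an obstacle than an observation: all the real work sits in the earlier results, and in particular coercivity (\autoref{lem:fp-convex}) is where \autoref{assum:finite-meas} and $q>1$ are genuinely used — for $q=1$ the reflexivity of $\wqpneubc(\mV,m)$ fails and this argument breaks down, which is precisely why the borderline case $q=1$ is handled separately (in \autoref{thm:well-p-l1}) by a different, more delicate method. The only mild bookkeeping is checking that the $\ell^1$-restriction on test functions in \eqref{eq:weakneu}/\eqref{eq:weakdir} is vacuous once $m(\mV)<\infty$.
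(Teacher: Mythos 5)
Your proposal is correct and follows essentially the same route as the paper: the paper likewise reduces the statement to the existence and uniqueness of a minimizer of $\Fneubc$ on the reflexive Banach space $\wqpneubc(\mV,m)$ (resp.\ $\wqpdirbc(\mV,m)$), invoking \autoref{prop:intro-plapl} and \autoref{lem:fp-convex} and then citing the standard convex-analysis theorem whose proof (direct method, weak lower semicontinuity via convexity, strict convexity for uniqueness) you have simply written out. Your identification of the minimizer with the weak $\ell^q$-solution via \eqref{eq:q'pq} and the convexity inequality is exactly the equivalence the paper records just before the theorem statement.
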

 In the case $q=p$, the same result has been obtained -- with a slightly different proof -- in \cite{MazTol23} for the normalized $p$-Laplacian with Dirichlet conditions on some $\mV_0\ne\emptyset$ (so, $c\equiv 0$ and $m^{-1}=\sum\limits_{\mw\in \mV}b(\cdot,\mw)$).

\begin{proof}
In order to prove that~\eqref{eq:discr-ellipt-1-p} has a unique weak solution, it suffices to show that the differentiable functional $\Fneubc $ has a unique minimum $\tau$. But this is an immediate consequence of standard results in convex analysis (cf.~\cite[Theorem~E.38 and Remark E.39]{ChiFas10} or~\cite[Theorem~5.1.1 and Corollary~5.1.1]{Kes04}), since by \autoref{prop:intro-plapl} and \autoref{lem:fp-convex} $\Fneubc$ is a lower semicontinuous, strictly convex coercive functional defined on the reflexive Banach space $\wqpneubc(\mV,m)$.

A natural variation of the above arguments can be used to complete the proof in the Dirichlet case, too: this finishes the proof.
\end{proof}

The upcoming condition will be crucial for us to guarantee the unique $p$-torsinal-adimissibility of a given graph in the remaining case where $q=1$. This will also be the central assumption in \autoref{sec:torsionalrig} and \autoref{sec:bounds} below.
 
\begin{assum}\label{ass:compact-embedding-l1}
Either $\sup_{\mv \in \mV} c(\mv) > 0$ and $\wneubc(\mV,m)$ is compactly embedded in $\ell^1(\mV,m)$; or $\mV_0 \neq \emptyset$ and $\wdirbc(\mV,m)$ is compactly embedded in $\ell^1(\mV,m)$.
\end{assum}
Note that by \autoref{cor:compact-admiss}, \autoref{ass:compact-embedding-l1} implies the validity of \autoref{ass:poincare-inequality}; in particular, it is important to assume $\sup_{\mv \in \mV} c(\mv) > 0$ (resp., $\mV_0 \neq \emptyset$) to guarantee that the constant $M > 0$ obtained in the proof of \autoref{cor:compact-admiss} is not equal to $+\infty$, cf.\ \eqref{eq:poinc-neu} (resp.\ \eqref{eq:poinc-neu-dir}).

\begin{theo}\label{thm:well-p-l1}
Under \autoref{ass:compact-embedding-l1}, any graph $\mG = \Vmbc$ is uniquely $1-p$-torsional-admissible (resp., uniquely $1-p$-torsional-admissible with Dirichlet conditions at $\mV_0 \neq \emptyset$). In particular, the corresponding $p$-torsion function is the minimizer of $\Fneubc$ on $\wneubc$ (resp., on $\wdirbc$).
\end{theo}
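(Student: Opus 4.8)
The plan is to follow the same variational route as in the proof of \autoref{cor:wellp}: exhibit a unique minimizer of $\Fneubc$ on $\wneubc(\mV,m)$ (resp.\ of its restriction to $\wdirbc(\mV,m)$), and then identify it, via its Euler--Lagrange equation, with the sought $p$-torsion function. The essential difference — and the main obstacle — is that for $q=1$ the Sobolev-type space $\wneubc(\mV,m)$ is \emph{not} reflexive, so the weak-compactness argument that worked for $q\in(1,\infty)$ is unavailable; instead one must run the direct method of the calculus of variations by hand, using the compact embedding postulated in \autoref{ass:compact-embedding-l1} in place of weak sequential compactness. I would treat only the case $\sup_{\mv\in\mV}c(\mv)>0$, the Dirichlet case being entirely analogous (work with the restriction of $\Fneubc$ to $\wdirbc(\mV,m)$ and replace $\mathbf 1$ by $\mathbf 1_{\mV\setminus\mV_0}$).

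Concretely, I would first note that by \autoref{cor:compact-admiss} the standing \autoref{ass:compact-embedding-l1} implies \autoref{ass:poincare-inequality} for $q=1$, so \autoref{lem:fp-convex} applies and $\Fneubc$ is finite-valued, differentiable, strictly convex and coercive. Take a minimizing sequence $(u_n)_{n\in\N}\subset\wneubc(\mV,m)$. Since $\Fneubc$ attains some finite value, for a suitable $\beta$ the sequence lies eventually in the sublevel set $\{\Fneubc\le\beta\}$, which is bounded in $\wneubc(\mV,m)$ by the coercivity part of \autoref{lem:fp-convex}; hence $\sup_n\Qneubc(u_n)<\infty$. Now invoke the compactness of $\wneubc(\mV,m)\hookrightarrow\ell^1(\mV,m)$: after passing to a subsequence, $u_n\to u$ in $\ell^1(\mV,m)$ for some $u$, and in particular $u_n(\mv)\to u(\mv)$ for every $\mv\in\mV$ because $m(\mv)>0$.

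The next step is a lower semicontinuity argument tailored to this weak mode of convergence, which replaces the weak lower semicontinuity used in the reflexive case. Since every summand defining $\Qneubc$ is nonnegative, Fatou's lemma gives $\Qneubc(u)\le\liminf_n\Qneubc(u_n)<\infty$; together with $u\in\ell^1(\mV,m)$ this shows $u\in\wneubc(\mV,m)$. Meanwhile the linear term is genuinely continuous along the sequence, $\sum_{\mv}u_n(\mv)m(\mv)\to\sum_{\mv}u(\mv)m(\mv)$, directly by $\ell^1$-convergence. Combining the two yields $\Fneubc(u)\le\liminf_n\Fneubc(u_n)=\inf_{\wneubc}\Fneubc$, so $\Fneubc$ is bounded below and attains its infimum at $u$; strict convexity (\autoref{lem:fp-convex}) then makes the minimizer unique.

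Finally, since $\Fneubc$ is differentiable, its minimizer $u$ satisfies $(\Fneubc)'(u)(h)=0$ for all $h\in\wneubc(\mV,m)$, which by \eqref{eq:discr-ellipt-1-p-weak} and \eqref{eq:q'pq} is precisely the weak formulation \eqref{eq:weakneu} — here one uses that for $q=1$ the admissible test functions in \eqref{eq:weakneu} already exhaust $\wneubc(\mV,m)\subset\ell^1(\mV,m)$, so no separate density/truncation argument is needed. Uniqueness of the $p$-torsion function then follows because any weak $\ell^1$-solution is a critical point of the strictly convex functional $\Fneubc$, hence coincides with its unique minimizer. The Dirichlet statement is obtained by repeating the same four steps with $\wdirbc(\mV,m)$ in place of $\wneubc(\mV,m)$ and \eqref{eq:weakdir} in place of \eqref{eq:weakneu}. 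I expect the only delicate point to be making the lower semicontinuity step airtight — i.e.\ verifying that $\ell^1$-convergence plus the uniform $\Qneubc$-bound is exactly what is needed for the Fatou argument on the (infinite) edge sum — everything else being routine once non-reflexivity has been sidestepped via \autoref{ass:compact-embedding-l1}.
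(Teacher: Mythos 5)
Your proposal is correct and follows essentially the same route as the paper: the direct method with a minimizing sequence bounded by coercivity (\autoref{lem:fp-convex}), extraction of an $\ell^1$-convergent (hence pointwise convergent) subsequence via \autoref{ass:compact-embedding-l1}, Fatou's lemma for lower semicontinuity of $\Qneubc$ combined with continuity of the linear term, and strict convexity for uniqueness. The identification of the minimizer with the weak solution via the Euler--Lagrange equation is exactly the equivalence the paper records just before \autoref{lem:fp-convex}.
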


\begin{proof}
Again, we will only consider the case $\sup_{\mv \in \mV} c(\mv) > 0$, the Dirichlet case being analogous.  To this end, let $(u_n)_{n \in \mathbb{N}} \subseteq \wneubc(\mV,m)$ be a minimizing sequence for $\Fneubc$ on $\wneubc(\mV,m)$, i.e., $\mathfrak{F}_p^\mG(u_n) \rightarrow \inf_{v \in \wneubc(\mV,m)} \mathfrak{F}_p^\mG(v)$ as $n \rightarrow \infty$, which is necessarily bounded in $w^{1,p}_{b,c}(\mV;m)$ due to coercivity. Using the compactness of the embedding $\wneubc(\mV,m) \hookrightarrow \ell^1(\mV,m)$, it follows that (up to a subsequence) $u_n \rightarrow u$ in $\ell^1(\mV,m)$ for some $u \in \ell^1(\mV,m)$. In particular,
\[
\vert u_n(\mw) - u(\mw) \vert m(\mw) \leq \sum_{\mv \in \mV} \vert u_n(\mv) - u(\mv) \vert m(\mv)  \rightarrow 0 \qquad \text{as $n \rightarrow \infty$},
\]
and thus, $u_n(\mw) \rightarrow u(\mw)$ for every $\mw \in \mV$. Fatou's lemma then implies $\liminf_{n \rightarrow \infty} \Qneubc(u_n) \geq \Qneubc(u)$ and since the linear term appearing in $\Fneubc$ is even continuous w.r.t.\ $\ell^1(\mV,m)$, we deduce 
 \begin{align}\label{eq:minizing-sequence}
\inf_{v \in \wneubc(\mV,m)} \mathfrak{F}_p^\mG(v) = \liminf_{n \rightarrow \infty} \mathfrak{F}_p^\mG(u_n) \geq \mathfrak{F}_p^\mG(u) = \mathcal{Q}_p^\mG(u) - \sum_{\mv \in \mV} u(\mv)m(\mv), 
\end{align}
yielding that $u \in \wneubc(\mV,m)$ (as $u \in \ell^1(\mV,m)$), in particular, $u$ is a minimizer for $\mathfrak{F}_p^\mG$ on $\wneubc(\mV,m)$. 

Uniqueness follows according to the strict convexity of $\Fneubc$ shown in \autoref{lem:fp-convex}. This finishes the proof.
\end{proof}

We now turn to the order properties of torsion functions and present the following \textit{minimum principle}, see~\cite[Theorem~1.7]{KelLenWoj21} and~\cite[Proposition~4.1]{Amg08} for comparable results in the case of $p=2$ and finite graphs, respectively: since any minimizer $u$ of $\Fneubc$ on $\wqpneubc(\mV,m)$ (resp., $\wqpdirbc(\mV,m)$) can be replaced by $\vert u \vert$ implying $\Qneubc(\vert u \vert) \leq \Qneubc(u)$ by the inverse triangle inequality, every minimizer has to be non-negative. The following minimum principle represents a refinement of this observation.

\begin{lemma}\label{lem:torsionfunctionpositive}
Let $\mG=\Vmbc$ be a graph and let $q \in [1,\infty)$. 
Any weak $\ell^q$-solution of \eqref{eq:discr-ellipt-1-p} (resp, \eqref{eq:discr-ellipt-1-p-dir} if $\mV_0 \neq \emptyset$) is strictly positive. In particular, any $p$-torsion function $\tauneubc$ (resp., $\taudirbc$) of a graph $\mG$ (resp., of a graph $\mG$ with Dirichlet conditions at $\mV_0 \neq \emptyset$) is strictly positive.
\end{lemma}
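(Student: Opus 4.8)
The plan is to prove strict positivity of any weak $\ell^q$-solution $u$ of \eqref{eq:discr-ellipt-1-p} (the Dirichlet case being analogous, working on $\mV\setminus \mV_0$), proceeding in two stages: first nonnegativity, then the strong conclusion that $u(\mv)>0$ at \emph{every} vertex. For nonnegativity I would test the weak formulation \eqref{eq:weakneu} against $h:=u^-=\max\{-u,0\}$, which lies in $\wqpneubc(\mV,m)\cap\ell^1(\mV,m)$ by the lattice-ideal property from \autoref{prop:intro-plapl}. On the right-hand side one gets $\sum_\mv u^-(\mv)m(\mv)\ge 0$, while on the left-hand side the standard pointwise inequalities $|\nabla_{\mv,\mw}u|^{p-2}\nabla_{\mv,\mw}u\cdot\nabla_{\mv,\mw}u^-\le -|\nabla_{\mv,\mw}u^-|^p$ (valid because the map $t\mapsto |t|^{p-2}t$ is monotone) and $|u(\mv)|^{p-2}u(\mv)u^-(\mv)=-|u^-(\mv)|^p$ force $\Qneubc(u^-)\le -\tfrac12\sum u^-(\mv)m(\mv)\le 0$, hence $u^-\equiv 0$; alternatively, as the paper already notes, one may simply use that $|u|$ is also a minimizer of $\Fneubc$ and invoke strict convexity to get $u=|u|\ge 0$. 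So we may assume $u\ge 0$ throughout.

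For the strict positivity I would argue vertex by vertex using the fact — recorded in \autoref{rem:pointw} — that a weak $\ell^q$-solution is also a pointwise solution, i.e.
\[
\sum_{\mw\in\mV} b(\mv,\mw)\,|u(\mv)-u(\mw)|^{p-2}(u(\mv)-u(\mw)) + c(\mv)\,|u(\mv)|^{p-2}u(\mv) = m(\mv)>0
\]
for every $\mv\in\mV$. Suppose $u(\mv_0)=0$ for some $\mv_0$. Since $u\ge 0$, the term $c(\mv_0)|u(\mv_0)|^{p-2}u(\mv_0)$ vanishes and each summand $b(\mv_0,\mw)|{-u(\mw)}|^{p-2}({-u(\mw)})=-b(\mv_0,\mw)u(\mw)^{p-1}\le 0$, so the left-hand side is $\le 0$, contradicting $m(\mv_0)>0$. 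This already shows $u>0$ everywhere, with no connectivity hypothesis needed — the strict positivity of the measure $m$ and the right-hand side $\mathbf 1$ do all the work. (In the Dirichlet case the same computation applies verbatim for $\mv_0\in\mV\setminus\mV_0$, which is exactly where the equation holds; at vertices of $\mV_0$ the solution is of course zero by definition, so the asserted strict positivity is understood on $\mV\setminus\mV_0$.)

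Since every $p$-torsion function is by definition a weak $\ell^q$-solution (for some $q\in[1,\infty)$), the claim about $\tauneubc$ and $\taudirbc$ follows immediately. I do not anticipate a genuine obstacle here: the only point requiring a little care is the justification that $u^-$ is an admissible test function — i.e. that it lies in $\wqpneubc(\mV,m)\cap\ell^1(\mV,m)$, which is handled by the lattice-ideal structure already established in \autoref{prop:intro-plapl} — and the elementary monotonicity inequalities for $t\mapsto|t|^{p-2}t$; but even this can be bypassed entirely by the convexity argument, so the proof is essentially a one-line pointwise computation once nonnegativity is in hand.
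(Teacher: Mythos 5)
Your proof is correct and follows essentially the same route as the paper: nonnegativity is obtained by testing the weak formulation against the negative part $(-u)_+$ (the paper performs the same computation, splitting over $\mV_-:=\{u<0\}$), and strict positivity then follows by evaluating the pointwise equation at a hypothetical zero $\mv_0$, where the left-hand side is $-\sum_\mw b(\mv_0,\mw)u(\mw)^{p-1}\le 0$, contradicting the strictly positive right-hand side. The alternative nonnegativity argument via $|u|$ and strict convexity is also the one the paper itself records in the paragraph preceding the lemma, so there is nothing to object to.
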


\begin{proof}
We only show the case without Dirichlet condition as the Dirichlet case is completely analogous: Consider the set $\mV_- := \{ \mv \in \mV \: : \: \tauneubc(\mv) < 0 \}$ and let $\tauneubc \in \wqpneubc(\mV,m) \cap \ell^1(\mV,m)$ be a weak $\ell^q$-solution in the sense of \eqref{eq:discr-ellipt-1-p}. Then, testing against $h:= (-\tauneubc)_+ \in \wqpneubc(\mV,m) \cap \ell^1(\mV,m)$, where
\[
\big(-\tauneubc \big)_+(\mv) := \max\big\{ -\tauneubc(\mv),0 \big\}, \qquad \text{for $\mv \in \mV$,}
\]
it follows on the one hand that
\begin{align*}
\frac{1}{2} \sum_{\mv,\mw \in \mV} b(\mv,\mw) \vert \nabla_{\mv,\mw} \tauneubc \vert^{p-2} \nabla_{\mv,\mw} \tauneubc \nabla_{\mv,\mw} h + \sum_{\mv \in \mV} c(\mv) \vert \tauneubc(\mv) \vert^{p-2} \tauneubc(\mv) h(\mv) = \sum_{\mv \in \mV} (-\tauneubc)_+(\mv) m(\mv) \geq 0.
\end{align*}
On the other hand, one observes that
\begin{align*}
&\frac{1}{2} \sum_{\mv,\mw \in \mV} b(\mv,\mw) \vert \nabla_{\mv,\mw} \tauneubc \vert^{p-2} \nabla_{\mv,\mw} \tauneubc \nabla_{\mv,\mw} h + \sum_{\mv \in \mV} c(\mv) \vert \tauneubc(\mv) \vert^{p-2} \tauneubc(\mv) h(\mv) \\&\qquad\quad\quad = - \sum_{\stackrel{\mv \in \mV_-}{\mw \in \mV \setminus \mV_-}} b(\mv,\mw) \vert \nabla_{\mv,\mw} \tauneubc \vert^{p} - \sum_{\mv \in \mV_-} c(\mv) \vert \tauneubc(\mv) \vert^{p} \leq 0.
\end{align*}
This finally implies that $h = (-\tauneubc)_+ \equiv 0$, i.e., $\mV_- = \emptyset$ which shows positivity of $\tau^\mG_p$.

To deduce strict positivity, assume now contrarily that $\tauneubc(\mv_0)=0$ for some $\mv_0 \in \mV$. Then 
\[
1=\Lneubc \tauneubc(\mv_0)=\sum_{\mw\in\mV} b(\mv_0,\mw)(-\tauneubc(\mw))|\tauneubc(\mw)|^{p-2}\le 0,
\]
a contradiction.
\end{proof}

\section{The $p$-torsional rigidity}\label{sec:torsionalrig}
Extending the ideas in~\cite{MazTol23}, where only the case $c\equiv 0$ was considered, let us
define the $p$-torsional rigidity of a graph $\mG=\Vmbc$ variationally, in terms of the so-called \textit{Pólya quotients} 
\begin{equation}\label{eq:polya-def}
\Polneubc(u):=\frac{\|u\|^p_{\ell^1(\mV,m)}}{p\Qneubc(u)}\quad\hbox{or}\quad
\Poldirbc(u):=\frac{\|u\|^p_{\ell^1(\mV\setminus \mV_0,m)}}{p\Qdirbc(u)}:
\end{equation}
we will afterwards discuss its relation with the $p$-torsion function and from now on, we restrict onto the case where $q=1$. In particular, the $p$-torsion function for $\mG$ (resp., for $\mG$ with Dirichlet conditions at $\mV_0$) is the unique weak $\ell^1$-solution of \eqref{eq:discr-ellipt-1-p} (resp., of \eqref{eq:discr-ellipt-1-p-dir}).

\begin{defi}
Let $\mG=\Vmbc$ be a graph, and $\mV_0 \subset \mV$.
The associated \emph{$p$-torsional rigidity} $T_p(\mG)$ or $T_p(\mG;\mV_0)$
 is defined as
\begin{equation}\label{eq:tors-def-1}
T_p(\mG) := \sup_{u \in \wneubc(\mV,m)} \frac{\|u\|^p_{\ell^1(\mV,m)}}{p\Qneubc(u)} \quad\hbox{or}\quad
 T_p(\mG;\mV_0) := \sup_{u \in \wdirbc(\mV,m)} \frac{\|u\|^p_{\ell^1(\mV\setminus \mV_0,m)}}{p\Qdirbc(u)}\hbox{ if }\mV_0 \neq \emptyset,
\end{equation}
respectively.
\end{defi}

Formally, the right hand sides in~\eqref{eq:tors-def-1} may be $+\infty$ if $\wneubc(\mV,m)$ is not embedded in $\ell^1(\mV,m)$. Furthermore, the supremum over the Pólya quotients in \eqref{eq:tors-def-1} is actually a maximum whenever $\wneubc(\mV,m)$ is compactly embedded in $\ell^1(\mV,m)$, i.e., under \autoref{ass:compact-embedding-l1}.

It will be occasionally convenient to identify all elements in $\mV_0$: to this purpose, observe that if the subgraph of $\mG= \Vmbc$ induced by $\mV\setminus\mV_0$ is connected, so is the subgraph of $\mG' = (\mV',m',b',c')$ induced by $\mV'\setminus\mV'_0$, where
\[
\mV':=(\mV\setminus \mV_0)\sqcup\mV'_0
\]
for a singleton $\mV'_0:=\{\mv_0'\}$, with
\[
b'(\mv,\mw)
:=\begin{cases}
b(\mv,\mw), \quad &\mv\in \mV'\setminus \mV_0'\simeq \mV\setminus \mV_0\hbox{ and/or }\mw\in \mV'\setminus \mV_0'\simeq \mV\setminus \mV_0,\\
0 &\mv',\mw'\in \mV'_0,
\end{cases}
\]
along with
\[
c'(\mv):=\begin{cases}
c(\mv), \quad &\mv\in \mV'\setminus \mV_0',\\
\sum_{\mw \in \mV_0} c(\mw), &\mv\in\mV_0'.
\end{cases}
\qquad\hbox{and}\qquad  m'(\mv):=\begin{cases}
m(\mv), \quad &\mv\in \mV'\setminus \mV_0',\\
\sum_{\mw \in \mV_0} m(\mw), &\mv\in\mV_0'.
\end{cases}
\]
We motivate the above operation on $\mG$ by formulating the following, which shows that two relevant quantities are invariant under the above reduction: the bottom of the $p$-spectrum will be introduced in~\eqref{eq:variational-characterization-ground-state} and successively studied in more detail.
\begin{lemma}\label{lem:v0-singleton} Under the above reduction of $\mG$ to a graph $\mG'$ having only one Dirichlet condition, the bottom of the $p$-spectrum on $\mG$ and $\mG'$ coincide. Likewise, the $p$-torsional rigidity of $\mG$ and $\mG'$ coincide.
\end{lemma}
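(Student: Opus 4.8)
The plan is to realise $\mG'$ as the result of identifying all vertices of $\mV_0$ into the single vertex $\mv_0'$, and to observe that this identification affects none of the quantities built from Dirichlet-vanishing functions; since both the bottom of the $p$-spectrum \eqref{eq:variational-characterization-ground-state} and the $p$-torsional rigidity \eqref{eq:tors-def-1} are infima, resp.\ suprema, of such quantities, the claim follows. Concretely, I would exhibit a bijection $\Phi$ between the relevant function spaces of $\mG$ and $\mG'$ preserving three ingredients: the energy $\Qdirbc$, the $\ell^1$-norm on $\mV\setminus\mV_0$, and the $\ell^p$-norm on $\mV\setminus\mV_0$.

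First I would set, for $u\in\Ddirbc$, $\Phi u(\mv):=u(\mv)$ for $\mv\in\mV\setminus\mV_0\simeq\mV'\setminus\mV_0'$ and $\Phi u(\mv_0'):=0$. As every $u\in\Ddirbc$ vanishes on all of $\mV_0$ and $\Phi u$ vanishes on $\mV_0'=\{\mv_0'\}$, the assignment $u\mapsto\Phi u$ is a bijection between $\{u\in\R^\mV:u|_{\mV_0}=0\}$ and $\{u'\in\R^{\mV'}:u'(\mv_0')=0\}$; that it restricts to bijections between the corresponding energy spaces of $\mG$ and $\mG'$, between $\wdirbc(\mV,m)$ and $\wdirbcpr(\mV',m')$, and onto the space underlying \eqref{eq:variational-characterization-ground-state}, will be a consequence of the preservation of energies and $\ell^r$-norms established next.

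For the energy I would split $\sum_{\mv,\mw\in\mV}b(\mv,\mw)|\nabla_{\mv,\mw}u|^p$ according to whether $\mv,\mw$ lie in $\mV_0$ or in $\mV\setminus\mV_0$. The block $\mV_0\times\mV_0$ contributes $0$ because $u\equiv0$ on $\mV_0$; the block $(\mV\setminus\mV_0)\times(\mV\setminus\mV_0)$ is unchanged since $b'=b$ and $\Phi u=u$ there; and, using once more $u|_{\mV_0}=0$, the two mixed blocks together equal $\tfrac1p\sum_{\mw\in\mV\setminus\mV_0}\bigl(\sum_{\mv\in\mV_0}b(\mv,\mw)\bigr)|u(\mw)|^p$, which is precisely the contribution of the edges incident with $\mv_0'$ in $\mG'$, since the collapsed weight is $b'(\mv_0',\mw)=\sum_{\mv\in\mV_0}b(\mv,\mw)$ (the parallel-edge reduction of \autoref{rem:multiple}). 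The potential term is immediate: $\sum_{\mv\in\mV}c(\mv)|u(\mv)|^p=\sum_{\mv\in\mV\setminus\mV_0}c(\mv)|u(\mv)|^p$ because $u$ vanishes on $\mV_0$, while $c'=c$ on $\mV'\setminus\mV_0'$ and $\Phi u(\mv_0')=0$. Hence the energy of $\mG'$ at $\Phi u$ equals $\Qdirbc(u)$. For the norms, $u|_{\mV_0}=0$ together with $m'=m$ on $\mV'\setminus\mV_0'$ give $\|u\|_{\ell^r(\mV\setminus\mV_0,m)}=\|u\|_{\ell^r(\mV,m)}=\|\Phi u\|_{\ell^r(\mV',m')}$ for every $r\in[1,\infty)$.

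Finally I would conclude. By the above, $\Phi$ identifies the Pólya quotient \eqref{eq:polya-def} of $\mG$ at $u$ with that of $\mG'$ at $\Phi u$; taking the supremum over $u$ in \eqref{eq:tors-def-1} gives $T_p(\mG;\mV_0)=T_p(\mG';\mV_0')$. In the same way $\Phi$ preserves the Rayleigh-type quotient defining the bottom of the $p$-spectrum in \eqref{eq:variational-characterization-ground-state}, so the two bottoms coincide. The only genuinely delicate point is the bookkeeping of the mixed blocks combined with the correct reading of the definition of $b'$: the ``and/or'' clause there must be understood, via \autoref{rem:multiple}, as assigning to the unique edge between $\mv_0'$ and $\mw$ the summed weight $\sum_{\mv\in\mV_0}b(\mv,\mw)$ of the (possibly several) parallel edges created by the identification; the rest is a routine splitting of sums, exploiting that admissible test functions vanish on the whole of $\mV_0$. (If $\mV_0$ is infinite, one should also note that $m'(\mv_0')$, $c'(\mv_0')$ and the weights $b'(\mv_0',\cdot)$ are well defined — say, finite — but since no term attached to $\mv_0'$ ever enters the functionals at hand, this is immaterial.)
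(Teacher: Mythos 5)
Your proof is correct and follows essentially the same route as the paper's: both identify the Dirichlet function spaces of $\mG$ and $\mG'$ via the vanishing condition on $\mV_0$, resp.\ $\mV_0'$, and observe that the Rayleigh and Pólya quotients are unchanged under this identification, so the infimum and supremum coincide. Your version is in fact more explicit than the paper's on the one nontrivial point — the block decomposition of the energy showing that the two mixed blocks $\mV_0\times(\mV\setminus\mV_0)$ reassemble into the collapsed weights $b'(\mv_0',\mw)=\sum_{\mv\in\mV_0}b(\mv,\mw)$ prescribed by \autoref{rem:multiple} — which the paper's proof leaves implicit.
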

\begin{proof}
First of all, let us observe that $\wdirbc(\mV,m)\simeq \overset{\circ}{w}^{1,p,1}_{b,c;\mV'_0}(\mV',m')$, as every function in $\wdirbc(\mV,m)$ (resp., in $\wneubc(\mV,m)$) vanishes outside $\mV \setminus \mV_0$ (resp., outside $\mV' \setminus \mV_0'$). Hence the bottom of the $p$-spectrum does not change upon the identification $\mV_0\simeq \mV_0'$, since the measure of the vertices in $\mV_0$  is irrelevant for the Rayleigh quotients of both $\mG$ and $\mG'$, cf.\ \eqref{eq:variational-characterization-ground-state}.

 Likewise,  the $p$-torsional rigidity does not change upon the identification $\mV_0\simeq \mV_0'$, since the measure of the vertices in $\mV_0$ is also irrelevant for the Pólya quotients of both $\mG$ and $\mG'$.
\end{proof}
In the proof of \autoref{lem:v0-singleton} we mentioned the notion of \emph{bottom of the spectrum} of the $p$-Laplacian $\Lneubc$ and $\Ldirbc$ (or, simply, bottom of the $p$-spectrum of $\mG$): this is found taking the infimum of the \emph{Rayleigh quotient}, i.e.,
\begin{align}\label{eq:variational-characterization-ground-state}
\begin{aligned}
\lambda_{0,p}(\mG) &:= \inf_{0 \neq f \in \wppneubc(\mV,m)} \frac{p\Qneubc(f)}{\Vert f \Vert_{\ell^p(\mV,m)}^p} \\ \Bigg(\text{resp., } \quad 
    \lambda_{0,p}(\mG;\mV_0) &:= \inf_{0\neq f \in \wppdirbc(\mV,m)} \frac{p\Qdirbc(f)}{\Vert f \Vert_{\ell^p(\mV \setminus \mV_0, m)}^p}\Bigg);
\end{aligned}
\end{align}
which need not be an eigenvalue unless the infimum is actually attained, i.e., if it is a minimum: this happens, in particular, if $\wppneubc(\mV,m)$ (resp., $\wppdirbc(\mV,m)$) is compactly embedded in $\ell^p(\mV,m)$, and in this case any minimizer for this Rayleigh quotient is called \emph{ground state}, whereas $\lambda_{0,p}(\mG)$ (resp., $\lambda_0(\mG;\mV_0)$) is referred to as the \emph{smallest eigenvalue}.

Concerning the reduction of the Dirichlet vertices we have discussed above, \autoref{lem:v0-singleton} motivates us to stipulate the following.

\begin{conv}\label{conv:dirichlet-singleton}
If Dirichlet conditions are imposed on a set $\mV_0$, then $\mV_0$ may without loss of generality be assumed to be a singleton.
\end{conv}
We will later illustrate this reduction in the case of star graphs, see  \autoref{rem:reduction-star-graphs} below.

It is known that the ($p$-)torsional rigidity of domains with Dirichlet boundary conditions can be described in terms of the integral of the ($p$-)torsion function~\cite{Pol48,Bra14}:
let us extend this observation to the $p$-torsional rigidity on combinatorial graphs.

\begin{prop}\label{prop:variational-char}
Let $\mG = (\mV,m,b,c)$ be a graph. If $T_p(\mG) < \infty$ (resp., $T_p(\mG;\mV_0)< \infty$ if $\mV_0 \neq \emptyset$), then 
\begin{align}\label{eq:torsional-rigidity-fp}
T_p(\mG) =  \bigg( \frac{p}{1-p} \inf_{u \in \wneubc(\mV,m)} \Fneubc(u) \bigg)^{p-1} \quad \text{holds,}
\end{align}
in particular, $\inf_{u \in \wneubc(\mV,m)} \Fneubc(u) > -\infty$ (resp., $\inf_{u \in \wdirbc(\mV,m)} \Fneubc(u) > -\infty$). 

If, additionally, \autoref{ass:compact-embedding-l1} is satisfied, then the Pólya quotient admits a maximum over $\wneubc(\mV,m)$ (resp., over $\wdirbc(\mV,m)$) and, 
in addition, 
its maximum is precisely the $p$-torsion function for $\mG$ (resp., for $\mG$ with Dirichlet conditions at $\mV_0 \neq \emptyset$), 
i.e.,
\begin{align}\label{eq:torsion-p-Pólya}
\begin{aligned}
    T_p(\mG) &= \Vert \tauneubc \Vert_{\ell^1(\mV,m)}^{p-1} = \bigg( \frac{p}{1-p}\min_{u \in \wneubc(\mV,m)} \Fneubc(u)\bigg)^{p-1} \\
    \Bigg(\hbox{resp., }
    T_p(\mG;\mV_0) &=  \Vert \taudirbc \Vert_{\ell^1(\mV \setminus \mV_0,m)}^{p-1} = \bigg( \frac{p}{1-p}\min_{u \in \wdirbc(\mV,m)} \Fneubc(u) \bigg)^{p-1} \Bigg)
    \end{aligned}
\end{align}
holds.
\end{prop}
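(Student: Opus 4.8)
The plan is to derive everything from two homogeneities: the Pólya quotient $\Polneubc$ is $0$-homogeneous, while along a ray the objective reads $\Fneubc(tu)=t^{p}\Qneubc(u)-t\sum_{\mv\in\mV}u(\mv)m(\mv)$ for $t>0$, a one-variable function whose minimum is explicit. I would first record that the hypothesis $T_p(\mG)<\infty$ already forces $\Qneubc(u)>0$ for every $0\ne u\in\wneubc(\mV,m)$ (otherwise $\Polneubc$ would be $+\infty$ at such a $u$, since $\|u\|_{\ell^1(\mV,m)}>0$ because $m>0$ pointwise). The core is then the scalar identity
\[
\inf_{u\in\wneubc(\mV,m)}\Fneubc(u)=-\frac{p-1}{p}\,T_p(\mG)^{\frac{1}{p-1}},
\]
which is precisely \eqref{eq:torsional-rigidity-fp} after rearranging, and which in particular yields $\inf\Fneubc>-\infty$.

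For ``$\le$'': given $0\ne u$, replacing $u$ by $|u|$ leaves $\|u\|_{\ell^1(\mV,m)}$ unchanged and does not increase $\Qneubc$ (inverse triangle inequality), so we may assume $u\ge0$, and then $A:=\Qneubc(u)>0$, $B:=\|u\|_{\ell^1(\mV,m)}>0$. Minimizing $t\mapsto At^{p}-Bt$ over $t>0$ at $t_\ast=(B/pA)^{1/(p-1)}$ gives $\Fneubc(t_\ast u)=-\frac{p-1}{p}\,\Polneubc(u)^{1/(p-1)}$; taking the supremum over $u$ and using monotonicity of $x\mapsto x^{1/(p-1)}$ gives the bound. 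For ``$\ge$'': for arbitrary $u$ one has $\Fneubc(u)\ge\Qneubc(u)-\|u\|_{\ell^1(\mV,m)}$ (as $\sum u m\le\sum|u|m$) and, by definition of $T_p(\mG)$, $\|u\|_{\ell^1(\mV,m)}\le(pT_p(\mG))^{1/p}\Qneubc(u)^{1/p}$; minimizing the resulting lower bound $A\mapsto A-(pT_p(\mG))^{1/p}A^{1/p}$ over $A\ge0$ produces exactly $-\frac{p-1}{p}T_p(\mG)^{1/(p-1)}$. The Dirichlet case is verbatim the same, using $\|u\|_{\ell^1(\mV\setminus\mV_0,m)}=\|u\|_{\ell^1(\mV,m)}$ for $u$ vanishing on $\mV_0$.

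For the second part, \autoref{ass:compact-embedding-l1} entails (via \autoref{cor:compact-admiss}) a Poincaré inequality, hence $T_p(\mG)<\infty$, and by \autoref{thm:well-p-l1} the existence of the unique minimizer $\tau:=\tauneubc$ of $\Fneubc$, which is the $p$-torsion function and is strictly positive by \autoref{lem:torsionfunctionpositive}; compactness also turns the supremum in $T_p(\mG)$ into a maximum. Testing the weak formulation \eqref{eq:weakneu} with the admissible choice $h=\tau$ and using the Euler identity $(\Qneubc)'(\tau)(\tau)=p\Qneubc(\tau)$ gives $p\Qneubc(\tau)=\sum_{\mv}\tau(\mv)m(\mv)=\|\tau\|_{\ell^1(\mV,m)}$, whence $\Fneubc(\tau)=\Qneubc(\tau)-\|\tau\|_{\ell^1(\mV,m)}=-\frac{p-1}{p}\|\tau\|_{\ell^1(\mV,m)}$. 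Comparing with the scalar identity gives $\|\tau\|_{\ell^1(\mV,m)}=T_p(\mG)^{1/(p-1)}$, i.e.\ \eqref{eq:torsion-p-Pólya}; moreover $\Polneubc(\tau)=\|\tau\|_{\ell^1(\mV,m)}^{p}/(p\Qneubc(\tau))=\|\tau\|_{\ell^1(\mV,m)}^{p-1}=T_p(\mG)$, so $\tau$ attains the maximum. Finally, if $u_\ast\ge0$ is any maximizer of the Pólya quotient, then the ray computation above shows that $t_\ast u_\ast$ minimizes $\Fneubc$, so $t_\ast u_\ast=\tau$ by uniqueness; thus every Pólya maximizer is a positive multiple of $\tauneubc$, which is the precise meaning of the last assertion, and the Dirichlet version is analogous.

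I do not expect a genuine obstacle here: the argument is essentially Young/Legendre bookkeeping. The points that need care are (i) that $\Qneubc$ is strictly positive on non-zero functions, which in the first part is \emph{extracted} from $T_p(\mG)<\infty$ rather than postulated; (ii) the passage from $\sum u m$ to $\|u\|_{\ell^1}$ via the replacement $u\rightsquigarrow|u|$; and (iii) interpreting ``the maximum is precisely the $p$-torsion function'' correctly, since $\Polneubc$ being $0$-homogeneous admits a whole ray of maximizers — the content being that exactly one representative, the one normalized by $p\Qneubc(u)=\|u\|_{\ell^1}$, solves the torsion equation.
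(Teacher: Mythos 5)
Your proof is correct and follows essentially the same route as the paper: both directions of the identity $\inf\Fneubc=-\tfrac{p-1}{p}T_p(\mG)^{1/(p-1)}$ are obtained by the same Legendre/Young bookkeeping along rays $t\mapsto tu$ (the paper phrases the ``$\le$'' direction via a maximizing sequence $u_n$ rescaled by exactly your $t_\ast$, and the ``$\ge$'' direction via $\max_{\lambda\ge0}(\lambda\|v\|_1-\lambda^pQ(v))$), and the second part is in both cases the computation $p\Qneubc(\tau)=\|\tau\|_{\ell^1(\mV,m)}$ from testing with $h=\tau$. Your explicit reduction to $u\ge0$ before the ray computation is a small point the paper glosses over, and your closing remark on the ray of maximizers is a harmless addition.
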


\begin{proof} 
The proof closely follows that of \cite[Proposition~2.2]{Bra14}. We only discuss in detail the $p$-torsional rigidity if $\mV_0 = \emptyset$, the Dirichlet case being analogous.

First, for every $v \in \wneubc(\mV,m)$ we see that
\begin{align}\label{eq:first-estimate-function}
\begin{aligned}
    -\Fneubc(v) \leq \Vert v \Vert_{\ell^1(\mV,m)} - \Qneubc(v) &\leq \max_{\lambda \geq 0} \Big( \lambda \Vert v \Vert_{\ell^1(\mV,m)} - \lambda^p \Qneubc(v) \Big) \\&
    \le \frac{p-1}{p} \bigg[ \frac{\Vert v\Vert_{\ell^1(\mV,m)}^p}{p\Qneubc(v)} \bigg]^\frac{1}{p-1} \\
    & \leq \frac{p-1}{p} \bigg[ \sup_{u \in \wneubc(\mV,m)} \frac{\Vert u \Vert_{\ell^1(\mV,m)}^p}{p\Qneubc(u)} \bigg]^\frac{1}{p-1},
    \end{aligned}
\end{align}
where the penultimate inequality holds by \cite[Lemma~2.1]{Bra14}: equivalently,
\begin{align}\label{eq:proof-p-Pólya-p-torsion}
\frac{1-p}{p} T_p(\mG)^\frac{1}{p-1} = \frac{1-p}{p} \bigg[ \sup_{u \in \wneubc(\mV,m)} \frac{\Vert u \Vert_{\ell^1(\mV,m)}^p}{p\Qneubc(u)} \bigg]^\frac{1}{p-1} \leq \Fneubc(v),
\end{align}
for every $v \in \wneubc(\mV,m)$, and in particular
\begin{align}
-\infty<\frac{1-p}{p} T_p(\mG)^\frac{1}{p-1} \leq \inf_{u \in \wneubc(\mV,m)} \Fneubc(u),
\end{align}
i.e.,
\begin{align}
T_p(\mG) \geq \bigg( \frac{p}{1-p} \inf_{u \in \wneubc(\mV,m)}\Fneubc(u) \bigg)^{p-1}.
\end{align}

To prove the converse inequality, let now $(u_n)_{n \in \mathbb{N}} \subset \wneubc(\mV,m)$ be a sequence achieving the supremum of the Pólya quotient, i.e.,
\[
\lim_{n \rightarrow \infty} \frac{\Vert u_n \Vert_{\ell^1(\mV,m)}^p}{p\Qneubc(u_n)} = T_p(\mG)
\]
and define
\[
v_n:= \bigg( \frac{\Vert u_n \Vert_{\ell^1(\mV,m)}}{p\Qneubc(u_n)} \bigg)^\frac{1}{p-1}
u_n \in \wneubc(\mV,m) \qquad \text{for $n \in \mathbb{N}$}.
\]
Then we observe that
\begin{align*}
     \inf_{u \in \wneubc(\mV,m)} \Fneubc(u) &\leq \Fneubc(v_n) = -\Vert v_n \Vert_{\ell^1(\mV,m)} + \Qneubc(v_n) \\&=
    -\bigg[ \frac{\Vert u_n \Vert_{\ell^1(\mV,m)}}{p\Qneubc(u_n)} \bigg]^\frac{1}{p-1} \Vert u_n \Vert_{\ell^1(\mV,m)} + \bigg[ \frac{\Vert u_n \Vert_{\ell^1(\mV,m)}}{p\Qneubc(u_n)} \bigg]^\frac{p}{p-1} \Qneubc(u_n) \\&=
    -\bigg[ \frac{\Vert u_n \Vert_{\ell^1(\mV,m)}^p}{p\Qneubc(u_n)} \bigg]^\frac{1}{p-1}  + \frac{1}{p} \bigg[ \frac{\Vert u_n \Vert_{\ell^1(\mV,m)}^p}{p\Qneubc(u_n)} \bigg]^\frac{1}{p-1} \\&= \frac{1-p}{p}\bigg[ \frac{\Vert u_n \Vert_{\ell^1(\mV,m)}^p}{p\Qneubc(u_n)} \bigg]^\frac{1}{p-1},
\end{align*}
and thus, letting $n \rightarrow \infty$,
\begin{align*}
    T_p(\mG) \leq \bigg( \frac{p}{1-p} \inf_{u \in \wneubc(\mV,m)} \Fneubc(u) \bigg)^{p-1}
\end{align*}
yields \eqref{eq:torsional-rigidity-fp}.

Because under \autoref{ass:compact-embedding-l1} the $p$-torsion function $\tauneubc$ (which exists according to \autoref{thm:well-p-l1}) is the minimizer of the functional $\Fneubc$ (cf.~\eqref{eq:q'pq}), we have that
\begin{equation}\label{eq:minizer-functional-equation}
\begin{split}
\min_{u \in \wneubc(\mV,m)} \Fneubc(u) 
&= -\sum_{\mv \in \mV} \tauneubc(\mv)m(\mv) + \Qneubc(\tauneubc) \\
&=  -\Vert \tauneubc \Vert_{\ell^1(\mV,m)} + \frac{1}{p} \Vert \tauneubc \Vert_{\ell^1(\mV,m)} 
= \frac{1-p}{p}\Vert \tauneubc \Vert_{\ell^1(\mV,m)}:
\end{split}
\end{equation}
by \eqref{eq:torsional-rigidity-fp}, this implies \eqref{eq:torsion-p-Pólya}. Moreover plugging $\tauneubc \in \wneubc(\mV,m)$ into the Pólya quotient, it follows that
\begin{align}
\Polneubc(\tauneubc) = \frac{\Vert \tauneubc \Vert_{\ell^1(\mV,m)}^{p}}{p\Qneubc(\tauneubc)} = \frac{\Vert \tauneubc \Vert_{\ell^1(\mV,m)}^{p}}{\Vert \tauneubc \Vert_{\ell^1(\mV,m)}} = \Vert \tauneubc \Vert_{\ell^1(\mV,m)}^{p-1},
\end{align}
and by \eqref{eq:torsion-p-Pólya}, the Pólya quotient indeed admits a maximum over $\wneubc(\mV,m)$ given by the $p$-torsion function $\tauneubc$. This finishes the proof.
\end{proof}
\begin{rem}
We might also allow for $q\in (1,\infty)$ and define the $p$-torsional rigidity in \autoref{defi:torsion} by taking the supremum of the Pólya quotients over functions in $\wqpneubc(\mV,m)$ (resp., $\wqpdirbc(\mV,m)$).
However, as $\Vert \cdot \Vert_{\ell^1(\mV,m)}$ is the norm appearing in the Pólya quotient $\Polneubc(u)$ (resp., $\Poldirbc(u)$), a compact embedding of $\wqpneubc(\mV,m)$ (resp., $\wqpdirbc(\mV,m)$) into $\ell^1(\mV,m)$ would not be sufficient to guarantee the existence of a maximizer for the Pólya quotient: this is one of the main reasons for the choice of $q=1$.

Moreover, it is unclear whether the additional \autoref{assum:finite-meas} which is needed in the case where $q \neq 1$ (cf.\ \autoref{lem:fp-convex} and \autoref{cor:wellp}) is a sufficient condition for the embedding $\wneubc(\mV,m) \hookrightarrow \ell^1(\mV,m)$ to be compact, similarly to the metric graph case, cf.\ \cite[Theorem~3.1]{DüfKenMugPlüTäu22}.
\end{rem}
Using this variational characterization for the $p$-torsional rigidity, we can now show the following monotonicity property with respect to the notion of subgraph as introduced in \autoref{sec:general}.

\begin{prop}\label{prop:monotonicity-subgraphs}
Let $\mG' = (\mV,m,b',c')$ and $ \mG =  \Vmbc$ be two graphs on the same vertex set $\mV$. 
If $b' \leq b$ and $c' \leq c$ pointwise, then  $T_p(\mG) \leq T_p(\mG')$. If, additionally, $ \emptyset\ne \mV_0' \subset \mV_0$, then  also $T_p(\mG;\mV_0) \leq T_p(\mG';\mV_0')$. 
\end{prop}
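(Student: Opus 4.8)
The plan is to derive the monotonicity directly from the variational characterization of $T_p$ in terms of the supremum of the Pólya quotients, exploiting that weakening the edge weights and the potential can only decrease the energy functional $\Qpformbc$. First I would observe that since $b'\le b$ and $c'\le c$ pointwise, for every $u\in\R^\mV$ one has, term by term,
\[
\Qneubcpr(u)=\frac{1}{2p}\sum_{\mv,\mw\in\mV}b'(\mv,\mw)|\nabla_{\mv,\mw}u|^p+\frac1p\sum_{\mv\in\mV}c'(\mv)|u(\mv)|^p\le\Qneubc(u).
\]
In particular $\Dneubc\subseteq\mathcal D^p_{b',c'}(\mV)$, so $\wneubc(\mV,m)\subseteq w^{1,p,1}_{b',c'}(\mV,m)$, and the Pólya quotient only increases: for every $u\in\wneubc(\mV,m)$ with $\Qneubc(u)>0$,
\[
\Polneubc(u)=\frac{\|u\|^p_{\ell^1(\mV,m)}}{p\Qneubc(u)}\le\frac{\|u\|^p_{\ell^1(\mV,m)}}{p\Qneubcpr(u)}=\Polneubcpr(u).
\]

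Taking the supremum over $u\in\wneubc(\mV,m)$ on the left and noting that this is a supremum over a subset of $w^{1,p,1}_{b',c'}(\mV,m)$, I get
\[
T_p(\mG)=\sup_{u\in\wneubc(\mV,m)}\Polneubc(u)\le\sup_{u\in\wneubc(\mV,m)}\Polneubcpr(u)\le\sup_{u\in w^{1,p,1}_{b',c'}(\mV,m)}\Polneubcpr(u)=T_p(\mG'),
\]
which is the first claim. (One must be slightly careful with the degenerate cases: if $\Qneubc(u)=0$ for some admissible $u\ne 0$, then $T_p(\mG)=+\infty$, but then also $\Qneubcpr(u)=0$ by the pointwise bound, so $T_p(\mG')=+\infty$ as well, and the inequality holds trivially. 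When $\mV$ is connected and, say, $\sup c>0$ this degeneracy does not occur, but the argument does not need that.)

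For the Dirichlet version, with $\emptyset\ne\mV_0'\subseteq\mV_0$, the same pointwise inequality $\Qdirbc(u)\ge\mathcal Q^{\mG';\mV_0'}_p(u)$ holds; moreover, since $\mV_0'\subseteq\mV_0$, any $u$ vanishing on $\mV_0$ also vanishes on $\mV_0'$, so $\wdirbc(\mV,m)\subseteq\overset{\circ}{w}^{1,p,1}_{b',c';\mV_0'}(\mV,m)$, and $\|u\|_{\ell^1(\mV\setminus\mV_0,m)}\le\|u\|_{\ell^1(\mV\setminus\mV_0',m)}$ since the latter sum has at least as many (nonnegative) terms. Hence for every admissible $u$,
\[
\Poldirbc(u)=\frac{\|u\|^p_{\ell^1(\mV\setminus\mV_0,m)}}{p\Qdirbc(u)}\le\frac{\|u\|^p_{\ell^1(\mV\setminus\mV_0',m)}}{p\,\mathcal Q^{\mG';\mV_0'}_p(u)}=\mathcal P^{\mG';\mV_0'}_p(u),
\]
and taking suprema over the progressively larger admissible sets yields $T_p(\mG;\mV_0)\le T_p(\mG';\mV_0')$. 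There is no real obstacle here: the only point requiring attention is bookkeeping the inclusions of admissible function spaces and handling the $\Qneubc(u)=0$ / $T_p=+\infty$ boundary case, together with the harmless observation that enlarging the index set of an $\ell^1$-sum of nonnegative terms can only increase it.
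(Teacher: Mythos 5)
Your proof is correct and follows essentially the same route as the paper: both use the variational characterization of $T_p$ as a supremum of Pólya quotients, note that $b'\le b$ and $c'\le c$ force $\Qneubcpr(u)\le\Qneubc(u)$ and hence $\wneubc(\mV,m)\subset\wneubcpr(\mV,m)$, and then compare suprema over nested admissible sets. Your additional bookkeeping (the degenerate case $\Qneubc(u)=0$ and the inequality $\|u\|_{\ell^1(\mV\setminus\mV_0,m)}\le\|u\|_{\ell^1(\mV\setminus\mV_0',m)}$ needed when $\mV_0'\subsetneq\mV_0$) is a harmless and slightly more careful elaboration of what the paper dispatches as "analogous".
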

We should emphasize that this statement especially implies that the $p$-torsional rigidity increases under taking subgraphs over the \emph{same} vertex set.
\begin{proof}
Because $\Qneubcpr(u) \leq \Qneubc (u)$ for every $u \in \wneubc(\mV,m)$ it follows that 
 $\wneubc(\mV,m) \subset \wneubcpr (\mV,m)$ (whence every test function for the Pólya quotient $\Polneubc$ is a valid test function for $\Polneubcpr$, too). Using the variational characterization \eqref{eq:tors-def-1}, we deduce that
    \[
    T_p(\mG) = \sup_{u \in \wneubc(\mV,m)} \frac{\Vert u \Vert_{\ell^1(\mV,m)}^p}{p\Qneubc(u)} \leq \sup_{u \in \wneubcpr(\mV,m)} \frac{\Vert u \Vert_{\ell^1(\mV,m)}^p}{p\Qneubcpr(u)} = T_p(\mG'),
    \]
which proves the first inequality.
     
An analogous proof can be performed to compare $T_p(\mG;\mV_0)$ with $T_p(\mG';\mV_0)$, in view of $\wdirbc(\mV,m) \subset \wdirbcpr (\mV,m)$
\end{proof}
In particular, the proof reveals that each graph that contains a subgraph on the same vertex set satisfying an $\ell^1-\ell^p$-Poincaré-inequality is itself $1-p$-torsional-admissible, with smaller $p$-torsional rigidity. This is reminiscent of classical results for the algebraic connectivity of Laplacians that go back to~\cite[Corollary~3.2]{Fie73}.

As a counterpart to the previous monotonicity result, let us show that suitably attaching a graph at another graph may increase the $p$-torsional rigidity. The following can be arguably considered as a generalization of~\cite[Definition~3.8]{BerKenKur19}.

\begin{defi}
Let $\mV^\dagger \subset \mV$ be a (nonempty) set of distinguished vertices in a graph $\mathsf{G} := \Vmbc$, and let $\mathsf{G}' := (\mV',m',b',c')$ be another graph. Form a new graph $\widetilde{\mathsf{G}} := (\widetilde{\mV},\widetilde{m},\widetilde{b},\widetilde{c})$ by letting $\widetilde{\mV}:=\mV\sqcup \mV'$ with $\tilde m\vert_{\mV}=m$, $\tilde m\vert_{\mV'}=m'$,
 and considering arbitrary $\tilde{b}:\widetilde{\mV}\times \widetilde{\mV}\to [0,\infty)$, $\widetilde{c}:\widetilde{\mV}\to [0,\infty)$ in such a way that 
\begin{itemize}
\item $\widetilde{b}\vert_{\mV\times \mV}=b$, \item $\tilde{b}\vert_{\mV'\times \mV'}=b'$, 
\item $\widetilde{c}\vert_{\mV}=c$, 
\item $\widetilde{c}\vert_{\mV'}=c'$,
\item $\widetilde{b}(\mv,\mv') = 0$ for all $\mv \notin \mV^\dagger$ and all $\mv' \in \mV'$.
\end{itemize}
We then say that $\widetilde{\mathsf{G}}$ is formed by \emph{inserting $\mathsf{G}'$ into $\mathsf{G}$ at $\mV^\dagger$}.
\end{defi}
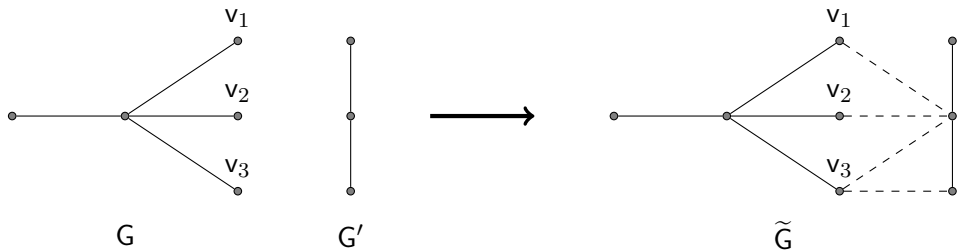
\begin{figure}[h]
\begin{tikzpicture}[scale=0.50]
      \tikzset{enclosed/.style={draw, circle, inner sep=0pt, minimum size=.10cm, fill=gray}, every loop/.style={}}

      \node[enclosed] (Z) at (-1,0) {};
      \node[enclosed, label={above: $\mv_1$}] (A) at (2,2) {};
      \node[enclosed] (A') at (5,2) {};
      \node[enclosed] (A'') at (5,0) {};
      \node[enclosed] (A''') at (5,-2) {};
      \node[enclosed] (B) at (-4,0) {};
      \node[enclosed, label={above: $\mv_3$}] (C) at (2,-2) {};
      \node[enclosed, label={above: $\mv_2$}] (D) at (2,0) {};
      \node[enclosed, white] (E) at (7,0) {};
      \node[enclosed, white] (F) at (10,0) {};
      
      \node[enclosed] (Z') at (15,0) {};
      \node[enclosed, label={above: $\mv_1$}] (X) at (18,2) {};
      \node[enclosed] (X') at (21,2) {};
      \node[enclosed] (B') at (12,0) {};
      \node[enclosed, white, label={below: $\mathsf{G}$}] (G) at (-1,-2.5) {};
       \node[enclosed, white, label={below: $\mathsf{G}'$}] (G') at (5,-2.5) {};
      \node[enclosed, white, label={below: $\widetilde{\mathsf G}$}] (H) at (16.5,-2.4) {};
      \node[enclosed, label={above: $\mv_3$}] (Y) at (18,-2) {};
      \node[enclosed] (Y') at (21,-2) {};
      \node[enclosed, label={above: $\mv_2$}] (W) at (18,0) {};
      \node[enclosed] (W') at (21,0) {};

      \draw (Z) edge node[above] {} (A) node[midway, above] (edge1) {};
      \draw (A') edge node[above] {} (A'') node[midway, above] (edge1rightgraph) {};
      \draw (A'') edge node[above] {} (A''') node[midway, above] (edge2rightgraph) {};
      \draw (Z) edge node[above] {} (B) node[midway, above] (edge2) {};
      \draw (Z) edge node[above] {} (C) node[midway, above] (edge3) {};
      \draw (Z) edge node[above] {} (D) node[midway, above] (edge4) {};
      \draw[->, ultra thick] (E) edge node[above] {} (F) node[midway, above] (edge5) {};
      
      \draw (Z') edge node[above] {} (X) node[midway, above] (edge1) {};
      \draw[dashed] (X) edge node[above] {} (W') node[midway, above] (edge1') {};
      \draw (Z') edge node[above] {} (B') node[midway, above] (edge2) {};
      \draw (Z') edge node[above] {} (Y) node[midway, above] (edge3) {};
      \draw[dashed] (Y) edge node[above] {} (Y') node[midway, above] (edge3') {};
      \draw[dashed] (Y) edge node[above] {} (W') node[midway, above] (edge3') {};
      \draw (Z') edge node[above] {} (W) node[midway, above] (edge4) {};
      \draw[dashed] (W) edge node[above] {} (W') node[midway, above] (edge4) {};
      \draw (X') edge node[above] {} (W') node[midway, above] (edge4) {};
     \draw (W') edge node[above] {} (Y') node[midway, above] (edge4) {};
     \end{tikzpicture}
     \caption{A graph $\widetilde{\mathsf{G}}$ which is formed by inserting a graph $\mathsf{G}'$ into another graph $\mathsf{G}$ at $\mV^\dagger := \{\mv_1,\mv_2,\mv_3 \}$.}
     \end{figure}
     The following is close in spirit \cite[Proposition~4.1(3)]{MugPlu23} on metric graphs.

\begin{prop}\label{cor:monotonocitiy-torsion-subgraphs}
 Let $\mG=\Vmbc$ and $\mG'= (\mV',m',b',c' \equiv 0)$ be two graphs and consider the graph $\tilde{\mG}= (\widetilde{\mV},\widetilde{m},\widetilde{b},\widetilde{c})$ formed by inserting $\mG'$ into $\mG$ at some subset $\mV^\dagger \subset \mV$. If \autoref{ass:compact-embedding-l1} is satisfied for $\mG$, then the following assertions hold. 
\begin{enumerate}[(i)]
\item If $\sup_{\mv \in \mV} c(\mv) > 0$ and 
 $\tauneubc$ is constant over $\mV^\dagger$, then
\[
T_p(\mG) \leq T_p(\widetilde{\mG} ).
\]
\item Else if $\mV_0 \neq \emptyset$ such that $\mV_0 \cap \mV^\dagger = \emptyset$ and
$\taudirbc$ is constant on $\mV^\dagger$, then 
\[
T_p(\mG;\mV_0) \leq T_p(\widetilde{\mG};\mV_0 ).
\]
\end{enumerate}
\end{prop}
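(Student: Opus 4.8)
The plan is to use the variational characterization of the $p$-torsional rigidity from \autoref{prop:variational-char}, namely $T_p(\mG) = \max_{u \in \wneubc(\mV,m)} \Polneubc(u)$ (and similarly with Dirichlet conditions), together with the fact that the maximizer is precisely the $p$-torsion function $\tauneubc$. The idea is to extend $\tauneubc$ from $\mV$ to $\widetilde{\mV} = \mV \sqcup \mV'$ in such a way that the extension is an admissible test function for the Pólya quotient on $\widetilde{\mG}$, and whose Pólya quotient value is at least $T_p(\mG)$.

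First I would set $c_0$ to be the constant value of $\tauneubc$ on $\mV^\dagger$ (which is where the hypothesis is used) and define $\tilde{u} : \widetilde{\mV} \to \R$ by $\tilde{u}\vert_{\mV} := \tauneubc$ and $\tilde{u}\vert_{\mV'} := c_0$ (the constant). The key point is the energy computation: since $\widetilde{b}(\mv,\mv') = 0$ for all $\mv \notin \mV^\dagger$ and all $\mv' \in \mV'$, the only cross-edges between $\mV$ and $\mV'$ connect a vertex of $\mV^\dagger$ to a vertex of $\mV'$, and across every such edge $\tilde{u}$ takes the value $c_0$ at \emph{both} endpoints, so $\nabla \tilde{u} = 0$ there. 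Moreover $\tilde{u}$ is constant on $\mV'$ so all edge terms within $\mV' \times \mV'$ vanish, and $\widetilde{c}\vert_{\mV'} = c' \equiv 0$ so the potential terms on $\mV'$ vanish as well. Hence $\Qneubct(\tilde{u}) = \Qneubc(\tauneubc)$. On the other hand $\|\tilde{u}\|_{\ell^1(\widetilde{\mV},\widetilde{m})} = \|\tauneubc\|_{\ell^1(\mV,m)} + |c_0|\, m'(\mV') \geq \|\tauneubc\|_{\ell^1(\mV,m)}$, using that $\tauneubc$ (hence $c_0$) is nonnegative by \autoref{lem:torsionfunctionpositive}. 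Therefore
\[
T_p(\widetilde{\mG}) \geq \Polneubct(\tilde{u}) = \frac{\|\tilde{u}\|_{\ell^1(\widetilde{\mV},\widetilde{m})}^p}{p\,\Qneubct(\tilde{u})} \geq \frac{\|\tauneubc\|_{\ell^1(\mV,m)}^p}{p\,\Qneubc(\tauneubc)} = \Polneubc(\tauneubc) = T_p(\mG),
\]
where the last equality is \autoref{prop:variational-char}. The Dirichlet case (ii) is identical, working with $\taudirbc$ and the space $\wdirbct$: here one needs $\mV_0 \cap \mV^\dagger = \emptyset$ precisely so that the extended function still vanishes on $\mV_0$ and hence lies in $\wdirbct(\widetilde{\mV},\widetilde{m})$; the energy and $\ell^1$-norm computations are unchanged.

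There is one technical point to check, which I expect to be the main (minor) obstacle: that $\tilde{u}$ actually belongs to the Sobolev space $\wneubct(\widetilde{\mV},\widetilde{m})$, i.e.\ that it is of class $\ell^1(\widetilde{\mV},\widetilde{m})$ with finite $\widetilde{\mG}$-energy. Finiteness of the energy is immediate from $\Qneubct(\tilde{u}) = \Qneubc(\tauneubc) < \infty$. Summability requires $|c_0|\, m'(\mV') < \infty$; if $\mV'$ has infinite measure this could fail unless $c_0 = 0$, so strictly speaking one should either invoke \autoref{assum:finite-meas}-type control on $\widetilde{\mG}$, or note that $\Polneubct$ is only being used as a lower bound and $T_p(\widetilde{\mG}) = +\infty$ trivially satisfies the claimed inequality when the extension fails to be summable. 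In the regime where the statement is of interest (finite measure, or $\mV'$ finite) this is vacuous. I would dispatch this in a single sentence and otherwise present the computation above essentially verbatim.
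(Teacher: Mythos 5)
Your proposal is correct and takes essentially the same route as the paper's proof: extend the torsion function to $\mV'$ by its constant value on $\mV^\dagger$, observe that the energy is unchanged (cross-edges and edges inside $\mV'$ carry zero gradient, and $c'\equiv 0$) while the $\ell^1$-norm can only increase, and feed this test function into the Pólya quotient. Your closing remark on summability of the extension is a technical point the paper's proof passes over silently.
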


\begin{proof}
Let $\tauneubc \in \wneubc (\mV,m)$ be a maximizer of the Pólya quotient $\Polneubc(u)$. Let $\mv^\dagger \in \mV^\dagger$ and define $\hneubc$ on $\widetilde{\mV} = \mV \sqcup \mV'$ via
\[
\hneubc(\mv) := \begin{cases}
\tauneubc(\mv), & \text{if} \:\: \mv\in\mV, \\ 
\tauneubc(\mv^\dagger), &\text{if} \:\:  \mv\in \mV'.
\end{cases} 
\]
Then clearly $\hneubc \in w_{p,\widetilde{b},\widetilde{c}}^{1,p,2}(\widetilde{\mV},m)$ is a valid test function for $\Polneubct$ and (noting that $\tauneubct$ is constant on $\mV^\dagger$ and that $c' \equiv 0$), we estimate
\begin{align*}
T_p(\widetilde{\mG}) &\geq \Polneubct(\hneubc) = \frac{\Vert \hneubc \Vert_{\ell^1(\mV,m)}^p}{p\Qneubct(\hneubc)} \\&= \frac{\Big(\sum\limits_{\mv\in\mV} \vert \tauneubc(\mv) \vert m(\mv) + \sum\limits_{\mv\in \mV'} \vert \tauneubc(\mv^\dagger) \vert m'(\mv) \Big)^p}{\frac{1}{2} \sum\limits_{\mv,\mw \in \mV} b(\mv,\mw)\vert \tauneubc(\mv) - \tauneubc(\mw) \vert^p + \sum\limits_{\mv\in\mV} c(\mv)|\tauneubc(\mv)|^p} \\&\geq \frac{\Big(\sum\limits_{\mv\in\mV} \vert \tauneubc(\mv) \vert m(\mv) \Big)^p}{\frac{1}{2} \sum\limits_{\mv,\mw \in \mV} b(\mv,\mw)\vert \tauneubc(\mv) - \tauneubc(\mw) \vert^p + \sum\limits_{\mv\in\mV} c(\mv)|\tauneubc(\mv)|^p} \\&= \frac{\Vert \tauneubc \Vert_{\ell^1(\mV,m)}^p}{p\Qneubc(\tauneubc)} = \Polneubc(\tauneubc) = T_p(\mG),
\end{align*}
as $\widetilde{b}(\mv,\mw) > 0$ on $\mV \times \mV'$ (resp.,\ on $\mV' \times \mV$) is, by construction of the graph $\widetilde{\mG}$, only possible if $\mv \in \mV^\dagger$ (or resp.,\ $\mw \in \mV^\dagger$). In a similar manner, one can show that $T_p(\mG;\mV_0) \leq T(\widetilde{\mG};\mV_0)$. 
\end{proof}

Note that the choice of taking $\mV^\dagger$ as a singleton is allowed and leads to $\mG=\Vmbc$ being a tree.

For the sake of completeness, let us also note a simple but useful scaling relation. 

\begin{lemma}\label{lem:scaling}
Let $\mG=\Vmbc$ be a
graph and let $\lambda, \mu> 0$. Then the $p$-torsional rigidity and the bottom of the $p$-spectrum are given by
\[
T_p\big(\mG_{\mu,\lambda} \big) = \frac{\mu^p}{\lambda} T_p(\mG) \quad \bigg(\text{resp., $T_p\big(\mG_{\mu,\lambda};\mV_0 \big) = \frac{\mu^p}{\lambda} T_p(\mG;\mV_0)$} \text{ if }\mV_0 \neq \emptyset \bigg),
\]
and
\[
\lambda_{0,p} \big(\mG_{\mu,\lambda} \big) = \frac{\lambda}{\mu}\lambda_{0,p}(\mG) \quad \bigg(\text{resp., $\lambda_0 \big(\mG_{\mu,\lambda};\mV_0 \big) = \frac{\lambda}{\mu}\lambda_{0,p}(\mG;\mV_0)$} \text{ if }\mV_0 \neq \emptyset \bigg),
\]
where $\mG_{\mu,\lambda} := (\mV,\mu m,\lambda b, \lambda c)$.

If, additionally, \autoref{ass:compact-embedding-l1} holds, then $\mG_{\mu,\lambda}$ is $q-p$-torsional admissible, too, with $p$-torsion function given by
\[
\tauneubcmulambda = \bigg(\frac{\mu}{\lambda} \bigg)^\frac{1}{p-1} \tauneubc \quad \bigg(\text{resp., $\taudirbcmulambda =\bigg( \frac{\mu}{\lambda} \bigg)^{\frac{1}{p-1}} \taudirbc$}\bigg).
\]
    \end{lemma}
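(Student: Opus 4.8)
The plan is to reduce everything to the positive homogeneity of the energy functional~\eqref{eq:quad-form} and of the nonlinearity $s\mapsto|s|^{p-2}s$ under the simultaneous rescaling $m\to\mu m$, $b\to\lambda b$, $c\to\lambda c$. The preliminary observation is that, for fixed $\mu,\lambda>0$, one has $\ell^r(\mV,\mu m)=\ell^r(\mV,m)$ with $\|u\|_{\ell^r(\mV,\mu m)}^r=\mu\|u\|_{\ell^r(\mV,m)}^r$ and, since $Q_p$ is linear in the pair $(b,c)$, the energy functional of $\mG_{\mu,\lambda}$ equals $\lambda\,\Qpformbc$; in particular $\Dneubc$ (hence each Sobolev-type space) is unchanged as a set under the rescaling, only its natural norm being multiplied by a fixed positive constant. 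Thus all the suprema and infima below are taken over one and the same domain before and after rescaling.

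Granted this, the first two identities are immediate. Substituting into~\eqref{eq:tors-def-1}, every Pólya quotient for $\mG_{\mu,\lambda}$ equals $\tfrac{\mu^p}{\lambda}$ times the corresponding quotient for $\mG$, so that $T_p(\mG_{\mu,\lambda})=\tfrac{\mu^p}{\lambda}T_p(\mG)$; replacing $\wneubc(\mV,m)$ by $\wdirbc(\mV,m)$ and $\mV$ by $\mV\setminus\mV_0$ in the $\ell^1$-norm yields the Dirichlet version verbatim. Likewise, substituting into~\eqref{eq:variational-characterization-ground-state}, every Rayleigh quotient for $\mG_{\mu,\lambda}$ equals $\tfrac{\lambda}{\mu}$ times that for $\mG$, whence $\lambda_{0,p}(\mG_{\mu,\lambda})=\tfrac{\lambda}{\mu}\lambda_{0,p}(\mG)$ and, again, its Dirichlet counterpart.

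For the assertion about the $p$-torsion function I would first note that \autoref{ass:compact-embedding-l1} is stable under the rescaling, since the identity maps $\wneubc(\mV,m)\to w^{1,p,1}_{\lambda b,\lambda c}(\mV,\mu m)$ and $\ell^1(\mV,m)\to\ell^1(\mV,\mu m)$ are isomorphisms of normed spaces (their norms differ only by the fixed factors $\lambda^{1/p}$ and $\mu$); hence \autoref{thm:well-p-l1} applies to $\mG_{\mu,\lambda}$, which is therefore uniquely $1-p$-torsional-admissible (the same remark together with $\mu m(\mV)<\infty\iff m(\mV)<\infty$ covers the case $q\in(1,\infty)$ via \autoref{cor:wellp}). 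It then remains to check directly that $u:=(\mu/\lambda)^{1/(p-1)}\tauneubc$ solves the weak formulation~\eqref{eq:weakneu} written for $\mG_{\mu,\lambda}$: inserting $u$ into its left-hand side and extracting the scalar through the degree-$(p-1)$ homogeneity of $s\mapsto|s|^{p-2}s$, that side equals $\lambda\cdot(\mu/\lambda)$ times the left-hand side of~\eqref{eq:weakneu} for $\mG$, i.e.\ $\mu\sum_{\mv}h(\mv)m(\mv)=\sum_{\mv}h(\mv)\,\mu m(\mv)$, which is precisely the right-hand side for $\mG_{\mu,\lambda}$; by uniqueness (strict convexity, \autoref{lem:fp-convex}) this forces $u=\tauneubcmulambda$. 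The Dirichlet case is handled identically, with $\mV\setminus\mV_0$ in place of $\mV$ and~\eqref{eq:weakdir} in place of~\eqref{eq:weakneu}.

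The whole argument is routine; the only point that requires a moment's care — and the only place an error could slip in — is the bookkeeping of the two scaling parameters, namely that the factor $\lambda$ produced by the energy and the factor $\mu/\lambda$ produced by the homogeneity of the nonlinearity combine to the $\mu$ demanded by the rescaled right-hand side, and, for $T_p$, that the $\mu^p$ in the numerator plays against the $\lambda$ in the denominator. I do not anticipate any substantial obstacle.
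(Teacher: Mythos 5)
Your proposal is correct and follows essentially the same route as the paper: both arguments rest on the identities $\|u\|_{\ell^1(\mV,\mu m)}=\mu\|u\|_{\ell^1(\mV,m)}$ and $\mathcal{Q}_p^{\mG_{\mu,\lambda}}=\lambda\,\mathcal{Q}_p^{\mG}$ applied to the variational characterizations \eqref{eq:tors-def-1} and \eqref{eq:variational-characterization-ground-state}, and on the degree-$(p-1)$ homogeneity of the weak formulation for the torsion function. Your version merely spells out more explicitly the stability of \autoref{ass:compact-embedding-l1} under rescaling and the bookkeeping $\lambda\cdot(\mu/\lambda)=\mu$, which the paper compresses into ``follows immediately by definition.''
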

    
\begin{proof}
Using the variational characterization for the $p$-torsional rigidity in \eqref{eq:tors-def-1}, we observe that (taking into account that in this case $\wneubc(\mV,m) = w_{\lambda b,\lambda c}^{1,p,2}(\mV,\mu m)$)
\begin{align*}
T_p(\mG_{\mu,\lambda}) = \sup_{u \in \wneubc(\mV,m)} \frac{\Vert u \Vert_{\ell^1(\mV,\mu m)}^p}{p\mathcal{Q}_{p}^{\mG_{\mu, \lambda}}(u)} = \frac{\mu^p}{\lambda} \sup_{u \in \wneubc(\mV,m)} \frac{\Vert u \Vert_{\ell^1(\mV,m)}^p}{p\Qneubc(u)} = \frac{\mu^p}{\lambda} T_p(\mG),
\end{align*}
because $\Vert u \Vert_{\ell^1(\mV,\mu m)} = \mu \Vert u \Vert_{\ell^1(\mV,m)}$ and $\mathcal{Q}_{p}^{\mG_{\lambda, \mu}}(u) = \lambda \Qneubc(u)$ for each $u \in \wneubc(\mV,m)$; and likewise in the case of Dirichlet conditions. Furthermore, using the variational characterization  now for the bottom of the $p$-spectrum represented in \eqref{eq:variational-characterization-ground-state}, the scaling property can be verified in the same way as for the $p$-torsional rigidity.

 Moreover, by definition of the $p$-torsion function in terms of a weak solution, see \eqref{eq:discr-ellipt-1-p}, $\tauneubcmulambda = (\frac{\mu}{\lambda})^\frac{1}{p-1} \tauneubc$ follows immediately; same holds for the Dirichlet case (and \eqref{eq:discr-ellipt-1-p-dir}).
\end{proof}

\subsection{Two examples}
We know from \autoref{cor:wellp} that, in particular, graphs that support an $\ell^p-\ell^p$-Poincaré inequality have a unique $p$-torsion function. Let us present two cases  where an explicit formula for this function  can be deduced: possibly infinite paths and finite stars.

\begin{prop}\label{prop:p-torsion-function-pat}
Let $\mP=(\mV,m,b,c\equiv 0)$ be a path graph whose vertex set is, without loss of generality, $\mV=\{\mv_j:j\in J\}$ with 
\[
\hbox{either }J=\{0,1,\ldots,n-1\}, \qquad\hbox{ or }J=\N_0.
\]
If Dirichlet conditions are imposed at $\mV_0=\{\mv_0\}$ and \autoref{assum:finite-meas} is satisfied, 
then
\begin{align}\label{eq:p-torsion-function-path-left-dirichlet}
\udirbpath(\mv_j)
=
\begin{cases}
0, &j=0,\\
  \sum\limits_{\ell=1}^j \bigg(\frac{1}{b(\mv_{\ell-1}, \mv_{\ell})}  \sum\limits_{\substack{k \in J\\ k>\ell-1 }} m(\mv_k) \bigg)^\frac{1}{p-1},\qquad &j\ne 0.
  \end{cases}
\end{align}
defines a pointwise solution for the equation $L_p^\mG f = \mathbf{1}$. Moreover, $\mP$ is $1-p$-torsional-admissible if and only if $\udirbpath \in \wdirb(\mV,m)$. In this case, the unique $p$-torsion function is given by $\taudirbpath = \udirbpath$.

In the special case of $p=2$, $b=b_{\mathrm{st}}$ and $\mV\setminus \mV_0$ consisting of $n-1$ vertices, 
\begin{equation}\label{eq:torsional-rigidity-path-m1}
T_2(\mP;\mV_0)=\frac{(n-1)n(2n-1)}{6}\qquad\hbox{for }m=\mathbf{1}
\end{equation}
and
\begin{equation}\label{eq:torsional-rigidity-path-mdeg}
T_2(\mP;\mV_0)=\frac{(n-1)(2n-3)(2n-1)}{3}\qquad\hbox{for }m=\deg.
\end{equation}
\end{prop}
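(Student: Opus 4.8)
The plan is to treat the path $\mP$ as a one-dimensional difference equation governed by the discrete $p$-current along each edge, and to read off the three claims — the pointwise solution formula, the admissibility criterion, and the two explicit values for $p=2$ — in that order.

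First I would set, for $\ell\in J$ with $\ell\ge 1$, the current $\Phi_\ell:=b(\mv_{\ell-1},\mv_\ell)\,|\nabla_{\mv_\ell,\mv_{\ell-1}}f|^{p-2}\nabla_{\mv_\ell,\mv_{\ell-1}}f$. Since on $\mP$ the vertex $\mv_j$ has neighbours $\mv_{j-1},\mv_{j+1}$ — and only $\mv_{n-2}$ if $J$ is finite and $j=n-1$ — and $c\equiv 0$, the symbolic equation $\Lpformbc f(\mv_j)=1$ is equivalent to the recursion $\Phi_j-\Phi_{j+1}=m(\mv_j)$ at every interior vertex, supplemented by $\Phi_{n-1}=m(\mv_{n-1})$ at the single leaf in the finite case. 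Using \autoref{assum:finite-meas} (which makes the tails $\sum_{k\ge\ell}m(\mv_k)$ finite when $J=\N_0$) this is solved by $\Phi_\ell=\sum_{k\in J,\,k>\ell-1}m(\mv_k)>0$; inverting the strictly increasing odd map $t\mapsto|t|^{p-2}t$ gives the positive increments $\nabla_{\mv_\ell,\mv_{\ell-1}}f=\bigl(b(\mv_{\ell-1},\mv_\ell)^{-1}\sum_{k>\ell-1}m(\mv_k)\bigr)^{1/(p-1)}$, and telescoping from $f(\mv_0)=0$ yields precisely \eqref{eq:p-torsion-function-path-left-dirichlet}; a direct back-substitution then confirms that this $\udirbpath$ satisfies $\Lpformbc\udirbpath(\mv_j)=1$ for all $\mv_j\ne\mv_0$.

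For the admissibility equivalence I would argue both directions. If $\udirbpath\in\wdirb(\mV,m)$, a discrete Green's formula — namely the identity $\tfrac12\sum_{\mv,\mw}b(\mv,\mw)|\nabla_{\mv,\mw}\udirbpath|^{p-2}\nabla_{\mv,\mw}\udirbpath\,\nabla_{\mv,\mw}h=\sum_{\mv\in\mV\setminus\mV_0}h(\mv)m(\mv)$, valid for every $h\in\wdirb(\mV,m)$ thanks to Hölder's inequality (controlling the energy pairing $\sum_\ell\Phi_\ell\nabla_{\mv_\ell,\mv_{\ell-1}}h$) and to \autoref{assum:finite-meas} (which forces the partial-sum flux $\Phi_N h(\mv_N)$ to vanish as $N\to\infty$ when $J=\N_0$) — shows that $\udirbpath$ is a weak $\ell^1$-solution of \eqref{eq:discr-ellipt-1-p-dir}, so $\mP$ is $1-p$-torsional-admissible; strict convexity of the torsion functional (\autoref{lem:fp-convex}) then yields uniqueness and hence $\taudirbpath=\udirbpath$. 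Conversely, if $\mP$ is $1-p$-torsional-admissible, its $p$-torsion function $\tau:=\taudirbpath\in\wdirb(\mV,m)$ is a pointwise solution by \autoref{rem:pointw} and is strictly positive by \autoref{lem:torsionfunctionpositive}; since $\mv_0$ has a single neighbour and $m(\mV)<\infty$, the function $\mathbf{1}_{\mV\setminus\mV_0}$ belongs to $\wdirb(\mV,m)$, and inserting it into the weak formulation \eqref{eq:weakdir} — where, as $c\equiv 0$, only the edge between $\mv_0$ and $\mv_1$ contributes — gives $\Phi_1=b(\mv_0,\mv_1)\tau(\mv_1)^{p-1}=\sum_{k\in J,\,k\ge 1}m(\mv_k)$. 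Feeding this into the recursion $\Phi_j-\Phi_{j+1}=m(\mv_j)$ forces $\Phi_\ell=\sum_{k>\ell-1}m(\mv_k)$ for all $\ell$, and telescoping exactly as before gives $\tau=\udirbpath$, so in particular $\udirbpath\in\wdirb(\mV,m)$.

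Finally, for the explicit values when $p=2$, $b=b_{\mathrm{st}}$ and $\mV=\{\mv_0,\dots,\mv_{n-1}\}$ is finite, the graph is automatically $1-p$-torsional-admissible, so \autoref{prop:variational-char} gives $T_2(\mP;\mV_0)=\|\taudirbpath\|_{\ell^1(\mV\setminus\mV_0,m)}$ (the exponent $p-1$ equals $1$) and \eqref{eq:p-torsion-function-path-left-dirichlet} simplifies to $\taudirbpath(\mv_j)=\sum_{\ell=1}^{j}\sum_{k=\ell}^{n-1}m(\mv_k)$. For $m\equiv\mathbf{1}$ the inner sum is $n-\ell$, hence $\taudirbpath(\mv_j)=\tfrac12 j(2n-1-j)$, and summing over $j=1,\dots,n-1$ with the standard identities for $\sum j$ and $\sum j^2$ produces $\tfrac{(n-1)n(2n-1)}{6}$. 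For $m=\deg$ one has $\deg(\mv_0)=\deg(\mv_{n-1})=1$ and $\deg\equiv 2$ on the remaining vertices, so the inner sum equals $2n-2\ell-1$ for every $1\le\ell\le n-1$; then $\taudirbpath(\mv_j)=j(2n-2-j)$ and $\sum_{j=1}^{n-1}j(2n-2-j)\deg(\mv_j)=\tfrac{(n-1)(2n-1)(2n-3)}{3}$ after an elementary power-sum computation (the endpoint $j=n-1$ contributing the term $(n-1)^2$). I expect the genuinely delicate point to be the ``only if'' half of the admissibility statement — one must exclude the possibility that a weak solution is a different member of the one-parameter family of pointwise solutions of $\mP$, which is exactly what testing against $\mathbf{1}_{\mV\setminus\mV_0}$ achieves — together with the subsidiary point of checking that the boundary flux in the Green's formula vanishes at infinity when $J=\N_0$.
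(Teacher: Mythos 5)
Your proposal is correct, and it reaches the formula by a genuinely different route from the paper. The paper proves \eqref{eq:p-torsion-function-path-left-dirichlet} in the finite case by induction on $n$, eliminating the last vertex and absorbing its mass into a modified measure $m'$, and then handles $J=\N_0$ by testing against the whole family of tail indicators $h_j=\mathbf{1}_{\{\mv_j,\mv_{j+1},\dots\}}$ to extract each edge flux separately before reducing to a truncated finite path. You instead work directly with the edge currents $\Phi_\ell$, observe that $\Lpformbc f=\mathbf 1$ is the first-order recursion $\Phi_j-\Phi_{j+1}=m(\mv_j)$, and pin down the correct member of the one-parameter family $\Phi_\ell=\gamma+\sum_{k\ge\ell}m(\mv_k)$ (the family the paper's subsequent remark exhibits as $f_\gamma$) by testing against the single function $\mathbf{1}_{\mV\setminus\mV_0}$; this treats the finite and infinite cases uniformly and makes the ``only if'' direction of the admissibility equivalence cleaner than the paper's argument. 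The one step you flag but do not carry out --- that the boundary flux $\Phi_N h(\mv_N)$ vanishes as $N\to\infty$ in the Green's formula, so that $\udirbpath$ really is a \emph{weak} solution once it lies in $\wdirb(\mV,m)$ --- does require an argument: by Abel summation and H\"older the limit $L:=\lim_N\Phi_Nh(\mv_N)$ exists, and if $L\ne0$ then $|h(\mv_N)|\gtrsim 1/\Phi_N$ eventually, whence $\sum_N m(\mv_N)|h(\mv_N)|\gtrsim\sum_N(\Phi_N-\Phi_{N+1})/\Phi_N=\infty$ (since $\prod_N\Phi_{N+1}/\Phi_N=0$), contradicting $h\in\ell^1(\mV,m)$. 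Since this is completable along exactly these lines --- and the paper itself dismisses this direction as ``immediate'' --- I would not count it as a gap, but you should include it. Your $p=2$ computations agree with the paper's.
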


More generally, in the non-linear case of $p \neq 2$ it follows from \eqref{eq:p-torsion-function-path-left-dirichlet} that if  $\mV\setminus \mV_0$ consists of $n-1$ vertices then for $m\equiv \mathbf{1}$ and standard edge weights $b=b_{\mathrm{st}}$, the $p$-torsional rigidity is
\begin{equation}\label{eq:torsional-rigid-infty}
\|\taudirbpath\|_{\ell^1(\mV\setminus \mV_0,m)}^{p-1} =\bigg(\sum_{j=1}^{n-1} \sum_{\ell=1}^{j}(n-\ell)^\frac{1}{p-1}\bigg)^{p-1}\quad \hbox{with}\quad \|\taudirbpath\|_{\ell^\infty(\mV \setminus \mV_0)} =\sum_{\ell=1}^{n-1} (n-\ell)^\frac{1}{p-1}.
\end{equation}
We should emphasize that the assumption of $1-p$-torsional-admissibility of $\mP$ is automatically satisfied whenever $\wdirbc(\mV,m)$ is compactly embedded in $\ell^p(\mV,m)$, and in particular if $\mP$ is a finite path graph, as we already saw in \autoref{cor:compact-admiss}. 
A geometric condition for $1-p$-torsional-admissibility of an infinite path graph can also be deduced from \autoref{prop:estimate-inradius} below.
\begin{proof}
It is an immediate observation that the function $\udirbpath$ defined in \eqref{eq:p-torsion-function-path-left-dirichlet} yields a pointwise solution for $L_p^\mG f=\mathbf{1}$ in both the finite and the infinite case. To prove 
the remaining part of \autoref{prop:p-torsion-function-pat} we distinguish between the two cases where $J$ is finite or infinite.

\emph{Case 1:} Starting with the finite case, we first note that in this case, the equivalence holds trivially and that $\udirbpath$ indeed yields a $p$-torsion function; thus we only have to deduce the representation \eqref{eq:p-torsion-function-path-left-dirichlet} for any arbitrary weak $\ell^1$-solution, to show the uniqueness: to this end, we will argue inductively on the possible number $n  \in \mathbb{N}$ of vertices. 

Let us first consider the case of two vertices $\mv_0,\mv_1$, with Dirichlet conditions imposed on $\mV_0:=\{\mv_0\}$: of course, $\taudirbpath(\mv_0)=0$, and furthermore
\[
\frac{b(\mv_0,\mv_1)}{m(\mv_1)} \vert \nabla_{\mv_1,\mv_0}\taudirbpath \vert^{p-2} \nabla_{\mv_1,\mv_0} \taudirbpath= \frac{b(\mv_0,\mv_1)}{m(\mv_1)} \vert \taudirbpath(\mv_1) \vert^{p-2} \taudirbpath(\mv_1)  = 1.
\]
In particular, $\taudirbpath(\mv_1)>0$ and  we deduce that
\begin{align}\label{eq:p-torsion-path-two-vertices}
\taudirbpath(\mv_0) = 0, \quad \text{and} \quad \taudirbpath(\mv_1) = \bigg(\frac{m(\mv_1)}{b(\mv_0,\mv_1)}\bigg)^\frac{1}{p-1}:
\end{align}
this shows the assertion for $n=2$.

Let now $n \geq 3$. Then to obtain $\taudirbpath$ one has to solve, in particular,
\begin{align*}
\frac{1}{m(\mv_{n-2})} &\big(b(\mv_{n-3}, \mv_{n-2})\vert \nabla_{\mv_{n-2},\mv_{n-3}}\taudirbpath \vert^{p-2} \nabla_{\mv_{n-2},\mv_{n-3}}\taudirbpath \\ &\qquad +  b(\mv_{n-2}, \mv_{n-1})\vert \nabla_{\mv_{n-2},\mv_{n-1}}\taudirbpath \vert^{p-2} \nabla_{\mv_{n-2},\mv_{n-1}}\taudirbpath \big) = 1,
\end{align*}
and
\begin{align*}
\frac{1}{m(\mv_{n-1})}b(\mv_{n-2},\mv_{n-1}) \vert \nabla_{\mv_{n-1},\mv_{n-2}}\taudirbpath \vert^{p-2}\nabla_{\mv_{n-1},\mv_{n-2}}\taudirbpath = 1
\end{align*}
Plugging the latter into the former equation, we obtain
\begin{align*}
\frac{1}{m(\mv_{n-2})} b(\mv_{n-3}, \mv_{n-2})\vert \nabla_{\mv_{n-2},\mv_{n-3}}\taudirbpath \vert^{p-2} \nabla_{\mv_{n-2},\mv_{n-3}}\taudirbpath = \frac{m(\mv_{n-2}) + m(\mv_{n-1})}{m(\mv_{n-2})}.
\end{align*}
Now, define another point measure $m'$ on $\mV \setminus \{\mv_{n-1} \}$ given by
\[
m'(\mv)= \begin{cases} m(\mv), & \text{if} \:\: \mv \neq \mv_{n-2}, \\ m(\mv_{n-2})+m(\mv_{n-1}), & \text{else}, \end{cases} \qquad \text{for } \mv \in \mV \setminus \{\mv_{n-1} \}.
\] 
Then the system reads like
\begin{align*}
\frac{1}{m'(\mv_j)}&\big( b(\mv_{j-1},\mv_j)\vert \nabla_{\mv_j,\mv_{j-1}}\taudirbpath \vert^{p-2} \nabla_{\mv_j,\mv_{j-1}}\taudirbpath \\ &\qquad +  b(\mv_{j+1},\mv_j)\vert \nabla_{\mv_j,\mv_{j+1}}\taudirbpath \vert^{p-2} \nabla_{\mv_j,\mv_{j+1}}\taudirbpath \big) = 1
\end{align*}
for $j=1,\dots,n-2$ and 
\[
\frac{1}{m'(\mv_{n-2})} b(\mv_{n-3},\mv_{n-2}) \vert \nabla_{\mv_{n-2},\mv_{n-3}} \taudirbpath \vert^{p-2} \nabla_{\mv_{n-2},\mv_{n-3}}\taudirbpath = 1
\]
for the last equation. We have thus reduced our original system in $n-1$ equations and $n-1$ unknowns to a system with only $n-2$ equations and $n-2$ unknowns; solving this smaller system then leads to
\[
\taudirbpath(\mv_0)=0, \quad {and} \quad \taudirbpath(\mv_j) = \sum_{\ell=1}^j \bigg(\frac{1}{b(\mv_{\ell-1}, \mv_{\ell})} \sum_{k=\ell}^{n-2} m'(\mv_k) \bigg)^\frac{1}{p-1}
\]
and since $m'(\mv_{n-2})=m(\mv_{n-2})+m(\mv_{n-1})$ by definition, this yields
\[
\taudirbpath(\mv_j) = \sum_{\ell=1}^j \bigg(\frac{1}{b(\mv_{\ell-1}, \mv_{\ell})} \sum_{k=\ell}^{n-1} m(\mv_k) \bigg)^\frac{1}{p-1}
\]
for $j=1,\dots,n-2$. Looking once again at the equation 
\[
\frac{1}{m(\mv_{n-1})}b(\mv_{n-2},\mv_{n-1}) \vert \nabla_{\mv_{n-1},\mv_{n-2}}\taudirbpath  \vert^{p-2}\nabla_{\mv_{n-1},\mv_{n-2}}\taudirbpath =1
\]
we conclude (as in the case for $n=2$) that $\taudirbpath(\mv_{n-1}) > \taudirbpath(\mv_{n-2})$, because otherwise the left-hand side would be non-positive and this finally leads to
\begin{align*}
\taudirbpath(\mv_{n-1}) &= \bigg(\frac{m(\mv_{n-1})}{b(\mv_{n-2},\mv_{n-1})}\bigg)^\frac{1}{p-1} + \taudirbpath(\mv_{n-2}) \\&= \bigg(\frac{m(\mv_{n-1})}{b(\mv_{n-2},\mv_{n-1})}\bigg)^\frac{1}{p-1} + \sum_{\ell=1}^{n-2} \bigg( \frac{1}{b(\mv_{\ell-1}, \mv_{\ell})} \sum_{k=\ell}^{n-1} m(\mv_k) \bigg)^\frac{1}{p-1} \\&= \sum_{\ell=1}^{n-1} \bigg( \frac{1}{b(\mv_{\ell-1}, \mv_{\ell})} \sum_{k=\ell}^{n-1} m(\mv_k) \bigg)^\frac{1}{p-1},
\end{align*}
which eventually yields 
$\taudirbpath = \udirbpath$ in the finite case.

\emph{Case 2:} Next, suppose that $J$ is infinite, in other words $J = \N_0$: it is immediate that $\taudirb$ yields a $p$-torsion function whenever it belongs to $\wdirb(\mv,m)$. To prove the other direction, we are going to reduce this to the finite case. Let $j \in \N$. As $\mathsf{P} = \Vmb$ is $1-p$-torsional-admissible, there exists a weak $\ell^1$-solution $\taudirbpath$ of $\Ldirbpath u = \mathbf{1}$ on $\mV \setminus \mV_0$, 
in particular, considering the function $h_j \in \wdirb(\mV,m)$ given by
\begin{align*}
h_j(\mv_i) := \begin{cases}
0, & \text{if $i=0,\dots,j-1$,} \\ 1, & \text{else,}
\end{cases} \qquad i \in \mathbb{N}_0, 
\end{align*}
one has
\begin{align*}
\sum_{k=1}^\infty  h_j(\mv_k)m(\mv_k) &= \sum_{k=1}^\infty b(\mv_{k-1}, \mv_{k})\vert \nabla_{\mv_{k},\mv_{k-1}}\taudirbpath \vert^{p-2} \nabla_{\mv_{k},\mv_{k-1}}\taudirbpath  \nabla_{\mv_{k},\mv_{k-1}}h_j 
\end{align*}
and consequently
\begin{align}
b(\mv_{j-1}, \mv_{j})\vert \nabla_{\mv_{j},\mv_{j-1}}\taudirbpath \vert^{p-2} \nabla_{\mv_{j},\mv_{j-1}}\taudirbpath = \sum_{k=j}^\infty m(\mv_k).
\end{align}
Moreover,
\begin{align}
\begin{aligned}
\frac{1}{m(\mv_{i})} &\big(b(\mv_{i-1}, \mv_{i})\vert \nabla_{ \mv_{i},\mv_{i-1}}\taudirbpath \vert^{p-2} \nabla_{ \mv_{i},\mv_{i-1}}\taudirbpath \\ &\qquad +  b(\mv_{i}, \mv_{i+1})\vert \nabla_{\mv_{i},\mv_{i+1}}\taudirbpath \vert^{p-2} \nabla_{\mv_{i},\mv_{i+1}}\taudirbpath \big) = 1,
\end{aligned}
\end{align}
holds for all $i=1,\dots,j-1$. Thus, defining $m'$ on $\mV' := \{\mv_0,\dots,\mv_j\}$ by
\[
m'(\mv_i)= \begin{cases} m(\mv_i), & \text{if} \:\: i \neq j, \\ \sum_{k=j}^\infty m(\mv_k), & \text{else}, \end{cases} \qquad \text{for } i =0,1,\ldots,j,
\] 
one reaches at a \emph{finite} path graph $\mP' = (\mV',b',m',c\equiv 0)$ with $b' := b\vert_{\mV' \times \mV'}$. Therefore, according to the first case, we observe that
\begin{align*}
\taudirbpath(\mv_j) &= \sum_{\ell=1}^j \bigg( \frac{1}{b'(\mv_{\ell-1}, \mv_\ell)} \sum_{k=\ell}^j m'(\mv_k) \bigg)^\frac{1}{p-1} \\&= \sum_{\ell=1}^j \bigg( \frac{1}{b(\mv_{\ell-1}, \mv_\ell)} \bigg[\sum_{k=\ell}^{j-1} m(\mv_k) + \sum_{k=j}^\infty m(\mv_k) \bigg] \bigg)^\frac{1}{p-1} \\&= \sum_{\ell=1}^j \bigg( \frac{1}{b(\mv_{\ell-1}, \mv_\ell)} \sum_{k=\ell}^\infty m(\mv_k) \bigg)^\frac{1}{p-1} =\udirbpath(\mv_j)
\end{align*}
which completes the proof 
in the case of infinite graphs as $\taudirbpath \in \wdirb(\mV,m)$. (Note that this also yields uniqueness of the $p$-torsion function, as any (weak) solution is, by the same proof, necessarily of the form given in \eqref{eq:p-torsion-function-path-left-dirichlet}.) 

To conclude, let us focus on the linear case of $p=2$, for a finite graph with $b=b_{\mathrm{st}}$.
If $m\equiv \mathbf{1}$, then applying~\eqref{eq:p-torsion-function-path-left-dirichlet} we find for the $p$-torsion function $\taudirbpathlin$ of $\mP$ that
\begin{equation*}
\begin{split}
\taudirbpathlin(\mv_j) &= \sum_{\ell=1}^j \sum_{k=\ell}^{n-1} 1 = \sum_{k = 1}^j (n-k) = nj - \sum_{\ell=1}^j \ell \\
&= nj-\frac{j(j+1)}{2} = j\bigg( n-\frac{j+1}{2}\bigg),
\end{split}
\qquad
\hbox{for all }j=1,\dots,n-1.
\end{equation*}
This shows { by \autoref{prop:variational-char}} that
\begin{align}\label{eq:formula-torsional-rigidity-stadard-and-m-equiv-1}
\begin{aligned}
    T_2(\mP;\mV_0) &= \sum_{j=1}^{n-1} \taudirbpathlin(\mv_j) = \sum_{j=1}^{n-1} j\bigg(n-\frac{j+1}{2}\bigg) \\
    &= n \sum_{j=1}^{n-1} j - \frac{1}{2}\sum_{j=1}^{n-1} j(j+1) \\&= \frac{(n-1)n(2n-1)}{6}.
\end{aligned}
\end{align}  
Likewise, for $m=\deg$
\begin{equation*}
\begin{split}
\taudirbpathlin(\mv_j) &= \sum_{\ell=1}^j \sum_{k=\ell}^{n-1} \deg(\mv_k) = \sum_{k = 1}^j \big(2(n-1-k)+1\big) \\
&= (2n-1)j - j(j+1)\\
&= j\Big( 2n-2-j\Big),
\end{split}
\qquad
\hbox{for all }j=1,\dots,n-1.
\end{equation*}
Therefore,
\begin{equation}\label{eq:formula-torsional-rigidity-stadard-and-m=deg}
\begin{split}
T_2(\mP;\mV_0)&=
\sum_{j=1}^{n-1} \taudirbpathlin(\mv_j)\deg(\mv_j) = 2\sum_{j=1}^{n-2} j\Big(2n-2-j\Big) + (n-1)^2 \\
&= 2\bigg((n-1)^2 (n-2) -\frac{(n-1)(n-2)(2n-3)}{6}\bigg) + (n-1)^2\\
&= \frac{(n-1)(n-2)(4n-3)}{3} + (n-1)^2 \\&= \frac{(n-1)(4n^2-8n+3)}{3} = \frac{(n-1)(2n-1)(2n-3)}{3}.
\end{split}
\end{equation}
This concludes the proof. 
 \end{proof}

\begin{rem}
(i) If one imposes a Dirichlet condition at \emph{both} ends of a finite path graph $\mP=\Vmb$, it seems to be more difficult to derive a general formula for the $p$-torsion function $\taudirbpath$, unless additional symmetry assumptions on the weights $b$ and the underlying discrete measure $m$ are imposed and the underlying path graph has an \emph{odd} number of vertices. Namely, if $n=2k+1$ for some $k \in \mathbb{N}$ and
\begin{align*}
m(\mv_\ell) = m(\mv_{2k-\ell}), \quad b(\mv_{\ell-1},\mv_\ell) = b(\mv_{2k-\ell}, \mv_{2k-\ell+1}) \quad \text{for $\ell=1,\dots,k$},
\end{align*}
then the $p$-torsion function $\taudirbpath$ is given by $\taudirbpath(\mv_0) = \taudirbpath(\mv_{n-1}) = 0$ and
\begin{align}\label{eq:p-torsion-two-dirichlet-odd}
 \taudirbpath(\mv_j) = \sum_{\ell=1}^j \bigg(\frac{1}{b(\mv_{\ell-1},\mv_\ell)} \bigg[\sum_{h=\ell}^{k-1} m(\mv_h) + \frac{m(\mv_k)}{2} \bigg] \bigg)^\frac{1}{p-1} = \taudirbpath(\mv_{2k-j}), \quad \text{for $j=1,\dots,k$.}
\end{align}
Indeed, if $k \geq 2$, writing down the equations for $\taudirbpath(\mv_{k-1})$, $\taudirbpath(\mv_k)$ and $\taudirbpath(\mv_{k+1})$ and using the symmetry of the $p$-torsion function, one reaches at the equations
\begin{align*}
\frac{1}{m(\mv_{k-1})} &\big(b(\mv_{k-2}, \mv_{k-1})\vert \nabla_{\mv_{k-1},\mv_{k-2}}\taudirbpath \vert^{p-2} \nabla_{\mv_{k-1},\mv_{k-2}}\taudirbpath \\ &\qquad +  b(\mv_{k-1}, \mv_{k})\vert \nabla_{\mv_{k-1},\mv_{k}}\taudirbpath \vert^{p-2} \nabla_{\mv_{k-1},\mv_{k}}\taudirbpath \big) = 1,
\end{align*}
and
\begin{align*}
\frac{2}{m(\mv_{k})}b(\mv_{k-1},\mv_{k-2}) \vert \nabla_{\mv_{k},\mv_{k-1}}\taudirbpath \vert^{p-2}\nabla_{\mv_{k},\mv_{k-1}}\taudirbpath = 1.
\end{align*}
Thus, taking only the first $k+1$ vertices, we reach at a path graph having only one Dirichlet end, unchanged weights and point measures with the difference that the last point measure $m(\mv_k)$ is divided by $2$. Therefore, \autoref{prop:p-torsion-function-pat} yields \eqref{eq:p-torsion-two-dirichlet-odd}. Moreover, one easily verifies that \eqref{eq:p-torsion-two-dirichlet-odd} also holds for the case where $k=1$.

(ii) We emphasize that uniqueness of the pointwise solution $\udirbpath$ represented in \eqref{eq:p-torsion-function-path-left-dirichlet} does not hold in general:
indeed, in the infinite case, all functions $f_\gamma:\N \to  \R$ defined by
\begin{align*}
f_\gamma(\mv_j) 
=
\begin{cases}
0, &j=0,\\
  \sum\limits_{\ell=1}^j \bigg(\frac{1}{b(\mv_{\ell-1}, \mv_{\ell})}  \bigg(\sum\limits_{\substack{k \in J\\ k>\ell-1 }} m(\mv_k) + \gamma\bigg) \bigg)^\frac{1}{p-1},\qquad &j\ne 0,
  \end{cases}
\end{align*}
are also pointwise solutions of $L_p^\mG f = \mathbf{1}$ for any $\gamma\ge 0$. However, the strict convexity of $\mathfrak{F}_p^\mP$ on $\wdirb(\mV,m)$, see \autoref{lem:fp-convex}, implies the uniqueness of a weak solution of class $\wdirb(\mV,m)$, and indeed the only such $f_\gamma$ belonging to $\wdirb(\mV,m)$ is $\udirbpath$ in \autoref{prop:p-torsion-function-pat}.

\end{rem}
The arguments used in the proof \autoref{prop:p-torsion-function-pat} can be used to derive formulas for the $p$-torsion function for other graphs, too: a very simple example is given by star graphs.
\begin{prop}\label{prop:formula-n-star}
Let $\mathsf{S}:=\Vmb$ be a star graph on $n+1$ vertices
$\mV = \{ \mv_0,\mv_1,\dots,\mv_n \}$ with Dirichlet condition at $\mV_0 = \{\mv_0 \}$ and central vertex $\mv_1$.
Then the $p$-torsion function $\taudirbstar$ is given by
\begin{equation}\label{eq:formula-torsion-star}
\taudirbstar(\mv_j) =
\begin{cases}
0, &j=0,\\
 \bigg(\frac{\sum_{k=1}^{n} m(\mv_k)}{b(\mv_1,\mv_0)} \bigg)^\frac{1}{p-1}, &j=1,\\
  \bigg(\frac{\sum_{k=1}^{n} m(\mv_k)}{b(\mv_1,\mv_0)} \bigg)^\frac{1}{p-1} + \bigg(\frac{m(\mv_j)}{b(\mv_1,\mv_j)} \bigg)^\frac{1}{p-1},\quad &j\in\{2,\ldots,n\}.
  \end{cases}
\end{equation}

If, in particular, standard edge weights $b=b_{\mathrm{st}}$ are considered and $p=2$, then
\begin{equation}\label{eq:tors-star-1}
T_2(\mathsf{S};\mV_0)=n^2+n-1\qquad \hbox{for }m=\mathbf{1} 
\end{equation}
and
\begin{equation}\label{eq:tors-star-deg}
T_2(\mathsf{S};\mV_0)=4n^2-n\qquad \hbox{for }m=\deg. 
\end{equation}
\end{prop}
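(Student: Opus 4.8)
The plan is to follow the template of the proof of \autoref{prop:p-torsion-function-pat}, using that a star graph on $n+1$ vertices is, in essence, a path graph on the two vertices $\mv_0,\mv_1$ to which the pendant leaves $\mv_2,\dots,\mv_n$ have been attached at the centre $\mv_1$. Since $\mathsf{S}$ is finite and $\mV_0=\{\mv_0\}\neq\emptyset$, the relevant discrete Sobolev space $\wdirb(\mV,m)$ is finite-dimensional, hence compactly embedded in $\ell^1(\mV,m)$; thus \autoref{cor:compact-admiss} together with \autoref{thm:well-p-l1} (or \autoref{cor:wellp}, if one prefers to work with some $q>1$) guarantees that $\mathsf{S}$ with Dirichlet condition at $\mv_0$ possesses a unique $p$-torsion function $\taudirbstar$, which by \autoref{rem:pointw} is the pointwise solution of $\Ldirbstar u=\mathbf{1}$ on $\mV\setminus\mV_0$ subject to $u(\mv_0)=0$. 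It only remains to identify it explicitly.

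First I would read off the pointwise equations at the outer leaves. For $j\in\{2,\dots,n\}$ the sole neighbour of $\mv_j$ is the centre $\mv_1$, so the equation at $\mv_j$ is $b(\mv_1,\mv_j)\,|u(\mv_j)-u(\mv_1)|^{p-2}\bigl(u(\mv_j)-u(\mv_1)\bigr)=m(\mv_j)>0$; since $t\mapsto|t|^{p-2}t$ is an increasing bijection of $\R$, this forces $u(\mv_j)>u(\mv_1)$ and $u(\mv_j)-u(\mv_1)=\bigl(m(\mv_j)/b(\mv_1,\mv_j)\bigr)^{1/(p-1)}$. Substituting these relations into the equation at the centre --- where $\mv_1$ is adjacent to $\mv_0$ (at which $u$ vanishes) and to every $\mv_2,\dots,\mv_n$ --- the flux through each edge $\{\mv_1,\mv_j\}$, $j\geq2$, collapses to exactly $-m(\mv_j)$, because $b(\mv_1,\mv_j)\,|u(\mv_1)-u(\mv_j)|^{p-2}\bigl(u(\mv_1)-u(\mv_j)\bigr)=-b(\mv_1,\mv_j)\cdot m(\mv_j)/b(\mv_1,\mv_j)=-m(\mv_j)$. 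As $\taudirbstar>0$ by the minimum principle \autoref{lem:torsionfunctionpositive}, we have $|u(\mv_1)|^{p-2}u(\mv_1)=u(\mv_1)^{p-1}$, so the equation at the centre reduces to $b(\mv_1,\mv_0)\,u(\mv_1)^{p-1}=m(\mv_1)+\sum_{j=2}^{n}m(\mv_j)=\sum_{k=1}^{n}m(\mv_k)$. This yields the asserted value of $\taudirbstar(\mv_1)$, and the leaf relations then give $\taudirbstar(\mv_j)$ for $j\geq2$, i.e.\ \eqref{eq:formula-torsion-star}. (Equivalently, eliminating the leaf unknowns is precisely the ``measure-reduction'' step of Case~1 in the proof of \autoref{prop:p-torsion-function-pat}, collapsing $\mathsf{S}$ to the two-vertex path $\mv_0$--$\mv_1$ carrying the modified mass $m(\mv_1)+\sum_{j\geq2}m(\mv_j)$ at $\mv_1$.)

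For the closed forms I would then invoke \autoref{prop:variational-char}, which here gives $T_2(\mathsf{S};\mV_0)=\|\taudirbstarlin\|_{\ell^1(\mV\setminus\mV_0,m)}$ since $p-1=1$, and evaluate \eqref{eq:formula-torsion-star} with $b=b_{\mathrm{st}}$: for $m\equiv\mathbf{1}$ one reads off $\taudirbstarlin(\mv_1)=n$ and $\taudirbstarlin(\mv_j)=n+1$ ($j\geq2$), while for $m=\deg$ one has $\deg(\mv_1)=n$ and $\deg(\mv_j)=1$ ($j\geq2$), hence $\taudirbstarlin(\mv_1)=2n-1$ and $\taudirbstarlin(\mv_j)=2n$; summing the $m$-weighted values over $\mV\setminus\mV_0$ then produces \eqref{eq:tors-star-1} and \eqref{eq:tors-star-deg}, respectively, by a routine computation. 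Throughout, the only genuinely delicate points are the sign bookkeeping at the centre --- each outer-leaf term must cancel part of the right-hand side, so that only the edge towards $\mv_0$ survives --- and the appeal to strict positivity in order to linearise $|u(\mv_1)|^{p-2}u(\mv_1)$; everything else is mere bookkeeping.
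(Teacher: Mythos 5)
Your argument is correct and follows essentially the same route as the paper's proof: solve the pointwise equation at each leaf for the difference $\taudirbstar(\mv_j)-\taudirbstar(\mv_1)$, substitute into the equation at the centre so that each leaf edge contributes exactly $-m(\mv_j)$, use strict positivity to replace $\vert\taudirbstar(\mv_1)\vert^{p-2}\taudirbstar(\mv_1)$ by $\taudirbstar(\mv_1)^{p-1}$, and finally evaluate the weighted $\ell^1$ norm via \autoref{prop:variational-char}. One caveat: your intermediate values for $m=\deg$ (namely $\taudirbstar(\mv_1)=2n-1$ and $\taudirbstar(\mv_j)=2n$ for $j\ge 2$) are right, but summing them against $m=\deg$ gives $n(2n-1)+2n(n-1)=4n^2-3n$, not the $4n^2-n$ asserted in \eqref{eq:tors-star-deg}; the discrepancy appears to lie in the paper's stated formula rather than in your computation (the paper's own Figure for the Kohler--Jobin product plots $4E^2-3E$ for the star with $m=\deg$), so you should not claim that the sum ``produces \eqref{eq:tors-star-deg}'' without flagging this mismatch.
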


\begin{figure}[h]
\begin{tikzpicture}[scale=0.50]
      \tikzset{enclosed/.style={draw, circle, inner sep=0pt, minimum size=.10cm, fill=gray}, every loop/.style={}}

      \node[enclosed, label={below right: $\:\; \mv_1$}] (Z) at (0,4) {};
      \node[enclosed, label={above: $\mv_5$}] (A) at (0,6.5) {};
      \node[enclosed, fill=white, label={left: $\mv_0$}] (B) at (-2.5,4.75) {};
      \node[enclosed, label={right: $\mv_4$}] (C) at (2.5,4.75) {};
      \node[enclosed, label={left: $\mv_2$}] (D) at (-1.5,2) {};
      \node[enclosed, label={right: $\mv_3$}] (E) at (1.5,2) {};

      \draw (Z) edge node[above] {} (A) node[midway, above] (edge1) {};
      \draw (Z) edge node[above] {} (B) node[midway, above] (edge2) {};
      \draw (Z) edge node[above] {} (C) node[midway, above] (edge3) {};
      \draw (Z) edge node[above] {} (D) node[midway, above] (edge4) {};
      \draw (Z) edge node[above] {} (E) node[midway, above] (edge5) {};
     \end{tikzpicture}
     \vspace{-1cm}
     \caption{A $5$-star with center $\mv_1$ and a Dirichlet condition at $\mV_0 = \{ \mv_0 \}$.}
     \end{figure}
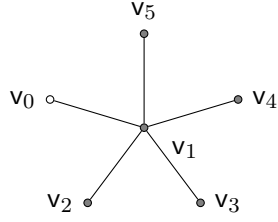
     
\begin{proof}
As the $p$-torsion function $\taudirbstar$ should satisfy the equation $\Ldirbstar \taudirbstar = \mathbf{1}$, we deduce
\begin{align}
\frac{1}{m(\mv_j)} b(\mv_j, \mv_1) \vert \nabla_{\mv_j,\mv_1} \taudirbstar \vert^{p-2} \nabla_{\mv_j,\mv_1} \taudirbstar &= 1,\quad  j \in \{2,\dots,n \},\label{eq:equation-star-graphs}
\\ 
\frac{1}{m(\mv_1)} \sum_{k=0}^{n} b(\mv_1,\mv_k) \vert \nabla_{\mv_1,\mv_k} \taudirbstar \vert^{p-2} \nabla_{\mv_1,\mv_k} \taudirbstar &= 1.
\label{eq:equation-star-graphs-2}
\end{align}
By plugging the sum of all equations in \eqref{eq:equation-star-graphs} into \eqref{eq:equation-star-graphs-2} we obtain
where the last equation can be written as
\begin{align*}
-\sum_{k=2}^{n} m(\mv_k) + b(\mv_1,\mv_0) \vert \taudirbstar(\mv_1) \vert^{p-2}\taudirbstar(\mv_1) = m(\mv_1),´
\end{align*}
or equivalently
\[
\taudirbstar(\mv_1)^{p-1} = \vert \taudirbstar(\mv_1) \vert^{p-2}\taudirbstar(\mv_1) = \frac{\sum_{k=1}^{n} m(\mv_k)}{b(\mv_1,\mv_0)},
\]
where the first equality holds by positivity of the $p$-torsion function. Thus, taking the $(p-1)$-th root on both sides, this yields the expression for $\taudirbstar(\mv_1)$. Now, plugging this back into \eqref{eq:equation-star-graphs}, we find
\begin{align*}
\Bigg\vert \taudirbstar(\mv_j) - \bigg(\frac{\sum_{k=1}^{n} m(\mv_k)}{b(\mv_1,\mv_0)} \bigg)^\frac{1}{p-1} \Bigg\vert^{p-2} \Bigg( \taudirbstar(\mv_j) - \bigg(\frac{\sum_{k=1}^{n} m(\mv_k)}{b(\mv_1,\mv_0)} \bigg)^\frac{1}{p-1} \Bigg) = \frac{m(\mv_j)}{b(\mv_j,\mv_1)}
\end{align*}
for all $j \in \{2,\dots,n \}$; and because the right-hand side is positive, it follows that 
\[
\taudirbstar(\mv_j) > \bigg(\frac{\sum_{k=1}^{n} m(\mv_k)}{b(\mv_1,\mv_0)} \bigg)^\frac{1}{p-1}
\]
and rearranging of the equation yields the claimed expression for all $\taudirbstar(\mv_j)$, $j=2,\dots,n$.

Let us finally check the formula for the torsional rigidity in the linear case of $p=2$, $b=b_{\mathrm{st}}$ and $m\equiv \mathbf{1}$. 
Then \eqref{eq:formula-torsion-star} reads
\[
\taudirbstarlin(\mv)=\begin{cases}
0, \quad & \mv=\mv_0,\\
n, & \mv=\mv_1,\\
n+1, &\mv=\mv_2,\ldots,\mv_n.
\end{cases}
\]
Accordingly, the torsional rigidity is given by
\[
T_2(\mathsf{S};\mV_0) = \|\taudirbstarlin\|_{\ell^1(\mV\setminus \mV_0,m)}=n+(n-1)(n+1),
\]
thus showing~\eqref{eq:tors-star-1}; \eqref{eq:tors-star-deg} can be proven likewise, using the Handshake Lemma.
\end{proof}

\begin{rem}\label{rem:reduction-star-graphs}
We can use \autoref{rem:multiple} in combination with \autoref{conv:dirichlet-singleton}, to derive a formula for star graphs having an arbitrary number of outer Dirichlet conditions: starting with a star graph $\mathsf{S} = \Vmb$ on $n+m$, $n,m \in \mathbb{N}$ vertices with $m$ Dirichlet conditions at $\mV_0 := \{ \mv_0^1,\ldots,\mv_0^m \}$ and remaining vertices $\mv_1,\ldots,\mv_n$, where $\mv_1$ is the central vertex and the edges between $\mv_1$ and $\mv_0^1,\ldots,\mv_0^m$ having edge weights
\[
b(\mv_0^i,\mv_1) \qquad \text{for $i=1,\ldots,m$},
\]
we can identify all the Dirichlet vertices $\mv_0^1,\ldots,\mv_0^m$ (without changing the values of the $p$-torsion function, cf.\ \autoref{conv:dirichlet-singleton}) and denote the corresponding new vertex with $\mv_0$ (cf.\ the first transformation in Figure \ref{fig:transformation-mutliple-dirichlet-vertices}). In this way, we reach at a new graph (which is regardless of $m$ at those vertices, since it does not change the torsion function at all) with new vertex set $\mV' = \{\mv_0,\mv_1,\dots,\mv_n\}$ and a singleton $\mV_0' = \{ \mv_0 \}$, where we have $m$ multiple edges $\me_{\mv_0,\mv_1}^1, \ldots, \me_{\mv_0,\mv_1}^m$ having the same edge weights as before. As described in \autoref{rem:multiple}, we can replace those edges by just a single edge between $\mv_0,\mv_1$, and considering a new edge weight 
$b(\mv_0,\mv_1) = \sum_{i=1}^m b(\mv_0^i,\mv_1)$ (cf.\ the second transformation in Figure \ref{fig:transformation-mutliple-dirichlet-vertices}). In this way, we obtain a star graph on $n+1$ vertices with exactly one Dirichlet condition at $\mv_0$. Thus, according to \autoref{prop:formula-n-star}, we obtain for the torsion function
\begin{align*}
\taudirbstar(\mv_1) = \bigg(\frac{\sum_{k=1}^{n} m(\mv_k)}{b(\mv_1,\mv_0)} \bigg)^\frac{1}{p-1} = \bigg(\frac{\sum_{k=1}^{n} m(\mv_k)}{\sum_{i=1}^m b(\mv_0^i,\mv_1)} \bigg)^\frac{1}{p-1}
\end{align*}
as well as 
\begin{align*}
\taudirbstar(\mv_j) = \bigg(\frac{\sum_{k=1}^{n} m(\mv_k)}{\sum_{i=1}^m b(\mv_0^i,\mv_1)} \bigg)^\frac{1}{p-1} + \bigg(\frac{m(\mv_j)}{b(\mv_1,\mv_j)} \bigg)^\frac{1}{p-1} \qquad \text{for $j=2,\dots,n$.}
\end{align*}
\end{rem}
\vspace{-0.3cm}
\begin{figure}[h]
\begin{tikzpicture}[scale=0.50]
      \tikzset{enclosed/.style={draw, circle, inner sep=0pt, minimum size=.10cm, fill=gray}, every loop/.style={}}

      \node[enclosed, label={below right: $\:\; \mv_1$}] (Z) at (0,4) {};
      \node[enclosed, fill=white, label={above: $\mv_0^3$}] (A) at (0,6.5) {};
      \node[enclosed, fill=white, label={left: $\mv_0^2$}] (B) at (-2.5,4.75) {};
      \node[enclosed, label={right: $\mv_3$}] (C) at (2.5,4.75) {};
      \node[enclosed, fill=white, label={left: $\mv_0^1$}] (D) at (-1.5,2) {};
      \node[enclosed, label={right: $\mv_2$}] (E) at (1.5,2) {};
      \node[enclosed, white] (X) at (4.5,4.375) {};
      \node[enclosed, white] (Y) at (7.5,4.375) {};

      \draw (Z) edge node[above] {} (A) node[midway, above] (edge1) {};
      \draw (Z) edge node[above] {} (B) node[midway, above] (edge2) {};
      \draw (Z) edge node[above] {} (C) node[midway, above] (edge3) {};
      \draw (Z) edge node[above] {} (D) node[midway, above] (edge4) {};
      \draw (Z) edge node[above] {} (E) node[midway, above] (edge5) {};
      \draw[dashed] (D) edge[bend left] node[above] {} (B) node[midway, above] (edge6) {};
      \draw[dashed] (A) edge[bend right] node[above] {} (B) node[midway, above] (edge7) {};
      \draw[->, ultra thick] (X) edge node[above] {} (Y) node[midway, above] (arrow1) {};
      
      \node[enclosed, label={below right: $\:\; \mv_1$}] (Z') at (12,4) {};
      \node[enclosed, fill=white, label={left: $\mv_0$}] (B') at (9.5,4.75) {};
      \node[enclosed, label={right: $\mv_3$}] (C') at (14.5,4.75) {};
      \node[enclosed, label={right: $\mv_2$}] (E') at (13.5,2) {};
	  \node[enclosed, white] (X') at (16.5,4.375) {};
      \node[enclosed, white] (Y') at (19.5,4.375) {};      
      
      \draw (Z') edge node[above] {} (B') node[midway, above] (edge8) {};
      \draw (Z') edge[bend left] node[above] {} (B') node[midway, above] (edge8') {};
      \draw (Z') edge[bend right] node[above] {} (B') node[midway, above] (edge8'') {};
      \draw (Z') edge node[above] {} (C') node[midway, above] (edge9) {};
      \draw (Z') edge node[above] {} (E') node[midway, above] (edge10) {};
      \draw[->, ultra thick] (X') edge node[above] {} (Y') node[midway, above] (arrow2) {};
      
      \node[enclosed, label={below right: $\:\; \mv_1$}] (Z'') at (24,4) {};
      \node[enclosed, fill=white, label={left: $\mv_0$}] (B'') at (21.5,4.75) {};
      \node[enclosed, label={right: $\mv_3$}] (C'') at (26.5,4.75) {};
      \node[enclosed, label={right: $\mv_2$}] (E'') at (25.5,2) {};
      \draw[thick] (Z'') edge node[above] {} (B'') node[midway, above] (edge11) {};
      \draw (Z'') edge node[above] {} (C'') node[midway, above] (edge12) {};
      \draw (Z'') edge node[above] {} (E'') node[midway, above] (edge13) {};
      \draw[->, ultra thick] (X') edge node[above] {} (Y') node[midway, above] (arrow2) {};
     \end{tikzpicture}
     \vspace{-0.5cm}
     \caption{Transformation of a $5$-star with three outer Dirichlet conditions into a $3$-star having just \emph{one} Dirichlet condition without changing the torsion function.}
     \label{fig:transformation-mutliple-dirichlet-vertices}
     \end{figure}
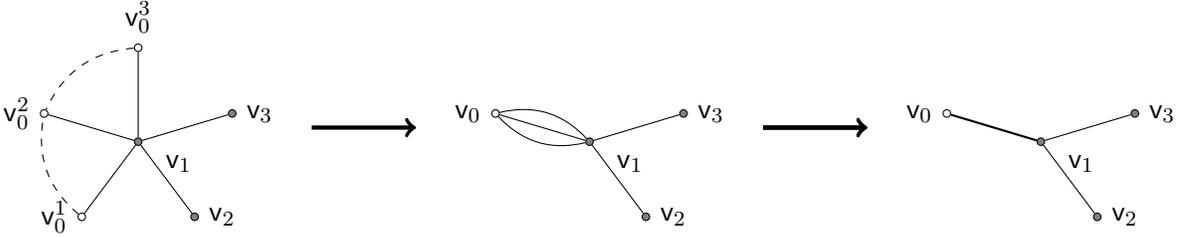

\section{Estimates of the $p$-torsional rigidity}\label{sec:bounds} 

This section is devoted to developing a torsional geometry of graphs: in other words, we are going to derive upper and lower estimates on the $p$-torsional rigidity of a given graph in terms of other quantities, like the number of vertices or the graph's inradius.

\subsection{Upper bounds and a Saint-Venant inequality} \label{sec:upper-bounds}

Inspired by Fiedler's celebrated bound on the smallest positive eigenvalue of the $p$-Laplacian \textit{without Dirichlet conditions} in \cite[4.3]{Fie73} for $p=2$, and by its extension to general $p\in (1,\infty)$ in \cite[Theorem~2.3]{BerKenKur17}, let us now impose Dirichlet conditions on a non-empty vertex set and pursue the goal of estimating both the $p$-torsional rigidity and the bottom of the $p$-spectrum; in passing, we also remove the condition that $\mG=\Vmbc$ has standard edge weights $b=b_{\mathrm{st}}$, vanishing potential $c\equiv 0$, and trivial vertex weights $m\equiv \mathbf{1}$. Motivated by the classical Max-Flow-Min-Cut Theorem, we introduce the following.

\begin{defi}
The \emph{minimal cut weight} $\eta_\mG$ of a graph $\mG=\Vmbc$ is 
\[
\eta_\mG := \min \sum_{\mv \in \mV_1, \mw \in \mV_2} b(\mv,\mw),
\]
where the minimum is taken over all \emph{cuts} in $\mV$, i.e., over all pairs of disjoint and mutually complementary subsets $\mV_1,\mV_2$ of $\mV$. 
\end{defi}

Clearly, a necessary condition for $\mG=\Vmbc$ to have minimal cut weight $\eta_\mG>0$ is that the graph is connected.
We stress that an infinite graph need \textit{not}  have a minimal cut weight. On the other hand, finite graphs certainly do, and indeed a finite graph $\mG$ with standard edge weights $b=b_{\mathrm{st}}$ has minimal cut weight $\eta_\mG $ if and only if its edge-connectivity (in the sense of \cite[Section~1.4]{Die05}) is $\eta_\mG$, i.e., if and only if it stays connected upon removal of any number $\eta'<\eta_\mG$ of edges: this is the setting considered in~\cite{Fie73,BerKenKur17}.

We are now in the position to present a symmetrization lemma that extends the scope of (and is inspired by) the method developed in the proof of~\cite[Theorem~2.3]{BerKenKur17}.
Recall that, by \autoref{conv:dirichlet-singleton}, we are assuming without loss of generality the Dirichlet set $\mV_0$ to be a singleton.

\begin{lemma}\label{thm:Pólya-discr}
Let $\mG=\Vmbc$ be a connected  graph with minimal cut weight $\eta_\mG$ and such that \autoref{ass:compact-embedding-l1} is satisfied.
Then there exists \underline{some} permutation $\sigma: \mV \rightarrow \mV$  (with $\sigma(\mV_0) \subset \mV_0$
and with Dirichlet conditions imposed at one endpoint if $\mV_0\ne\emptyset$) such that
 the $p$-torsional rigidity
 satisfies
\begin{equation}
  \label{eq:Pólya-discr-c}
T_p(\mG)\le \frac{1}{\eta_\mG}T_p(\mP)
\qquad \bigg(\hbox{resp., }
T_p(\mG;\mV_0)\le \frac{1}{\eta_\mG}T(\mP;\mV_0)\hbox{ if }\mV_0 \neq \emptyset\bigg),
\end{equation}
where the expression on the right-hand side of either inequality denotes the $p$-torsional rigidity of a path graph $\mP:=  (\mV,m', b_{\mathrm{st}}, c')$ on the same vertex set, with standard edge weights and with vertex weights $m'$ and potential $c'$ given by
\begin{equation}\label{eq:symm-m-c}
m'(\mv) := m(\sigma(\mv)), \qquad c'(\mv) := c(\sigma(\mv)), \qquad \mv \in \mV.
\end{equation}

Moreover, equality in~\eqref{eq:Pólya-discr-c} holds if and only if $\mG$ is 
a homogeneous path graph (with Dirichlet conditions imposed at one endpoint if $\mV_0\ne\emptyset$).
\end{lemma}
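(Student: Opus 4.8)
The plan is to symmetrise the $p$-torsion function of $\mG$ into a monotone function on a path graph and to pay for the loss of Dirichlet energy with the factor $\eta_\mG$, via a discrete coarea estimate in the Max--Flow--Min--Cut spirit underlying the definition of $\eta_\mG$. I treat the Dirichlet case in detail; the case without Dirichlet conditions (where $\sup_\mv c(\mv)>0$) is analogous. By \autoref{thm:well-p-l1} the $p$-torsion function $\taudirbc$ exists, by \autoref{lem:torsionfunctionpositive} it is strictly positive on $\mV\setminus\mV_0$ and vanishes on $\mV_0$, and by \autoref{prop:variational-char} it is the unique maximiser of the P\'olya quotient, so $T_p(\mG;\mV_0)=\Poldirbc(\taudirbc)$. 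By \autoref{conv:dirichlet-singleton} we take $\mV_0=\{\mv_0\}$; since $c(\mv)|f(\mv)|^p=c(\mv)|f(\mv)-f(\mv_0)|^p$ whenever $f(\mv_0)=0$, the potential term in $\Qdirbc$ is the contribution of ``virtual edges'' $\{\mv,\mv_0\}$ of weight $c(\mv)$, and passing to the graph with edge weights $b(\mv,\mv_0)+c(\mv)$ and vanishing potential changes neither $\Qdirbc$ on $\wdirbc(\mV,m)$ nor $T_p(\mG;\mV_0)$, while it can only enlarge the minimal cut weight. Hence we may assume $c\equiv0$, and then the comparison path carries no potential either.

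Next, enumerate $\mV=\{\mv_0,\mv_1,\mv_2,\dots\}$ (finite or indexed by $\N_0$) so that $0=\taudirbc(\mv_0)\le\taudirbc(\mv_1)\le\taudirbc(\mv_2)\le\cdots$; such an enumeration exists since $\taudirbc$ vanishes only at $\mv_0$, and the permutation implementing it, which fixes $\mv_0$, is the permutation $\sigma$ of the statement (the weights $m',c'$ on $\mP$ being the corresponding push-forwards). Let $\mP$ be the path graph on this ordered vertex set with unit edge weights, and view $u:=\taudirbc$ as a function on $\mP$: it is non-decreasing along the path, vanishes at the Dirichlet endpoint, has the same $\ell^1$-norm, and is therefore an admissible competitor for the P\'olya quotient of $\mP$, so only the $p$-Dirichlet energies remain to be compared. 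Write $d_j:=\taudirbc(\mv_{j+1})-\taudirbc(\mv_j)\ge0$. For each $j$ the pair $A_j:=\{\mv_0,\dots,\mv_j\}$ and its complement form a cut of $\mG$, so $\sum_{\mv\in A_j,\,\mw\notin A_j}b(\mv,\mw)\ge\eta_\mG$, whereas on $\mP$ this same pair is separated by the single standard edge $\{\mv_j,\mv_{j+1}\}$. For an edge $\{\mv_k,\mv_\ell\}$ of $\mG$ with $k<\ell$, superadditivity of $t\mapsto t^p$ on $[0,\infty)$ gives $|\taudirbc(\mv_k)-\taudirbc(\mv_\ell)|^p=(\sum_{j=k}^{\ell-1}d_j)^p\ge\sum_{j=k}^{\ell-1}d_j^{\,p}$; multiplying by $b(\mv_k,\mv_\ell)$, summing over the edges of $\mG$ and interchanging the summations (the edges crossing $A_j$ being exactly those $\{\mv_k,\mv_\ell\}$ with $k\le j<\ell$) yields
\[
p\,\Qdirbc(\taudirbc)=\sum_{\{\mv,\mw\}}b(\mv,\mw)\,|\taudirbc(\mv)-\taudirbc(\mw)|^p\ \ge\ \sum_{j}d_j^{\,p}\!\!\sum_{\mv\in A_j,\,\mw\notin A_j}\!\!b(\mv,\mw)\ \ge\ \eta_\mG\sum_{j}d_j^{\,p}=\eta_\mG\,p\,\mathcal{Q}^{\mP;\mV_0}_{p}(u).
\]
In particular $u$ has finite energy and lies in the relevant Sobolev space of $\mP$; and since the $\ell^1$-norms agree, $\Poldirbc(\taudirbc)\le\eta_\mG^{-1}\mathcal{P}^{\mP;\mV_0}_{p}(u)\le\eta_\mG^{-1}T_p(\mP;\mV_0)$, which together with $T_p(\mG;\mV_0)=\Poldirbc(\taudirbc)$ is the asserted bound.

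For the equality statement, suppose equality holds. Then the supremum $T_p(\mP;\mV_0)$ is attained at $u$, so by the variational characterisation in \autoref{prop:variational-char} $u$ is the $p$-torsion function of $\mP$, which by the explicit formula in \autoref{prop:p-torsion-function-pat} is \emph{strictly} increasing; hence $d_j>0$ for all $j$. Both inequalities in the displayed chain are then equalities: equality in the superadditivity step forces, for each edge $\{\mv_k,\mv_\ell\}$ of $\mG$ with $k<\ell$, that $(\sum_{j=k}^{\ell-1}d_j)^p=\sum_{j=k}^{\ell-1}d_j^{\,p}$, which for $p>1$ and strictly positive $d_j$ requires $\ell=k+1$; so every edge of $\mG$ joins consecutive vertices of the ordering, and $\mG$, being connected, is the path $\mv_0-\mv_1-\mv_2-\cdots$. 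Equality in the cut step then gives $\sum_{\mv\in A_j,\,\mw\notin A_j}b(\mv,\mw)=b(\mv_j,\mv_{j+1})=\eta_\mG$ for every $j$, i.e.\ $\mG$ is a homogeneous path graph with a Dirichlet condition at one endpoint; the converse implication is the direct computation afforded by \autoref{prop:p-torsion-function-pat}.

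The delicate point is the equality analysis: the obstruction is ``plateaus'' of $\taudirbc$ (vertices on which it is constant), across which a vanishing increment $d_j=0$ makes the coarea step carry no information, so one cannot read off the structure of $\mG$ directly from tightness there — this is exactly why the argument is routed through the strict monotonicity of the \emph{path} torsion function rather than through $\taudirbc$ itself. The coarea inequality is the other technical core, but it is precisely the discrete layer-cake computation displayed above; the remaining steps are bookkeeping.
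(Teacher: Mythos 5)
Your inequality argument is, at its core, the same as the paper's: order the vertices along the values of the torsion function, use the superadditivity $(\sum_j d_j)^p\ge\sum_j d_j^{\,p}$ to spread each long edge over the consecutive increments, and observe that the coefficient accumulated on the $j$-th path edge is the $b$-weight of the cut $(A_j,\mV\setminus A_j)$, hence at least $\eta_\mG$. The paper implements this by physically splitting every long edge into parallel short edges of equal weight, reducing the resulting pumpkin chain to a simple weighted path, and invoking Max--Flow--Min--Cut; your level-set bookkeeping is an equivalent but tidier packaging that avoids the multigraph detour. Your equality analysis is sound and in fact more careful than the paper's one-line remark: routing through the strict monotonicity of the \emph{path} torsion function (rather than of $\taudirbc$ itself) to exclude plateaus is exactly the right move. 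One small point there: a maximiser of the P\'olya quotient is only determined up to a positive scalar, so $u$ is a positive multiple of the torsion function of $\mP$ rather than equal to it -- harmless, since strict monotonicity is scale-invariant.

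The genuine deviation is your treatment of the potential, and it opens a gap relative to the statement as written. The comparison graph $\mP$ in the lemma carries the potential $c'=c\circ\sigma$; after your reduction the comparison path carries none, and since dropping $c'$ only \emph{increases} $T_p(\mP;\mV_0)$ by \autoref{prop:monotonicity-subgraphs}, the inequality you prove is formally weaker than \eqref{eq:Pólya-discr-c}. Moreover, your equality analysis then characterises the \emph{modified} graph (with $c$ absorbed into edges at $\mv_0$) as a homogeneous path, which does not immediately return the stated conclusion about $\mG$ itself when $c\not\equiv0$. More seriously, the reduction ``absorb $c(\mv)$ into a virtual edge $\{\mv,\mv_0\}$'' needs a Dirichlet vertex to attach these edges to, so the case $\mV_0=\emptyset$, $\sup_{\mv}c(\mv)>0$ is \emph{not} ``analogous'': there the potential must be kept (a potential-free, boundary-free graph has infinite P\'olya quotient on constants), and carrying it through your displayed chain would require $\sum_\mv c(\mv)\vert\tau(\mv)\vert^p\ge\eta_\mG\sum_\mv c'(\mv)\vert u(\mv)\vert^p$, i.e.\ $\eta_\mG\le1$, which you have not assumed. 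To repair this, keep the potential attached to its vertex throughout (it is invariant under the reordering, exactly as in the paper's proof), and restrict the virtual-edge trick, if you use it at all, to the Dirichlet case while stating the (potential-free) bound you actually obtain. A last, shared, loose end: for infinite $\mV$ an increasing enumeration indexed by $\N_0$ need not exist if the range of $\taudirbc$ has the wrong order type; the paper glosses over this as well.
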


Here, a path graph is said to be \emph{homogeneous} if its edge weight function is constant.

We stress that while the left hand side in both inequalities in \eqref{eq:Pólya-discr-c} is finite by assumption, the right hand side may be infinite if $\mG$ is infinite.

\begin{proof}
We will adapt to our context the symmetrization method developed in the proof of \cite[Theorem~2.3]{BerKenKur17}. We will only discuss the case of $\Lneubc$, the case of $\Ldirbc$ being perfectly equivalent.

Let us take the $p$-torsion function with respect to $\Lneubc$, i.e., the maximizer $\tauneubc$ of the Pólya quotient $\Polneubc$ on $\mG = \Vmbc$, which exists by \autoref{ass:compact-embedding-l1}: as $\mV$ is assumed to be countable, there is some canonical enumeration of the the vertex set, but we prefer to enumerate the  vertices in such a way  that $\tauneubc$ is increasing, i.e.,
  \begin{equation}
    \label{eq:vertex_ordering}
\tauneubc(\mv_i) \le \tauneubc(\mv_{i+1})\qquad\hbox{for all }i:
  \end{equation}
this new enumeration induces a permutation $\sigma$ on $\mV$, and we introduce $m',c'$ as in~\eqref{eq:symm-m-c}.
Moreover, observe that $\tauneubc$ need not attain a maximum or minimum, but this does not pose a problem).
  
Now, we first form a new graph $\hat{\mG}:=(\mV,m',\hat{b},c')$ as follows:  Suppose there is an edge whose endpoints are $(\mv_i,\mv_j)$ (or, in other words, $b(\mv_i,\mv_j) > 0$), for some $i <
  j$, $j-i \geq 2$ (we call this a ``long edge''): we then \textit{replace} this edge by the (necessarily finite) sequence of edges with endpoints $(\mv_i, \mv_{i+1})$,
  $(\mv_{i+1}, \mv_{i+2}), \ldots, (\mv_{j-1}, \mv_j)$.  Here it is essential
  that we temporarily allow for pairs of vertices to be possibly connected by multiple edges, that is, we  create the edges $(\mv_i, \mv_{i+1})$, $(\mv_{i+1}, \mv_{i+2})$, \ldots,
  $(\mv_{j-1}, \mv_j)$ \emph{in addition} to the possibly already existing edges between
  these vertices; and we stipulate that each of them is given the same weight $\hat{b}(\mv_{\ell},\mv_{\ell+1}):=b(\mv_i,\mv_j)$ for $\ell=i,i+1,\dots,j-1$. (We are eventually going to get rid of these parallel edges as in \autoref{rem:multiple}.)
  
This operation increases the Pólya quotient,  i.e., $\Polneubc(\tauneubc)\le \Polneubchat(\tauneubc)$, since
$\|\tauneubc\|_{\ell^1(\mV,m')}=\|\tauneubc\|_{\ell^1(\mV,m)}$ but $\mathcal Q^{\hat{\mG}}(\tauneubc)\le \Qneubc(\tauneubc)$ because
  \begin{align}\label{eq:estimate-l^p-norm-smaller-l^1-norm}
  \begin{aligned}
    \left[\sum_{\ell=i}^{j-1}\hat{b}(\mv_\ell,\mv_{\ell+1}) |\tauneubc(\mv_{\ell})-\tauneubc(\mv_{\ell+1})|^{{p}}
    \right]^{1/{p}} 
    &\leq b(\mv_i,\mv_j)^\frac{1}{p}\sum_{\ell=i}^{j-1} |\tauneubc(\mv_{\ell})-\tauneubc(\mv_{\ell+1})| \\
    &= b(\mv_i,\mv_j)^\frac{1}{p}\sum_{\ell=i}^{j-1} \big(\tauneubc(\mv_{\ell+1})-\tauneubc(\mv_\ell) \big) \\&= b(\mv_i,\mv_j)^\frac{1}{p}\big(\tauneubc(\mv_j)-\tauneubc(\mv_i) \big),
  \end{aligned}
  \end{align}
 where the inequality follows from the inequality $\|x\|_{\ell^p}\le \|x\|_{\ell^1}$ that holds for any positive vector $x$ of finite length, and the first equality from \eqref{eq:vertex_ordering}.
In other words,
\begin{align}\label{eq:symmetrization-proof}
    \sum_{\ell=i}^{j-1}\hat{b}(\mv_\ell,\mv_{\ell+1}) |\tauneubc(\mv_{\ell})-\tauneubc(\mv_{\ell+1})|^{{p}}
    &\leq b(\mv_i,\mv_j)\big(\tauneubc(\mv_j)-\tauneubc(\mv_i) \big)^p.
  \end{align}
  
Repeating this operation for every \emph{long} edge, we obtain a pumpkin chain $\hat{\mG}:=(\mV,m',\hat{b},c')$ (i.e., a graph consisting in a sequence of vertices $\mv_0,\mv_1,\ldots,\mv_n$ such that $\mv_\ell$ is only adjacent with $\mv_{\ell\pm 1}$ for $\ell=1,\dots,n-1$ by means of possibly more than one edge) with larger $p$-torsional rigidity; indeed, by rewiring the graph we have only made the gradient term in the energy functional smaller, but neither the potential term in the denominator of the Pólya quotient, nor its numerator have been changed. 

Technically speaking, pumpkin chains do not fit into our general framework, as they generally contain multiple edges: however, we already know that, for all purposes regarding the analysis of a $p$-Laplacian, such a pumpkin chain $\hat{\mG}$ can be equivalently regarded as a graph $\hat{\hat{\mG}}:=(\mV,m',\hat{\hat{b}},c')$ without multiple edges, but modified weights, see \autoref{rem:multiple}, where $\hat{\hat{b}}(\mv_\ell,\mv_{\ell+1})$ is defined as the sum of all weights $\hat{b}$ over all parallel edges between $\mv_\ell,\mv_{\ell+1}$.
 
It follows from the Max-Flow-Min-Cut Theorem that the minimal cut weight of $\mG'$ satisfies
\[
\eta_{\hat{\hat{\mG}}}:=\min_{\ell=0,\ldots,n-1}\hat{\hat{b}}(\mv_\ell,\mv_{\ell+1}) \ge \eta_\mG.
\]
By \autoref{prop:monotonicity-subgraphs}, replacing the path graph $\hat{\hat{\mG}}=(\mV,m',\hat{\hat{b}},c')$ by a path graph $\mP':=(\mV,m',b',c')$ whose edges have constant weight $b':=\eta_{\hat{\hat{\mG}}}b_{\mathrm{st}}$ further raises the $p$-torsional rigidity.
We finally conclude that 
\begin{equation}\label{eq:allafiera}
T_p(\mG) \leq T_p\big(\hat{\mG}\big)=T_p\big(\hat{\hat{\mG}}\big) \leq T_p\big(\mP') = \frac{1}{\eta_{\hat{\hat{\mG}}}} T_p(\mP)
\le \frac{1}{\eta_\mG} T_p(\mP),
\end{equation}
A close look at the above proof reveals that equality can only hold in~\eqref{eq:symmetrization-proof} if $b$ is constant and no long edge is replaced by a chain of short edges; in this case, also both further inequalities in~\eqref{eq:allafiera} hold.
 This concludes the proof in the case of graphs with no Dirichlet conditions.

All the above arguments carry over without major changes to the case of $\mV_0 \neq \emptyset$ and $\Ldirbc$.
\end{proof}
Note that, according to \eqref{eq:symm-m-c}, $m' = m$ and $c' = c$, whenever $m$ and $c$ are constant maps on $\mV$. Moreover, in the case where $m = \deg_\mG$ and $c \equiv 0$, one can trivially estimate
\begin{align}\label{eq:normalized-saint-venant}
T(\mG;\mV_0) \leq \frac{1}{\eta_\mG} T(\mP;\mV_0) \leq \frac{\max_{\mv \in \mV \setminus \mV_0} \deg_\mG(\mv)}{\eta_\mG} T(\mP';\mV_0),
\end{align}
where $\mP' := (\mV, m = \mathbf{1}, b'=b_{\mathrm{st}}, c \equiv 0)$. 
However, in order to apply \autoref{thm:Pólya-discr} in more generality, a complete knowledge of the weight $m'$ and the potential $c'$ and, in turn, of the permutation $\sigma$ that induces the re-numbering is needed.
However, this is generally impossible, as the permutation $\sigma$ is only given \emph{implicitly} by ordering along the values of the $p$-torsion function of the graph $\mG = \Vmbc$. 

Nevertheless, we may trivially estimate the measure $m'$ from above by $\big( \sup_{\mv \in \mV \setminus \mV_0} m(\mv) \big) \mathbf{1}$ to obtain a rough upper bound on $T_p(\mG)$ and $T_p(\mG;\mV_0)$ by means of \autoref{thm:Pólya-discr}. A smarter choice is available, at least in the case of finite graphs, if we happen to know (typically, by a symmetry argument) \textit{a} vertex $\mv_{\max}\in\mV$ where the $p$-torsion function $\tauneubc$ attains its maximum. At the risk of being redundant, let us stress that finite graphs with $\max_{\mv\in\mV}c(\mv)>0$ or $\mV_0\ne\emptyset$ satisfy \autoref{ass:compact-embedding-l1}.

\begin{cor}\label{cor:tors-fullnew}
Let $\mG = \Vmbc$ be a connected  finite graph with minimal cut weight $\eta_\mG$ and $\mV_0\ne \emptyset$. Then
\begin{align}\label{eq:saint-venant-explicit-mprime}
T_p(\mG;\mV_0) \leq \frac{1}{\eta_\mG}T_p(\mP;\mV_0),
\end{align}
where $\mP := (\mV,\widetilde{m}, b_{\mathrm{st}},c \equiv 0)$ denotes the path graph having one Dirichlet condition at one end and $\widetilde{m}:\mV \rightarrow (0,\infty)$ given by
\begin{align*}
\widetilde{m}(\mv) := \begin{cases}
\min\limits_{\mv \in \mV \setminus \mV_0} m(\mv), & \text{if $\mv \neq \mv_{\max}$}, \\ m(\mv_{\max}) + \sum\limits_{\stackrel{\mw \in \mV \setminus \mV_0}{\mw \neq \mv_{\max}}} \Big(m(\mw) - \min\limits_{\mv \in \mV \setminus \mV_0} m(\mv) \Big), & \text{if $\mv = \mv_{\max}$},
\end{cases}
\end{align*}
where $\mv_{\max} \in \mV \setminus \mV_0$, $\mV_0 \subset \mV$ such that $\tauneubc(\mv_{\max}) = \Vert \tauneubc \Vert_{\ell^\infty(\mV)}$ (resp., $\taudirb(\mv_{\max}) = \Vert \taudirb \Vert_{\ell^\infty(\mV \setminus \mV_0)}$ if $\mV_0 \neq \emptyset$). 
\end{cor}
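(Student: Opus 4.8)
The plan is to sharpen the symmetrization of \autoref{thm:Pólya-discr} by inserting into it the closed-form expression for the $p$-torsion function of a weighted path from \autoref{prop:p-torsion-function-pat}, which lets one replace the \emph{implicitly} defined rearranged measure appearing there by the \emph{explicit} measure $\widetilde m$. I only discuss the case $\mV_0\ne\emptyset$ appearing in the statement; by \autoref{conv:dirichlet-singleton} I take $\mV_0=\{\mv_0\}$.

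First I would invoke \autoref{thm:Pólya-discr} — which applies since a finite graph with $\mV_0\ne\emptyset$ satisfies \autoref{ass:compact-embedding-l1} — to obtain a permutation $\sigma$ of $\mV$ with $\sigma(\mV_0)\subset\mV_0$ that enumerates $\mV=\{\mv_0,\mv_1,\dots,\mv_n\}$ ($\mv_0\in\mV_0$) so that $\taudirb$ is non-decreasing along the list $\mv_0,\mv_1,\dots,\mv_n$, together with $T_p(\mG;\mV_0)\le\eta_\mG^{-1}T_p\bigl((\mV,m\circ\sigma,b_{\mathrm{st}},c\circ\sigma);\mV_0\bigr)$. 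Because $\mv_{\max}$ attains $\|\taudirb\|_{\ell^\infty(\mV\setminus\mV_0)}$, I am free to pick this enumeration with $\mv_{\max}=\mv_n$. Since moreover $c\circ\sigma\ge 0$ pointwise and $\mV_0$ is unchanged, \autoref{prop:monotonicity-subgraphs} (comparing with the potential $c\equiv 0$) yields $T_p(\mG;\mV_0)\le\eta_\mG^{-1}T_p\bigl((\mV,m\circ\sigma,b_{\mathrm{st}},c\equiv 0);\mV_0\bigr)$.

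It then remains to show $T_p\bigl((\mV,m\circ\sigma,b_{\mathrm{st}},c\equiv 0);\mV_0\bigr)\le T_p(\mP;\mV_0)$. For this I would use that, for any point measure $\mu$ on $\mV\setminus\mV_0$, the path $(\mV,\mu,b_{\mathrm{st}},c\equiv 0)$ with Dirichlet vertex $\mv_0$ has, by \autoref{prop:p-torsion-function-pat} and \autoref{prop:variational-char}, torsion function $\tau(\mv_j)=\sum_{\ell=1}^{j}s_\ell(\mu)^{1/(p-1)}$ with $s_\ell(\mu):=\sum_{k=\ell}^{n}\mu(\mv_k)$, hence, after interchanging the two sums,
\[
T_p\bigl((\mV,\mu,b_{\mathrm{st}},c\equiv 0);\mV_0\bigr)=\|\tau\|_{\ell^1(\mV\setminus\mV_0,\mu)}^{p-1}=\Bigl(\sum_{j=1}^{n}\mu(\mv_j)\!\sum_{\ell=1}^{j}s_\ell(\mu)^{\frac{1}{p-1}}\Bigr)^{p-1}=\Bigl(\sum_{\ell=1}^{n}s_\ell(\mu)^{p'}\Bigr)^{p-1},
\]
a quantity that is non-decreasing in each $s_\ell(\mu)$. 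Writing $\mu_{\min}:=\min_{\mv\in\mV\setminus\mV_0}m(\mv)$ and $S:=m(\mV\setminus\mV_0)$, the measure $m\circ\sigma$ restricted to $\{\mv_1,\dots,\mv_n\}$ is a rearrangement of $m|_{\mV\setminus\mV_0}$, so each of its values is $\ge\mu_{\min}$ and therefore $s_\ell(m\circ\sigma)=S-\sum_{k=1}^{\ell-1}m(\sigma(\mv_k))\le S-(\ell-1)\mu_{\min}$; on the other hand, $\mv_{\max}=\mv_n$ being the only vertex on which $\widetilde m$ exceeds $\mu_{\min}$, one reads off $\widetilde m(\mv_{\max})=S-(n-1)\mu_{\min}$ and thus $s_\ell(\widetilde m)=(n-\ell)\mu_{\min}+\widetilde m(\mv_{\max})=S-(\ell-1)\mu_{\min}$ for every $\ell$. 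Hence $s_\ell(m\circ\sigma)\le s_\ell(\widetilde m)$ termwise, which by the displayed monotonicity finishes the proof of \eqref{eq:saint-venant-explicit-mprime}.

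The only genuinely delicate point — and the reason the $\ell^\infty$-maximizer $\mv_{\max}$ must be singled out — is that the termwise comparison $s_\ell(m\circ\sigma)\le s_\ell(\widetilde m)$ requires the two path graphs to share \emph{both} the Dirichlet endpoint and the opposite endpoint: placing the heavy vertex of $\widetilde m$ anywhere but at the far end would lower the sequence $s_\ell(\widetilde m)$ and the inequality could fail. This is handled by exploiting the freedom left in the monotone re-enumeration of \autoref{thm:Pólya-discr} to put $\mv_{\max}$ at $\mv_n$; apart from that, everything is routine bookkeeping.
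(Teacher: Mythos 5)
Your proposal is correct. The overall scaffolding is the same as the paper's: invoke \autoref{thm:Pólya-discr} to pass to a path $\mP'=(\mV,m\circ\sigma,b_{\mathrm{st}},c\circ\sigma)$ with the Dirichlet vertex at one end and $\mv_{\max}$ at the other, drop the potential by \autoref{prop:monotonicity-subgraphs}, and then compare the rearranged measure $m\circ\sigma$ with $\widetilde m$. Where you genuinely diverge is in that last comparison. The paper keeps the torsion function $\taudirbpathpr$ of $\mP'$ fixed and performs a mass-transport estimate on its weighted $\ell^1$ norm, moving the excess $m'(\mw)-\min m$ from each vertex to $\mv_{\max}$ (where $\taudirbpathpr$ is largest, since the path torsion function is increasing away from the Dirichlet end); it then feeds $\taudirbpathpr$ as a test function into the Pólya quotient of $\mP$ via \autoref{prop:variational-char} to conclude $T_p(\mP';\mV_0)\le T_p(\mP;\mV_0)$. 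You instead exploit the closed form $T_p=\bigl(\sum_\ell s_\ell^{p'}\bigr)^{p-1}$ from \autoref{prop:p-torsion-function-pat} and verify the termwise bound $s_\ell(m\circ\sigma)\le S-(\ell-1)\mu_{\min}=s_\ell(\widetilde m)$, which is a clean, fully explicit computation; your check that the $s_\ell(\widetilde m)$ are exactly the tail sums of $\widetilde m$ with $\mv_{\max}$ at the far end is right, and you correctly flag that the position of the heavy vertex is the one delicate point (the paper's proof asserts the same placement). Both routes are valid; yours trades the test-function argument for the explicit path formula, which makes the monotonicity in the tail sums transparent but is specific to paths with standard edge weights, whereas the paper's measure-rearrangement step would survive in settings where no closed form is available.
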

\begin{proof}
According to \autoref{thm:Pólya-discr} and \autoref{prop:monotonicity-subgraphs}, it follows that $T_p(\mG) \leq \frac{1}{\eta_\mG}T_p(\mP')$, where $\mP' := (\mV,m',b=b_{\mathrm{st}},c \equiv 0)$ is a path graph with standard edge weights, a Dirichlet condition at one endpoint and the vertex $\mv_{\max}$ at the other endpoint and $m':=m\circ \sigma$ for some permutation $\sigma:\mV\to \mV$ with $\sigma(\mV_0)\subset \mV_0$. Therefore
 \[
\begin{split}
\Vert \taudirbpathpr \Vert_{\ell^1(\mV \setminus \mV_0,m')} &= \sum_{\stackrel{\mw \in \mV \setminus \mV_0}{\mw \neq \mv_{\max}}} \taudirbpathpr(\mw) \Big( m'(\mw) - \min_{\mv \in \mV \setminus \mV_0} m'(\mv) \Big) \\
& \qquad + \sum_{\stackrel{\mw \in \mV \setminus \mV_0}{\mw \neq \mv_{\max}}} \taudirbpathpr(\mw) \min_{\mv \in \mV \setminus \mV_0} m'(\mv) + \taudirbpathpr(\mv_{\max}) m'(\mv_{\max}) \\
&\leq \taudirbpathpr(\mv_{\max}) \sum_{\stackrel{\mw \in \mV \setminus \mV_0}{\mw \neq \mv_{\max}}} \Big( m(\mw) - \min_{\mv \in \mV \setminus \mV_0} m(\mv) \Big) \\& \qquad + \sum_{\stackrel{\mw \in \mV \setminus \mV_0}{\mw \neq \mv_{\max}}} \taudirbpathpr(\mw) \min_{\mv \in \mV \setminus \mV_0} m(\mv) + \taudirbpathpr(\mv_{\max}) m(\mv_{\max}) \\&= \Vert \taudirbpathpr \Vert_{\ell^1(\mV \setminus \mV_0,\widetilde{m})},
\end{split}
\]
which shows \eqref{eq:saint-venant-explicit-mprime} according to \autoref{prop:variational-char}.
\end{proof}

If $c\equiv 0$, we can further bound the right-hand side in the second inequality in~\eqref{eq:Pólya-discr-c}, hence turning the qualitative comparison in~\autoref{thm:Pólya-discr} into a quantitative estimate.
This upper bound is reminiscent of Pólya's so-called \textit{Saint-Venant inequality} on 
the $p$-torsional rigidity of a planar domain, see \cite[Theorem~4.6]{MugPlu23} for a related result on metric graphs.

\begin{cor}\label{cor:estimate-p-torsion-standard-weights}
    Let $\mG=\Vmbc$
be a connected  finite graph with minimal cut weight $\eta_\mG$, and let $\emptyset \neq \mV_0$. If $\mV\setminus\mV_0$ contains $n$ vertices, then 
    \begin{equation}\label{eq:upper-torsion-m}
    T_p(\mG;\mV_0) \leq \frac{n^{p-1}}{\eta_\mG} m(\mV \setminus \mV_0)^{p}.
    \end{equation}
Let, additionally, $p=2$ and $m \equiv \mathbf{1}$. Then the torsional rigidity satisfies the improved estimate
    \begin{equation}\label{eq:upper-torsion}
    T_2(\mG;\mV_0) \leq \frac{n(n+1)(2n+1)}{6\eta_\mG},
    \end{equation}
 and equality is attained if and only if $\mG$ is the path graph with Dirichlet conditions at only one endpoint.
\end{cor}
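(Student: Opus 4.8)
The plan is to feed the symmetrization inequality of \autoref{thm:Pólya-discr} into the explicit path formula of \autoref{prop:p-torsion-function-pat}. First I would reduce to the case $c\equiv 0$: by \autoref{prop:monotonicity-subgraphs}, passing from $\mG=\Vmbc$ to the graph with the same edge weights $b$ but zero potential only increases $T_p(\mG;\mV_0)$, while $\eta_\mG$ depends on $b$ alone, so it suffices to establish \eqref{eq:upper-torsion-m} when $c\equiv 0$. By \autoref{conv:dirichlet-singleton} I may take $\mV_0=\{\mv_0\}$, and since $\mG$ is a finite graph with $\mV_0\ne\emptyset$ it satisfies \autoref{ass:compact-embedding-l1}, so \autoref{thm:Pólya-discr} yields a permutation $\sigma$ of $\mV$ with $\sigma(\mv_0)=\mv_0$ for which $T_p(\mG;\mV_0)\le \eta_\mG^{-1}\,T_p(\mP;\mV_0)$, where $\mP=(\mV,m',b_{\mathrm{st}},c'\equiv 0)$ and $m'=m\circ\sigma$ by \eqref{eq:symm-m-c}. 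Crucially $\sigma$ restricts to a bijection of $\mV\setminus\mV_0$, so that $\mu:=m(\mV\setminus\mV_0)=\sum_{\mv\in\mV\setminus\mV_0}m'(\mv)$.

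Next I would order $\mV=\{\mv_0,\mv_1,\dots,\mv_n\}$ along the path and apply \autoref{prop:p-torsion-function-pat} in the finite case with standard edge weights: the $p$-torsion function of $\mP$ is $\taudirbpath(\mv_j)=\sum_{\ell=1}^{j}\big(\sum_{k=\ell}^{n}m'(\mv_k)\big)^{1/(p-1)}$, hence $T_p(\mP;\mV_0)=\big(\sum_{j=1}^{n}m'(\mv_j)\,\taudirbpath(\mv_j)\big)^{p-1}$ by \autoref{prop:variational-char}. Then comes the routine estimate: every inner sum is bounded by $\mu$, so $\taudirbpath(\mv_j)\le j\,\mu^{1/(p-1)}\le n\,\mu^{1/(p-1)}$, whence $\sum_{j=1}^{n}m'(\mv_j)\,\taudirbpath(\mv_j)\le n\,\mu^{1/(p-1)}\cdot\mu=n\,\mu^{p/(p-1)}$; raising this to the power $p-1$ gives $T_p(\mP;\mV_0)\le n^{p-1}\mu^{p}$, and combining with the symmetrization bound produces \eqref{eq:upper-torsion-m}.

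For the sharpened estimate \eqref{eq:upper-torsion} with $p=2$ and $m\equiv\mathbf{1}$ one has $m'\equiv\mathbf{1}$ as well, so $\mP$ is exactly the homogeneous path with standard edge weights and $n$ non-Dirichlet vertices, whose torsional rigidity equals $\frac{n(n+1)(2n+1)}{6}$ by \eqref{eq:torsional-rigidity-path-m1} applied with $n$ non-Dirichlet vertices; the inequality $T_2(\mG;\mV_0)\le \eta_\mG^{-1}T_2(\mP;\mV_0)$ then gives the claim. For the equality statement I would trace back the two estimates used: equality forces equality in \autoref{thm:Pólya-discr}, which by the final assertion of that lemma occurs precisely when $\mG$ is a homogeneous path graph with the (single) Dirichlet vertex at an endpoint; conversely, for such a graph \autoref{lem:scaling} shows the bound is attained, since $\eta_\mG$ is then the common edge weight. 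The only delicate points are the bookkeeping with $\sigma$ — in particular that it fixes $\mv_0$ and permutes $\mV\setminus\mV_0$, so that $\mu$ is preserved — and the index shift in \eqref{eq:torsional-rigidity-path-m1}; no substantial analytic obstacle remains once \autoref{thm:Pólya-discr} is available.
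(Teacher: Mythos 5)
Your argument is correct and essentially reproduces the paper's proof: symmetrize to a weighted path via \autoref{thm:Pólya-discr}, then bound $T_p(\mP;\mV_0)$ using the explicit formula \eqref{eq:p-torsion-function-path-left-dirichlet} (you bound the torsion function pointwise by $n\,m(\mV\setminus\mV_0)^{1/(p-1)}$ where the paper first swaps the order of summation, but the resulting estimate is identical), and for $p=2$, $m\equiv\mathbf 1$ plug in \eqref{eq:torsional-rigidity-path-m1} with the correct index shift. Your explicit handling of the reduction to $c\equiv 0$ via \autoref{prop:monotonicity-subgraphs}, of the fact that $\sigma$ fixes $\mv_0$ and hence preserves $m(\mV\setminus\mV_0)$, and of the equality case via the rigidity statement in \autoref{thm:Pólya-discr} together with \autoref{lem:scaling} is if anything more careful than the paper's write-up.
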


\begin{proof}
Let $\mP:=(\mV,m',b_{\mathrm{st}}, c \equiv 0)$ be the path graph on the same vertex set but with standard edge weights constructed in~\autoref{thm:Pólya-discr}. As in the proof of~\autoref{thm:Pólya-discr}, we generally have no information about the permutation $\sigma$ and, hence, about the profile of $m'=m\circ \sigma$, but by~\eqref{eq:p-torsion-function-path-left-dirichlet} (remember our \autoref{conv:dirichlet-singleton}), we observe that the $p$-torsion function of a path graph on $n+1$ vertices (with Dirichlet conditions at $\mV_0=\{\mv_0\}$) satisfies
\begin{align}\label{eq:upper-bound-torsion-using-path-graph}
\begin{aligned}
\Vert \taudirbpath \Vert_{\ell^1(\mV,m')} &= \sum_{j = 1}^n \taudirbpath(\mv_j)m'(\mv_j) = \sum_{j=1}^n \sum_{\ell = 1}^j \bigg( \sum_{k=\ell}^n m'(\mv_k) \bigg)^\frac{1}{p-1}m'(\mv_j) \\&=  \sum_{\ell=1}^n \sum_{j = \ell}^n \bigg(\sum_{k=\ell}^n m'(\mv_k) \bigg)^\frac{1}{p-1}m'(\mv_j) =  \sum_{\ell=1}^n \bigg(\sum_{k=\ell}^n m'(\mv_k) \bigg)^\frac{p}{p-1} \leq n m'(\mV \setminus \mV_0)^\frac{p}{p-1}.
\end{aligned}
\end{align}
Now, observe that 
the measure $m'$ and the potential $c'$ preserve the total mass of the graph in the sense that
\begin{align*}
m(\mV) = m'(\mV) \:\: \text{(resp., $m(\mV \setminus \mV_0) = m'(\mV \setminus \mV_0)$)}, \quad c(\mV) = c'(\mV) \:\: \text{(resp., $c(\mV \setminus \mV_0) = c'(\mV \setminus \mV_0)$)}.
\end{align*}
Thus, combining \eqref{eq:upper-bound-torsion-using-path-graph} with the latter inequality in \eqref{eq:Pólya-discr-c},  \eqref{eq:upper-torsion-m} follows.

Likewise, letting now $p=2$, $m \equiv \mathbf{1}$ and imposing standard edge weights $b=b_{\mathrm{st}}$ on the original graph, we obtain \eqref{eq:upper-torsion}  by ``reducing'' $\mG$ to a path graph $\mP:=(\mV,m,b'=b_{\mathrm{st}}, c \equiv 0)$ with same number of vertices and using the known formula for the torsional rigidity of $\mP$ from \autoref{prop:p-torsion-function-pat}.
Now, \eqref{eq:upper-torsion} follows from
plugging \eqref{eq:formula-torsional-rigidity-stadard-and-m-equiv-1} into \autoref{thm:Pólya-discr}. 
\end{proof}

\begin{rem}
Upon an exhaustion argument, \autoref{cor:estimate-p-torsion-standard-weights} suggests that an infinite graph cannot generally be expected to have finite $p$-torsional rigidity unless $\eta_\mG=0$: roughly speaking, this means that the periphery of the graph must be very loosely connected.
\end{rem}

Adapting an idea that goes back to~\cite{PolSze51}, we can find the following relation between the $p$-torsional rigidity 
and the
\emph{bottom of the spectrum} of the $p$-Laplacian $\Lneubc$ (resp., $\Ldirbc$); we recall that the latter need not to be an eigenvalue.

The proof of the following result is analogous to those of \cite[Proposition~5.1]{MugPlu23} and \cite[Corollary~5.5]{MazTol23} and we therefore omit it. The proof of the strict inequality in \eqref{eq:p-torsion-lowest-eigenvalue-dirichlet} is based on the existence of a unique weak (hence pointwise) $\ell^1$-solution $\tauneubc$ (resp., $\taudirbc$) of \eqref{eq:discr-ellipt-1-p} (resp., \eqref{eq:discr-ellipt-1-p-dir}) -- which is guaranteed by \autoref{thm:well-p-l1} -- and on Hölder's inequality applied to $\|\tauneubc\cdot \mathbf{1}\|_{\ell^1(\mV,m)}$ or $\|\taudirbc\cdot \mathbf{1}\|_{\ell^1(\mV,m)}$: Hölder's inequality becomes an equality if and only if $\tauneubc$ and $\taudirb$ are multiple of $\mathbf{1}$, which would necessarily contradict the fact that $\tauneubc$ (resp., $\taudirbc$) is an $\ell^1$-solution of \eqref{eq:discr-ellipt-1-p} (resp., \eqref{eq:discr-ellipt-1-p-dir}), whenever $c$ is not constant (resp., whenever $\mV_0\ne\emptyset$).

\begin{prop}\label{prop:p-cheeger}
Let $\mG=\Vmbc$ be a graph. If \autoref{assum:finite-meas} is satisfied, then
\begin{align}\label{eq:p-torsion-lowest-eigenvalue-neumann}
\lambda_{0,p}(\mG) T_p(\mG) \le  m(\mV)^{p-1}
\end{align}
if $\sup_{\mv\in\mV}c(\mv)>0$ (resp., 
    \begin{align}\label{eq:p-torsion-lowest-eigenvalue-dirichlet}
    \lambda_{0,p}(\mG;\mV_0)T_p(\mG;\mV_0) \le m(\mV\setminus\mV_0)^{p-1}
    \end{align}
if $\mV_0\ne \emptyset$).
Whenever, additionally, \autoref{ass:compact-embedding-l1} is satisfied, the inequality \eqref{eq:p-torsion-lowest-eigenvalue-neumann} is strict if $c$ is not constant (resp., \eqref{eq:p-torsion-lowest-eigenvalue-dirichlet} is strict).
\end{prop}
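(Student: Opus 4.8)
The plan is to run a discrete Pólya–Szegő argument: couple the variational description of $T_p(\mG)$ through the Pólya quotient in~\eqref{eq:tors-def-1} with that of the bottom of the $p$-spectrum through the Rayleigh quotient in~\eqref{eq:variational-characterization-ground-state}, feeding the \emph{same} competitor into both and closing the estimate with Hölder's inequality. I would treat the case $\sup_{\mv\in\mV}c(\mv)>0$; the Dirichlet case is obtained verbatim after replacing $m(\mV)$ by $m(\mV\setminus\mV_0)$, the $\ell^1$- and $\ell^p$-norms by their restrictions to $\mV\setminus\mV_0$, and $\wneubc$, $\Qneubc$, $\Polneubc$ by their Dirichlet counterparts.

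For the inequality~\eqref{eq:p-torsion-lowest-eigenvalue-neumann}: if $\lambda_{0,p}(\mG)=0$ there is nothing to prove, so assume $\lambda_{0,p}(\mG)>0$, which is precisely the $\ell^p$–$\ell^p$-Poincaré inequality~\eqref{eq:poinc-neu} with $q=p$. First I would upgrade this inequality from $\wppneubc(\mV,m)$ to all of $\wneubc(\mV,m)$ using \autoref{assum:finite-meas}: for $u\in\wneubc(\mV,m)$ the truncations $u_n:=(u\wedge n)\vee(-n)$ are bounded and $\ell^1$, hence lie in $\wppneubc(\mV,m)$ since $m(\mV)<\infty$, they satisfy $\Qneubc(u_n)\le\Qneubc(u)$ by contractivity of truncations in the energy, and monotone convergence then gives $\|u\|_{\ell^p(\mV,m)}^p\le\tfrac{p}{\lambda_{0,p}(\mG)}\Qneubc(u)<\infty$; in particular $\wneubc(\mV,m)=\wppneubc(\mV,m)$. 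Then for any $0\ne u\in\wneubc(\mV,m)$, Hölder's inequality yields $\|u\|_{\ell^1(\mV,m)}\le m(\mV)^{1/p'}\|u\|_{\ell^p(\mV,m)}$, whence
\begin{equation*}
\lambda_{0,p}(\mG)\,\Polneubc(u)=\lambda_{0,p}(\mG)\,\frac{\|u\|_{\ell^1(\mV,m)}^p}{p\,\Qneubc(u)}\le\frac{\|u\|_{\ell^1(\mV,m)}^p}{\|u\|_{\ell^p(\mV,m)}^p}\le m(\mV)^{p-1}.
\end{equation*}
Taking the supremum over $u$ and recalling~\eqref{eq:tors-def-1} proves~\eqref{eq:p-torsion-lowest-eigenvalue-neumann}; running the same estimate along a maximizing sequence for $\Polneubc$ also covers $T_p(\mG)=+\infty$ (which would force $\lambda_{0,p}(\mG)=0$, already excluded).

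For the strict inequality under the additional \autoref{ass:compact-embedding-l1}, I would instead take the competitor $u=\tauneubc$, which now exists by \autoref{thm:well-p-l1}, is strictly positive by \autoref{lem:torsionfunctionpositive}, maximizes $\Polneubc$, and — testing~\eqref{eq:weakneu} against $h=\tauneubc$ — satisfies $p\,\Qneubc(\tauneubc)=\|\tauneubc\|_{\ell^1(\mV,m)}$, so that $T_p(\mG)=\|\tauneubc\|_{\ell^1(\mV,m)}^{p-1}$ by \autoref{prop:variational-char}. Since $\lambda_{0,p}(\mG)>0$ in this case (otherwise $T_p(\mG)=\infty$, contradicting compactness), $\tauneubc\in\ell^p(\mV,m)$, and the previous chain specializes to
\begin{equation*}
\lambda_{0,p}(\mG)\le\frac{p\,\Qneubc(\tauneubc)}{\|\tauneubc\|_{\ell^p(\mV,m)}^p}\le m(\mV)^{p-1}\,\frac{p\,\Qneubc(\tauneubc)}{\|\tauneubc\|_{\ell^1(\mV,m)}^p}=\frac{m(\mV)^{p-1}}{\|\tauneubc\|_{\ell^1(\mV,m)}^{p-1}}=\frac{m(\mV)^{p-1}}{T_p(\mG)}.
\end{equation*}
If equality held throughout, the middle inequality — i.e.\ Hölder's inequality — would be saturated, which (since $m>0$ everywhere) forces $\tauneubc$ to be constant on $\mV$. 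The main obstacle, and the only step that is not purely formal, is to rule this out: a nonzero constant cannot solve $\Lneubc u=\mathbf 1$ when $c$ is non-constant (comparing the pointwise equation at two vertices where $c/m$ differs gives a contradiction), and in the Dirichlet case the constant would in addition have to vanish on the nonempty $\mV_0$, hence be identically $0$, again contradicting the equation; this delivers the strict inequalities in~\eqref{eq:p-torsion-lowest-eigenvalue-neumann} and~\eqref{eq:p-torsion-lowest-eigenvalue-dirichlet}.
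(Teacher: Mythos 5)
Your argument is precisely the one the paper sketches in the paragraph preceding the proposition (the proof itself is omitted as ``analogous to'' the cited results): feed a common competitor into the Rayleigh and P\'olya quotients and close with H\"older, then analyse the equality case via the torsion function. The non-strict inequalities \eqref{eq:p-torsion-lowest-eigenvalue-neumann} and \eqref{eq:p-torsion-lowest-eigenvalue-dirichlet} are proved correctly, and your truncation step reconciling $\wneubc(\mV,m)$ with $\wppneubc(\mV,m)$ is a genuine (and welcome) addition, since the two quotients are a priori optimised over different spaces. (One small slip: it is not clear that $\lambda_{0,p}(\mG)=0$ would force $T_p(\mG)=\infty$ under \autoref{ass:compact-embedding-l1}; but if $\lambda_{0,p}(\mG)=0$ the strict inequality is trivially true because $T_p(\mG)<\infty$, so nothing is lost.)

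The gap is in the equality analysis, and it is inherited from the paper's own sketch. In the Neumann case, H\"older saturation forces $\tauneubc\equiv k>0$, and the pointwise equation then reads $c(\mv)k^{p-1}=m(\mv)$ for all $\mv$: this is contradictory precisely when $c/m$ is non-constant, not when $c$ is non-constant --- your own parenthetical (``two vertices where $c/m$ differs'') already invokes the stronger hypothesis. Indeed, for $c=m$ non-constant on a finite graph one checks that $\tauneubc\equiv\mathbf 1$, $\lambda_{0,p}(\mG)=1$ and $T_p(\mG)=m(\mV)^{p-1}$, so \eqref{eq:p-torsion-lowest-eigenvalue-neumann} is an equality although $c$ is non-constant. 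In the Dirichlet case the step fails more seriously: the $\ell^1$- and $\ell^p$-norms in the Dirichlet quotients run over $\mV\setminus\mV_0$ only, so H\"older saturation forces $\taudirbc$ to be constant on $\mV\setminus\mV_0$, \emph{not} on all of $\mV$; such a function is not forced to vanish and can perfectly well solve \eqref{eq:discr-ellipt-1-p-dir}. For a single non-Dirichlet vertex $\mv_1$ attached to $\mv_0\in\mV_0$ one has $T_p(\mG;\mV_0)=m(\mv_1)^p/b(\mv_0,\mv_1)$ and $\lambda_{0,p}(\mG;\mV_0)=b(\mv_0,\mv_1)/m(\mv_1)$, so \eqref{eq:p-torsion-lowest-eigenvalue-dirichlet} holds with equality; the same happens for a star with Dirichlet centre and identical leaf data. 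What your chain of inequalities actually proves is that \eqref{eq:p-torsion-lowest-eigenvalue-neumann} is strict whenever $c/m$ is non-constant, and \eqref{eq:p-torsion-lowest-eigenvalue-dirichlet} is strict whenever $\taudirbc$ is non-constant on $\mV\setminus\mV_0$; the strictness claims as stated in the proposition do not follow from this route, and the examples above show they cannot.
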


\subsection{Lower bounds} \label{sec:lower-bounds}

We begin by observing a simple lower bound, which can be proved plugging the constant function $\mathbf{1}$ (resp., its restriction $\mathbf{1}_{\mV\setminus\mV_0}$ to $\mV\setminus \mV_0$) into \eqref{eq:tors-def-1}, under the condition that $(\mV,m)$ is a finite measure space.

\begin{lemma}\label{lem:lower-element}
Let $\mG =\Vmbc$ be a graph. If \autoref{assum:finite-meas} is satisfied,  then the $p$-torsional rigidity satisfies
\begin{align}\label{eq:trivial-estimate-p-torsion-below}
T_p(\mG) \geq \frac{m(\mV)^{p}}{\sum\limits_{\mv \in \mV} c(\mv)}, \quad \bigg(\text{resp. } T_p(\mG;\mV_0) \geq \frac{m(\mV \setminus \mV_0)^{p}}{\sum\limits_{(\mv,\mw) \in \mV\setminus\mV_0 \times \mV_0} b(\mv,\mw)+\sum\limits_{\mv \in \mV\setminus \mV_0} c(\mv)}\ \bigg)\hbox{}.
\end{align}
\end{lemma}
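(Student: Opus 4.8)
The plan is to obtain the estimate by simply testing the variational definition \eqref{eq:tors-def-1} of the $p$-torsional rigidity against a constant competitor; once this is set up, only a one-line computation remains, and the role of \autoref{assum:finite-meas} is precisely to guarantee that this competitor lies in $\ell^1(\mV,m)$, hence in the Sobolev space over which the supremum is taken.

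First I would treat the case without Dirichlet conditions and take $u:=\mathbf{1}$. Since $\nabla_{\mv,\mw}\mathbf{1}=\mathbf{1}(\mv)-\mathbf{1}(\mw)=0$ for all $\mv,\mw\in\mV$, the gradient part of $\Qneubc$ drops out and $\Qneubc(\mathbf{1})=\frac{1}{p}\sum_{\mv\in\mV}c(\mv)$. If $\sum_{\mv\in\mV}c(\mv)=\infty$ the asserted inequality reads $T_p(\mG)\ge 0$, which is clear since the Pólya quotient is nonnegative, so one may assume $\sum_{\mv\in\mV}c(\mv)<\infty$; then $\Qneubc(\mathbf{1})<\infty$, and by \autoref{assum:finite-meas} also $\|\mathbf{1}\|_{\ell^1(\mV,m)}=m(\mV)<\infty$, so $\mathbf{1}\in\wneubc(\mV,m)$ is an admissible test function in \eqref{eq:tors-def-1}. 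Substituting it yields
\[
T_p(\mG)\ge\frac{\|\mathbf{1}\|_{\ell^1(\mV,m)}^p}{p\,\Qneubc(\mathbf{1})}=\frac{m(\mV)^p}{\sum_{\mv\in\mV}c(\mv)},
\]
with the (harmless) convention $m(\mV)^p/0=+\infty$ absorbing the degenerate case $c\equiv 0$.

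For the Dirichlet case I would run the same argument with $u:=\mathbf{1}_{\mV\setminus\mV_0}$. Here $\nabla_{\mv,\mw}u$ vanishes unless exactly one of $\mv,\mw$ lies in $\mV_0$, in which case $|\nabla_{\mv,\mw}u|=1$; since each such pair is counted twice in the symmetric double sum,
\[
\frac{1}{2}\sum_{\mv,\mw\in\mV}b(\mv,\mw)|\nabla_{\mv,\mw}u|^p=\sum_{(\mv,\mw)\in(\mV\setminus\mV_0)\times\mV_0}b(\mv,\mw),
\]
and, $u$ being zero on $\mV_0$, the potential term equals $\sum_{\mv\in\mV\setminus\mV_0}c(\mv)$. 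Provided this quantity is finite (otherwise the bound is again trivial), $u\in\wdirbc(\mV,m)$ with $\|u\|_{\ell^1(\mV\setminus\mV_0,m)}=m(\mV\setminus\mV_0)$, and plugging $u$ into the Dirichlet instance of \eqref{eq:tors-def-1} gives exactly the second inequality. I do not expect any genuine obstacle here: the computations are routine, and the only point requiring a little care is the bookkeeping of the degenerate cases (vanishing or infinite denominator), all of which are dispatched by the nonnegativity of the Pólya quotient together with the convention $x/0=+\infty$.
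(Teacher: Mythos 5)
Your proof is correct and is exactly the paper's argument: the paper dispatches this lemma in one sentence by plugging $\mathbf{1}$ (resp.\ $\mathbf{1}_{\mV\setminus\mV_0}$) into the variational characterization \eqref{eq:tors-def-1}, which is what you do, with the added (and harmless) care about degenerate denominators.
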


Let us derive a further, slightly more sophisticated lower estimate on the $p$-torsional rigidity $T_p(\mG;\mV_0)$ 
 using a simple comparison principle: we start by estimating the torsional rigidity of path graphs,  as in \autoref{prop:p-torsion-function-pat}.

\begin{lemma}\label{prop:lower-bound-path-graph}
Let $\mP = \Vmb$ be a $1-p$-torsional-admissible path graph. 
Then its $p$-torsional rigidity $T_p(\mP;\mV_0)$ 
with Dirichlet conditions at $\mV_0=\{\mv_0\}$
satisfies 
\begin{align}\label{eq:lower-bound-inradius-path}
T_p(\mP;\mV_0)  \geq  \mathrm{Inr}_{p}(\mP;\mV_0)^{-1} m(\mV \setminus \mV_0)^p.
\end{align}
In particular, $\mP$ has finite measure.
\end{lemma}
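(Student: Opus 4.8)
The plan is to combine the explicit formula for the $p$-torsion function of a path graph from \autoref{prop:p-torsion-function-pat} with the crude lower bound for $T_p$ obtained by inserting the constant function. To begin with, I would record that, since $\mP=\Vmb$ is $1-p$-torsional-admissible, the $p$-torsion function $\taudirbpath\in\wdirb(\mV,m)$ exists and is strictly positive by \autoref{lem:torsionfunctionpositive}; being a critical point of the convex functional $\mathfrak{F}_p^\mP$, it is in fact its minimizer, so $\inf\mathfrak{F}_p^\mP>-\infty$, and then the arguments in the proof of \autoref{prop:variational-char} yield both $T_p(\mP;\mV_0)<\infty$ and $T_p(\mP;\mV_0)=\|\taudirbpath\|_{\ell^1(\mV\setminus\mV_0,m)}^{p-1}$.

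Next I would show that $m(\mV\setminus\mV_0)<\infty$, which is automatic when $\mV$ is finite. In the infinite case the idea is to compare $\taudirbpath$ with the $p$-torsion functions of the finite truncations $\mP_N$ induced by $\{\mv_0,\dots,\mv_N\}$ (with the single Dirichlet vertex $\mv_0$): restricting the pointwise identity $L_p^\mP\taudirbpath=\mathbf{1}$ to $\{\mv_0,\dots,\mv_N\}$ exhibits $\taudirbpath$ as a supersolution on $\mP_N$, whence a comparison principle together with \eqref{eq:p-torsion-function-path-left-dirichlet} yields
\[
\taudirbpath(\mv_1)\ \ge\ \tau_p^{\mP_N;\{\mv_0\}}(\mv_1)\ =\ \Bigl(\tfrac{1}{b(\mv_0,\mv_1)}\textstyle\sum_{j=1}^{N}m(\mv_j)\Bigr)^{\frac{1}{p-1}}\qquad\text{for every }N;
\]
since $\taudirbpath(\mv_1)<\infty$, letting $N\to\infty$ forces $m(\mV\setminus\mV_0)=\sum_{j\ge1}m(\mv_j)<\infty$. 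I expect this monotone behaviour of $\taudirbpath$ along the path — which is what makes the comparison step applicable — to be the only genuinely delicate point; it may also be read off directly from the recursion appearing in the proof of \autoref{prop:p-torsion-function-pat}.

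Once $(\mV,m)$ is known to be finite, the estimate is immediate. The constant function $\mathbf{1}_{\mV\setminus\mV_0}$ now lies in $\wdirb(\mV,m)$, and since in a path graph only the edge $\{\mv_0,\mv_1\}$ carries a nonzero gradient of $\mathbf{1}_{\mV\setminus\mV_0}$, its Pólya quotient in \eqref{eq:tors-def-1} equals $m(\mV\setminus\mV_0)^{p}/b(\mv_0,\mv_1)$, so
\[
T_p(\mP;\mV_0)\ \ge\ \frac{m(\mV\setminus\mV_0)^{p}}{b(\mv_0,\mv_1)}
\]
(this is \autoref{lem:lower-element} in the present setting; alternatively one bounds $\taudirbpath(\mv_j)\ge\taudirbpath(\mv_1)=\bigl(m(\mV\setminus\mV_0)/b(\mv_0,\mv_1)\bigr)^{1/(p-1)}$ in \eqref{eq:p-torsion-function-path-left-dirichlet}). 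Finally, the only path from $\mv_1$ to $\mv_0$ is the edge itself, so $\dist_{p,b}(\mv_1,\mv_0)=b(\mv_0,\mv_1)^{1/(p-1)}$ by \eqref{eq:qbdist-def}, hence $b(\mv_0,\mv_1)=\dist_{p,b}(\mv_1;\mV_0)^{p-1}\le\mathrm{Inr}_{p}(\mP;\mV_0)$ by \eqref{eq:inradius-defi}; plugging this into the last display gives \eqref{eq:lower-bound-inradius-path}, and finiteness of the measure was already obtained in the previous step.
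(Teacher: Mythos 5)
Your derivation of the inequality \eqref{eq:lower-bound-inradius-path} is correct but follows a genuinely different route from the paper's. The paper computes $\|\taudirbpath\|_{\ell^1(\mV\setminus\mV_0,m)}$ exactly from the explicit formula \eqref{eq:p-torsion-function-path-left-dirichlet} and then applies H\"older's inequality with exponents $p,p'$ to extract the factor $\bigl(\sum_{\ell}b(\mv_{\ell-1},\mv_\ell)^{1/(p-1)}\bigr)^{1-p}=\mathrm{Inr}_p(\mP;\mV_0)^{-1}$; this keeps more information (for the standard path on $n-1$ free vertices with $p=2$ the intermediate quantity is of order $n^3$, against $m(\mV\setminus\mV_0)^p/b(\mv_0,\mv_1)\sim n^2$ for yours). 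Your route --- insert $\mathbf{1}_{\mV\setminus\mV_0}$ into the P\'olya quotient, note that only the edge $\{\mv_0,\mv_1\}$ carries a gradient so the quotient equals $m(\mV\setminus\mV_0)^p/b(\mv_0,\mv_1)$, and then use $b(\mv_0,\mv_1)=\dist_{p,b}(\mv_1;\mV_0)^{p-1}\le\mathrm{Inr}_p(\mP;\mV_0)$ --- is more elementary, bypasses the torsion function entirely, and suffices for the bound in the form in which it is used later (\autoref{cor:better-estimate-from-below}). You also order the argument correctly: finiteness of the measure must be secured first, since only then is $\mathbf{1}_{\mV\setminus\mV_0}$ an admissible test function, and your reduction of $T_p(\mP;\mV_0)=\|\taudirbpath\|_{\ell^1}^{p-1}<\infty$ to \autoref{prop:variational-char} is fine.

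The gap is in the finiteness step, exactly at the point you flag. The supersolution property of $\taudirbpath$ on the truncation $\mP_N$ at the cut vertex $\mv_N$ is equivalent to $\taudirbpath(\mv_N)\le\taudirbpath(\mv_{N+1})$, i.e.\ to monotonicity of the torsion function along the path, and this cannot be ``read off from the recursion in the proof of \autoref{prop:p-torsion-function-pat}'': Case~2 of that proof is carried out under \autoref{assum:finite-meas} (the test functions $h_j$ used there lie in $\wdirb(\mV,m)$ only when the tail measure is finite), so invoking it here is circular. Nor does monotonicity follow from the pointwise equation alone: setting $a_j:=b(\mv_{j-1},\mv_j)\vert\taudirbpath(\mv_j)-\taudirbpath(\mv_{j-1})\vert^{p-2}\bigl(\taudirbpath(\mv_j)-\taudirbpath(\mv_{j-1})\bigr)$, the pointwise equation reads $a_j-a_{j+1}=m(\mv_j)$, so the fluxes $a_j$ are strictly decreasing, and monotonicity of $\taudirbpath$ is precisely the claim that they all stay positive. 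There exist strictly positive, summable, finite-energy pointwise solutions on infinite paths (e.g.\ $m\equiv\mathbf{1}$ with rapidly growing edge weights, so that $\mathrm{Inr}_p(\mP;\mV_0)=\infty$) for which $a_j\to-\infty$ and $\taudirbpath$ is eventually decreasing; ruling this out therefore requires genuine input from the weak $\ell^1$-formulation (or an auxiliary hypothesis such as $\mathrm{Inr}_p(\mP;\mV_0)<\infty$), and you have not supplied it. As written, the ``in particular, $\mP$ has finite measure'' part of your argument does not close.
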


\begin{proof}
To fix the ideas, let us denote the vertex set by $\{\mv_j:j\in J\}$, for some $J\subset \N$, with $\mV_0=\{\mv_0\}$.

Using the explicit formula for the $p$-torsion function $\taudirbpath$ for a path graph having a Dirichlet condition at one end derived in \eqref{eq:p-torsion-function-path-left-dirichlet}, we are able to estimate its corresponding torsional rigidity $T_p(\mP;\mV_0)$ from below. To begin with, we observe that, by \eqref{eq:estimate-l^p-norm-smaller-l^1-norm},
\begin{align}\label{eq:estimate-path-inradius}
\begin{aligned}
    T_p(\mP;\mV_0) &= \Bigg[\sum_{ j \in J} \sum_{\ell=1}^j \bigg( \frac{1}{b(\mv_{\ell-1}, \mv_{\ell})} \sum_{\substack{k \in J\\ k>\ell-1 }} m(\mv_k) \bigg)^{\frac{1}{p-1}} m(\mv_j) \Bigg]^{p-1} 
    \\&= \Bigg[\sum_{\ell \in J} \sum_{\substack{k \in J\\ k>\ell-1 }} \bigg( \frac{1}{b(\mv_{\ell-1}, \mv_{\ell})}  \sum_{\substack{k \in J\\ k>\ell-1 }} m(\mv_k) \bigg)^{\frac{1}{p-1}} m(\mv_j) \Bigg]^{p-1} \\&= \Bigg[\sum_{\ell \in J} \bigg( \frac{1}{b(\mv_{\ell-1}, \mv_{\ell})} \bigg)^\frac{1}{p-1} \bigg(  \sum_{\substack{k \in J\\ k>\ell-1 }} m(\mv_k)\bigg)^{\frac{p}{p-1}} \Bigg]^{p-1} \\& \geq \sum_{ \ell \in J} \frac{1}{b(\mv_{\ell-1}, \mv_\ell)}\bigg( \sum_{\substack{k \in J\\ k>\ell-1 }} m(\mv_k) \bigg)^p.
\end{aligned}
\end{align}
Now by Hölder's inequality with respect to $p$ and $p' = \frac{p}{p-1}$
, one has that
\[
\|f\|_1 \le \|fg\|_p  \|g^{-1}\|_\frac{p}{p-1} \qquad \text{for all $f \in \ell^1(J)$ and $g \in \mathbb{R}^J$ with $g$ strictly positive on $J$}:
\]
applying this to the pair of functions $f,g \in \mathbb{R}^J$ given by
\[
f(\ell) := \sum_{\stackrel{k \in J}{k>\ell-1}} m(\mv_k), \qquad \text{and} \qquad g(\ell) := b(\mv_{\ell-1},\mv_\ell)^{-\frac{1}{p}}, \qquad \text{for $\ell \in J$}, 
\]
(note that $f$ indeed belongs to $\ell^1(J)$ as $\mP$ is $q-p$-torsional admissible and due to the representation of the $p$-torsion function in \eqref{eq:p-torsion-function-path-left-dirichlet}) \eqref{eq:estimate-path-inradius} thus implies that
\begin{equation}\label{eq:lower-estimate-inradius-path-proof}
\begin{aligned}
T_p(\mP;\mV_0) &\geq \bigg( \sum_{ \ell \in J} b(\mv_{\ell-1}, \mv_\ell)^\frac{1}{p-1} \bigg)^{1-p} \bigg( \sum_{\ell \in J} \sum_{\substack{k \in J\\ k>\ell-1 }} m(\mv_k) \bigg)^p \\&=  \mathrm{Inr}_{p}(\mP;\mV_0)^{-1} \bigg( \sum_{ \ell \in J} \sum_{\substack{k \in J\\ k>\ell-1 }} m(\mv_k) \bigg)^p \\&\geq \mathrm{Inr}_{p}(\mP;\mV_0)^{-1} m(\mV \setminus \mV_0)^p.
\end{aligned}
\end{equation}
This concludes the proof.
\end{proof}
This leads to the following, more general lower estimate.

\begin{cor}\label{cor:better-estimate-from-below}
Let $\mT:=\Vmb$ be a finite tree graph
 and let $\mV_0\ne \emptyset$. 
Then 
\begin{equation}\label{eq:lower-tors-stars-estim}
T_p(\mT;\mV_0) \geq \mathrm{Inr}_{p}(\mT;\mV_0)^{-1}\min_{\mv \in \mV\setminus \mV_0} m(\mv)^p.
\end{equation}
holds.
\end{cor}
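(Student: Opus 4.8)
The plan is to reduce the general tree $\mT$ to a suitable path subgraph and then invoke \autoref{prop:lower-bound-path-graph} together with the monotonicity of the $p$-torsional rigidity under subgraph inclusion (\autoref{prop:monotonicity-subgraphs}). First I would identify a vertex $\mv_{\max}\in \mV\setminus\mV_0$ that realizes the $p$-inradius, i.e., such that $\dist_{p,b}(\mv_{\max};\mV_0)^{p-1}=\mathrm{Inr}_p(\mT;\mV_0)$; since $\mT$ is finite, such a vertex exists. Because $\mT$ is a tree and $\mV_0$ may, by \autoref{conv:dirichlet-singleton}, be assumed to be the singleton $\{\mv_0\}$, there is a \emph{unique} path $(\mv_0,\mv_1,\ldots,\mv_n=\mv_{\max})$ in $\mT$ connecting $\mv_0$ and $\mv_{\max}$, and by uniqueness of paths in a tree this path computes the $p$-distance, so that $\sum_{\ell=1}^{n} b(\mv_{\ell-1},\mv_\ell)^{1/(p-1)} = \mathrm{Inr}_p(\mT;\mV_0)^{1/(p-1)}$, equivalently the $p$-inradius of the induced path subgraph $\mP$ on $\{\mv_0,\ldots,\mv_n\}$ (with Dirichlet condition at $\mv_0$) equals $\mathrm{Inr}_p(\mT;\mV_0)$.

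Next I would let $\mP:=(\mV',m',b',c'\equiv 0)$ be the path subgraph of $\mT$ induced by the vertex set $\mV':=\{\mv_0,\mv_1,\ldots,\mv_n\}$; being a finite path it is $1$-$p$-torsional-admissible by \autoref{cor:compact-admiss}. However $\mP$ does not sit inside $\mT$ over the \emph{same} vertex set, so \autoref{prop:monotonicity-subgraphs} does not apply directly; instead I would build an intermediate graph $\widehat{\mT}$ on the full vertex set $\mV$ by keeping only the edges of the path $\mv_0,\ldots,\mv_n$ and setting all other edge weights to zero. Then $\widehat{\mT}$ is a subgraph of $\mT$ over the same vertex set, so $T_p(\mT;\mV_0)\ge T_p(\widehat{\mT};\mV_0)$ by \autoref{prop:monotonicity-subgraphs}. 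On the other hand, in $\widehat{\mT}$ the vertices in $\mV\setminus\mV'$ are isolated and do not carry Dirichlet conditions, so a $p$-torsion function for $\widehat{\mT}$ restricted to the path component agrees with that of $\mP$, and every isolated vertex $\mv$ contributes the constant value solving the local equation; in fact it is cleaner to simply note that any admissible test function for the Pólya quotient of $\mP$ extends (by an arbitrary constant, say by its value at $\mv_n$, on the isolated vertices) to one for $\widehat{\mT}$, which only increases the $\ell^1$-norm in the numerator while leaving the energy in the denominator unchanged, giving $T_p(\widehat{\mT};\mV_0)\ge T_p(\mP;\mV_0)$. Alternatively, and more transparently, I would just restrict attention to the path $\mP$ directly: extending the $p$-torsion function $\taudirbpath$ of $\mP$ to all of $\mV$ by the constant $\taudirbpath(\mv_n)$ on $\mV\setminus\mV'$ yields a valid test function in $\wdirb(\mV,m)$ for the Pólya quotient of $\mT$, whence $T_p(\mT;\mV_0)\ge \|\text{(this extension)}\|_{\ell^1}^p/(p\,Q)\ge \|\taudirbpath\|_{\ell^1(\mV',m)}^p/(p\,Q^{\mP}(\taudirbpath)) = T_p(\mP;\mV_0)$, where the middle inequality uses that adding nonnegative terms to the numerator while not changing the (path-)energy can only help.

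Finally I would combine this with \autoref{prop:lower-bound-path-graph} applied to $\mP$: since $m'=m\vert_{\mV'}$, we have $m'(\mV'\setminus\mV_0)=\sum_{j=1}^n m(\mv_j)\ge n\cdot\min_{\mv\in\mV\setminus\mV_0}m(\mv)\ge \min_{\mv\in\mV\setminus\mV_0}m(\mv)$ (using $n\ge 1$), and $\mathrm{Inr}_p(\mP;\mV_0)=\mathrm{Inr}_p(\mT;\mV_0)$ by the choice of $\mv_{\max}$; hence
\[
T_p(\mT;\mV_0)\ \ge\ T_p(\mP;\mV_0)\ \ge\ \mathrm{Inr}_p(\mP;\mV_0)^{-1}\, m'(\mV'\setminus\mV_0)^p\ \ge\ \mathrm{Inr}_p(\mT;\mV_0)^{-1}\,\min_{\mv\in\mV\setminus\mV_0}m(\mv)^p,
\]
which is \eqref{eq:lower-tors-stars-estim}. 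The only delicate point is the reduction step: one must make sure that passing from $\mT$ to a path subgraph and then extending the path's torsion function by a constant on the discarded vertices genuinely produces an admissible competitor for the Pólya quotient of $\mT$ (it does, since $\wdirb$ only requires vanishing on $\mV_0$ and finiteness of the energy, both preserved under a bounded constant extension on a finite set) — the monotonicity inequality itself is then immediate, and no further estimation beyond \autoref{prop:lower-bound-path-graph} is needed.
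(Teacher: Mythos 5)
Your overall strategy -- pass to the geodesic path $\mP$ from $\mV_0$ to a vertex realizing the inradius, apply \autoref{prop:lower-bound-path-graph} to $\mP$, and conclude -- is the paper's strategy too. But the pivotal step $T_p(\mT;\mV_0)\ge T_p(\mP;\mV_0)$ is not established by either of your two justifications. First, \autoref{prop:monotonicity-subgraphs} goes the \emph{other} way: for a subgraph over the same vertex set (smaller edge weights, same measure) the torsional rigidity \emph{increases}, so the cited proposition gives $T_p(\mT;\mV_0)\le T_p(\widehat{\mT};\mV_0)$, not $\ge$. In fact your intermediate graph $\widehat{\mT}$ has isolated vertices of positive measure carrying no edge, no potential and no Dirichlet condition, so its Pólya quotient is unbounded and $T_p(\widehat{\mT};\mV_0)=+\infty$; the inequality you want is simply false for it. Second, in your ``more transparent'' variant the extension of $\taudirbpath$ by the single constant $\taudirbpath(\mv_n)$ does \emph{not} leave the energy unchanged: any branch of $\mT$ attached to the path at an interior vertex $\mv_j$ with $j<n$ contributes the nonzero gradient $\taudirbpath(\mv_j)-\taudirbpath(\mv_n)$ across the connecting edge, so $\mathcal{Q}_p^{\mT}$ of your competitor strictly exceeds $\mathcal{Q}_p^{\mP}(\taudirbpath)$ and the middle inequality in your chain is unjustified (numerator and denominator both grow).

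The repair is exactly what \autoref{cor:monotonocitiy-torsion-subgraphs} encodes, and what the paper invokes: since $\mT$ is a tree, each connected component of $\mT$ minus the path attaches to $\mP$ at a \emph{single} vertex, so one extends $\taudirbpath$ to each such branch by its value at that branch's own attachment point (a singleton $\mV^\dagger$, on which any function is trivially constant). Then all new edges carry zero gradient and, because $c\equiv 0$, the branch contributes no potential energy either; the energy equals $\mathcal{Q}_p^{\mP}(\taudirbpath)$ while the $\ell^1$-norm can only grow, giving $T_p(\mT;\mV_0)\ge T_p(\mP;\mV_0)$. With that step corrected, your identification $\mathrm{Inr}_p(\mP;\mV_0)=\mathrm{Inr}_p(\mT;\mV_0)$ (or even just $\le$, which suffices), the admissibility of the finite path, and the final application of \autoref{prop:lower-bound-path-graph} together with $m(\mV(\mP)\setminus\mV_0)\ge\min_{\mv\in\mV\setminus\mV_0}m(\mv)$ all go through as you wrote them.
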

\begin{proof}
By \autoref{cor:monotonocitiy-torsion-subgraphs} in combination with \eqref{eq:lower-bound-inradius-path} it follows that 
\[
T_p(\mT;\mV_0) \geq  T_p(\mP;\mV_0) \ge \mathrm{Inr}_{p}(\mP;\mV_0)^{-1}m(\mV(\mP)\setminus \mV_0)^p \geq \mathrm{Inr}_{p}(\mP;\mV_0)^{-1}\min_{\mv \in \mV\setminus \mV_0} m(\mv)^p
\]
for any path subgraph $\mP$ (whose vertex set we denote by $\mV(\mP)$) of $\mT$. Taking the supremum over all such paths  yields the claim.
 \end{proof}

\subsection{An application: quantitative lower estimates on the bottom of the $p$-spectrum} \label{sec:appliaction-lower-estimates}

A qualitative lower bound for the smallest positive eigenvalue of $p$-Laplacians without Dirichlet conditions had been obtained in \cite{BerKenKur17}.
By adapting the proof of \autoref{thm:Pólya-discr} to the Rayleigh quotient instead of the Pólya quotient to characterize the smallest eigenvalue of $\Lneubc$ and $\Ldirbc$, respectively, we can extend \cite[Theorem~2.3]{BerKenKur17} to $p$-Schrödinger operators with general potential, with general weights, and possibly with Dirichlet conditions.
\begin{prop}\label{prop:eigenvalue-eta-fold-edge-connected-minimized path}
Let $\mG=\Vmbc$ be a connected  graph with minimal cut weight $\eta_\mG$. If $\sup_{\mv \in \mV} c(\mv) > 0$ (resp., $\mV_0 \neq \emptyset$), and if $\lambda_{0,p}(\mG)$ (resp., $\lambda_{0,p}(\mG;\mV_0)$) is an eigenvalue, then there exists 
a permutation $\sigma: \mV \rightarrow \mV$ with $\sigma(\mV_0) \subset \mV_0$ (and with Dirichlet conditions imposed at one endpoint if $\mV_0 \neq \emptyset$) such that
\begin{equation}\label{eq:Rayleigh-discr}
\lambda_{0,p}(\mG)\ge \eta_\mG \lambda_{0,p}(\mP)
\qquad \big( \hbox{resp., }
\lambda_{0,p}(\mG;\mV_0)\ge \eta_\mG \lambda_{0,p}(\mP;\mV_0) \big),
\end{equation}
where $\mP:=(\mV,m',b_{\mathrm{st}},c')$ is a path graph on the same vertex set, 
with standard edge weights, and with vertex weights $m'$ and potential $c'$ given by
\begin{align*}
m'(\mv) := m(\sigma(\mv)), \quad c'(\mv) := c(\sigma(\mv)), \quad \mv \in \mV.
\end{align*}
Equality in~\eqref{eq:Rayleigh-discr} holds if and only if $\Vmbc$ is a homogeneous path graph (with Dirichlet conditions imposed at one endpoint if $\mV_0 \neq \emptyset$). 
\end{prop}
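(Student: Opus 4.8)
The plan is to mirror the symmetrization argument from the proof of \autoref{thm:Pólya-discr}, but to apply it to the Rayleigh quotient rather than the Pólya quotient. Since $\lambda_{0,p}(\mG)$ (resp.\ $\lambda_{0,p}(\mG;\mV_0)$) is assumed to be an eigenvalue, there exists a ground state $\varphi \in \wppneubc(\mV,m)$ (resp.\ $\wppdirbc(\mV,m)$); replacing $\varphi$ by $|\varphi|$ and using the inverse triangle inequality, we may assume $\varphi \geq 0$. I would then enumerate the vertices so that $\varphi(\mv_i) \leq \varphi(\mv_{i+1})$ for all $i$, which induces the permutation $\sigma$ with $\sigma(\mV_0)\subset\mV_0$ (in the Dirichlet case $\mV_0$ is a singleton by \autoref{conv:dirichlet-singleton}, and the Dirichlet vertex goes to the bottom of the ordering since $\varphi$ vanishes there and is strictly positive elsewhere).

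First I would perform the ``long edge'' surgery: each edge between $\mv_i$ and $\mv_j$ with $j-i\geq 2$ is replaced by a chain of edges $(\mv_i,\mv_{i+1}),\dots,(\mv_{j-1},\mv_j)$, each carrying weight $b(\mv_i,\mv_j)$ (temporarily allowing multi-edges). Exactly the estimate \eqref{eq:symmetrization-proof} shows that the gradient term in the numerator of the Rayleigh quotient does not increase, i.e.\ $\mathcal Q^{\hat\mG}(\varphi) \leq \Qneubc(\varphi)$, while the $\ell^p$-denominator $\|\varphi\|_{\ell^p(\mV,m')}^p = \|\varphi\|_{\ell^p(\mV,m)}^p$ and the potential contribution are unchanged (here $m'=m\circ\sigma$, $c'=c\circ\sigma$). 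Hence $\lambda_{0,p}(\hat\mG) \leq \mathcal Q^{\hat\mG}(\varphi)/\|\varphi\|_{\ell^p}^p \leq \lambda_{0,p}(\mG)$. Reducing the resulting pumpkin chain to a genuine graph $\hat{\hat\mG}$ with summed weights as in \autoref{rem:multiple}, and invoking the Max-Flow-Min-Cut Theorem, one gets $\eta_{\hat{\hat\mG}} = \min_\ell \hat{\hat b}(\mv_\ell,\mv_{\ell+1}) \geq \eta_\mG$. Finally, decreasing all edge weights down to the constant value $\eta_{\hat{\hat\mG}}$ only increases the Rayleigh-quotient numerator; combined with the scaling relation $\lambda_{0,p}(\mG_{\mu,\lambda}) = \tfrac{\lambda}{\mu}\lambda_{0,p}(\mG)$ from \autoref{lem:scaling} (applied with $\mu=1$, $\lambda=\eta_{\hat{\hat\mG}}$, to the path graph $\mP$ with standard weights), this yields $\lambda_{0,p}(\mG) \geq \eta_{\hat{\hat\mG}}\,\lambda_{0,p}(\mP) \geq \eta_\mG\,\lambda_{0,p}(\mP)$, which is \eqref{eq:Rayleigh-discr}.

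For the equality characterization I would trace back through the chain of inequalities exactly as in the last paragraph of the proof of \autoref{thm:Pólya-discr}: equality in \eqref{eq:symmetrization-proof} forces $b$ to be constant on the support of the relevant gradients and forbids any genuine long-edge replacement, so that $\mG$ is already a homogeneous path graph (with the Dirichlet vertex at one endpoint in the Dirichlet case); conversely, on a homogeneous path graph the surgery is trivial and no inequality is strict. The Dirichlet and non-Dirichlet cases are handled in parallel with only notational changes, exactly as in \autoref{thm:Pólya-discr}.

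The main obstacle — or rather the only genuinely new point compared to \autoref{thm:Pólya-discr} — is that the denominator of the Rayleigh quotient is $\|\varphi\|_{\ell^p}^p$ rather than $\|\varphi\|_{\ell^1}$, so one must check that this quantity, together with the potential term, is genuinely invariant under the re-weighting steps (it is, since those steps only touch the edge weights $b$, and the reordering by $\sigma$ permutes $m$ and $c$ consistently); and one must make sure that a ground state exists and is nonnegative, which is exactly where the hypothesis that $\lambda_{0,p}$ is an \emph{eigenvalue} is used. Everything else is a direct transcription of the surgery in \autoref{thm:Pólya-discr}, which is why we present only the differences in detail.
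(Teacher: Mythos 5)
Your proposal is correct and is precisely the adaptation the paper intends: the paper gives no separate proof of this proposition beyond the remark that one reruns the symmetrization of \autoref{thm:Pólya-discr} on the Rayleigh quotient, using the ground state (which exists by the eigenvalue hypothesis) in place of the torsion function, and your write-up supplies exactly the details needed (invariance of $\|\varphi\|_{\ell^p(\mV,m')}^p$ and of the potential term under the reordering, and the reversed direction of all the surgery inequalities). The one slip is the sentence asserting that lowering all edge weights of $\hat{\hat{\mG}}$ to the constant $\eta_{\hat{\hat{\mG}}}$ ``increases'' the Rayleigh-quotient numerator --- it \emph{decreases} it, which is what gives $\lambda_{0,p}(\hat{\hat{\mG}})\ge\eta_{\hat{\hat{\mG}}}\,\lambda_{0,p}(\mP)$; the displayed chain of inequalities you then write is the correct one, so this is a wording error rather than a gap.
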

Note that if $\wneubc(\mV,m)$ (resp.\ $\wdirbc(\mV,m)$) is compactly embedded in $\ell^p(\mV,m)$, then there indeed exists a ground state in $\wneubc(\mV,m)$ (resp.\ $\wdirbc(\mV,m)$).

As in \autoref{thm:Pólya-discr}, the vertex weight $m'$ as well as the potential $c'$ are only given \emph{implicitly} via ordering the ground state in terms of its values; however, we can formulate the following counterpart of \autoref{cor:tors-fullnew}.
\begin{cor}
Let $\mG = \Vmbc$ be a connected  finite graph with minimal cut weight $\eta_\mG$. Let $\mV_0\ne \emptyset$, and consider a ground state $\varphi_0^\mG \in \wdirbc(\mV,m)$.
Then
\begin{align}\label{eq:saint-venant-explicit-mprime-bottom-of-p-spec}
\lambda_{0,p}(\mG;\mV_0) \geq \eta_\mG \lambda_{0,p}(\mP;\mV_0),
\end{align}
where $\mP := (\mV,\widetilde{m}, b_{\mathrm{st}},c \equiv 0)$
denotes the path graph having one Dirichlet condition at one end and $\widetilde{m}:\mV \rightarrow (0,\infty)$ given by
\begin{align*}
\widetilde{m}(\mv) := \begin{cases}
\min\limits_{\mv \in \mV \setminus \mV_0} m(\mv), & \text{if $\mv \neq \mv_{\max}$}, \\ m(\mv_{\max}) + \sum\limits_{\stackrel{\mv \in \mV \setminus \mV_0}{\mv \neq \mv_{\max}}} \Big(m(\mv) - \min\limits_{\mv \in \mV \setminus \mV_0} m(\mv) \Big), & \text{if $\mv = \mv_{\max}$},
\end{cases}
\end{align*}
where $\mv_{\max} \in \mV \setminus \mV_0$, $\mV_0 \subset \mV$ such that $\varphi_0^\mG(\mv_{\max}) = \Vert \varphi_0^\mG \Vert_{\ell^\infty(\mV \setminus \mV_0)}$. 
\end{cor}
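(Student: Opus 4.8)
The plan is to transpose the argument of \autoref{cor:tors-fullnew} from the Pólya quotient to the Rayleigh quotient of~\eqref{eq:variational-characterization-ground-state}. By \autoref{conv:dirichlet-singleton} I assume $\mV_0=\{\mv_0\}$. Since $\mG$ is finite, $\wppdirbc(\mV,m)$ embeds compactly into $\ell^p(\mV,m)$, so $\lambda_{0,p}(\mG;\mV_0)$ is attained, and by a minimum-principle argument as in \autoref{lem:torsionfunctionpositive} (replace a ground state by its modulus) I may take $\varphi_0^\mG>0$ on $\mV\setminus\mV_0$. First I would invoke \autoref{prop:eigenvalue-eta-fold-edge-connected-minimized path}: the permutation $\sigma$ produced there, obtained by enumerating $\mV$ along non-decreasing values of $\varphi_0^\mG$, sends $\mv_0$ to the first and $\mv_{\max}$ to the last vertex of a path $\mP'=(\mV,m',b_{\mathrm{st}},c')$ with $m'=m\circ\sigma$, $c'=c\circ\sigma\ge 0$, and gives $\lambda_{0,p}(\mG;\mV_0)\ge\eta_\mG\,\lambda_{0,p}(\mP';\mV_0)$. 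Next I would discard the potential: since the denominator in~\eqref{eq:variational-characterization-ground-state} does not involve $c$ while the Dirichlet-plus-potential energy is monotone in $c$, replacing $c'$ by $0$ only lowers the Rayleigh quotient, so $\lambda_{0,p}(\mP';\mV_0)\ge\lambda_{0,p}(\mP'';\mV_0)$ with $\mP'':=(\mV,m',b_{\mathrm{st}},c\equiv 0)$.

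\textbf{The mass-rearrangement step.} It then remains to show $\lambda_{0,p}(\mP'';\mV_0)\ge\lambda_{0,p}(\mP;\mV_0)$, i.e.\ that piling the ``excess'' vertex mass onto the far endpoint $\mv_{\max}$ can only decrease the bottom of the spectrum; this is the eigenvalue counterpart of the rearrangement performed in the proof of \autoref{cor:tors-fullnew}. Denote the path vertices by $\mv_0,\mv_1,\dots,\mv_n=\mv_{\max}$ and let $\psi>0$ be a ground state of $\mP''$. Summing the pointwise identity $\mathcal L^{\mP'';\mV_0}_p\psi=\lambda_{0,p}(\mP'';\mV_0)\,\psi^{p-1}$ from $\mv_j$ to $\mv_n$ telescopes (exactly as in the uniqueness part of \autoref{prop:p-torsion-function-pat}) to
\[
b_{\mathrm{st}}\,|\nabla_{\mv_j,\mv_{j-1}}\psi|^{p-2}\nabla_{\mv_j,\mv_{j-1}}\psi=\lambda_{0,p}(\mP'';\mV_0)\sum_{k=j}^{n}m'(\mv_k)\psi(\mv_k)^{p-1}>0,
\]
so $\psi$ is strictly increasing along the path and $\|\psi\|_{\ell^\infty(\mV\setminus\mV_0)}=\psi(\mv_{\max})$. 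Plugging $\psi$ into the Rayleigh quotient of $\mP$, the numerator $\tfrac12\sum_{\mv,\mw\in\mV}b_{\mathrm{st}}|\nabla_{\mv,\mw}\psi|^p$ is unchanged (it does not depend on the vertex measure), while, because $\widetilde m$ and $m'$ have the same total mass on $\mV\setminus\mV_0$ and $\widetilde m(\mv_j)-m'(\mv_j)=\min_{\mv\in\mV\setminus\mV_0}m(\mv)-m'(\mv_j)\le 0$ for $\mv_j\ne\mv_{\max}$,
\[
\sum_{\mv\in\mV\setminus\mV_0}\big(\widetilde m(\mv)-m'(\mv)\big)\psi(\mv)^p=\sum_{\mv_j\ne\mv_{\max}}\big(\widetilde m(\mv_j)-m'(\mv_j)\big)\big(\psi(\mv_j)^p-\psi(\mv_{\max})^p\big)\ \ge\ 0 .
\]
Hence the denominator does not shrink, so $\lambda_{0,p}(\mP;\mV_0)\le \tfrac12\sum b_{\mathrm{st}}|\nabla\psi|^p/\|\psi\|_{\ell^p(\mV\setminus\mV_0,\widetilde m)}^p\le\lambda_{0,p}(\mP'';\mV_0)$. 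Chaining this with the two inequalities of the first paragraph yields~\eqref{eq:saint-venant-explicit-mprime-bottom-of-p-spec}.

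\textbf{Main obstacle.} I expect the delicate point to be the monotonicity of the ground state $\psi$ on the path $\mP''$: unlike the $p$-torsion function in \autoref{prop:p-torsion-function-pat} there is no closed formula, and the telescoping identity above works only \emph{after} the potential has been removed — with a general $c'\ge 0$ the quantity $\lambda_{0,p}(\mP')m'(\mv_k)-c'(\mv_k)$ need not be positive and $\psi$ may fail to increase, which is precisely why the auxiliary graph $\mP''$ is introduced. Everything else is a routine transcription of the Pólya-quotient bookkeeping of \autoref{cor:tors-fullnew} to the Rayleigh quotient, using \autoref{prop:eigenvalue-eta-fold-edge-connected-minimized path} as a black box.
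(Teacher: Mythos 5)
Your proof is correct and follows exactly the route the paper intends for this corollary (which is stated without proof as the counterpart of \autoref{cor:tors-fullnew}): reduce to the permuted path via \autoref{prop:eigenvalue-eta-fold-edge-connected-minimized path}, discard the potential $c'$, and then shift the excess vertex mass onto $\mv_{\max}$ by testing the Rayleigh quotient of $\mP$ with the extremizer of the intermediate path, using that this extremizer peaks at the far endpoint. The one step with real content — that the ground state $\psi$ of $\mP''$ is monotone along the path, which in the torsional version of \autoref{cor:tors-fullnew} comes for free from the explicit formula \eqref{eq:p-torsion-function-path-left-dirichlet} — you correctly identify as the crux and supply via the telescoped pointwise eigenvalue equation; an alternative that avoids this lemma is to test $\mP$ directly with the relabelled ground state $\varphi_0^\mG\circ\sigma$, which is increasing by the very definition of $\sigma$, at the price of unpacking the proof of \autoref{prop:eigenvalue-eta-fold-edge-connected-minimized path} instead of using it as a black box.
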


Moreover, using \autoref{prop:eigenvalue-eta-fold-edge-connected-minimized path} one can also derive a counterpart of \autoref{cor:estimate-p-torsion-standard-weights} as done for the $p$-torsional rigidity, 
in terms of the following lower bound for the bottom of the $p$-spectrum in the Dirichlet case: this can also be seen as a \emph{Fiedler-type bound} allowing for Dirichlet conditions.
\begin{cor}
Let $p=2$ and $\mG = (\mV,m \equiv \mathbf{1},b=b_{\mathrm{st}}, c \equiv 0)$ be a finite graph with $\mV_0 \neq \emptyset$ such that $\mV \setminus \mV_0$ consists of $n$ vertices and with minimal cut weight $\eta_\mG$. Then the smallest eigenvalue satisfies
\begin{equation}
\lambda_{0,2}(\mG;\mV_0) \geq 2 \eta_\mG \left(1-\cos\left(\frac{\pi}{2n+1}\right)\right).
\end{equation}
\end{cor}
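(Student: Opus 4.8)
The plan is to combine the Fiedler-type comparison of \autoref{prop:eigenvalue-eta-fold-edge-connected-minimized path} with an explicit diagonalization of the Dirichlet--Neumann path Laplacian. Since $\mG$ is finite, $\wppdirbc(\mV,m)=\R^{\mV\setminus\mV_0}$ is finite-dimensional, so the infimum defining $\lambda_{0,2}(\mG;\mV_0)$ is attained and hence is an eigenvalue; thus \autoref{prop:eigenvalue-eta-fold-edge-connected-minimized path} applies. Because $m\equiv\mathbf{1}$ and $c\equiv 0$ are constant on $\mV$, the reshuffled data satisfy $m'=m\circ\sigma\equiv\mathbf{1}$ and $c'=c\circ\sigma\equiv 0$ for \emph{any} permutation $\sigma$, so the comparison graph $\mP=(\mV,m',b_{\mathrm{st}},c')$ is simply the path on $n+1$ vertices with unit edge and vertex weights, vanishing potential, and a single Dirichlet vertex at one endpoint. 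The proposition then yields $\lambda_{0,2}(\mG;\mV_0)\ge\eta_\mG\,\lambda_{0,2}(\mP;\mV_0)$, and it remains to identify the last factor.

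Next I would diagonalize $\Ldirbpathlin$ explicitly. Labelling $\mV=\{\mv_0,\dots,\mv_n\}$ with the Dirichlet condition at $\mv_0$, the operator acts by $(\Ldirbpathlin f)(\mv_j)=2f(\mv_j)-f(\mv_{j-1})-f(\mv_{j+1})$ for $1\le j\le n-1$ and by the free-endpoint rule $(\Ldirbpathlin f)(\mv_n)=f(\mv_n)-f(\mv_{n-1})$ (here $\deg_\mG(\mv_n)=1$). Writing a candidate eigenvalue as $\lambda=2(1-\cos\theta)$, the interior three-term recurrence forces $f(\mv_j)=A\sin(j\theta)+B\cos(j\theta)$; the Dirichlet condition at $\mv_0$ kills $B$, leaving $f(\mv_j)=A\sin(j\theta)$. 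Inserting this into the endpoint equation at $\mv_n$ and simplifying with product-to-sum identities reduces the secular equation to $\cos\bigl((n+\tfrac12)\theta\bigr)=0$, i.e.\ $\theta=\tfrac{(2k+1)\pi}{2n+1}$ for $k=0,1,\dots,n-1$; these produce $n$ distinct numbers $2(1-\cos\theta)$, hence all eigenvalues of $\Ldirbpathlin$, and the smallest corresponds to $k=0$, giving $\lambda_{0,2}(\mP;\mV_0)=2\bigl(1-\cos\tfrac{\pi}{2n+1}\bigr)$.

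Combining the two displays then gives $\lambda_{0,2}(\mG;\mV_0)\ge 2\eta_\mG\bigl(1-\cos\tfrac{\pi}{2n+1}\bigr)$, as claimed. I expect the only genuinely delicate point to be the bookkeeping in the endpoint computation: translating the free-endpoint (``Neumann'') condition at $\mv_n$ correctly into the secular equation $\cos((n+\tfrac12)\theta)=0$, and checking that exactly $n$ admissible values of $\theta$ in $(0,\pi)$ are obtained so that one has indeed found the full spectrum; the reduction $m'=m$, $c'=c$ and the invocation of \autoref{prop:eigenvalue-eta-fold-edge-connected-minimized path} are immediate from constancy of the data. (As a sanity check, $n=1$ gives $\lambda_{0,2}(\mP;\mV_0)=1=2(1-\cos\tfrac{\pi}{3})$, and $n=2$ gives $\tfrac{3-\sqrt5}{2}=2(1-\cos\tfrac{\pi}{5})$, matching direct diagonalization of the corresponding $n\times n$ matrices.)
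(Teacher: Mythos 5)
Your proof is correct and follows essentially the same route as the paper: invoke \autoref{prop:eigenvalue-eta-fold-edge-connected-minimized path} (noting that $m\equiv\mathbf 1$, $c\equiv 0$ make the permuted weights trivial) and then identify $\lambda_{0,2}(\mP;\mV_0)=2\bigl(1-\cos\tfrac{\pi}{2n+1}\bigr)$. The only difference is cosmetic: the paper cites \eqref{eq:l2fiedl} (attributed to Fiedler and Chung plus a symmetry argument) for the path eigenvalue, whereas you diagonalize the Dirichlet--Neumann path Laplacian directly, and your secular equation $\cos\bigl((n+\tfrac12)\theta\bigr)=0$ and the resulting count of $n$ eigenvalues check out.
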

\begin{proof}
The claimed inequality follows immediately from \eqref{eq:Rayleigh-discr} and~\eqref{eq:l2fiedl} below by computing the smallest eigenvalue $\lambda_{0,2}(\mP;\mV_0)$ for a path graph $\mP = (\mV,m \equiv \mathbf{1}, b=b_{\mathrm{st}},c \equiv 0)$ on the same number of vertices as $\mG$ having a Dirichlet condition at one end.
\end{proof}

Another lower bound on the bottom of the $p$-spectrum (and, in turn, an upper bound for the $p$-torsional rigidity) can be obtained in terms of the inradius and mean distance introduced in~\eqref{eq:inradius-defi} and \eqref{eq:q-meanstdist-defi}. The proof uses the edge weight inverted graph introduced in~\eqref{eq:def-g-1} and is similar to that of \autoref{lem:geohaekelp}.
 
\begin{theo}\label{prop:estimate-inradius}
Let $\mG=\Vmbc$ be a graph such that $\Mp(\mV,m,b^{-1};\mV_0)$ is finite,
and let $\mV_0\ne \emptyset$
Under \autoref{assum:finite-meas} there holds
\begin{equation}\label{eq:lower-l0-inr}
\lambda_{0,p}(\mG;\mV_0) \geq \frac{1}{m(\mV\setminus \mV_0)\Mp(\mG^{-1};\mV_0)} \geq \frac{1}{m(\mV\setminus \mV_0) \mathrm{Inr}_p(\mG^{-1};\mV_0)}:
\end{equation}
in particular, an $\ell^p-\ell^p$-Poincaré inequality holds and
\begin{equation}\label{eq:upper-T-inr}
T_p(\mG;\mV_0) < m(\mV \setminus \mV_0)^p \Mp(\mG^{-1};\mV_0) \leq m(\mV \setminus \mV_0)^p \mathrm{Inr}_p(\mG^{-1};\mV_0)
\end{equation}
and $\mG$ is (uniquely) $1-p$-torsional-admissible.\end{theo}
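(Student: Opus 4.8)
The plan is to prove \eqref{eq:lower-l0-inr} first, deduce from it the Poincaré inequality and hence (via \autoref{cor:compact-admiss} and \autoref{thm:well-p-l1}) the $1$–$p$-torsional-admissibility, and finally obtain \eqref{eq:upper-T-inr} by combining \eqref{eq:lower-l0-inr} with \autoref{prop:p-cheeger}. The heart of the argument is the lower bound on the Rayleigh quotient, which is a discrete, $p$-dependent version of the computation behind \autoref{lem:geohaekelp}.

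First I would take an arbitrary $f\in\wppdirbc(\mV,m)$, so that $f$ vanishes on $\mV_0$. For each $\mv\in\mV\setminus\mV_0$ pick a path $(\mv=\mv_0,\ldots,\mv_n)$ with $\mv_n\in\mV_0$ that (nearly) realizes $\dist_{p,b}(\mv;\mV_0)$ in the sense of \eqref{eq:qbdist-def} applied to the edge weight $b^{-1}$, i.e.\ $\sum_{i=0}^{n-1}(b^{-1}(\mv_i,\mv_{i+1}))^{1/(p-1)}$ is close to $\dist_{p,b^{-1}}(\mv;\mV_0)$. Telescoping $f(\mv)=f(\mv)-f(\mv_n)=\sum_{i=0}^{n-1}\nabla_{\mv_i,\mv_{i+1}}f$ and applying Hölder's inequality with exponents $p$ and $p'$ exactly as in the displayed estimate in the proof of \autoref{lem:geohaekelp} — but now splitting each summand as $b(\mv_i,\mv_{i+1})^{1/p}|\nabla_{\mv_i,\mv_{i+1}}f|\cdot b(\mv_i,\mv_{i+1})^{-1/p}$ — gives
\[
|f(\mv)|^p \le \Big(\sum_{i=0}^{n-1} b(\mv_i,\mv_{i+1})|\nabla_{\mv_i,\mv_{i+1}}f|^p\Big)\Big(\sum_{i=0}^{n-1} b(\mv_i,\mv_{i+1})^{-\frac{1}{p-1}}\Big)^{p-1}.
\]
The first factor is bounded by $\sum_{\mw,\mw'\in\mV}b(\mw,\mw')|\nabla_{\mw,\mw'}f|^p = 2p\,\Qdirbc(f)$ (using $c\ge0$), and the second factor is, up to the approximation, $\dist_{p,b^{-1}}(\mv;\mV_0)^{p-1}$. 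Hence $|f(\mv)|^p\le 2p\,\Qdirbc(f)\,\dist_{p,b^{-1}}(\mv;\mV_0)^{p-1}$ for every $\mv\in\mV\setminus\mV_0$. Multiplying by $m(\mv)$ and summing over $\mv\in\mV\setminus\mV_0$ yields $\|f\|_{\ell^p(\mV\setminus\mV_0,m)}^p\le 2p\,\Qdirbc(f)\sum_{\mv}\dist_{p,b^{-1}}(\mv;\mV_0)^{p-1}m(\mv) = 2p\,\Qdirbc(f)\,m(\mV\setminus\mV_0)\,\Mp(\mG^{-1};\mV_0)$, using the definition \eqref{eq:q-meanstdist-defi}. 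Dividing gives $p\,\Qdirbc(f)/\|f\|_{\ell^p}^p\ge \tfrac12 (m(\mV\setminus\mV_0)\Mp(\mG^{-1};\mV_0))^{-1}$; taking the infimum over $f$ and then replacing $\Mp$ by the larger $\mathrm{Inr}_p$ yields \eqref{eq:lower-l0-inr}. (A factor $\tfrac12$ versus the stated constant can be absorbed by summing over $\mv,\mw$ rather than over ordered pairs, i.e.\ by being careful that $\Qdirbc$ carries the $\tfrac1{2p}$; I would double-check the precise constant when writing the details.)

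From \eqref{eq:lower-l0-inr} the positivity of $\lambda_{0,p}(\mG;\mV_0)$ is an $\ell^p$–$\ell^p$-Poincaré inequality \eqref{eq:poinc-neu-dir} with $q=p$. This in particular implies (by \autoref{prop:intro-plapl}) that $\wppdirbc(\mV,m)$ is a reflexive Banach space continuously embedded in $\ell^p(\mV,m)$; but to invoke \autoref{thm:well-p-l1} I need the stronger $\ell^1$–$\ell^p$ version and compactness into $\ell^1$. Here \autoref{assum:finite-meas} does the work: since $m(\mV)<\infty$, one has $\ell^p(\mV,m)\hookrightarrow\ell^1(\mV,m)$, and moreover \cite[Lemma~2.2]{HofKenMug22} (quoted already in the proof of \autoref{lem:geohaekelp}) gives that $\ell^\infty(\mV)$, hence any bounded-in-$\ell^p$ set with a uniform tail bound, embeds compactly into $\ell^1(\mV,m)$. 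Concretely, the same pathwise estimate above with the roles reorganized shows $|f(\mv)|\le (2p\,\Qdirbc(f))^{1/p}\mathrm{Inr}_p(\mG^{-1};\mV_0)^{1/p}$, so the unit ball of $\wppdirbc(\mV,m)$ is bounded in $\ell^\infty(\mV)$ and therefore relatively compact in $\ell^1(\mV,m)$; thus $\wdirbc(\mV,m)\hookrightarrow\ell^1(\mV,m)$ compactly, \autoref{ass:compact-embedding-l1} holds, and \autoref{thm:well-p-l1} gives unique $1$–$p$-torsional-admissibility. Finally, \autoref{prop:p-cheeger} (valid under \autoref{assum:finite-meas}) gives $\lambda_{0,p}(\mG;\mV_0)T_p(\mG;\mV_0)\le m(\mV\setminus\mV_0)^{p-1}$, and since $\mV_0\ne\emptyset$ this inequality is strict; combining with \eqref{eq:lower-l0-inr} we get $T_p(\mG;\mV_0)< m(\mV\setminus\mV_0)^{p-1}/\lambda_{0,p}(\mG;\mV_0)\le m(\mV\setminus\mV_0)^{p}\Mp(\mG^{-1};\mV_0)\le m(\mV\setminus\mV_0)^p\mathrm{Inr}_p(\mG^{-1};\mV_0)$, which is \eqref{eq:upper-T-inr}.

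The main obstacle is the approximation/choice-of-path issue in the first step: $\dist_{p,b^{-1}}(\mv;\mV_0)$ is an infimum over paths and need not be attained on an infinite graph, so the pathwise Hölder estimate must be run with a path whose $b^{-1}$-length exceeds the infimum by at most $\varepsilon$, and one has to check that summing such $\varepsilon$-suboptimal bounds over all $\mv$ still gives a finite right-hand side — this is exactly why the hypothesis is phrased in terms of $\Mp(\mG^{-1};\mV_0)<\infty$ (a summability condition) rather than merely finiteness of the inradius, and why letting $\varepsilon\to0$ at the end is legitimate. A secondary, purely bookkeeping, point is tracking the constant $\tfrac12$ coming from ordered versus unordered pairs in $\Qdirbc$; it is harmless but should be stated cleanly so that the constant in \eqref{eq:lower-l0-inr} is exactly as claimed.
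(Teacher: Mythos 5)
Your derivation of \eqref{eq:lower-l0-inr} is the same pathwise Hölder argument the paper uses, and your deduction of \eqref{eq:upper-T-inr} from \autoref{prop:p-cheeger} together with \eqref{eq:lower-l0-inr} also matches the paper. Two small remarks on that part: the $\varepsilon$-suboptimal-path worry is a non-issue, because the Dirichlet-energy factor is bounded by $p\,\Qdirbc(f)$ \emph{uniformly over all paths}, so one may simply take the infimum over paths of the remaining factor, which equals $\dist_{p,b^{-1}}(\mv;\mV_0)^{p-1}$ by definition; and the spurious factor $2$ disappears once you note that a path uses each unordered edge once while $\sum_{\mv,\mw}$ counts it twice, so the path sum is $\le p\,\Qdirbc(f)$ -- this is exactly what yields the constant $1$ in \eqref{eq:lower-l0-inr} after dividing into $\lambda_{0,p}=\inf p\,\Qdirbc(f)/\Vert f\Vert_{\ell^p}^p$.

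The genuine gap is in your route to the $1$--$p$-torsional-admissibility (and to \autoref{ass:compact-embedding-l1}, which you also invoke for the strictness in \eqref{eq:upper-T-inr}). Your uniform bound $\vert f(\mv)\vert\le\big(2p\,\Qdirbc(f)\big)^{1/p}\,\mathrm{Inr}_p(\mG^{-1};\mV_0)^{1/p}$ requires $\mathrm{Inr}_p(\mG^{-1};\mV_0)<\infty$, whereas the theorem only assumes $\Mp(\mG^{-1};\mV_0)<\infty$; since $\mathrm{Inr}_p\ge\Mp$, the weighted mean can be finite while the supremum is infinite (take an infinite path with standard weights and $m(\mv_j)=2^{-j}$: then $\sum_j j^{p-1}2^{-j}<\infty$ but $\sup_j j^{p-1}=\infty$). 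Hence the compactness of $\wdirbc(\mV,m)\hookrightarrow\ell^1(\mV,m)$ does not follow from your argument under the stated hypotheses. The paper avoids $\ell^1$-compactness altogether: \eqref{eq:lower-l0-inr} gives the $\ell^p$--$\ell^p$ Poincaré inequality, so \autoref{ass:poincare-inequality} holds with $q=p\in(1,\infty)$, and together with \autoref{assum:finite-meas} one invokes \autoref{cor:wellp} to obtain a unique $p$-torsion function in $\wppdirbc(\mV,m)$, which lies in $\wdirbc(\mV,m)$ because $m(\mV)<\infty$. You should replace your compactness step by this argument (or else add $\mathrm{Inr}_p(\mG^{-1};\mV_0)<\infty$ as an extra hypothesis, which would weaken the statement).
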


Let us remark that finiteness of $\Mp(\mV,m,b^{-1};\mV_0)$ implies that $\mG$ is connected.
Also, observe that the left hand side in \eqref{eq:lower-l0-inr} (resp., in \eqref{eq:upper-T-inr}) is monotonically increasing (resp., decreasing) in $c$, but the corresponding right hand side is not: in other words, these estimates are optimal for $c\equiv 0$.

In fact, this result represents the discrete counterpart to \cite[Theorem~4.4.3]{Plu21b} and \cite[Remark~5.2]{MugPlu23}, whose proofs are, in turn, presenting similarities with the method proposed in~\cite[Lemma~1.9]{Chu97} for the unweighted, normalized Laplacian without boundary conditions.
\begin{proof}
Let $(\mv_0,\mv_1,\dots,\mv_{n+1})$ be an arbitrary path in $\mG = (\mV,m,b,c)$ connecting any vertex $\mv \in \mV\setminus \mV_0$ with $\mw \in \mV_0$ (noting that w.l.o.g.\ $\mV_0$ is a singleton). By Hölder's inequality
\begin{align*}
\vert f(\mv) \vert^p &\leq \bigg( \sum_{j=0}^n \vert \nabla_{\mv_j,\mv_{j+1}}f \vert \bigg)^p = \bigg( \sum_{j=0}^n b(\mv_j,\mv_{j+1})^{-\frac{1}{p}}b(\mv_j,\mv_{j+1})^{\frac{1}{p}}\vert \nabla_{\mv_j,\mv_{j+1}}f \vert \bigg)^p \\&\le\bigg( \sum_{j=0}^n b(\mv_j,\mv_{j+1})^{\frac{1}{1-p}} \bigg)^{p-1} \bigg( \sum_{j=0}^n b(\mv_j,\mv_{j+1}) \vert \nabla_{\mv_j,\mv_{j+1}}f \vert^p \bigg),
\end{align*}
and as $(\mv_0,\mv_1,\dots,\mv_{n+1})$ was arbitrary, this yields
\begin{align*}
\vert f(\mv) \vert^p \leq \mathrm{dist}_{p,b^{-1}}(\mv,\mV_0)^{p-1}\Qdirbc(f).
\end{align*}
Summing against $m(\mv)$ over all elements of $\mV\setminus \mV_0$, we find
\[
\Vert f \Vert_{\ell^p(\mV\setminus \mV_0,m)}^p = \sum_{\mv \in \mV\setminus \mV_0} \vert f(\mv) \vert^p m(\mv) \leq m(\mV\setminus \mV_0)\Mp(\mG^{-1};\mV_0)\Qdirbc(f).
\]
Taking the infimum over $f\in \ell^p(\mV\setminus \mV_0,m)$ finally yields \eqref{eq:lower-l0-inr}.

Now, \eqref{eq:upper-T-inr} follows from~\autoref{prop:p-cheeger}. Moreover, \autoref{cor:wellp} yields a unique $p$-torsion function in $\wppdirbc(\mV,m) \hookrightarrow \wdirbc(\mV,m)$ (and the embedding is dense).
\end{proof}

\begin{exa}
Let $\mP=(\mV,m \equiv \mathbf{1}, b = b_{\mathrm{st}}, c \equiv 0)$ be the unweighted path graph. If a Dirichlet condition is imposed on one endpoint only (say, $\mV_0=\{\mv_0\}$) and $\mV\setminus \mV_0$ consists of $n-1$ vertices, then 
\[
\Mt(\mP;\mV_0)=\frac{1}{n-1}\sum_{k=1}^{n-1}k=\frac{n}{2}
\]
(and, for general $p\in \N$, 
\[
\Mp(\mP;\mV_0)=p^{-1}\sum_{k=0}^{p-1}\begin{pmatrix}
p\\ k
\end{pmatrix}B_k (n-1)^{p-1-k}
\]
by Faulhaber's formula), whereas $\mathrm{Inr}_2(\mP;\mV_0)={n-1}$:
whence, for $p=2$, \autoref{prop:estimate-inradius} implies
\[
\lambda_{0,2}(\mP;\mV_0)\ge \frac{2}{n(n-1)}.
\]
The same estimate can be obtained invoking the theory of landscape functions: indeed, for all $p\in (1,\infty)$
\[
\|\tau^{\mP;\mV_0}_p\|_\infty=\tau^{\mP;\mV_0}_p(\mv_{n-1}) =  \sum_{\ell=1}^{n-1} \bigg(\frac{1}{b_{\mathrm{st}}(\mv_{\ell-1}, \mv_{\ell})} \sum_{k=\ell}^{n-1} m(\mv_k) \bigg)^\frac{1}{p-1}
=  \sum_{k=1}^{n-1} (n-k)^\frac{1}{p-1},
\]
by \autoref{prop:p-torsion-function-pat} and because we are considering standard edge weights. Accordingly, by~\cite[Proposition~4.1]{Mug23}
\begin{equation}\label{eq:appl-landscape-f-p}
\lambda_{0,p}(\mP;\mV_0)\ge \left(\frac{1}{ \sum\limits_{k=1}^{n-1} (n-k)^\frac{1}{p-1}}\right)^{p-1},    
\end{equation}
and in particular
\[
\lambda_{0,2}(\mP;\mV_0)\ge \frac{1}{ \sum\limits_{k=1}^{n-1} (n-k)}=\frac{2}{n(n-1)}.
\]
By means of the above formula for $\mathrm{Inr}_2(\mP;\mV_0)$, and in view of \autoref{prop:estimate-inradius} and \eqref{eq:lower-tors-stars-estim}, we also find the two-sided bound
\[
n-1 \leq T_2(\mP;\mV_0) \leq \frac{(n-1)^2n}{2}
\]
for the torsional rigidity.
\end{exa}

Combining \eqref{eq:appl-landscape-f-p} with \autoref{prop:eigenvalue-eta-fold-edge-connected-minimized path}, we immediately deduce the following lower bound for the bottom of the spectrum of $\Ldirbc$. We are not aware of any other lower bound on the bottom of the spectrum of the $p$-Laplacian on graphs with Dirichlet conditions (but see the Cheeger-type-inequalities in~\cite[Theorem~3]{Amg03}, \cite[Theorem~5.1]{BuhHei09b} and~\cite[Theorem~3.10]{KelMug16} for the graph $p$-Laplacian without boundary conditions).

\begin{cor}\label{prop:fiedler-dirich}
Let $\mG=(\mV,m \equiv \mathbf{1},b,c \equiv 0)$ be a connected finite graph with minimal cut weight $\eta_\mG$. If $\mV_0 \neq \emptyset$ and $\mV\setminus \mV_0$ consists of $n-1$ vertices, then
\begin{equation}\label{eq:fiedler-dirich}
\lambda_{0,p}(\mG;\mV_0)\ge \eta_\mG \left(\frac{1}{ \sum\limits_{k=1}^{n-1} (n-k)^\frac{1}{p-1}}\right)^{p-1}.
\end{equation}
\end{cor}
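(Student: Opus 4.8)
The plan is to combine two results established earlier in the paper. First, \autoref{prop:eigenvalue-eta-fold-edge-connected-minimized path} reduces the estimate on $\mG$ to an estimate on a path graph: since $\mG = (\mV, m\equiv \mathbf{1}, b, c\equiv 0)$ is connected and finite with $\mV_0 \neq \emptyset$, the space $\wppdirbc(\mV,m)$ is compactly embedded in $\ell^p(\mV,m)$ (as $\mV$ is finite), so $\lambda_{0,p}(\mG;\mV_0)$ is indeed an eigenvalue and a ground state exists. Hence there is a permutation $\sigma$ with $\sigma(\mV_0)\subset\mV_0$ such that
\[
\lambda_{0,p}(\mG;\mV_0) \ge \eta_\mG\, \lambda_{0,p}(\mP;\mV_0),
\]
where $\mP = (\mV, m', b_{\mathrm{st}}, c')$ is a path graph on $|\mV| = n$ vertices with $m'(\mv) = m(\sigma(\mv)) = 1$ for all $\mv$ (since $m\equiv\mathbf{1}$) and $c'(\mv) = c(\sigma(\mv)) = 0$. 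So $\mP = (\mV, m\equiv\mathbf{1}, b_{\mathrm{st}}, c\equiv 0)$ is precisely the unweighted path graph on $n$ vertices with a Dirichlet condition at one endpoint, and $\mV\setminus\mV_0$ consists of $n-1$ vertices.

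The second ingredient is the landscape-function lower bound for this path graph, which was recorded as~\eqref{eq:appl-landscape-f-p} in the Example preceding the corollary: for all $p\in(1,\infty)$,
\[
\lambda_{0,p}(\mP;\mV_0)\ge \left(\frac{1}{\sum\limits_{k=1}^{n-1}(n-k)^{\frac{1}{p-1}}}\right)^{p-1}.
\]
This rests in turn on \autoref{prop:p-torsion-function-pat} (the explicit formula $\|\tau^{\mP;\mV_0}_p\|_\infty = \sum_{k=1}^{n-1}(n-k)^{1/(p-1)}$ for standard edge weights) together with the landscape estimate~\cite[Proposition~4.1]{Mug23} relating the bottom of the $p$-spectrum to the sup-norm of the $p$-torsion function.

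Multiplying the two displayed inequalities yields exactly~\eqref{eq:fiedler-dirich}, which completes the proof. There is essentially no obstacle here: the statement is a direct concatenation of \autoref{prop:eigenvalue-eta-fold-edge-connected-minimized path} with the already-derived path-graph bound~\eqref{eq:appl-landscape-f-p}. The only point requiring a moment's care is verifying that the path graph produced by the permutation $\sigma$ in \autoref{prop:eigenvalue-eta-fold-edge-connected-minimized path} coincides with the unweighted path graph to which~\eqref{eq:appl-landscape-f-p} applies — and this is immediate because $m$ and $c$ are constant, so $m' = m$ and $c' = c$ are unchanged under $\sigma$. Accordingly the proof can simply read: the claim follows by combining~\eqref{eq:appl-landscape-f-p} with \autoref{prop:eigenvalue-eta-fold-edge-connected-minimized path}.
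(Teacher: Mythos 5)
Your proof is correct and follows exactly the route the paper takes: the corollary is stated there as an immediate consequence of combining the path-graph bound \eqref{eq:appl-landscape-f-p} with \autoref{prop:eigenvalue-eta-fold-edge-connected-minimized path}, using that $m\equiv\mathbf{1}$ and $c\equiv 0$ are invariant under the permutation $\sigma$. Your additional check that finiteness of $\mV$ guarantees the compact embedding (so that the bottom of the spectrum is attained and the proposition applies) is a welcome, if minor, elaboration.
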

Using an elementary symmetry argument we deduce the following, where we denote by $\lambda_{1,p}(\mG)$ the lowest positive eigenvalue of $\Lneubc$.
\begin{cor}\label{prop:fiedler-neum}
Let $\mG=(\mV,m \equiv \mathbf{1},b,c\equiv 0)$ be a connected  finite graph 
with minimal cut weight $\eta_\mG$. If $\mV$ consists of $2n-1$ or $2n$ vertices, then the lowest positive eigenvalue $\lambda_{1,p}(\mG)$ satisfies
\begin{equation}\label{eq:fiedler-neum}
\lambda_{1,p}(\mG) \ge \eta_\mG \left(\frac{1}{ \sum\limits_{k=1}^{n-1} (n-k)^\frac{1}{p-1}}\right)^{p-1}.
\end{equation}
\end{cor}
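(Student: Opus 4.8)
The plan is to reduce the Neumann eigenvalue estimate to the Dirichlet estimate of \autoref{prop:fiedler-dirich} via a folding argument. The key observation is that the lowest positive eigenvalue $\lambda_{1,p}(\mG)$ is characterized by the Rayleigh quotient minimized over functions orthogonal (in the appropriate nonlinear sense) to the constants; in particular, any minimizer (ground state for $\lambda_{1,p}$) must change sign, so there is a vertex $\mw_0$ where the eigenfunction $\varphi$ vanishes or changes sign across an edge. First I would pick such an eigenfunction $\varphi$ associated with $\lambda_{1,p}(\mG)$ and split the vertex set into $\mV^+ := \{\mv : \varphi(\mv) > 0\}$ and $\mV^- := \{\mv : \varphi(\mv) \le 0\}$ (or the analogous decomposition adapted to where $\varphi$ changes sign along an edge). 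On the larger of the two induced subgraphs — say $\mV^+$, which then contains at most ... well, one of them contains at least $n$ vertices when $|\mV| \in \{2n-1, 2n\}$; by symmetry assume it is $\mV^+$ with $|\mV^+| \ge n$ — the restriction $\varphi|_{\mV^+}$ satisfies, pointwise, $\Lneubc \varphi \ge \lambda_{1,p}(\mG)|\varphi|^{p-2}\varphi$ together with a sign condition across the boundary edges that makes $\varphi|_{\mV^+}$ a subsolution for the Dirichlet problem on the subgraph $\mG^+$ induced by $\mV^+$ with Dirichlet vertices $\mV_0^+$ taken to be the neighbours of $\mV^-$ inside $\mV^+$ (where one imposes $\varphi = 0$).

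The second step is to make precise the inequality $\lambda_{0,p}(\mG^+; \mV_0^+) \le \lambda_{1,p}(\mG)$. This follows by testing the Dirichlet Rayleigh quotient on $\mG^+$ with $\varphi|_{\mV^+}$ extended by zero: the numerator $p\,\Qdirbc(\varphi|_{\mV^+})$ is bounded above by the portion of the Dirichlet energy $\tfrac12 \sum b(\mv,\mw)|\nabla_{\mv,\mw}\varphi|^p$ over edges within $\mV^+$ plus the cross-edges to $\mV^-$, and since $\varphi(\mw)\le 0 \le \varphi(\mv)$ on such cross-edges we have $|\varphi(\mv)|^p \le |\varphi(\mv) - \varphi(\mw)|^p$, so testing the eigenvalue equation for $\varphi$ against $\varphi^+$ gives exactly $\lambda_{1,p}(\mG)\|\varphi^+\|_{\ell^p}^p \ge p\,\Qdirbc(\varphi|_{\mV^+})$; dividing yields $\lambda_{0,p}(\mG^+;\mV_0^+) \le \lambda_{1,p}(\mG)$. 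Then I would apply \autoref{prop:fiedler-dirich} to $\mG^+$: it is a connected (or at least, one may pass to the connected component of $\mG^+$ containing a vertex where $|\varphi|$ is maximal) finite graph with $m \equiv \mathbf 1$, $c\equiv 0$; its minimal cut weight $\eta_{\mG^+}$ is at least $\eta_\mG$ since removing edges only from a subgraph cannot disconnect what a larger edge set could not; and $|\mV^+ \setminus \mV_0^+|$... here one must be slightly careful — what enters the bound is the number of non-Dirichlet vertices, which is $\le n-1$ only if the Dirichlet set is nonempty — but monotonicity of $k \mapsto \sum_{k=1}^{N}(N+1-k)^{1/(p-1)}$ in $N$, combined with \autoref{prop:monotonicity-subgraphs}-type monotonicity or directly the fact that \autoref{prop:fiedler-dirich}'s bound is decreasing in the vertex count, lets us replace $\mV^+$ by its worst case of $n$ vertices (i.e., $n-1$ non-Dirichlet vertices). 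Chaining the inequalities gives $\lambda_{1,p}(\mG) \ge \lambda_{0,p}(\mG^+;\mV_0^+) \ge \eta_{\mG^+}\bigl(\sum_{k=1}^{n-1}(n-k)^{1/(p-1)}\bigr)^{1-p} \ge \eta_\mG \bigl(\sum_{k=1}^{n-1}(n-k)^{1/(p-1)}\bigr)^{1-p}$.

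The main obstacle I anticipate is the bookkeeping around whether the eigenfunction vanishes exactly at a vertex or only changes sign across an edge, and correspondingly how to define $\mV^+$, $\mV^-$, and the Dirichlet boundary $\mV_0^+$ so that $\varphi|_{\mV^+}$ is genuinely admissible for the Dirichlet Rayleigh quotient and so that the vertex count is controlled — one wants $|\mV^+ \ge n$| and simultaneously enough Dirichlet vertices. In the cleanest case ($\varphi$ has a genuine zero), one adds that zero vertex to both sides as a Dirichlet vertex; in the general case one subdivides the sign-changing edge conceptually, but since we are on combinatorial graphs a cleaner route is: let $\mV^+ = \{\varphi > 0\}$, $\mV^- = \{\varphi \le 0\}$, and note that since $\mV = \mV^+ \sqcup \mV^-$ has $2n-1$ or $2n$ vertices, one of them, say $\mV^+$, has $\ge n$ vertices; take $\mV_0^+ := \{\mv \in \mV^+ : b(\mv,\mw) > 0 \text{ for some } \mw \in \mV^-\}$ and impose $\varphi = 0$ there after redefining — more carefully, one works with $(\varphi - \varphi(\text{nothing}))^+$ but since $\varphi \le 0$ on $\mV^-$ the relevant test function is simply $\varphi^+$, extended by $0$, which lies in $\wppdirbc$ for $\mG$ with Dirichlet set $\mV^-$. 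This last observation shows it is cleanest to apply \autoref{prop:fiedler-dirich} not to an induced subgraph but to $\mG$ itself with Dirichlet conditions on $\mV^-$, where $|\mV \setminus \mV^-| = |\mV^+| \ge n$, and then use monotonicity of the Dirichlet bound in the number of non-Dirichlet vertices together with $\eta_\mG$ being the same graph's cut weight. A final subtlety is connectedness of $\mG$ restricted appropriately: if needed, restrict attention to the connected component of $\{\varphi > 0\}$ on which $|\varphi|$ attains its maximum, on which the eigenvalue equation still holds with the same constant, and observe this component still has enough vertices or, if not, the complementary side does and one repeats the argument there; I would state this reduction once and suppress the routine case analysis.
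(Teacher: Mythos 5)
Your overall strategy --- a Courant-type nodal-domain argument run directly on $\mG$, followed by an appeal to \autoref{prop:fiedler-dirich} --- is genuinely different from the paper's proof, which first symmetrizes the whole graph to a path (as in \autoref{thm:Pólya-discr}, but for the Rayleigh quotient) and only then uses the reflection symmetry of the second eigenfunction of a path to pass to a half-path with one Dirichlet endpoint. Your key test-function computation is correct: pairing the eigenvalue equation with $\varphi^+$ and using $(\varphi(\mv)-\varphi(\mw))^{p-1}\varphi(\mv)\ge \varphi(\mv)^p$ on sign-changing edges does give $\lambda_{0,p}(\mG;\mV\setminus\mV^{>})\le\lambda_{1,p}(\mG)$ with $\mV^{>}:=\{\varphi>0\}$, and your final variant --- applying \autoref{prop:fiedler-dirich} to $\mG$ itself with Dirichlet set $\mV\setminus\mV^{>}$ rather than to an induced subgraph --- is the right move, because your earlier claim $\eta_{\mG^+}\ge\eta_\mG$ for induced subgraphs is false in general (take $\mG=K_4$ and the induced subgraph on two vertices: $\eta_{K_4}=3$ while the subgraph has minimal cut weight $1$). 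The genuine gap is in the size bookkeeping, which points the wrong way: the bound of \autoref{prop:fiedler-dirich} is \emph{decreasing} in the number of non-Dirichlet vertices, so you must take as non-Dirichlet set the \emph{smaller} of the two disjoint nodal sets $\{\varphi>0\}$ and $\{\varphi<0\}$, not the one with $\ge n$ vertices. With that fix, when $|\mV|=2n-1$ the smaller nodal set has at most $n-1$ elements and the chain $\lambda_{1,p}(\mG)\ge\lambda_{0,p}(\mG;\mV_0)\ge\eta_\mG\bigl(\sum_{k=1}^{n-1}(n-k)^{1/(p-1)}\bigr)^{1-p}$ closes; as you wrote it, you only obtain the weaker constant coming from $n$ (or more) free vertices.

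In the even case $|\mV|=2n$ both nodal sets may have exactly $n$ elements, and then even the corrected argument only yields $\eta_\mG\bigl(\sum_{j=1}^{n}j^{1/(p-1)}\bigr)^{1-p}$. You should not regard this as a defect of your method relative to the paper's: the paper's own reduction in the even case (appending a vertex to reach a path on $2n+1$ vertices) likewise only produces the $n$-term sum, and indeed the inequality as stated fails for the path on $4$ vertices with $m\equiv\mathbf 1$, $b=b_{\mathrm{st}}$, $c\equiv 0$, where $\lambda_{1,2}=2(1-\cos(\pi/4))\approx 0.586$ while the claimed right-hand side equals $1$. So, once you flip the size comparison and drop the induced-subgraph detour (which also disposes of your worries about connectedness, since \autoref{prop:fiedler-dirich} is applied to the connected graph $\mG$ itself), your proof establishes exactly as much as the paper's: the stated bound for $|\mV|=2n-1$, and the bound with $\sum_{k=1}^{n}$ when $|\mV|=2n$.
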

\begin{proof}
Up to treating general edge weights $b$ like in the proof of \autoref{thm:Pólya-discr}, we can mimc the proof of \cite[Theorem~2.3]{BerKenKur17} and deduce that $\lambda_{1,p}(\mG) \ge \eta_\mG \lambda_{1,p}(\mP)$ where $\mP$ is an appropriate path graph $\mP=(\mV,m\equiv \mathbf{1},b_{\mathrm{st}},c\equiv 0)$; we can assume $\mP$ to consist of an odd number $2n-1$ of vertices, otherwise we replace it by a new path graph -- which for simplicity we denote again by $\mP$ with one more node -- as this only further lowers $\lambda_{1,p}$.
We know that the ground state is a constant function, and  that the eigenfunctions $\phi^\mP$ associated with $\lambda_{1,p}$ of $\mP$ change sign; by symmetry, they must vanish in the central vertex of $\mP$. Because the frequency of $\phi^\mP$ agrees with the frequency of the ground state on the path of length $n$ with Dirichlet condition at one endpoint, the claim follows from \autoref{prop:fiedler-dirich}.
\end{proof}

\subsection{A discrete Kohler-Jobin inequality}\label{sec:kohler}

A deep result in the theory of torsional rigidity is an inequality conjectured (for $p=2$) by Pólya in~\cite{PolSze51}, and finally proved by Kohler-Jobin in~\cite{Koh78}, and then generalized to $p\ne 2$ by Brasco in~\cite{Bra14}: it states that, for the $p$-Laplacian with Dirichlet boundary conditions and upon consideration of a natural scaling exponent $\alpha(p,d)$ needed to turn it into an absolute number, the product
\[
T^{\alpha(p,d)}_p \lambda_{0,p}
\]
of the $p$-torsional rigidity $T_p = T_p(\Omega)$ with the bottom of the spectrum $\lambda_{0,p}=\lambda_{0,p}(\Delta_p)$ of the $p$-Laplacian is minimized by the ball, among all $d$-dimensional open domains $\Omega \subset \mathbb{R}^d$ of finite Lebesgue measure. 
 It has been known since the isoperimetric inequality in~\cite{Fie73} that, in the discrete settings, path graphs play the role of balls. In this section we will discuss several partial extensions of the Kohler-Jobin inequality to the combinatorial case, under the standing assumption that 
 \[
p=2, \quad \mG\hbox{ is connected}, \quad \text{and} \quad c \equiv 0. 
 \]
 
\begin{exa}
We already know from~\autoref{prop:p-torsion-function-pat} that for path graphs $\mP$ with standard edge weights on $n$ edges and Dirichlet conditions on one endpoint
\begin{equation}\label{eq:torsional-rigidity-path-m1-n}
T_2(\mP;\mV_0)=\frac{n(n+1)(2n+1)}{6}\qquad\hbox{for }m=\mathbf{1}
\end{equation}
and
\begin{equation}\label{eq:torsional-rigidity-path-mdeg-n}
T_2(\mP;\mV_0)=\frac{n(2n-1)(2n+1)}{3}\qquad\hbox{for }m=\deg;
\end{equation}
furthermore, it can be derived from \cite[Section~4.4]{Fie73} and \cite[Example~1.4]{Chu97}, and by elementary symmetry arguments, that
\begin{equation}\label{eq:l2fiedl}
\lambda_{0,2}(\mP;\mV_0)=2\left(1-\cos\left(\frac{\pi}{2n+1}\right)\right)\qquad\hbox{for }m=\mathbf{1}
\end{equation}
and
\begin{equation}\label{eq:chung}
\lambda_{0,2}(\mP;\mV_0)=1-\cos\left(\frac{\pi}{2n}\right)\qquad\hbox{for }m=\deg.
\end{equation}
Let us now consider the same quantities for the star graph $\mS$ on $n$ edges, with Dirichlet conditions on all vertices of degree 1. Clearly, the corresponding Laplacian is nothing but a multiple $n$ of the Laplacian on the path graph on one edge, with Dirichlet condition on one of the vertices. Accordingly, the above quantities become
\begin{equation}\label{eq:torsional-rigidity-star-m1}
T_2(\mS;\mV_0)=\frac{1}{n}\qquad\hbox{for both }m=\mathbf{1}\hbox{ and }m=\deg,
\end{equation}
whereas
\begin{equation}\label{eq:lambda0-star-m1}
\lambda_{0,2}(\mS;\mV_0)=2n\left(1-\cos\left(\frac{\pi}{3}\right)\right)\qquad\hbox{for }m=\mathbf{1}
\end{equation}
and
\begin{equation}\label{eq:lambda0-star-mdeg}
\lambda_{0,2}(\mS;\mV_0)=1-\cos\left(\frac{\pi}{2}\right) =1 \qquad\hbox{for }m=\deg.
\end{equation}
This easy example allows us to observe a few interesting things. First of all, there is no universal power $\alpha$ such that $T_2(\mG;\mV_0)^\alpha \lambda_{0,2}(\mG;\mV_0)$ is an absolute number that is independent of the graph's measure (viz, $m(\mV\setminus \mV_0)$): indeed,  in the case of stars  \eqref{eq:torsional-rigidity-star-m1}, \eqref{eq:lambda0-star-m1}, and~\eqref{eq:lambda0-star-mdeg} show that $\alpha=1$ leads to the correct scaling if $m \equiv \mathbf{1}$, whereas $m = \deg$ requires $\alpha=0$. Also,  \eqref{eq:torsional-rigidity-path-m1} and \eqref{eq:torsional-rigidity-path-mdeg} along with  \eqref{eq:l2fiedl} and \eqref{eq:chung} show that, in the case of paths, $\alpha=\frac{2}{3}$ guarantees at least a correct \textit{asymptotic} scaling, for large $n$, both for $m\equiv \mathbf{1}$ and $m=\deg$.

In particular, as we look for a Kohler-Jobin-type inequality, we cannot expect to find a lower bound on $T_2(\mG;\mV_0)^\alpha \lambda_{0,2}(\mG;\mV_0)$ for a universal exponent $\alpha>0$. However, we may still hope to find a sharp lower bound on $T_2(\mG;\mV_0)^{\alpha(n)} \lambda_{0,2}(\mG;\mV_0)$ for all graphs on a given number of edges. Indeed, we can observe numerically that (both for $m=\mathbf{1}$ and $m=\deg$!) the product $T_2(\mG;\mV_0)^\alpha\lambda_{0,2}(\mG;\mV_0)$ is smaller for the path than for the star on the same number of vertices,
 for any value of $\alpha\le \frac{2}{3}$.

Alternatively, one may think of looking for suitable functions $\Phi, \Psi$ such that $\Phi(T_2(\mG;\mV_0)) \Psi(\lambda_{0,2}(\mG;\mV_0))$ has a universal lower bound: we will explore this idea next.
\end{exa}

A metric graph version of the Kohler-Jobin inequality has been obtained in~\cite{MugPlu23} (for $p=2$ only), where the sharp lower bound is, in this case, given by a path graph: this can be used to prove the following discrete counterpart (for $m=\deg_\mG$).
 
\begin{lemma}\label{theo:kohler-jobin}
Let $\mathsf{G} = (\mV, m=\deg, b=b_{\mathrm{st}}, c \equiv 0)$ be a connected finite graph on $E<\infty$ edges, and let $\mV_0\ne\emptyset$. Then
\begin{align}\label{eq:modified-kohler-jobin-product}
\bigg( T_2(\mG;\mV_0) +\frac{E}{3} \bigg)^\frac{2}{3}\arccos\big(1-\lambda_{0,2}(\mG;\mV_0)\big)^2 \geq   \bigg( \frac{\pi}{\sqrt[3]{6}} \bigg)^2.
\end{align}
Within the class of graphs with $E$ edges, $m = \deg$, edge weight $b=b_{\mathrm{st}}$ and $c \equiv 0$, equality holds if and only if $\mG$ is a path graph. 
\end{lemma}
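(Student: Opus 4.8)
The plan is to reduce the statement to the metric-graph Kohler-Jobin inequality from~\cite{MugPlu23} by exhibiting a faithful correspondence between the combinatorial graph $\mG = (\mV,\deg,b_{\mathrm{st}},c\equiv 0)$ with Dirichlet conditions at $\mV_0$ and a suitable metric graph $\Graph$. The natural candidate is the \emph{equilateral} metric graph whose underlying combinatorial structure is $\mG$, with all edges of length $1$ and Dirichlet vertex conditions imposed at $\mV_0$. First I would recall that, for the metric Laplacian on an equilateral metric graph with $E$ edges and Dirichlet conditions at a nonempty vertex set, the bottom of the spectrum $\mu_0(\Graph;\mV_0)$ and the (metric) torsional rigidity $T(\Graph;\mV_0)$ are related to their combinatorial counterparts $\lambda_{0,2}(\mG;\mV_0)$ (for $m=\deg$, which is exactly the normalized graph Laplacian) and $T_2(\mG;\mV_0)$. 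The relevant dictionary is the standard one: the eigenvalues of the equilateral metric-graph Laplacian below $\pi^2$ are in bijection with the eigenvalues of the normalized combinatorial Laplacian via $\mu = \arccos(1-\lambda)^2$ (this is the classical Below-type correspondence, and it explains the appearance of $\arccos(1-\lambda_{0,2}(\mG;\mV_0))^2$ in~\eqref{eq:modified-kohler-jobin-product}); and the metric torsion function restricted to the vertices, together with its behaviour along edges, yields $T(\Graph;\mV_0) = T_2(\mG;\mV_0) + \tfrac{E}{3}$, the correction term $\tfrac{E}{3}$ accounting for the contribution of the torsion function on the edge interiors (each edge of unit length of a quadratic profile contributes $\tfrac{1}{3}$ to the $L^1$ norm after the vertex values have been subtracted off; this is precisely the kind of computation carried out in~\cite{MugPlu23,Ozc24}).

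With this correspondence in place, the second step is to invoke the metric-graph Kohler-Jobin inequality of~\cite{MugPlu23}, which asserts that for a connected compact metric graph $\Graph$ with total length $L$ and Dirichlet conditions at $\mV_0 \neq \emptyset$ one has $T(\Graph;\mV_0)^{2/3}\mu_0(\Graph;\mV_0) \geq T(\mathcal P_L;\mV_0)^{2/3}\mu_0(\mathcal P_L;\mV_0)$, where $\mathcal P_L$ is the interval (path) of length $L$ with a single Dirichlet endpoint, with equality if and only if $\Graph$ is such an interval. Since our $\Graph$ has total length $L = E$, the right-hand side is an explicit function of $E$; evaluating it using $T(\mathcal P_E;\mV_0) = \tfrac{E^3}{3}$ and $\mu_0(\mathcal P_E;\mV_0) = \tfrac{\pi^2}{4E^2}$ gives exactly $\big(\pi/\sqrt[3]{6}\big)^2$ after simplification. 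Substituting the dictionary identities $T(\Graph;\mV_0) = T_2(\mG;\mV_0)+\tfrac{E}{3}$ and $\mu_0(\Graph;\mV_0) = \arccos(1-\lambda_{0,2}(\mG;\mV_0))^2$ then yields~\eqref{eq:modified-kohler-jobin-product}.

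For the equality case, I would argue as follows: equality in~\eqref{eq:modified-kohler-jobin-product} forces equality in the metric Kohler-Jobin inequality, which by~\cite{MugPlu23} happens exactly when $\Graph$ is an interval with one Dirichlet endpoint; among equilateral metric graphs whose combinatorial skeleton has $E$ edges, the only one that is an interval is the equilateral path graph on $E$ edges; and this corresponds under the dictionary precisely to $\mG$ being a combinatorial path graph on $E$ edges with a Dirichlet condition at one endpoint. Here one must be slightly careful about the location of the Dirichlet set on the metric side versus the combinatorial side, but the reduction in \autoref{conv:dirichlet-singleton} together with the structure of path graphs makes this routine.

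The main obstacle I anticipate is establishing the precise dictionary identities cleanly, in particular the torsion identity $T(\Graph;\mV_0) = T_2(\mG;\mV_0) + \tfrac{E}{3}$: one must verify that the metric torsion function on an equilateral Dirichlet metric graph is, edge by edge, the unique quadratic interpolant between its (combinatorial) endpoint values satisfying $-u'' = 1$, that its endpoint values solve exactly the normalized combinatorial torsion equation $\Ldirbc u = \mathbf 1$ with $m = \deg$, and then to integrate. The $\tfrac{E}{3}$ bookkeeping (splitting $u$ on each edge into its affine part, with $L^1$-contribution recovering the vertex sum, plus the symmetric parabolic bump of integral $\tfrac{1}{12}\cdot 4 = \tfrac13$ per unit edge — or the appropriate constant after matching conventions) needs to be done carefully so the constant matches $\tfrac{E}{3}$ exactly; any factor-of-two discrepancy in the normalization of $Q_2^{\mG}$ versus the metric energy must be tracked. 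Once that computation is pinned down, everything else is a direct appeal to~\cite{MugPlu23} and the known spectra of the interval.
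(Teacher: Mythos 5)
Your strategy coincides with the paper's: pass to the equilateral metric graph $\Graph$ of total length $E$, transfer the bottom of the spectrum via the Nicaise/von Below relation $\lambda_{0,2}(\Graph;\mV_0)=\arccos\big(1-\lambda_{0,2}(\mG;\mV_0)\big)^2$ and the torsional rigidity via the transference principle of \cite{MugPlu23}, invoke the metric Kohler-Jobin inequality of \cite{MugPlu23} (minimized by the interval with one Dirichlet endpoint), evaluate on the path, and read off the equality case from the rigidity statement on the metric side. The eigenvalue dictionary and the treatment of the equality case are exactly as in the paper.

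The one concrete problem is your torsion dictionary. You assert $T_2(\Graph;\mV_0)=T_2(\mG;\mV_0)+\tfrac{E}{3}$, whereas the identity proved in \cite{MugPlu23} and used by the paper is
\[
T_2(\Graph;\mV_0)=\tfrac14\, T_2(\mG;\mV_0)+\tfrac{E}{12},
\qquad\text{equivalently}\qquad
4\,T_2(\Graph;\mV_0)=T_2(\mG;\mV_0)+\tfrac{E}{3}.
\]
Your version is off by a factor of $4$, and this is not a harmless normalization: combined with your (correct) values $T_2(\mathcal P_E;\mV_0)=E^3/3$ and $\mu_0(\mathcal P_E;\mV_0)=\pi^2/(4E^2)$, your claimed right-hand side would be $\big(E^3/3\big)^{2/3}\cdot\pi^2/(4E^2)=\pi^2/(4\cdot 3^{2/3})$, which is \emph{not} $(\pi/\sqrt[3]{6})^2=\pi^2/6^{2/3}$; the two differ by $4^{2/3}$. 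The sharp constant only emerges when the Kohler-Jobin product is formed with $4T_2(\Graph;\mV_0)=T_2(\mG;\mV_0)+E/3$, since then the path gives $\big(4E^3/3\big)^{2/3}\cdot\pi^2/(4E^2)=\pi^2/36^{1/3}=\pi^2/6^{2/3}$. You did flag the normalization of $Q_2^\mG$ against the metric energy as the step to be pinned down, and this is precisely where the missing $\tfrac14$ lives; once the transference identity is quoted in its correct form, the remainder of your argument, including the equality discussion, goes through as written.
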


We regard \eqref{eq:modified-kohler-jobin-product} as a Kohler-Jobin-type inequality on combinatorial graphs.

\begin{proof}
Let $\mathcal{G}$ denote the equilateral metric graph (without parallel edges!) that arises from $\mG$ assigning length $1$ to each edge (in particular, $\mathcal{G}$ has a total length of $\vert \mathcal{G} \vert = E$). It is proven in \cite[Theorem~3.9]{MugPlu23} that there is a one-to-one correspondence between the (combinatorial) torsional rigidity $T_2(\mG;\mV_0)$ and the (metric) torsional rigidity on the same Dirichlet vertex set $\mV_0$, which will be denoted in the following with $T_2(\mathcal{G};\mV_0)$. More precisely, 
\begin{align}\label{eq:one-to-one-correspondence-torsion-altproof}
 T_2(\mathcal{G};\mV_0) = \frac{1}{4}T_2(\mG;\mV_0) + \frac{E}{12}.
\end{align}
Moreover, it is shown in \cite[Theorem~3.1]{Nic85}, that there is also a one-to-one correspondence between the lowest eigenvalue $\lambda_{0,2}(\mG;\mV_0)$ of the normalized Laplacian and the lowest eigenvalue $\lambda_{0,2}(\mathcal{G};\mV_0)$ of the (positive semi-definite) Laplacian $-\Delta^{\mathcal{G};\mV_0}_2$ defined on the metric graph $\mathcal{G}$ subject to Dirichlet conditions at $\mV_0$ and so-called \emph{continuity and Kirchhoff conditions} at $\mV \setminus \mV_0$ (see, for instance, \cite[Section~2]{MugPlu23} for a more precise description of this setup), in the sense that
\begin{align}\label{eq:one-to-one-correspondence-first-eigenvalue-altproof}
 \lambda_{0,2}(\mathcal{G};\mV_0) = \arccos\big(1-\lambda_{0,2}(\mG;\mV_0)\big)^2.
\end{align}
Now according to \cite[Theorem~5.8]{MugPlu23}, the mapping
\begin{equation*}\label{eq:k-j-metric}
\Graph\mapsto 
T_2(\mathcal{G};\mV_0)^{\frac{2}{3}}\lambda_{0,2}(\mathcal{G};\mV_0), \qquad
\end{equation*}
 is minimized (among all metric graphs with a Dirichlet condition on at least one point: thanks to scale invariance it is not necessary to prescribe their total length)  by (all!) metric path graphs having a Dirichlet condition at precisely  one end. In this infinite class we can find precisely one metric path graph that corresponds (in the way described above) to a combinatorial path graph with the same number of edges as $\mathsf{G}$. Therefore, due to \eqref{eq:one-to-one-correspondence-torsion-altproof} and \eqref{eq:one-to-one-correspondence-first-eigenvalue-altproof}, it follows that 
\begin{equation}\label{eq:k-j-metric-comb}
 \mG \mapsto \bigg(T_2(\mG;\mV_0) + \frac{E}{3} \bigg)^\frac{2}{3} \arccos\big(1-\lambda_{0,2}(\mG;\mV_0)\big)^2
\end{equation}
is, within the class of all combinatorial graphs with same number $E$ of edges, minimized by a combinatorial path graph with a Dirichlet condition imposed at precisely one endpoint: indeed starting with any combinatorial graph $\mG$ and considering the corresponding equilateral metric graph $\Graph$, we observe by \cite[Theorem~5.8]{MugPlu23} that:
\begin{align}\label{eq:minimizing-property-kohler-jobin}
\begin{aligned}
&\bigg(T_2(\mP;\mV_0) + \frac{E}{3} \bigg)^\frac{2}{3} \arccos\big(1-\lambda_{0,2}(\mP;\mV_0)\big)^2 = \big(4T_2(\mathcal{P};\mV_0)\big)^\frac{2}{3} \lambda_{0,2}(\mathcal{P};\mV_0) \\& \quad\quad\qquad\leq \big(4T_2(\Graph;\mV_0) \big)^\frac{2}{3} \lambda_{0,2}(\Graph;\mV_0) = \bigg(\frac{1}{4}T_2(\mG;\mV_0) + \frac{E}{12} \bigg)^\frac{2}{3} \arccos\big(1-\lambda_{0,2}(\mG;\mV_0)\big)^2,
\end{aligned}
\end{align}
where $\mathcal{P}$ is a (metric) path graph with one Dirichlet end such that $\vert \mathcal{P} \vert = \vert \Graph \vert = E$ and thus the corresponding combinatorial graph $\mP$ is given by $\mP = (\mV',m'=\deg_\mP, b'=b_{\mathrm{st}}, c \equiv 0)$. This implies \eqref{eq:modified-kohler-jobin-product}, since
\[
\bigg(T_2(\mP;\mV_0) + \frac{E}{3} \bigg)^\frac{2}{3} \arccos\big(1-\lambda_{0,2}(\mP;\mV_0)\big)^2 = \frac{\pi^2}{4E^2}\bigg( \frac{E(2E-1)(2E+1)}{3} + \frac{E}{3} \bigg)^\frac{2}{3} = \bigg(\frac{\pi}{\sqrt[3]{6}} \bigg)^2,
\]
according to \eqref{eq:torsional-rigidity-path-mdeg} and the fact that $\arccos\big(1-\lambda_{0,2}(\mP;\mV_0)\big)^2 = \lambda_{0,2}(\mathcal{P};\mV_0) = \frac{\pi^2}{4E^2}$.
 Vice versa, if equality is attained in \eqref{eq:minimizing-property-kohler-jobin} (and thus, in \eqref{eq:modified-kohler-jobin-product}) for some graph $\mG$, it follows again by \cite[Theorem~3.5]{MugPlu23} that its corresponding metric graph $\Graph$ is equal to a path graph $\mathcal{P}$ having one Dirichlet end. But as the number of edges $E$ is fixed, it follows that $\vert \mathcal{P} \vert =E$ yielding that its corresponding combinatorial path graph is indeed given by $\mP = (\mV',m'=\deg_\mP, b'=b_{\mathrm{st}}, c \equiv 0)$ and eventually that $\mG = \mP$. This completes the proof.
\end{proof}

The main feature of the Kohler-Jobin inequality is that it states that -- among all configurations of same volume -- a geometric object (a ball in~\cite{Koh78}, or path graph in the version in~\cite{MugPlu23}) is the minimizer of the product of two quantities that are, taken individually, minimized and maximized, respectively, by the same geometric object (by the Faber--Krahn and the Saint-Venant inequality, respectively). Also in the case of combinatorial graphs one may recognize the same pattern, using the versions of Faber--Krahn and Saint-Venant that are known to hold for \textit{metric} graphs along with the already mentioned transference principles
\cite[Theorem~3.1]{Nic85} and \cite[Theorem~3.9]{MugPlu23}: indeed, for $m=\deg$, $\lambda_{0,2}$ (resp., $T_2$) is minimized  (resp., maximized) by the path graph, among all graphs on the same number of edges. 

And yet, the Kohler-Jobin-type product $T_2^{\frac{2}{3}}\lambda_{0,2}$ is, altogether, minimized by a path graph.
The following is the main result of this subsection.

\begin{theo}\label{theo:lower-bound-classical-kohler-jobin-product}
Let $\mathsf{G} = (\mV,m =\deg, b=b_{\mathrm{st}}, c \equiv 0)$ be a connected finite graph with standard edge weights on $E< \infty$ edges, and let $\mV_0 \neq \emptyset$. Then
\begin{align}\label{eq:bound-classical-kohler-jobin-without-e}
T_2(\mG;\mV_0)^\frac{2}{3}
\lambda_{0,2}(\mG;\mV_0) \geq 1,
\end{align}
and equality holds if and only if $\mathsf{G}$ is a path graph on a single edge with one Dirichlet condition.
\end{theo}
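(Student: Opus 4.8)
The plan is to derive \eqref{eq:bound-classical-kohler-jobin-without-e} from the Kohler-Jobin-type inequality \autoref{theo:kohler-jobin} by feeding in two elementary estimates that eliminate the dependence on the edge count $E$. By \autoref{conv:dirichlet-singleton} we may and do assume $\mV_0=\{\mv_0\}$ is a singleton; abbreviate $T:=T_2(\mG;\mV_0)$ and $\lambda:=\lambda_{0,2}(\mG;\mV_0)$. The key facts to be assembled are: $\lambda\in(0,1]$; the scalar inequality $\arccos(1-t)^2\le\frac{\pi^2}{4}t$ on $[0,1]$; and $T\ge E$; after which the three constants in \autoref{theo:kohler-jobin} combine to give exactly the threshold $1$.

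\emph{Step 1: $\lambda\in(0,1]$.} Positivity is immediate from connectedness, since a nonzero $f$ vanishing on $\mV_0$ with $\frac12\sum_{\mv,\mw\in\mV}b(\mv,\mw)|f(\mv)-f(\mw)|^2=0$ is locally constant, hence $\equiv 0$. For the upper bound I would use that, $\mG$ being finite, a ground state $\varphi$ exists and may be taken nonnegative (replace a minimizer of the Rayleigh quotient by its modulus). Extending $\varphi$ by zero to $\mV_0$ and using $c\equiv 0$ and $m=\deg_\mG$, the identity $\frac12\sum_{\mv,\mw\in\mV}b(\mv,\mw)|\varphi(\mv)-\varphi(\mw)|^2=\sum_{\mv\in\mV}\deg_\mG(\mv)\varphi(\mv)^2-\sum_{\mv,\mw\in\mV}b(\mv,\mw)\varphi(\mv)\varphi(\mw)$ together with nonnegativity of the last sum gives $\frac12\sum_{\mv,\mw}b(\mv,\mw)|\varphi(\mv)-\varphi(\mw)|^2\le\|\varphi\|_{\ell^2(\mV\setminus\mV_0,m)}^2$, i.e.\ $\lambda\le 1$.

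\emph{Step 2: the two elementary bounds.} First, $\arccos(1-t)^2\le\frac{\pi^2}{4}\,t$ for $t\in[0,1]$, with equality only at $t=1$: substituting $t=2\sin^2(\theta/2)$ with $\theta=\arccos(1-t)\in[0,\frac\pi2]$ reduces the claim to $\frac{\theta/2}{\sin(\theta/2)}\le\frac{\pi}{2\sqrt2}$, which holds because $s\mapsto s/\sin s$ is increasing on $(0,\frac\pi2)$ and equals $\frac{\pi}{2\sqrt2}$ at $s=\frac\pi4$. Second, $T\ge E$: \autoref{lem:lower-element} applied with $p=2$, $c\equiv 0$, $m=\deg_\mG$, $b=b_{\mathrm{st}}$ gives $T\ge m(\mV\setminus\mV_0)^2/E_\partial$, where $E_\partial$ is the number of edges incident to $\mv_0$; since $\mV_0$ is a singleton, every edge of $\mG$ meets $\mV\setminus\mV_0$, so $m(\mV\setminus\mV_0)=\sum_{\mv\ne\mv_0}\deg_\mG(\mv)\ge E$, while trivially $E_\partial\le E$, whence $T\ge E^2/E=E$.

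\emph{Step 3: conclusion and rigidity.} By \autoref{theo:kohler-jobin}, $(T+\frac E3)^{2/3}\arccos(1-\lambda)^2\ge(\pi/\sqrt[3]6)^2=\pi^2 6^{-2/3}$. Inserting $T+\frac E3\le\frac43 T$ (Step 2) and $\arccos(1-\lambda)^2\le\frac{\pi^2}{4}\lambda$ (legitimate since $\lambda\in[0,1]$ by Step 1), one gets $\pi^2 6^{-2/3}\le\bigl(\frac43 T\bigr)^{2/3}\frac{\pi^2}{4}\lambda$; since $\frac14\bigl(\frac43\bigr)^{2/3}=\bigl(4^{1/3}3^{2/3}\bigr)^{-1}=6^{-2/3}$, this is precisely $T^{2/3}\lambda\ge 1$. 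For equality, $T^{2/3}\lambda=1$ forces equality in all three inputs: equality in \autoref{theo:kohler-jobin} makes $\mG$ a path graph with a Dirichlet condition at one endpoint, equality in the scalar inequality forces $\lambda=1$, and $T+\frac E3=\frac43 T$ forces $T=E$; but a path on $E$ edges has $T_2=\frac{E(2E-1)(2E+1)}{3}$ by \eqref{eq:torsional-rigidity-path-mdeg}, so $T=E$ only if $E=1$, and the single-edge path indeed satisfies $T_2=\lambda_{0,2}=1$. The main obstacle I anticipate is essentially bookkeeping: verifying that the constants $(\pi/\sqrt[3]6)^2$, $\pi^2/4$ and $(4/3)^{2/3}$ multiply out to the clean value $1$ rather than merely a proportional bound, and that $T\ge E$ is saturated exactly by the single edge so that the equality characterization closes.
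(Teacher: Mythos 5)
Your proof is correct and follows essentially the same route as the paper: combine \autoref{theo:kohler-jobin} with the scalar bound $\arccos(1-x)^2\le\frac{\pi^2}{4}x$ on $[0,1]$ and the estimate $T_2(\mG;\mV_0)\ge E$ from \autoref{lem:lower-element}, then check that the constants collapse to $1$ and that the equality case forces $E=1$. The only (harmless) deviations are that you verify $\lambda_{0,2}(\mG;\mV_0)\le 1$ explicitly and prove the scalar inequality by a trigonometric substitution rather than the paper's concavity argument for $F(x)=\frac{\pi}{2}\sqrt{x}-\arccos(1-x)$.
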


\begin{proof}
By \autoref{theo:kohler-jobin}, the map
\[
\mG \mapsto \bigg(T_2(\mG;\mV_0) + \frac{E}{3} \bigg)^\frac{2}{3}\arccos\big(1-\lambda_{0,2}(\mG;\mV_0)\big)^2
\]
is minimized
by a path graph within the class of graphs with $E$ edges, $m= \deg$, edge weight $b=b_{\mathrm{st}}$ and $c \equiv 0$, and with a Dirichlet condition at one endpoint.
Now, considering $F: [0,1] \rightarrow \mathbb{R}$ given by
\[
F(x) = \frac{\pi}{2}\sqrt{x} - \arccos(1-x), \qquad x \in [0,1],
\]
it follows that $F$ is a concave function with $\min_{x \in [0,1]} F(x) = F(0) = F(1) = 0$, yielding 
that for $x \in [0,1]$
\begin{equation}\label{eq:taylor-no-F}
\arccos(1-x)^2 
\leq 
\frac{\pi^2}{4}x,
\end{equation}
where the factor $\frac{\pi^2}{4}$ is asymptotically optimal for $\lambda_{0,2}(\mG;\mV_0)\to 1$, since for every $\alpha \geq 0$ such that $\arccos(1-x)^2 \leq \alpha x$ for all $x \in [0,1]$ it follows that $\frac{\pi^2}{4} = \arccos(1-1)^2 \leq \alpha$.

On the other hand, due to \eqref{eq:trivial-estimate-p-torsion-below} and the fact that $m=\deg$ and $b=b_{\mathrm{st}}$ (taking into account \autoref{conv:dirichlet-singleton}, i.e., without loss of generality $\mV_0 = \{\mv_0\}$), we observe
\begin{align}\label{eq:torsional-ridigity-deg-geq-e}
T_2(\mG;\mV_0) \geq  \frac{\Big(\sum\limits_{\mv \in \mV \setminus \mV_0} \deg(\mv)\Big)^2}{\sum\limits_{\mv \sim \mv_0} 1} = \frac{\Big(2E - \deg(\mv_0) \Big)^2}{\deg(\mv_0)} \geq \frac{E^2}{E} = E,
\end{align}
since $\deg(\mv_0) \leq E$. Combining now \eqref{eq:taylor-no-F} and \eqref{eq:torsional-ridigity-deg-geq-e} with \eqref{eq:modified-kohler-jobin-product}, we finally deduce
\begin{align}\label{eq:kohler-jobin-equality-if-and-only-if}
\begin{aligned}
&\bigg(\frac{\pi}{\sqrt[3]{6}}\bigg)^2 T_2(\mG;\mV_0)^\frac{2}{3} \lambda_{0,2}(\mG;\mV_0) = \bigg( \frac{4}{3} T_2(\mG;\mV_0) \bigg)^\frac{2}{3} \bigg(\frac{\pi^2}{4}\lambda_{0,2}(\mG;\mV_0)\bigg) \\&\quad\quad\qquad \geq \bigg(T_2(\mG;\mV_0) + \frac{E}{3} \bigg)^\frac{2}{3}\arccos\big(1-\lambda_{0,2}(\mG;\mV_0)\big)^2 \geq \bigg(\frac{\pi}{\sqrt[3]{6}}\bigg)^2,
\end{aligned}
\end{align}
which yields \eqref{eq:bound-classical-kohler-jobin-without-e}.

Clearly, if $\mG$ is a path graph on a single edge (having a Dirichlet condition at one end) equality is attained in \eqref{eq:bound-classical-kohler-jobin-without-e} due to \eqref{eq:torsional-rigidity-path-mdeg} and \eqref{eq:chung}. Conversely, if equality holds in \eqref{eq:bound-classical-kohler-jobin-without-e}, it follows that equality holds also in \eqref{eq:kohler-jobin-equality-if-and-only-if} and thus in \eqref{eq:modified-kohler-jobin-product} which implies according to \autoref{theo:kohler-jobin} that $\mG$ is given by a path graph on $E$ edges. Let us finally check that $E=1$: indeed, again using \eqref{eq:torsional-rigidity-path-mdeg} and \eqref{eq:chung}, it follows that
\[
1 \leq E^2\bigg(1-\cos\Big(\frac{\pi}{2E}\Big) \bigg) \leq \bigg( \frac{E(2E-1)(2E+1))}{3} \bigg)^\frac{2}{3} \bigg(1-\cos\Big(\frac{\pi}{2E}\Big) \bigg) =1
\]
yielding $E^2 (1-\cos(\frac{\pi}{2E})) = 1$ which is only the case when $E=1$.
\end{proof}

\begin{rem}
Our results crucially use the Kohler-Jobin-type inequality proved in \cite{MugPlu23}, which -- while only formulated for the case $p=2$ -- could perhaps be extended to the case of $p\in (1,\infty)$. However, our proof also deeply relies on the transference principles that relate the discrete and metric torsional rigidity as well as the lowest nontrivial eigenvalues $\lambda_{0,2}(\mG;\mV_0)$ and $\lambda_{0,2}(\Graph;\mV_0)$ of $-\Delta_2^{\Graph;\mV_0}$ induced by the corresponding metric graph $\Graph$: it has been an open question since~\cite{Bel85,Nic85} whether such a transference principle holds for $p$-Laplacians in the non-linear range, too.
Thus, using this approach, an extension of \autoref{theo:kohler-jobin} to the case $p\neq 2$ or even to the case of sharp $\ell^p-\ell^q$-Poincaré constants (cf.\ also \cite{KohJob82}) is -- at best -- an interesting open question.
\end{rem}

\autoref{theo:lower-bound-classical-kohler-jobin-product} now readily implies the following counterpart for the case of $m \equiv \mathbf{1}$.

\begin{cor}\label{cor:kohlerjobin-unnorm}
Let $p = 2$ and $\mG = (\mV, m \equiv \mathbf{1}, b=b_{\mathrm{st}}, c \equiv 0)$ be a  connected finite graph endowed with standard edge weights. Then the classical Kohler-Jobin type product satisfies
\begin{align*}
T_2(\mG;\mV_0)^\frac{2}{3}\lambda_{0,2}(\mG;\mV_0) \geq \frac{\min\limits_{\mv \in \mV \setminus \mV_0}\deg_{\mG}(\mv)}{\Big(\max\limits_{\mv \in \mV \setminus \mV_0}\deg_{\mG}(\mv)\Big)^\frac{4}{3}}
\end{align*}
and equality holds if $\mG$ is a path graph on a single edge with one Dirichlet condition.
\end{cor}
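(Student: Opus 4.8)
The plan is to deduce the claim from \autoref{theo:lower-bound-classical-kohler-jobin-product} by a change-of-measure comparison on the \emph{same} underlying combinatorial graph. Write $\mG_{\mathbf 1}:=(\mV,m\equiv\mathbf 1,b_{\mathrm{st}},c\equiv 0)$ and $\mG_{\deg}:=(\mV,m=\deg_\mG,b_{\mathrm{st}},c\equiv 0)$. Since $\mV$ is finite, $\wdirbc(\mV,m)=\R^{\mV\setminus\mV_0}=\wppdirbc(\mV,m)$ for \emph{every} choice of $m$, and for $p=2$ and $c\equiv 0$ the Dirichlet energy $N(f):=2\,\Qdirbc(f)=\tfrac12\sum_{\mv,\mw\in\mV}b_{\mathrm{st}}(\mv,\mw)|\nabla_{\mv,\mw}f|^2$ is independent of the vertex measure. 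Consequently, when passing from $\mG_{\mathbf 1}$ to $\mG_{\deg}$, only the $\ell^1$- and $\ell^2$-norms entering the Pólya quotient \eqref{eq:polya-def} and the Rayleigh quotient \eqref{eq:variational-characterization-ground-state} change, while the variational problems \eqref{eq:tors-def-1} and \eqref{eq:variational-characterization-ground-state} are otherwise unaffected; note also that $\mG_{\mathbf 1}$ and $\mG_{\deg}$ both satisfy \autoref{ass:compact-embedding-l1}, so all quantities are finite.

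Set $d_-:=\min_{\mv\in\mV\setminus\mV_0}\deg_\mG(\mv)$ and $d_+:=\max_{\mv\in\mV\setminus\mV_0}\deg_\mG(\mv)$. From $d_-\le\deg_\mG(\mv)\le d_+$ on $\mV\setminus\mV_0$ we get, for every $f\in\R^{\mV\setminus\mV_0}$,
\[
\|f\|_{\ell^2(\mV\setminus\mV_0,\deg)}^2\ \ge\ d_-\,\|f\|_{\ell^2(\mV\setminus\mV_0,\mathbf 1)}^2,\qquad \|f\|_{\ell^1(\mV\setminus\mV_0,\deg)}\ \le\ d_+\,\|f\|_{\ell^1(\mV\setminus\mV_0,\mathbf 1)}.
\]
The first inequality, inserted in \eqref{eq:variational-characterization-ground-state} and using that $N(f)$ is common to both graphs, yields $N(f)/\|f\|_{\ell^2(\mathbf 1)}^2\ge d_-\,N(f)/\|f\|_{\ell^2(\deg)}^2\ge d_-\,\lambda_{0,2}(\mG_{\deg};\mV_0)$ for all $f\ne 0$, hence $\lambda_{0,2}(\mG_{\mathbf 1};\mV_0)\ge d_-\,\lambda_{0,2}(\mG_{\deg};\mV_0)$. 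The second inequality, squared and inserted in \eqref{eq:tors-def-1}, gives $\|u\|_{\ell^1(\deg)}^2/N(u)\le d_+^2\,\|u\|_{\ell^1(\mathbf 1)}^2/N(u)\le d_+^2\,T_2(\mG_{\mathbf 1};\mV_0)$ for all $u\ne 0$, so $T_2(\mG_{\deg};\mV_0)\le d_+^2\,T_2(\mG_{\mathbf 1};\mV_0)$, i.e.\ $T_2(\mG_{\mathbf 1};\mV_0)\ge d_+^{-2}\,T_2(\mG_{\deg};\mV_0)$.

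Raising the torsional inequality to the power $\tfrac23$ and multiplying it with the spectral one, and then applying \autoref{theo:lower-bound-classical-kohler-jobin-product} to $\mG_{\deg}$,
\[
T_2(\mG_{\mathbf 1};\mV_0)^{\frac23}\lambda_{0,2}(\mG_{\mathbf 1};\mV_0)\ \ge\ \frac{d_-}{d_+^{\frac43}}\,T_2(\mG_{\deg};\mV_0)^{\frac23}\lambda_{0,2}(\mG_{\deg};\mV_0)\ \ge\ \frac{d_-}{d_+^{\frac43}},
\]
which is the asserted bound. For the equality clause, if $\mG$ is the path graph on one edge with a single Dirichlet vertex, then $\mV\setminus\mV_0$ consists of one vertex of degree $1$, so $d_-=d_+=1$; furthermore $T_2(\mG;\mV_0)=1$ by \eqref{eq:p-torsion-function-path-left-dirichlet} (with two vertices, $m\equiv\mathbf 1$, $b=b_{\mathrm{st}}$) and $\lambda_{0,2}(\mG;\mV_0)=1$ by a direct evaluation of the Rayleigh quotient, so the product equals $1=d_-/d_+^{4/3}$. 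The only point requiring care is matching the directions of the two comparison estimates — the upper bound via $d_+$ for the $\ell^1$-norm and the lower bound via $d_-$ for the $\ell^2$-norm — so that numerator and denominator of the Kohler-Jobin product for $\mG_{\mathbf 1}$ are controlled on the right side; apart from this bookkeeping the argument is immediate, since the substantive input is \autoref{theo:lower-bound-classical-kohler-jobin-product}.
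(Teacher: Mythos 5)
Your proposal is correct and follows essentially the same route as the paper: both compare $\mG$ with the degree-weighted graph $\mG'=(\mV,\deg_\mG,b_{\mathrm{st}},c\equiv 0)$ via the bounds $T_2(\mG';\mV_0)\le d_+^2\,T_2(\mG;\mV_0)$ and $\lambda_{0,2}(\mG';\mV_0)\le d_-^{-1}\lambda_{0,2}(\mG;\mV_0)$ read off from the Pólya and Rayleigh quotients, and then invoke \autoref{theo:lower-bound-classical-kohler-jobin-product} for $\mG'$. Your version merely spells out the norm comparisons and the equality check for the single-edge path in more detail than the paper does.
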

\begin{proof}
Looking at the Rayleigh and Pólya quotients, one observes that
\begin{align*}
T_2(\mG';\mV_0) \leq \max_{\mv \in \mV \setminus \mV_0} \deg_{\mG}(\mv)^2 T_2(\mG;\mV_0) \quad \text{ and } \quad \lambda_{0,2}(\mG';\mV_0) \leq \frac{1}{\min_{\mv \in \mV \setminus \mV_0} \deg_{\mG}(\mv)} \lambda_0(\mG;\mV_0), 
\end{align*}
where $\mG' := (\mV,m' \equiv \deg_{\mG}, b'=b_{\mathrm{st}}, c \equiv 0)$. Then \eqref{eq:bound-classical-kohler-jobin-without-e} implies
\begin{align*}
T_2(\mG;\mV_0)^\frac{2}{3}\lambda_{0,2}(\mG;\mV_0) &\geq \frac{\min\limits_{\mv \in \mV \setminus \mV_0}\deg_{\mG}(\mv)}{\Big(\max\limits_{\mv \in \mV \setminus \mV_0}\deg_{\mG}(\mv)\Big)^\frac{4}{3}} T_2(\mG';\mV_0)^\frac{2}{3}\lambda_{0,2}(\mG';\mV_0) \geq \frac{\min\limits_{\mv \in \mV \setminus \mV_0}\deg_{\mG}(\mv)}{\Big(\max\limits_{\mv \in \mV \setminus \mV_0}\deg_{\mG}(\mv)\Big)^\frac{4}{3}}. \qedhere
\end{align*}
\end{proof}

Let again $m\equiv 1$ and consider the classes of star graphs $\mathsf{S} = (\mV, m \equiv \mathbf{1},b=b_{\mathrm{st}}, c \equiv 0)$ and of path graphs $\mathsf{P} = (\mV, m \equiv \mathbf{1},b=b_{\mathrm{st}}, c \equiv 0)$ on the same number of edges, both with a Dirichlet condition imposed at exactly one vertex (of degree 1).
Comparing the products $T_2(\mP;\mV_0)^\frac{2}{3}\lambda_{0,2}(\mP;\mV_0)$ and $T_2(\mathsf{S};\mV_0)^\frac{2}{3} \lambda_{0,2}(\mathsf{S};\mV_0)$ in dependence of the number $E$ of edges, it can be observed that the Kohler-Jobin-type product is larger for a star graph, as we saw in the case for $m = \deg$ in \autoref{theo:lower-bound-classical-kohler-jobin-product}, cf.\ Figure \ref{fig:plots-classical-kohler-jobin-product}.

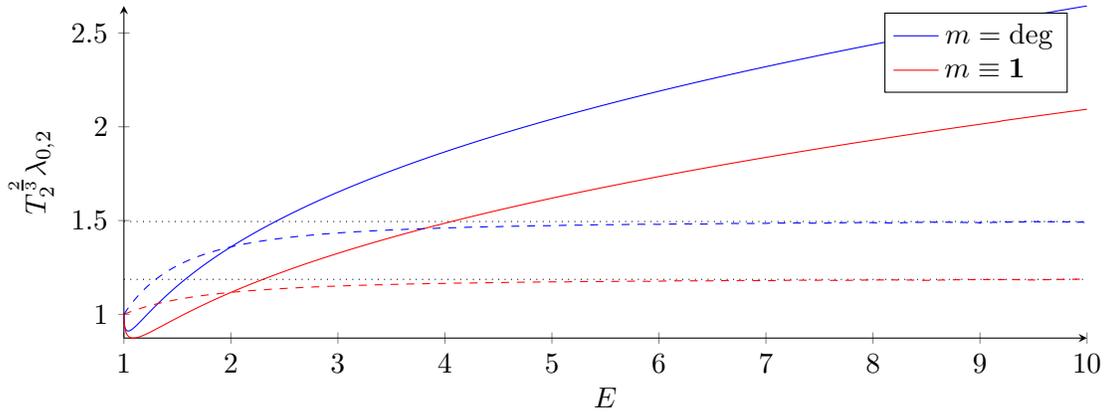
\begin{figure}[ht]
\begin{tikzpicture}
\begin{axis}[
  width=0.85\linewidth,
  	height=6cm,
	enlarge x limits=0.05,
    axis lines = left,
    xlabel = \(E\),
   ylabel = {\(T_2^\frac{2}{3} \lambda_{0,2}\)},
   ]

\addplot [
    domain=1:10,
   restrict y to domain=0:3,
    samples=2000, 
   color=blue,
]
{(4*x^2-3*x)^(2/3)*(1-sqrt((x-1)/x)};
\addlegendentry{\(m = \deg\)}

\addplot [
    domain=1:10, 
   restrict y to domain=0:3,
   samples=2000, 
    color=red,
    ]
   {1/2*(x^2+x-1)^(2/3)*(x+1-sqrt((x-1)*(x+3)))};
   \addlegendentry{\(m \equiv \mathbf{1} \:\:\:\,\,\)}

\addplot [
    domain=1:10,
    restrict y to domain=0:3,
    samples=2000, 
   color=black,
    dotted,
]
{1/2^(1/3)*(pi/(12^(1/3)))^2};

\addplot [
    domain=1:10,
   restrict y to domain=0:3,
    samples=2000, 
    color=black,
   dotted,
]
{(pi/(24^(1/3)))^2};

\addplot [
     domain=1:10,
     restrict y to domain=0:3,
    samples=2000, 
    dashed,
    color=red,
]
{(1/6*x*(x+1)*(2*x+1))^(2/3)*2*(1-cos(deg(pi/(2*x+1)))};

\addplot [
   domain=1:10,
       restrict y to domain=0:3,
   samples=2000,
    dashed, 
   color=blue,
]
{(1/3*x*(2*x+1)*(2*x-1))^(2/3)*(1-cos(deg(pi/(2*x)))};
\end{axis}
\end{tikzpicture}
\caption{Plots of the Kohler-Jobin-type product $T_2^{\frac{2}{3}}\lambda_{0,2}$ for star (continuous line) and path graphs (dashed line) o the same number of edges $E$ in the cases for $m=\deg$ (blue) and $m \equiv \mathbf{1}$ (red). All plots have a common crossing point at $E=1$ (in this case all graphs coincide and $\deg \equiv \mathbf{1}$) and a separate crossing point at $E=2$ (since a $2$-star coincides with a path having $2+1$ vertices, \emph{but} $\deg \not\equiv \mathbf{1}$). For path graphs $T_2^{\frac{2}{3}}\lambda_{0,2}$ converges to $\frac{1}{\sqrt[3]{2}}\big( \frac{\pi}{\sqrt[3]{12}} \big)^2 \approx 1.495$ (if $m = \deg$) and $\big( \frac{\pi}{\sqrt[3]{24}} \big)^2 \approx 1.186$ (if $m \equiv \mathbf{1}$) from below, respectively, whereas for star graphs $T_2^{\frac{2}{3}}\lambda_{0,2}$ tends to $+ \infty$.}\label{fig:plots-classical-kohler-jobin-product}
\end{figure}

\bibliographystyle{plain}
\bibliography{literatur.bib}

\begin{thebibliography}{10}

\bibitem{AdrSet23}
A.~Adriani and A.G. Setti.
\newblock {The $ L^1$-Liouville property on graphs}.
\newblock {\em J.\ Fourier Anal.\ Appl.}, 29:44, 2023.

\bibitem{Amg03}
S.~Amghibech.
\newblock Eigenvalues of the discrete $p$-{L}aplacian for graphs.
\newblock {\em Ars Combin.}, 67:283--302, 2003.

\bibitem{Amg08}
S.~Amghibech.
\newblock On the discrete version of {P}icone's identity.
\newblock {\em Disc.\ Appl.\ Math.}, 156:1--10, 2008.

\bibitem{BanBerCar02}
R.~Ba\~{n}uelos, {M. van den} Berg, and T.~Carroll.
\newblock Torsional rigidity and expected lifetime of {B}rownian motion.
\newblock {\em J.\ London Math.\ Soc.}, 66:499--512, 2002.

\bibitem{Bel85}
{J.\ von} Below.
\newblock A characteristic equation associated with an eigenvalue problem on
  $c^2$-networks.
\newblock {\em Lin.\ Algebra Appl.}, 71:309--325, 1985.

\bibitem{BenGirKot18}
J.~Benedikt, P.~Girg, L.~Kotrla, and P.~Tak{\'a}\v{c}.
\newblock Origin of the $p$-{L}aplacian and {A}.\ {M}issbach.
\newblock {\em Electronic J.\ Differ.\ Equ.}, 2018(16):1--17, 2018.

\bibitem{Ber12}
{M. van den} Berg.
\newblock Estimates for the torsion function and {S}obolev constants.
\newblock {\em Potential Analysis}, 36:607--616, 2012.

\bibitem{BerKenKur17}
G.~Berkolaiko, J.B. Kennedy, P.~Kurasov, and D.~Mugnolo.
\newblock Edge connectivity and the spectral gap of combinatorial and quantum
  graphs.
\newblock {\em J.\ Phys.\ A}, 50:365201, 2017.

\bibitem{BerKenKur19}
G.~Berkolaiko, J.B. Kennedy, P.~Kurasov, and D.~Mugnolo.
\newblock Surgery principles for the spectral analysis of quantum graphs.
\newblock {\em Trans.\ Amer.\ Math.\ Soc.}, 372:5153--5197, 2019.

\bibitem{BonGolKel15}
M.~Bonnefont, S.~Gol{\'e}nia, and M.~Keller.
\newblock Eigenvalue asymptotics for schr{\"o}dinger operators on sparse
  graphs.
\newblock {\em Ann.\ Inst.\ Fourier}, 65:1969--1998, 2015.

\bibitem{Bra14}
L.~Brasco.
\newblock On torsional rigidity and principal frequencies: an invitation to the
  {K}ohler-{J}obin rearrangement technique.
\newblock {\em ESAIM: COCV}, 20:315--338, 2014.

\bibitem{BraRuf17}
L.~Brasco and B.~Ruffini.
\newblock Compact {Sobolev} embeddings and torsion functions.
\newblock {\em Ann. Inst. Henri Poincar{\'e}, Anal. Non Lin{\'e}aire},
  34:817--843, 2017.

\bibitem{ButRufVel14}
G.~Buttazzo, B.~Ruffini, and B.~Velichkov.
\newblock Shape optimization problems for metric graphs.
\newblock {\em ESAIM: COCV}, 20:1--22, 2014.

\bibitem{BuhHei09}
T.~Bühler and M.~Hein.
\newblock Spectral clustering based on the graph $p$-{L}aplacian.
\newblock In {\em Proc.\ 26th Annual Int.\ Conf.\ Mach.\ Learning}, pages
  81--88, New York, 2009. ACM.

\bibitem{BuhHei09b}
T.~Bühler and M.~Hein.
\newblock Supplementary material for ``{S}pectral clustering based on the graph
  $p$-{L}aplacian''.
\newblock \url{http://www.ml.uni-saarland.de/Publications/BueHei09tech.pdf},
  2009.

\bibitem{Cal96}
B.D. Calvert.
\newblock Infinite nonlinear resistive networks, after {M}inty.
\newblock {\em Circuits, Systems and Signal Processing}, 15:727--733, 1996.

\bibitem{ChiFas10}
R.\ Chill and E.\ Fa\v{s}angov\'a.
\newblock {\em {Gradient Systems}}.
\newblock MatFyzPress, Prague, 2010.

\bibitem{ChoKimLaf18}
H.~Choi, H.~Kim, and M.~Laforest.
\newblock Relaxation model for the $p$-{L}aplacian problem with stiffness.
\newblock {\em J.\ Comp.\ Appl.\ Math.}, 344:173--189, 2018.

\bibitem{Chu97}
F.R.K. Chung.
\newblock {\em Spectral Graph Theory}, volume~92 of {\em Reg.\ Conf.\ Series
  Math.}
\newblock Amer.\ Math.\ Soc., Providence, RI, 1997.

\bibitem{CouKos04}
T.~Coulhon and P.~Koskela.
\newblock Geometric interpretations of $l_p$-poincaré inequalities on graphs
  with polynomial volume growth.
\newblock {\em Milan J.\ Math.}, 72:209--248, 2004.

\bibitem{Die05}
R.\ Diestel.
\newblock {\em Graph Theory}, volume 173 of {\em Graduate Texts in
  Mathematics}.
\newblock Springer-Verlag, Berlin, 2005.

\bibitem{DüfKenMugPlüTäu22}
M.~Düfel, J.B. Kennedy, D.~Mugnolo, M.~Plümer, and M.~Täufer.
\newblock Boundary conditions matter: On the spectrum of infinite quantum
  graphs.
\newblock arXiv:2207.04024, 2022.

\bibitem{Fie73}
M.~Fiedler.
\newblock Algebraic connectivity of graphs.
\newblock {\em Czech.\ Math.\ J.}, 23:298--305, 1973.

\bibitem{FilMay12}
M.~Filoche and S.~Mayboroda.
\newblock Universal mechanism for {A}nderson and weak localization.
\newblock {\em Proc.\ Natl.\ Acad.\ Sci.\ USA}, 109:14761--14766, 2012.

\bibitem{FilMayTao21}
M.~Filoche, S.~Mayboroda, and T.~Tao.
\newblock The effective potential of an {$M$}-matrix.
\newblock {\em J.\ Math.\ Phys.}, 62:041902, 2021.

\bibitem{Fis23}
F.~Fischer.
\newblock A non-local quasi-linear ground state representation and criticality
  theory.
\newblock {\em Calc.\ Var.}, 62:163, 2023.

\bibitem{FuePes13}
H.\ Führ and I.Z. Pesenson.
\newblock Poincar\'e and {P}lancherel--{P}olya inequalities in harmonic
  analysis on weighted combinatorial graphs.
\newblock {\em SIAM J.\ Disc.\ Math.}, 27:2007--2028, 2013.

\bibitem{GeoHaeKel15}
A.~Georgakopoulos, S.~Haeseler, M.~Keller, D.~Lenz, and R.~Wojciechowski.
\newblock Graphs of finite measure.
\newblock {\em J.\ Math.\ Pures Appl.}, 103:1093--1131, 2015.

\bibitem{HeiLenMug15}
M.~Hein, D.~Lenz, and D.~{Mugnolo (eds.)}.
\newblock {Mini-Workshop: Discrete $p$-Laplacians: Spectral Theory and
  Variational Methods in Mathematics and Computer Science}.
\newblock {\em Oberwolfach Reports}, 12:399--447, 2015.

\bibitem{HofKenMug22}
M.~Hofmann, J.B. Kennedy, D.~Mugnolo, and M.~Plümer.
\newblock Spectral minimal partitions for combinatorial graphs.
\newblock (in preparation).

\bibitem{KelLenWoj21}
M.~Keller, D.~Lenz, and R.K. Wojciechowski.
\newblock {\em Graphs and Discrete {D}irichlet Spaces}, volume 358 of {\em
  Grundlehren der mathematischen Wissenschaften}.
\newblock Springer-Verlag, Cham, 2021.

\bibitem{KelMug16}
M.~Keller and D.~Mugnolo.
\newblock General {C}heeger inequalities for $p$-{L}aplacians on graphs.
\newblock {\em Nonlinear Anal., Theory Methods Appl., Ser. A, Theory Methods},
  147:80--95, 2016.

\bibitem{KenKurmal16}
J.B. Kennedy, P.~Kurasov, G.~Malenová, and D.~Mugnolo.
\newblock On the spectral gap of a quantum graph.
\newblock {\em Ann.\ Henri Poincar\'e}, 17:2439--2473, 2016.

\bibitem{Kes04}
S.~Kesavan.
\newblock {\em {Nonlinear Functional Analysis: A First Course}}.
\newblock Texts and Read. in Math. Hindustan Book Agency Gurgaon, Hindustan
  Book Agency, 2004.

\bibitem{Kne06}
D.~Knees.
\newblock Griffith-formula and $j$-integral for a crack in a power-law
  hardening material.
\newblock {\em Math.\ Models Meth.\ Appl.\ Sci.}, 16:1723--1749, 2006.

\bibitem{Koh78}
M.-T. Kohler-Jobin.
\newblock Une m{\'e}thode de comparaison isop{\'e}rim{\'e}trique de
  fonctionnelles de domaines de la physique math{\'e}matique {I}.
  {P}remi{\`e}re partie: une d{\'e}monstration de la conjecture
  isop{\'e}rim{\'e}trique $p\lambda^2\ge\pi j_0^4/2$ de {P}{\'o}lya et
  {S}zeg{\"o}.
\newblock {\em Z.\ Angew.\ Math.\ Phys.}, 29:757--766, 1978.

\bibitem{KohJob82}
M.T. Kohler-Jobin.
\newblock {Symmetrization with equal Dirichlet integrals}.
\newblock {\em SIAM J.\ Math.\ Anal.}, 13(1):153--161, 1982.

\bibitem{MazSolTol23}
J.M. Maz{\'o}n, M.~Solera-Diana, and J.J. Toledo-Melero.
\newblock {\em {Variational and Diffusion Problems in Random Walk Spaces}},
  volume 103 of {\em Progress in Nonlinear Differential Equations and Their
  Applications}.
\newblock Birkhäuser, Basel, 2023.

\bibitem{MazTol23}
J.M. Maz{\'o}n and J.~Toledo.
\newblock Torsional rigidity in random walk spaces.
\newblock {\em SIAM J.\ Math.\ Anal.}, 56:1604--1642, 2023.

\bibitem{Mel17}
P.\ Melech.
\newblock $p$-{S}chatten embeddings.
\newblock Master's thesis, FernUniversität in Hagen, 2017.

\bibitem{Min60}
G.J. Minty.
\newblock Monotone networks.
\newblock {\em Proc.\ R.\ Soc.\ Lond., Ser.\ A, Math.\ Phys.\ Eng.\ Sci.},
  257:194--212, 1960.

\bibitem{Mug13}
D.~Mugnolo.
\newblock Parabolic theory of the discrete $p$-{L}aplace operator.
\newblock {\em Nonlinear Anal., Theory Methods Appl.}, 87:33--60, 2013.

\bibitem{Mug14}
D.~Mugnolo.
\newblock {\em {Semigroup Methods for Evolution Equations on Networks}}.
\newblock Underst.\ Compl.\ Syst. Springer-Verlag, Berlin, 2014.

\bibitem{Mug23}
D.~Mugnolo.
\newblock {Pointwise eigenvector estimates by landscape functions: some
  variations on the Filoche--Mayboroda--van den Berg bound}.
\newblock {\em Math.\ Nachr.}, (in press):DOI 10.1002/mana.202300239, 2023.

\bibitem{MugPlu23}
D.\ Mugnolo and M.\ Pl{\"u}mer.
\newblock On torsional rigidity and ground-state energy of compact quantum
  graphs.
\newblock {\em Calc.\ Var.}, 62:27, 2023.

\bibitem{NakTad21}
S.~Nakamura and Y.~Tadano.
\newblock On a continuum limit of discrete {S}chr{\"o}dinger operators on
  square lattice.
\newblock {\em J.\ Spectral Theory}, 11:355--367, 2021.

\bibitem{NakYam76}
T.~Nakamura and M.~Yamasaki.
\newblock Generalized extremal length of an infinite network.
\newblock {\em Hiroshima Math.\ J.}, 6:95--111, 1976.

\bibitem{Nic85}
S.\ Nicaise.
\newblock Some results on spectral theory over networks, applied to nerve
  impulse transmission.
\newblock In C.~Brezinski, A.~Draux, A.~P. Magnus, P.~Maroni, and A.~Ronveaux,
  editors, {\em Polyn{\^o}mes Orthogonaux et Applications (Proc.\ Bar-le-Duc
  1984)}, volume 1171 of {\em Lect.\ Notes.\ Math.}, pages 532--541, Berlin,
  1985. Springer-Verlag.

\bibitem{ParChu11}
J.-H. Park and S.-Y. Chung.
\newblock Positive solutions for discrete boundary value problems involving the
  $p$-laplacian with potential terms.
\newblock {\em Comput.\ Math.\ Appl.}, 61:17--29, 2011.

\bibitem{Plu21b}
M.\ Pl{\"u}mer.
\newblock {\em {Contributions to the Spectral Theory of Quantum Graphs}}.
\newblock PhD thesis, FernUniversit{\"a}t in Hagen, 2021.

\bibitem{Pol48}
G.~P{\'o}lya.
\newblock {Torsional rigidity, principal frequency, electrostatic capacity and
  symmetrization}.
\newblock {\em Quart.\ Appl.\ Math}, 6:267--277, 1948.

\bibitem{PolSze51}
G.~P{\'o}lya and G.~Szeg\H{o}.
\newblock {\em {Isoperimetric Inequalities in Mathematical Physics}}.
\newblock Number~27 in Annals of Mathematics Studies. Princeton University
  Press, Princeton, 1951.

\bibitem{RigSalVig97}
M.~Rigoli, M.~Salvatori, and M.~Vignati.
\newblock Subharmonic functions on graphs.
\newblock {\em Israel J.\ Math.}, 99:1--27, 1997.

\bibitem{Soa94}
P.\ Soardi.
\newblock {\em {Potential Theory on Infinite Networks}}, volume 1590 of {\em
  Lect.\ Notes Math.}
\newblock Springer-Verlag, Berlin, 1994.

\bibitem{SzlBre10}
A.~Szlam and X.~Bresson.
\newblock Total variation and {C}heeger cuts.
\newblock In {\em Proc.\ 27th Annual Int.\ Conf.\ Mach.\ Learning}, pages
  1039--1046, New York, 2010. ACM.

\bibitem{ThoSan06}
M.~Thomas and A.-M. S{\"a}ndig.
\newblock Energy release rate for interface-cracks in compounds of
  $p$-{L}aplacian type –- {G}riffith formula and {$J$}-integral.
\newblock Technical report, Universität Stuttgart, 2006.

\bibitem{Woe00}
W.~Woess.
\newblock {\em {Random Walks on Infinite Graphs and Groups}}, volume 138 of
  {\em Cambridge Tracts Math.}
\newblock Cambridge Univ.\ Press, Cambridge, 2000.

\bibitem{Yam75}
M.\ Yamasaki.
\newblock Extremum problems on an infinite network.
\newblock {\em Hiroshima Math.\ J.}, 5:223--250, 1975.

\bibitem{Yam77}
M.\ Yamasaki.
\newblock {Parabolic and hyperbolic infinite networks}.
\newblock {\em Hiroshima Math.\ J.}, 7:135--146, 1977.

\bibitem{Yin01}
H.-M. Yin.
\newblock On a $p$-{L}aplacian type of evolution system and applications to the
  {B}ean model in the type-{II} superconductivity theory.
\newblock {\em Quart.\ Appl.\ Math}, 59:47--66, 2001.

\bibitem{Ozc24}
S.~Özcan and M.~Täufer.
\newblock {Torsional Rigidity on Metric Graphs with $\delta$-vertex
  conditions}.
\newblock (in preparation).

\end{thebibliography}
\end{document}